\documentclass[10pt]{article}
\usepackage{amsthm}
\usepackage{amsmath}
\usepackage{amssymb}
\usepackage{makeidx}

\linespread{1}
\setlength{\textwidth}{17.1cm}
\setlength{\textheight}{22.3cm}
\setlength{\oddsidemargin}{0cm}
\setlength{\evensidemargin}{0cm}

\theoremstyle{definition}
\newtheorem{definition}{Definition}[section]
\newtheorem{notation}[definition]{Notation}
\newtheorem{example}[definition]{Example}
\newtheorem{openproblem}[definition]{Open problem}

\newtheorem{remark}[definition]{Remark}
\newtheorem{note}[definition]{Note}

\theoremstyle{plain}
\newtheorem{theorem}[definition]{Theorem}
\newtheorem{lemma}[definition]{Lemma}
\newtheorem{proposition}[definition]{Proposition}
\newtheorem{corollary}[definition]{Corollary}

\newcommand{\beq}{\begin{equation}}

\newcommand{\eeq}{\end{equation}}

\newcommand{\bdfn}{\begin{definition}}

\newcommand{\edfn}{\end{definition}}

\newcommand{\bthm}{\begin{theorem}}

\newcommand{\ethm}{\end{theorem}}

\newcommand{\bprop}{\begin{proposition}}

\newcommand{\eprop}{\end{proposition}}

\newcommand{\bcor}{\begin{corollary}}

\newcommand{\ecor}{\end{corollary}}

\newcommand{\blem}{\begin{lemma}}

\newcommand{\elem}{\end{lemma}}

\newcommand{\bex}{\begin{example}}

\newcommand{\eex}{\end{example}}

\newcommand{\bxc}{\begin{exercise}}

\newcommand{\exc}{\end{exercise}}

\newcommand{\bntn}{\begin{notation}}

\newcommand{\entn}{\end{notation}}

\newcommand{\be}{\begin{enumerate}}

\newcommand{\ee}{\end{enumerate}}

\newcommand{\bce}{\begin{center}}

\newcommand{\ece}{\end{center}}

\newcommand{\bi}{\begin{itemize}}

\newcommand{\ei}{\end{itemize}}

\newcommand{\bt}{\begin{tabular}}

\newcommand{\et}{\end{tabular}}

\newcommand{\ba}{\begin{array}} 

\newcommand{\ea}{\end{array}}

\numberwithin{equation}{section}

\def\N{{\mathbb N}}


\newcommand {\bua} {\begin{eqnarray*}}

\newcommand {\eua} {\end {eqnarray*}}

\begin{document}
\title{Congruence Boolean Lifting Property}
\author{George GEORGESCU and Claudia MURE\c SAN\thanks{Corresponding author.}\\ \footnotesize University of Bucharest\\ \footnotesize Faculty of Mathematics and Computer Science\\ \footnotesize Academiei 14, RO 010014, Bucharest, Romania\\ \footnotesize Emails: georgescu.capreni@yahoo.com; c.muresan@yahoo.com, cmuresan@fmi.unibuc.ro}
\date{\today }
\maketitle

\begin{abstract} We introduce and study the Congruence Boolean Lifting Property (CBLP) for congruence--distributive universal algebras, as well as a property related to CBLP, which we have called $(\star )$. CBLP extends the so--called Boolean Lifting Properties (BLP) from MV--algebras, BL--algebras and residuated lattices, but differs from the BLP when particularized to bounded distributive lattices. Important classes of universal algebras, such as discriminator varieties, fulfill the CBLP. The main results of the present paper include a characterization theorem for congruence--distributive algebras with CBLP and a structure theorem for semilocal arithmetical algebras with CBLP. When we particularize the CBLP to the class of residuated lattices and to that of bounded distributive lattices and we study its relation to other Boolean Lifting Properties for these algebras, interesting results concerning the image of the reticulation functor between these classes are revealed.\\ {\em 2010 Mathematics Subject Classification:} Primary: 08B10; secondary: 03C05, 06F35, 03G25, 08B05.\\ {\em Keywords:} Boolean Lifting Property; Boolean center; lattice; residuated lattice; reticulation; (congruence--distributive, congruence--permutable, arithmetical) algebra; discriminator variety.\end{abstract}

\section{Introduction}

A unital ring has the {\em Idempotent Lifting Property} ({\em ILP} or {\em LIP}) iff its idempotents can be lifted modulo every left ideal. The ILP is closely related to important classes of rings such as clean rings, exchange rings, Gelfand rings, maximal rings etc.. Several algebraic and topological characterizations of commutative unital rings with ILP are collected in \cite[Theorem $1.7$]{mcgov}.

In studying the ILP for commutative unital rings, it is essential that the set of idempotents of a commutative unital ring $R$ is a Boolean algebra (called the {\em Boolean center of $R$}). There are many algebraic structure to which one can associate a ``Boolean center``: bounded distributive lattices, $lu$--groups, MV--algebras, BL--algebras, residuated lattices etc.. For all of these algebras, a lifting condition for the elements of the Boolean center, similar to the ILP for rings, can be defined. In \cite{blpiasi}, \cite{blpdacs}, \cite{ggcm}, \cite{dcggcm}, we have defined and studied the Boolean Lifting Properties for residuated lattices and bounded distributive lattices, we have provided algebraic and topological characterizations for them, and established links between them, by means of the reticulation functor (\cite{eu3}, \cite{eu1}, \cite{eu}, \cite{eu2}, \cite{eu4}, \cite{eu5}, \cite{eu7}). We have also studied a type of generalization of the lifting properties for universal algebras in \cite{dcggcm} and \cite{eudacs}.

The issue of defining a condition type Boolean Lifting Property in the context of universal algebras naturally arises. Such a condition needs to extend the Boolean Lifting Properties in the particular cases of the structures mentioned above, thus, first of all, it needs to be defined for a class of universal algebras which includes these particular kinds of structures. This idea has started the research in the present paper. The Congruence Boolean Lifting Property (CBLP), which we study in this article, is a lattice--theoretic condition: a congruence--distributive algebra ${\cal A}$ has {\em CBLP} iff its lattice of congruences, ${\rm Con}({\cal A})$, is such that the Boolean elements of each of its principal filters are joins between the generator of that filter and Boolean elements of ${\rm Con}({\cal A})$. CBLP is the transcription in the language of universal algebra of property $(3)$ from \cite[Lemma $4$]{banasch}. In the particular cases of MV--algebras, BL--algebras and residuated lattices, CBLP is equivalent to the Boolean Lifting Property; however, this does not hold in the case of bounded distributive lattices, which all have CBLP, but not all have the Boolean Lifting Property. Consequently, the CBLP is only a partial solution to the issue mentioned above, but its study is motivated by the properties of the universal algebras with CBLP which we prove in this paper. Significant results on the Boolean Lifting Property for MV--algebras, BL--algebras and residuated lattices can be generalized to CBLP in important classes of universal algebras.

Section \ref{preliminaries} of this paper contains some well--known notions and previously known results from universal algebra that we need in the rest of the paper. The results in the following sections are new, excepting only the results cited from other papers. 

Section \ref{directproducts} is a collection of results concerning the form of congruences, and that of Boolean, finitely generated, prime and maximal congruences, in finite, as well as arbitrary direct products of congruence--distributive algebras from an equational class. These results serve as preparatives for the properties we obtain in the following sections.

In Section \ref{thecblp}, we introduce the property whose study is the aim of this paper: Congruence Boolean Lifting Property (abbreviated CBLP). We define and study the CBLP in equational classes of algebras whose lattice of congruences is distributive and has the property that its last element is compact. We prove a characterization theorem for the CBLP through algebraic and topological conditions, and identify important classes of algebras with CBLP, which include the class of bounded distributive lattices and discriminator varieties. We also study the behaviour of CBLP with respect to quotients and direct products, and its relation to a property we have called $(\star )$. Both CBLP and $(\star )$ extend properties which we have studied for residuated lattices in \cite{ggcm}, \cite{dcggcm}; many of the results in this section are inspired by results in \cite{ggcm}, \cite{dcggcm}. The preservation of the CBLP by quotients gives it an interesting behaviour in non--distributive lattices; in order to illustrate the related properties, we provide some examples.

Section \ref{cblpversusblp} is concerned with the particularizations of CBLP to the class of residuated lattices and that of bounded distributive lattices. For these algebras we have studied a property called the Boolean Lifting Property (BLP): for residuated lattices, in a restricted form in \cite{eu3}, \cite{eu4}, and, in the form which also appears in the present article, in \cite{ggcm} and \cite{dcggcm}; for bounded distributive lattices, in \cite{blpiasi}, \cite{blpdacs} and \cite{dcggcm}. It turns out that the CBLP and BLP coincide in the case of residuated lattices, but not in the case of bounded distributive lattices, where, as shown in Section \ref{thecblp}, the CBLP always holds, unlike the BLP. The reticulation functor from the category of residuated lattices to that of bounded distributive lattices preserves the BLP for filters (\cite{dcggcm}); clearly, the situation is different for the CBLP. These considerations make it easy to notice some properties concerning the image through the reticulation functor of certain classes of residuated lattices, including MV--algebras and BL--algebras, more precisely to exclude certain classes of bounded distributive lattices from these images.

In Section \ref{semilocal}, we return to the setting of universal algebra, and study the CBLP in semilocal arithmetical algebras; we find that, out of these algebras, the ones which fulfill the CBLP are exactly the finite direct products of local arithmetical algebras.

\section{Preliminaries}
\label{preliminaries}

We refer the reader to \cite{bal}, \cite{blyth}, \cite{bur}, \cite{gralgu} for a further study of the notions we recall in this section.

We shall denote by $\N $ the set of the natural numbers and by $\N ^*=\N \setminus \{0\}$. Throughout this paper, any direct product of algebras of the same type shall be considerred with the operations of that type of algebras defined componentwise. Also, throughout this paper, whenever there is no danger of confusion, for any non--empty family $(M_i)_{i\in I}$ of sets, by $\displaystyle (x_i)_{i\in I}\in \prod _{i\in I}M_i$ we shall mean: $x_i\in M_i$ for all $i\in I$. Finally, throughout this paper, a non--empty algebra will be 
called a {\em trivial algebra} iff it has only one element, and a {\em non--trivial algebra} iff it has at least two distinct elements.

Throughout this section, $\tau $ will be a universal algebra signature and ${\cal A}$ will be a $\tau $--algebra, with support set $A$. The {\em congruences} of ${\cal A}$ are the equivalences on $A$ which are compatible to the operations of ${\cal A}$. Let ${\rm Con}({\cal A})$ be the set of the congruences of ${\cal A}$, which is a complete lattice with respect to set inclusion. Notice that, for every $\phi ,\theta \in {\rm Con}({\cal A})$, $\phi \subseteq \theta $ iff $\phi $ is a refinement of $\theta $, that is all the congruence classes of $\theta $ are unions of congruence classes of $\phi $. In the complete lattice $({\rm Con}({\cal A}),\subseteq )$, the meet of any family of congruences of ${\cal A}$ is the intersection of those congruences. The join, however, does not coincide to the union; we shall use the common notation, $\vee $, for the join in ${\rm Con}({\cal A})$. Also, clearly, $\Delta _{\cal A}$ is the first element of the lattice ${\rm Con}({\cal A})$, and $\nabla _{\cal A}$ is the last element of the lattice ${\rm Con}({\cal A})$, where we have denoted by $\Delta _{\cal A}=id_A=\{(a,a)\ |\ a\in A\}$ and $\nabla _{\cal A}=A^2$. Any $\phi \in {\rm Con}({\cal A})$ such that $\phi \neq \nabla _{\cal A}$ is called a {\em proper congruence of ${\cal A}$.}

Since the intersection of any family of congruences of ${\cal A}$ is a congruence of ${\cal A}$, it follows that, for every subset $X$ of $A^2$, there exists the smallest congruence of ${\cal A}$ which includes $X$; this congruence is denoted by $Cg(X)$ and called {\em the congruence of ${\cal A}$ generated by $X$.} Whenever the algebra ${\cal A}$ needs to be specified, we shall denote $Cg(X)$ by $Cg_{\cal A}(X)$. It is obvious that the join in the lattice $({\rm Con}({\cal A}),\subseteq )$ is given by: for all $\phi ,\psi \in {\rm Con}({\cal A})$, $\phi \vee \psi =Cg(\phi \cup \psi)$. The congruences of ${\cal A}$ generated by finite subsets of $A^2$ are called {\em finitely generated congruences.} For every $a,b\in A$, $Cg(\{(a,b)\})$ is denoted, simply, by $Cg(a,b)$, and called the {\em principal congruence generated by $(a,b)$} in ${\cal A}$. Clearly, finitely generated congruences are exactly the finite joins of principal congruences, because, for any $X\subseteq A^2$, $\displaystyle Cg(X)=\bigvee _{(a,b)\in X}Cg(a,b)$ and, since every congruence is reflexive, $Cg(\emptyset )=\Delta _{\cal A}=Cg(a,a)$ for all $a\in A$. The set of the finitely generated congruences of ${\cal A}$ shall be denoted by ${\cal K}({\cal A})$. It is well known (\cite{bur}) that ${\rm Con}({\cal A})$ is an {\em algebraic lattice,} that is a complete lattice such that each of its elements is a supremum of compact elements, and that the compact elements of ${\rm Con}({\cal A})$ are exactly the finitely generated congruences of ${\cal A}$: ${\cal K}({\cal A})$ coincides to the set of the compact elements of ${\rm Con}({\cal A})$.

The $\tau $--algebra ${\cal A}$ is said to be:

\begin{itemize}
\item {\em congruence--distributive} iff the lattice ${\rm Con}({\cal A})$ is distributive;
\item {\em congruence--permutable} iff each two congruences of ${\cal A}$ permute (with respect to the composition of binary relations);
\item {\em arithmetical} iff it is both congruence--distributive and congruence--permutable.\end{itemize}

Throughout the rest of this section, the $\tau $--algebra ${\cal A}$ shall be considerred non--empty and congruence--distributive.
 
For any $n\in \N ^*$ and any non--empty $\tau $--algebras ${\cal A}_1$, ${\cal A}_2$, $\ldots $, ${\cal A}_n$, with support sets $A_1$, $\ldots $, $A_n$, respectively, if $\displaystyle {\cal A}=\prod _{i=1}^n{\cal A}_i$, then:

\begin{itemize}
\item if $\theta _i\in {\rm Con}({\cal A}_i)$ for all $i\in \overline{1,n}$, then we denote: $\theta _1\times \ldots \times \theta _n=\{((x_1,\ldots ,x_n),(y_1,\ldots ,y_n))\in A^2\ |\ (\forall \, i\in \overline{1,n})\, ((x_i,y_i)\in \theta _i)\}$;
\item if $\theta \in {\rm Con}({\cal A})$, then, for all $i\in \overline{1,n}$, we denote by: $\displaystyle pr_i(\theta )=\{(x,y)\in A_i^2\ |\ (\exists \, (x_1,\ldots ,x_n),(y_1,\ldots ,y_n)\in A)\, (((x_1,\ldots ,x_n),(y_1,\ldots ,y_n))\in \theta ,x_i=x,y_i=y)\}$.\end{itemize}

More generally, for any non--empty family $({\cal A}_i)_{i\in I}$ of non--empty $\tau $--algebras, if $A_i$ is the support set of ${\cal A}_i$ for each $i\in I$ and $\displaystyle {\cal A}=\prod _{i\in I}{\cal A}_i$, then:

\begin{itemize}
\item if $\theta _i\in {\rm Con}({\cal A}_i)$ for all $i\in I$, then we denote: $\displaystyle \prod _{i\in I}\theta _i=\{((x_i)_{i\in I},(y_i)_{i\in I})\in A^2\ |\ (\forall \, i\in I)\, ((x_i,y_i)\in \theta _i)\}$;
\item if $\theta \in {\rm Con}({\cal A})$, then, for all $i\in I$, we denote by: $\displaystyle pr_i(\theta )=\{(x,y)\in A_i^2\ |\ (\exists \, (x_t)_{t\in I},(y_t)_{t\in I}\in A)\, (((x_t)_{t\in I},$\linebreak $(y_t)_{t\in I})\in \theta ,x_i=x,y_i=y)\}$.\end{itemize}

\begin{lemma}{\rm \cite{bj}} Let $n\in \N ^*$, ${\cal A}_1$, ${\cal A}_2$, $\ldots $, ${\cal A}_n$ be non--empty congruence--distributive $\tau $--algebras, and assume that $\displaystyle {\cal A}=\prod _{i=1}^n{\cal A}_i$. Then:

\begin{enumerate}
\item\label{totdinbj1} given any $\theta _i\in {\rm Con}({\cal A}_i)$ for every $i\in \overline{1,n}$, it follows that $\theta _1\times \ldots \times \theta _n\in {\rm Con}({\cal A})$ and, for each $i\in \overline{1,n}$, $pr_i(\theta _1\times \ldots \times \theta _n)=\theta _i$;
\item\label{totdinbj0} given any $\theta \in {\rm Con}({\cal A})$, it follows that: for each $i\in \overline{1,n}$, $pr_i(\theta )\in {\rm Con}({\cal A}_i)$, and $\theta =pr_1(\theta )\times \ldots \times pr_n(\theta )$;
\item\label{totdinbj2} the function $\displaystyle f:\prod _{i=1}^n{\rm Con}({\cal A}_i)\rightarrow {\rm Con}({\cal A})$, defined by $f(\theta _1,\ldots ,\theta _n)=\theta _1\times \ldots \times \theta _n$ for all $\theta _1\in {\rm Con}({\cal A}_1)$, $\ldots $, $\theta _n\in {\rm Con}({\cal A}_n)$, is a bounded lattice isomorphism, whose inverse is defined by: $f^{-1}(\theta )=(pr_1(\theta ),\ldots ,pr_n(\theta ))$ for all $\theta \in {\rm Con}({\cal A})$.\end{enumerate}\label{totdinbj}\end{lemma}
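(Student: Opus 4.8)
The plan is to prove all three parts for $n=2$ and then obtain the general case by an easy induction on $n$, writing $\prod_{i=1}^{n}{\cal A}_i\cong{\cal A}_1\times\prod_{i=2}^{n}{\cal A}_i$ and using that this product is again congruence--distributive (being a finite product of algebras from the congruence--distributive equational class in which we work), so that the inductive hypothesis applies to the second factor. Assume then ${\cal A}={\cal A}_1\times{\cal A}_2$, and let $\alpha=\{(x,y)\in A^2\ |\ x_1=y_1\}$ and $\beta=\{(x,y)\in A^2\ |\ x_2=y_2\}$ be the kernels of the two projections; these are congruences of ${\cal A}$ and, since two tuples agreeing in both coordinates coincide, $\alpha\cap\beta=\Delta_{\cal A}$. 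Part \ref{totdinbj1} is a direct verification: $\theta_1\times\theta_2$ is reflexive, symmetric, transitive and compatible with the componentwise operations, and $pr_i(\theta_1\times\theta_2)=\theta_i$ because the remaining coordinate of a witnessing pair can always be filled in reflexively --- this is the only place where the non--emptiness of the factors intervenes.

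The substance of the lemma is the factorization $\theta=pr_1(\theta)\times\cdots\times pr_n(\theta)$ of part \ref{totdinbj0}, which for $n=2$ I would prove as a double inclusion. The inclusion $\theta\subseteq pr_1(\theta)\times pr_2(\theta)$ is immediate from the definition of the projections. For the converse, take $((a_1,a_2),(b_1,b_2))$ with $(a_1,b_1)\in pr_1(\theta)$ and $(a_2,b_2)\in pr_2(\theta)$. A witness for the first membership provides $c_2,d_2\in A_2$ with $((a_1,c_2),(b_1,d_2))\in\theta$; flanking it by two $\alpha$--steps (which only modify the second coordinate) yields the chain $(a_1,a_2)\mathrel{\alpha}(a_1,c_2)\mathrel{\theta}(b_1,d_2)\mathrel{\alpha}(b_1,b_2)$, so $((a_1,a_2),(b_1,b_2))\in\alpha\circ\theta\circ\alpha\subseteq\theta\vee\alpha$. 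Symmetrically, a witness for the second membership together with two $\beta$--steps gives $((a_1,a_2),(b_1,b_2))\in\beta\circ\theta\circ\beta\subseteq\theta\vee\beta$. Hence the pair lies in $(\theta\vee\alpha)\cap(\theta\vee\beta)$, and here the hypothesis is indispensable: distributivity of ${\rm Con}({\cal A})$ together with $\alpha\cap\beta=\Delta_{\cal A}$ gives
\[(\theta\vee\alpha)\cap(\theta\vee\beta)=\theta\vee(\alpha\cap\beta)=\theta\vee\Delta_{\cal A}=\theta,\]
so $((a_1,a_2),(b_1,b_2))\in\theta$ and the converse inclusion holds. With $\theta=pr_1(\theta)\times pr_2(\theta)$ in hand, each $pr_i(\theta)$ is a congruence of ${\cal A}_i$: reflexivity, symmetry and compatibility descend to the factors directly, while transitivity of $pr_1(\theta)$, say, follows by padding two first--coordinate pairs with one fixed second coordinate --- legitimate since $pr_2(\theta)$ is reflexive --- and invoking transitivity of $\theta$.

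Part \ref{totdinbj2} then assembles the first two. The assignment $g\colon\theta\mapsto(pr_1(\theta),\ldots,pr_n(\theta))$ lands in $\prod_{i=1}^{n}{\rm Con}({\cal A}_i)$ by part \ref{totdinbj0}, and parts \ref{totdinbj1} and \ref{totdinbj0} show that $g\circ f$ and $f\circ g$ are the respective identity maps, so $f$ is a bijection. Both $f$ and $g$ are order preserving for the inclusion orders (a componentwise inclusion of congruences forces inclusion of the corresponding products, and projections are monotone), hence $f$ is an isomorphism of the two complete lattices, in particular a lattice isomorphism; it is bounded since it sends $(\Delta_{{\cal A}_1},\ldots,\Delta_{{\cal A}_n})$ to $\Delta_{\cal A}$ and $(\nabla_{{\cal A}_1},\ldots,\nabla_{{\cal A}_n})$ to $\nabla_{\cal A}$. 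I expect the only real obstacle to be the converse inclusion in part \ref{totdinbj0}: for an arbitrary algebra the projection of a congruence of a direct product need not even be transitive, and it is exactly congruence--distributivity, funnelled through the identity $(\theta\vee\alpha)\cap(\theta\vee\beta)=\theta$, that compels every congruence of the product to decompose as the product of its projections.
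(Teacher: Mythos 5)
The paper itself gives no proof of this lemma: it is quoted from J\'onsson \cite{bj}, so your argument can only be judged on its own terms. On those terms, its core is correct, and it is the classical argument. For part (\ref{totdinbj0}) with $n=2$: taking $\alpha ,\beta $ to be the kernels of the two projections, the containments $\alpha \circ \theta \circ \alpha \subseteq \theta \vee \alpha $ and $\beta \circ \theta \circ \beta \subseteq \theta \vee \beta $, combined with the distributive identity $(\theta \vee \alpha )\cap (\theta \vee \beta )=\theta \vee (\alpha \cap \beta )=\theta \vee \Delta _{\cal A}=\theta $, do force every pair of $pr_1(\theta )\times pr_2(\theta )$ into $\theta $. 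Your ordering of the steps is also the right one: the factorization is proved pairwise, before anything is known about $pr_i(\theta )$, and only then is transitivity of $pr_i(\theta )$ deduced by padding with a fixed coordinate, so there is no circularity. Parts (\ref{totdinbj1}) and (\ref{totdinbj2}) are routine as you describe them.

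The one soft spot is the reduction from $n$ factors to two. You justify congruence--distributivity of $\displaystyle \prod _{i=2}^n{\cal A}_i$ by appealing to ``the congruence--distributive equational class in which we work'', but this lemma sits in the paper's preliminaries, where no equational class has been introduced: the hypotheses are only that the factors and the full product ${\cal A}$ (the latter by the section's standing convention) are congruence--distributive. Congruence--distributivity of individual algebras is \emph{not} preserved by finite direct products: two two--element algebras with no operations are congruence--distributive, while their product (a four--element bare set) is not, and indeed the conclusion of the lemma fails for that product. So the inductive hypothesis cannot be applied to the partial product without an extra argument. Two clean repairs: (a) observe that $\psi \mapsto \Delta _{{\cal A}_{\scriptstyle 1}}\times \psi $ is a lattice embedding of $\displaystyle {\rm Con}(\prod _{i=2}^n{\cal A}_i)$ into ${\rm Con}({\cal A})$ (joins are transitive closures of unions, computed coordinatewise here), so the partial product inherits congruence--distributivity from ${\cal A}$; or, simpler, (b) drop the induction and run your own argument directly for general $n$: with $\eta _i$ the kernel of $pr_i$, every pair of $pr_1(\theta )\times \ldots \times pr_n(\theta )$ lies in $\theta \vee \eta _i$ for each $i$, and iterated distributivity gives $\displaystyle \bigcap _{i=1}^n(\theta \vee \eta _i)=\theta \vee \bigcap _{i=1}^n\eta _i=\theta \vee \Delta _{\cal A}=\theta $.
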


A proper congruence $\phi $ of ${\cal A}$ is called a {\em prime congruence} iff, given any $\theta _1,\theta _2\in {\rm Con}({\cal A})$, if $\theta _1\cap \theta _2\subseteq \phi $, then $\theta _1\subseteq \phi $ or $\theta _2\subseteq \phi $. The maximal elements of the set of proper congruences of ${\cal A}$ are called {\em maximal congruences.} We shall denote by ${\rm Spec}({\cal A})$ the set of the prime congruences of ${\cal A}$, by ${\rm Max}({\cal A})$ the set of the maximal congruences of ${\cal A}$ and by $\displaystyle {\rm Rad}({\cal A})=\bigcap _{\theta \in {\rm Max}({\cal A})}\theta $.

Let us consider the following hypothesis:

\begin{flushleft}
\hspace*{20pt} (H)$\quad $ $\nabla _{\cal A}\in {\cal K}({\cal A})$ (that is: $\nabla _{\cal A}$ is a compact element of ${\rm Con}({\cal A})$; equivalently: $\nabla _{\cal A}$ is a finitely generated congruence of ${\cal A}$).\end{flushleft}

Clearly, ${\cal K}({\cal A})$ is a sublattice of ${\rm Con}({\cal A})$, because, for all $X\subseteq A^2$ and $Y\subseteq A^2$, $Cg(X)\vee Cg(Y)=Cg(X\cap Y)$ and $Cg(X)\cap Cg(Y)=Cg(X\cup Y)$. Furthermore, $\Delta _{\cal A}=Cg(\emptyset )\in {\cal K}({\cal A})$, hence, if (H) is satisfied, then ${\cal K}({\cal A})$ is a bounded sublattice of ${\rm Con}({\cal A})$.

The following lemma is well known and straightforward.

\begin{lemma}
If ${\cal A}$ fulfills (H), then:

\begin{enumerate}
\item\label{folclor1} any proper congruence of ${\cal A}$ is included in a maximal congruence;
\item\label{folclor2} ${\rm Max}({\cal A})\subseteq {\rm Spec}({\cal A})$;
\item\label{folclor3} any proper congruence of ${\cal A}$ equals the intersection of the prime congruences that include it.\end{enumerate}\label{folclor}\end{lemma}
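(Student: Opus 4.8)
The plan is to prove the three assertions in turn, relying on Zorn's Lemma together with the distributivity of ${\rm Con}({\cal A})$, and invoking hypothesis (H) precisely at the one point where it is actually needed. Throughout I use the standing fact that the union of a chain (indeed of any up--directed family) of congruences is again a congruence and coincides with its join in ${\rm Con}({\cal A})$: reflexivity and symmetry are clear, while transitivity and compatibility with the operations follow from directedness.

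For \ref{folclor1}, I fix a proper congruence $\phi $ and consider the poset $\{\theta \in {\rm Con}({\cal A})\ |\ \phi \subseteq \theta \subsetneq \nabla _{\cal A}\}$ of proper congruences above $\phi $, ordered by inclusion; it is non--empty since it contains $\phi $. To apply Zorn's Lemma I must produce an upper bound for each chain, and the only delicate point is that the union of a chain of proper congruences stays proper. This is \emph{exactly} where (H) enters: if the union $\theta =\bigcup _i\theta _i$ of a chain of proper congruences above $\phi $ equalled $\nabla _{\cal A}$, then compactness of $\nabla _{\cal A}$ (that is, $\nabla _{\cal A}\in {\cal K}({\cal A})$) would force $\nabla _{\cal A}=\theta _i$ for some $i$, contradicting the properness of $\theta _i$. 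Hence $\theta $ is an upper bound inside the poset, and Zorn yields a maximal element, which is by definition a maximal congruence containing $\phi $. This properness--of--the--union step is the main obstacle, and is the sole reason (H) is assumed.

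For \ref{folclor2}, let $M\in {\rm Max}({\cal A})$ and suppose, for a contradiction, that $\theta _1\cap \theta _2\subseteq M$ with $\theta _1\not\subseteq M$ and $\theta _2\not\subseteq M$. Then $M\vee \theta _1$ and $M\vee \theta _2$ strictly contain the proper congruence $M$, so maximality forces $M\vee \theta _1=M\vee \theta _2=\nabla _{\cal A}$. Now the distributivity of ${\rm Con}({\cal A})$ gives
\[ \nabla _{\cal A}=(M\vee \theta _1)\cap (M\vee \theta _2)=M\vee (\theta _1\cap \theta _2)\subseteq M\vee M=M, \]
contradicting $M\neq \nabla _{\cal A}$. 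Thus $M$ is prime.

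For \ref{folclor3}, the inclusion $\phi \subseteq \bigcap \{P\in {\rm Spec}({\cal A})\ |\ \phi \subseteq P\}$ is immediate for any proper $\phi $, so it suffices to show that for each $(a,b)\notin \phi $ there is a prime congruence $P\supseteq \phi $ with $(a,b)\notin P$. I would apply Zorn's Lemma to the poset of congruences $\theta $ with $\phi \subseteq \theta $ and $(a,b)\notin \theta $; here the union of a chain again omits $(a,b)$, so no appeal to (H) is needed to obtain an upper bound. A maximal such congruence $P$ is proper, since it omits $(a,b)$, and it is prime by the same computation as in \ref{folclor2}: if $\theta _1\cap \theta _2\subseteq P$ while $\theta _1,\theta _2\not\subseteq P$, then the maximality of $P$ in this poset forces $(a,b)\in P\vee \theta _1$ and $(a,b)\in P\vee \theta _2$, whence distributivity gives $(a,b)\in (P\vee \theta _1)\cap (P\vee \theta _2)=P\vee (\theta _1\cap \theta _2)\subseteq P$, a contradiction. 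This $P$ witnesses that $(a,b)$ does not lie in the intersection, establishing the reverse inclusion and completing the proof.
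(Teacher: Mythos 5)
Your proof is correct. The paper itself offers no argument for this lemma (it is stated as ``well known and straightforward''), and what you have written is precisely the standard folklore proof being invoked: Zorn's Lemma with compactness of $\nabla _{\cal A}$ (hypothesis (H)) guaranteeing that unions of chains of proper congruences remain proper in part (\ref{folclor1}), and distributivity of ${\rm Con}({\cal A})$ --- a standing assumption in that section of the paper --- driving the computation $(M\vee \theta _1)\cap (M\vee \theta _2)=M\vee (\theta _1\cap \theta _2)$ in parts (\ref{folclor2}) and (\ref{folclor3}). You also correctly observe that (H) is needed only in part (\ref{folclor1}), while the separation argument in part (\ref{folclor3}) needs only the chain-union-omits-$(a,b)$ device; this is an accurate accounting of where each hypothesis enters.
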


Clearly, if ${\cal A}$ fulfills (H), then: ${\cal A}$ is non--trivial iff $\Delta _{\cal A}\neq \nabla _{\cal A}$ iff $\Delta _{\cal A}$ is a proper congruence of ${\cal A}$ iff ${\cal A}$ has proper congruences iff ${\cal A}$ has maximal congruences, where the last equivalence follows from Lemma \ref{folclor}, (\ref{folclor1}).

We say that the {\em Chinese Remainder Theorem} ({\em CRT}, for short) holds in ${\cal A}$ iff: for all $n\in \N ^*$, any $\theta _1,\ldots ,\theta _n\in {\rm Con}({\cal A})$ and any $a_1,\ldots ,a_n\in A$, if $(a_i,a_j)\in \theta _i\vee \theta _j$ for all $i,j\in \overline{1,n}$, then there exists an $a\in A$ such that $(a,a_i)\in \theta _i$ for all $i\in \overline{1,n}$.

\begin{proposition}{\rm \cite{bj}} CRT holds in ${\cal A}$ iff ${\cal A}$ is arithmetical.\label{propozitie5.6}\end{proposition}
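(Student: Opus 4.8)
The plan is to prove the two implications separately, exploiting the standing assumption of this section that $\mathcal{A}$ is congruence--distributive. Under that assumption, ``arithmetical'' is equivalent to ``congruence--permutable'', so it suffices to relate CRT to congruence--permutability.

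For the implication CRT $\Rightarrow$ arithmetical, only the case $n=2$ of CRT is needed to establish congruence--permutability. Let $\phi, \psi \in \mathrm{Con}(\mathcal{A})$ and let $(a, c) \in \phi \circ \psi$, so that $(a, b) \in \phi$ and $(b, c) \in \psi$ for some $b \in A$; then $(a, c) \in \phi \vee \psi = \psi \vee \phi$. Applying CRT with $\theta_1 = \psi$, $\theta_2 = \phi$, $a_1 = a$, $a_2 = c$ (whose hypothesis $(a_1, a_2) \in \theta_1 \vee \theta_2$ we have just verified), I obtain $d \in A$ with $(d, a) \in \psi$ and $(d, c) \in \phi$, whence $(a, c) \in \psi \circ \phi$. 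This gives $\phi \circ \psi \subseteq \psi \circ \phi$, and by symmetry $\phi$ and $\psi$ permute. Together with the standing congruence--distributivity, this makes $\mathcal{A}$ arithmetical.

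For the implication arithmetical $\Rightarrow$ CRT, I would argue by induction on $n$, freely using that permutability gives $\alpha \vee \beta = \alpha \circ \beta = \beta \circ \alpha$ for all $\alpha, \beta \in \mathrm{Con}(\mathcal{A})$. The case $n=1$ is trivial, and the case $n=2$ is immediate from $(a_1, a_2) \in \theta_1 \vee \theta_2 = \theta_1 \circ \theta_2$, which supplies an element $a$ with $(a, a_1) \in \theta_1$ and $(a, a_2) \in \theta_2$ after using symmetry. For the inductive step, the induction hypothesis applied to $\theta_1, \ldots, \theta_{n-1}$ and $a_1, \ldots, a_{n-1}$ yields $b \in A$ with $(b, a_i) \in \theta_i$ for all $i \in \overline{1, n-1}$; setting $\psi = \bigcap_{i=1}^{n-1} \theta_i$, it then suffices to find $a \in A$ with $(a, b) \in \psi$ and $(a, a_n) \in \theta_n$, since transitivity (via $(a,b) \in \psi \subseteq \theta_i$ and $(b, a_i) \in \theta_i$) will give $(a, a_i) \in \theta_i$ for every $i \in \overline{1, n}$. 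By the already--established case $n=2$ applied to $\psi$ and $\theta_n$, such an $a$ exists as soon as $(b, a_n) \in \psi \vee \theta_n$.

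The main obstacle, and the one step that genuinely invokes congruence--distributivity, is verifying $(b, a_n) \in \psi \vee \theta_n$. For each $i \in \overline{1, n-1}$, the chain $(b, a_i) \in \theta_i \subseteq \theta_i \vee \theta_n$ and $(a_i, a_n) \in \theta_i \vee \theta_n$ (the latter from the pairwise hypothesis) gives $(b, a_n) \in \theta_i \vee \theta_n$ by transitivity of $\theta_i \vee \theta_n$. Hence $(b, a_n) \in \bigcap_{i=1}^{n-1}(\theta_i \vee \theta_n)$, and the finite distributive law in $\mathrm{Con}(\mathcal{A})$ rewrites this meet as $\left( \bigcap_{i=1}^{n-1} \theta_i \right) \vee \theta_n = \psi \vee \theta_n$, completing the step. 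I expect the bookkeeping in this inductive merge to be the only delicate part; everything else is a routine chase through the reflexivity, symmetry, and transitivity of the congruences involved.
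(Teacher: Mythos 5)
Your proof is correct, but note that there is nothing in the paper to compare it against: the paper states this proposition as a quoted result from \cite{bj} and gives no proof of its own. Both of your directions are sound. The $n=2$ instance of CRT does yield $\phi \circ \psi \subseteq \psi \circ \phi$ for all $\phi ,\psi \in {\rm Con}({\cal A})$, hence permutability; and your induction for the converse is the standard argument, with the only nontrivial ingredient being exactly the identity $\displaystyle \bigcap _{i=1}^{n-1}(\theta _i\vee \theta _n)=(\bigcap _{i=1}^{n-1}\theta _i)\vee \theta _n$, which is legitimate in any distributive lattice for finite meets, the meet in ${\rm Con}({\cal A})$ being intersection. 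One caveat about scope: in the forward direction you prove only permutability and then invoke the section's standing hypothesis that ${\cal A}$ is congruence--distributive. This is acceptable for the statement as scoped in this paper, but it is strictly weaker than the theorem as usually stated (and as proved in \cite{bj}), where distributivity of ${\rm Con}({\cal A})$ is itself a consequence of CRT rather than an ambient assumption. If you want that stronger version, it needs only one more application of CRT, with $n=3$: given $(a,b)\in \alpha \cap (\beta \vee \gamma )$, apply CRT to $\theta _1=\beta $, $\theta _2=\gamma $, $\theta _3=\alpha $ and $a_1=a$, $a_2=b$, $a_3=a$ (the pairwise conditions hold because $(a,b)\in \beta \vee \gamma $, $(a,a)$ is in everything, and $(b,a)\in \alpha \subseteq \gamma \vee \alpha $); the resulting $d$ satisfies $(a,d)\in \alpha \cap \beta $ and $(d,b)\in \alpha \cap \gamma $, so $\alpha \cap (\beta \vee \gamma )\subseteq (\alpha \cap \beta )\vee (\alpha \cap \gamma )$, which is congruence--distributivity.
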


For every congruence $\theta $ of ${\cal A}$, we shall denote:

\begin{itemize}

\item by $[\theta )$ the principal filter of the lattice ${\rm Con}({\cal A})$ generated by $\theta $, that is $[\theta )=\{\phi \in {\rm Con}({\cal A})\ |\ \theta \subseteq \phi \}$; as is the case for any lattice filter, $[\theta )$ is a sublattice (not bounded sublattice) of ${\rm Con}({\cal A})$, and $h_{\textstyle \theta }:{\rm Con}({\cal A})\rightarrow [\theta )$, $h_{\textstyle \theta }(\phi )=\phi \vee \theta $ for all $\phi \in {\rm Con}({\cal A})$, is a bounded lattice morphism;
\item for any $a\in A$, by $a/\theta $ the equivalence class of $a$ with respect to $\theta $, and by $A/\theta $ the quotient set of $A$ with respect to $\theta $; in what follows, we shall assume that $A/\theta $ becomes an algebra of the same kind as ${\cal A}$, with the operations defined canonically, and we shall denote by ${\cal A}/\theta $ the algebraic structure of $A/\theta $;
\item by $p_{\textstyle \theta }:A\rightarrow A/\theta $ the canonical surjection with respect to $\theta $;
\item for any $X\subseteq A^2$ and any $Y\subseteq A$, by $X/\theta =\{(p_{\textstyle \theta }(a),p_{\textstyle \theta }(b))\ |\ (a,b)\in X\}=\{(a/\theta ,b/\theta )\ |\ (a,b)\in X\}$ and by $Y/\theta =p_{\textstyle \theta }(Y)=\{a/\theta \ |\ a\in Y\}$;
\item by $s_{\textstyle \theta }:{\rm Con}({\cal A}/\theta )\rightarrow [\theta )$ the function defined by: for any $\alpha \in {\rm Con}({\cal A}/\theta )$, $s_{\textstyle \theta }(\alpha )=\{(a,b)\in A^2\ |\ (p_{\textstyle \theta }(a),p_{\textstyle \theta }(b))$\linebreak $\in \alpha \}$; as shown in \cite{bur}, $s_{\textstyle \theta }$ is a bounded lattice isomorphism, whose inverse is defined by $s_{\textstyle \theta }^{-1}(\phi )=\phi /\theta =\{(p_{\textstyle \theta }(a),p_{\textstyle \theta }(b))\ |\ (a,b)\in \phi \}=\{(a/\theta ,b/\theta )\ |\ (a,b)\in \phi \}$ for every $\phi \in [\theta )$; consequently, if ${\cal A}$ and ${\cal A}/\theta $ fulfill (H), then ${\rm Max}({\cal A}/\theta)=s_{\textstyle \theta }^{-1}({\rm Max}({\cal A})\cap [\theta ))=\{\phi /\theta \ | \phi \in {\rm Max}({\cal A}),\theta \subseteq \phi \}$; thus, if ${\cal A}$ and ${\cal A}/\theta $ fulfill the hypothesis (H), then ${\rm Max}({\cal A}/{\rm Rad}({\cal A}))=s_{\textstyle {\rm Rad}({\cal A})}^{-1}({\rm Max}({\cal A}))$, which is isomorphic to ${\rm Max}({\cal A})$, and ${\rm Rad}({\cal A}/{\rm Rad}({\cal A}))={\rm Rad}({\cal A})/{\rm Rad}({\cal A})=\{\nabla _{\cal A}/{\rm Rad}({\cal A})\}$.\end{itemize}

\begin{lemma}{\rm \cite[Theorem $2.3$, (iii)]{bj}}
For every $\theta \in {\rm Con}({\cal A})$ and any $X\subseteq A^2$, $Cg_{{\cal A}/\theta }(X/\theta )=(Cg_{\cal A}(X)\vee \theta )/\theta $.\label{superlema}\end{lemma}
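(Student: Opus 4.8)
My plan is to use the correspondence recalled just above, namely the bounded lattice isomorphism $s_{\theta}:{\rm Con}({\cal A}/\theta)\rightarrow [\theta )$ with inverse $s_{\theta}^{-1}(\phi )=\phi /\theta $ for $\phi \in [\theta )$, together with the universal (smallest--congruence) characterization of generated congruences, and to establish the asserted identity by proving two inclusions. As a preliminary I would check that both members are congruences of ${\cal A}/\theta $: the left--hand side by definition, and the right--hand side because $Cg_{\cal A}(X)\vee \theta $ contains $\theta $, hence lies in $[\theta )$, so that $(Cg_{\cal A}(X)\vee \theta )/\theta =s_{\theta}^{-1}(Cg_{\cal A}(X)\vee \theta )\in {\rm Con}({\cal A}/\theta )$. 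I would also record the lattice--theoretic fact that $Cg_{\cal A}(X)\vee \theta $ is exactly the smallest congruence of ${\cal A}$ including $X\cup \theta $, since a congruence includes $X$ iff it includes $Cg_{\cal A}(X)$, and the join is the least congruence above both joinands.

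For the inclusion $Cg_{{\cal A}/\theta }(X/\theta )\subseteq (Cg_{\cal A}(X)\vee \theta )/\theta $, I would observe that $X\subseteq Cg_{\cal A}(X)\subseteq Cg_{\cal A}(X)\vee \theta $, whence applying $p_{\theta }$ componentwise gives $X/\theta \subseteq (Cg_{\cal A}(X)\vee \theta )/\theta $. Since the right--hand side is a congruence of ${\cal A}/\theta $ containing the generating set $X/\theta $, minimality of $Cg_{{\cal A}/\theta }(X/\theta )$ yields the claimed inclusion.

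For the reverse inclusion I would transport the left--hand side back up to $[\theta )$ via $s_{\theta }$. Writing $\alpha =Cg_{{\cal A}/\theta }(X/\theta )$, the congruence $s_{\theta }(\alpha )\in [\theta )$ contains $\theta $, and, because $X/\theta \subseteq \alpha $ forces $(a,b)\in s_{\theta }(\alpha )$ for every $(a,b)\in X$ by the very definition of $s_{\theta }$, it also contains $X$. Thus $s_{\theta }(\alpha )$ is a congruence of ${\cal A}$ including $X\cup \theta $, so by the minimality noted above $Cg_{\cal A}(X)\vee \theta \subseteq s_{\theta }(\alpha )$. Applying the monotone map $s_{\theta }^{-1}$ and using $s_{\theta }^{-1}\circ s_{\theta }={\rm id}$ gives $(Cg_{\cal A}(X)\vee \theta )/\theta =s_{\theta }^{-1}(Cg_{\cal A}(X)\vee \theta )\subseteq s_{\theta }^{-1}(s_{\theta }(\alpha ))=\alpha $, which is the desired inclusion. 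Combining the two inclusions proves the lemma.

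I do not expect a serious obstacle here; the only points requiring care are keeping the directions of $s_{\theta }$ and $s_{\theta }^{-1}$ straight and justifying that $Cg_{\cal A}(X)\vee \theta $ is the least congruence above $X\cup \theta $ rather than merely above $Cg_{\cal A}(X)$ and $\theta $ taken separately. An alternative route would be to argue entirely in ${\cal A}/\theta $ by verifying directly that $s_{\theta }^{-1}$ carries the closure operator $\phi \mapsto Cg_{\cal A}(\phi )\vee \theta $ to $Cg_{{\cal A}/\theta }$, but the two--inclusion argument through the universal property seems the most transparent.
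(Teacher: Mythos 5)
Your proof is correct. Note, however, that the paper offers no proof of this lemma at all: it is imported verbatim from J\'onsson (cited as Theorem $2.3$, (iii) of that paper), so there is no internal argument to compare yours against. Your two--inclusion argument --- using minimality of generated congruences, the observation that $Cg_{\cal A}(X)\vee \theta $ is the least congruence of ${\cal A}$ containing $X\cup \theta $, and the correspondence isomorphism $s_{\textstyle \theta }$ and its inverse $\phi \mapsto \phi /\theta $ exactly as the paper sets them up in the preliminaries --- is the standard proof of this fact and fills the gap the paper leaves to the reference; every step (including the monotonicity of $s_{\textstyle \theta }^{-1}$ and the verification that $s_{\textstyle \theta }(\alpha )\supseteq X\cup \theta $) is justified.
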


For every bounded distributive lattice $L$, we shall denote by ${\cal B}(L)$ the {\em Boolean center} of $L$, that is the set of the complemented elements of $L$. Then ${\cal B}(L)$ is a Boolean algebra, and, given any bounded distributive lattice $M$ and any bounded lattice morphism $f:L\rightarrow M$, the image of the restriction ${\cal B}(f)$ of $f$ to ${\cal B}(L)$ is included in ${\cal B}(M)$. Thus ${\cal B}$ becomes a covariant functor from the category of bounded distributive lattices to the category of Boolean algebras.

Clearly, if $(L_i)_{i\in I}$ is an arbitrary family of bounded distributive lattices, then $\displaystyle {\cal B}(\prod _{i\in I}L_i)=\prod _{i\in I}{\cal B}(L_i)$.

A congruence $\theta $ of ${\cal A}$ is called a {\em factor congruence} iff there exists $\theta ^*\in {\rm Con}({\cal A})$ such that $\theta \vee \theta ^*=\nabla _{\cal A}$, $\theta \cap \theta ^*=\Delta _{\cal A}$ and $\theta \circ  \theta ^*=\theta ^*\circ \theta $. In other words, the factor congruences of ${\cal A}$ are the elements of ${\cal B}({\rm Con}({\cal A}))$ that permute with their complement. If the algebra ${\cal A}$ is arithmetical, then it is congruence--permutable, thus the set of its factor congruences coincides to ${\cal B}({\rm Con}({\cal A}))$.

All finite direct products of algebras in this paper are considerred non--empty.

\begin{lemma}{\rm \cite{bj}} Let $n\in \N ^*$ and consider $n$ arithmetical algebras, ${\cal A}_1$, ${\cal A}_2$, $\ldots $, ${\cal A}_n$. Then the following are equivalent:

\begin{enumerate}
\item\label{dinbj1} ${\cal A}$ is isomorphic to the direct product $\displaystyle \prod _{i=1}^n{\cal A}_i$;
\item\label{dinbj2} there exist $\alpha _1,\ldots ,\alpha _n\in {\cal B}({\rm Con}({\cal A}))$ such that $\alpha _i\vee \alpha _j=\nabla _{\cal A}$ for all $i,j\in \overline{1,n}$ such that $i\neq j$, $\displaystyle \bigcap _{i=1}^n\alpha _i=\Delta _{\cal A}$ and ${\cal A}_i$ is isomorphic to ${\cal A}/_{\textstyle \alpha _i}$ for each $i\in \overline{1,n}$.\end{enumerate}\label{dinbj}\end{lemma}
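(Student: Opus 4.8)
The plan is to prove the two implications separately, using Lemma \ref{totdinbj} for the forward direction and the Chinese Remainder Theorem (Proposition \ref{propozitie5.6}) for the converse; throughout, ${\cal A}$ is taken to be arithmetical, as is forced in (\ref{dinbj1}) by $\displaystyle {\cal A}\cong\prod_{i=1}^n{\cal A}_i$ and is the standing hypothesis that makes CRT available in (\ref{dinbj2}).

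For (\ref{dinbj1})$\Rightarrow$(\ref{dinbj2}): since every condition in (\ref{dinbj2}) is invariant under isomorphism, I may assume $\displaystyle {\cal A}=\prod_{i=1}^n{\cal A}_i$. For each $i\in\overline{1,n}$ I set $\alpha_i=\nabla_{{\cal A}_1}\times\cdots\times\Delta_{{\cal A}_i}\times\cdots\times\nabla_{{\cal A}_n}$, which by Lemma \ref{totdinbj}, (\ref{totdinbj1}), belongs to ${\rm Con}({\cal A})$; as $\alpha_i$ is exactly the kernel of the $i$-th canonical projection ${\cal A}\rightarrow{\cal A}_i$, it follows that ${\cal A}/\alpha_i\cong{\cal A}_i$. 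The three remaining assertions are then read off through the bounded lattice isomorphism $f$ of Lemma \ref{totdinbj}, (\ref{totdinbj2}): the tuple $\Delta_{{\cal A}_1}\times\cdots\times\nabla_{{\cal A}_i}\times\cdots\times\Delta_{{\cal A}_n}$ is the complement of $\alpha_i$, so $\alpha_i\in{\cal B}({\rm Con}({\cal A}))$; computing componentwise gives $\alpha_i\vee\alpha_j=\nabla_{\cal A}$ for $i\neq j$ and $\bigcap_{i=1}^n\alpha_i=\Delta_{\cal A}$. Since $f$ preserves finite joins, finite meets and the bounds, each identity transfers from $\prod_{i=1}^n{\rm Con}({\cal A}_i)$ to ${\rm Con}({\cal A})$.

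For (\ref{dinbj2})$\Rightarrow$(\ref{dinbj1}): because ${\cal A}_i\cong{\cal A}/\alpha_i$, it suffices to show $\displaystyle {\cal A}\cong\prod_{i=1}^n{\cal A}/\alpha_i$. I consider the homomorphism $\varphi:{\cal A}\rightarrow\prod_{i=1}^n{\cal A}/\alpha_i$, $\varphi(a)=(a/\alpha_1,\ldots,a/\alpha_n)$, induced by the canonical surjections. Its kernel is $\{(a,b)\in A^2\mid (a,b)\in\alpha_i\text{ for all }i\}=\bigcap_{i=1}^n\alpha_i=\Delta_{\cal A}$, so $\varphi$ is injective. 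For surjectivity, given $b_1,\ldots,b_n\in A$ I seek $a\in A$ with $(a,b_i)\in\alpha_i$ for all $i$; since $(b_i,b_j)\in\alpha_i\vee\alpha_j=\nabla_{\cal A}$ whenever $i\neq j$, and $(b_i,b_i)\in\alpha_i$ trivially, the premise of CRT is satisfied, so Proposition \ref{propozitie5.6} (applied to the arithmetical ${\cal A}$) produces such an $a$. Hence $\varphi$ is a bijective homomorphism, i.e. an isomorphism.

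The componentwise bookkeeping in the first implication and the homomorphism/kernel check in the second are routine. The single substantive step is the surjectivity of $\varphi$: the combinatorial decomposition conditions $\alpha_i\vee\alpha_j=\nabla_{\cal A}$ must be turned into the existence of a simultaneous ``solution'' $a$, which is precisely what CRT delivers and where arithmeticity is indispensable. I would also remark that the complementedness demanded in (\ref{dinbj2}) is redundant for this direction: in the distributive lattice ${\rm Con}({\cal A})$ the element $\bigcap_{j\neq i}\alpha_j$ is, by the join and meet conditions, automatically a complement of $\alpha_i$, so $\alpha_i\in{\cal B}({\rm Con}({\cal A}))$ is already implied.
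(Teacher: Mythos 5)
The paper offers no proof of this lemma to compare against: it is quoted in the Preliminaries as a known result from \cite{bj}. Judged on its own, your argument is correct and is the standard one. The direction (\ref{dinbj1})$\Rightarrow$(\ref{dinbj2}), taking $\alpha _i$ to be the kernel of the $i$-th canonical projection and transferring the complement, join and meet identities through the bounded lattice isomorphism $f$ of Lemma \ref{totdinbj}, (\ref{totdinbj2}), is routine and sound; the direction (\ref{dinbj2})$\Rightarrow$(\ref{dinbj1}), with injectivity of $\varphi $ from $\ker \varphi =\bigcap _{i=1}^n\alpha _i=\Delta _{\cal A}$ and surjectivity from CRT (Proposition \ref{propozitie5.6}), is exactly how this is usually done. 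The point most worth stressing is the one you flagged at the outset: arithmeticity of ${\cal A}$ itself is genuinely indispensable for (\ref{dinbj2})$\Rightarrow$(\ref{dinbj1}) and is \emph{not} a consequence of (\ref{dinbj2}). Under the ambient convention of Section \ref{preliminaries}, ${\cal A}$ is only congruence--distributive, and with that weaker hypothesis the statement is false: let ${\cal A}$ be the three--element chain (a bounded distributive lattice) and let $\alpha _1,\alpha _2$ be the two atoms of ${\rm Con}({\cal A})$, collapsing respectively the lower and the upper covering pair; then $\alpha _1,\alpha _2\in {\cal B}({\rm Con}({\cal A}))$, $\alpha _1\vee \alpha _2=\nabla _{\cal A}$, $\alpha _1\cap \alpha _2=\Delta _{\cal A}$, and each ${\cal A}/\alpha _i$ is the two--element chain, which is arithmetical, yet ${\cal A}$ is not isomorphic to the four--element product of the quotients, precisely because $\alpha _1$ and $\alpha _2$ do not permute and CRT is unavailable. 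So reading arithmeticity of ${\cal A}$ as a standing hypothesis (as the paper does in Section \ref{semilocal}, where the lemma is applied) is a necessity, not a convenience, and your proof handles it correctly. Your closing observation is also right: in the distributive lattice ${\rm Con}({\cal A})$, the join and meet conditions make $\bigcap _{j\neq i}\alpha _j$ a complement of $\alpha _i$, so the complementedness requirement in (\ref{dinbj2}) is redundant for that implication.
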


For every bounded distributive lattice $L$, we denote by ${\rm Id}(L)$ the lattice of ideals of $L$, by ${\rm Spec}_{\rm Id}(L)$ the set of the prime ideals of $L$, by ${\rm Max}_{\rm Id}(L)$ the set of the maximal ideals of $L$ and by ${\rm Rad}_{\rm Id}(L)$ the intersection of all maximal ideals of $L$. $L$ is said to be {\em ${\rm Id}$--local} iff it has exactly one maximal ideal.

We recall that a bounded distributive lattice $L$ is called:

\begin{itemize}
\item a {\em normal} lattice iff, for every $x,y\in L$ such that $x\vee y=1$, there exist $e,f\in L$ such that $e\wedge f=0$ and $x\vee e=y\vee f=1$;
\item a {\em B--normal} lattice iff, for every $x,y\in L$ such that $x\vee y=1$, there exist $e,f\in {\cal B}(L)$ such that $e\wedge f=0$ and $x\vee e=y\vee f=1$;
\item a {\em conormal} lattice iff its dual is normal;
\item a {\em B--conormal} lattice iff its dual is B--normal.\end{itemize}

Throughout the rest of this section, $L$ will be a bounded distributive lattice. Throughout the rest of this paper, by lattice we shall mean bounded distributive lattice.

Clearly, any B--normal lattice is normal, and any B--conormal lattice is conormal. Trivially, any Boolean algebra is B--normal and B--conormal.

\begin{lemma}{\rm \cite{brdi}} The following are equivalent:

\begin{itemize}
\item $L$ is ${\rm Id}$--local;
\item for all $x,y\in L$, $x\vee y=1$ implies $x=1$ or $y=1$.\end{itemize}\label{lema2.5}\end{lemma}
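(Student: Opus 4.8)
The plan is to prove the two implications separately, with the reverse implication being the more illuminating one. For the implication from the join condition to ${\rm Id}$--locality, the key observation I would exploit is that the hypothesis ``$x\vee y=1$ implies $x=1$ or $y=1$'' says precisely that the complement $M=L\setminus\{1\}=\{x\in L\mid x\neq 1\}$ is closed under finite joins. First I would verify that $M$ is in fact a proper ideal of $L$: it is nonempty (it contains $0$, since $L$ is nontrivial), it is downward closed (if $z\leq x$ and $x\neq 1$, then $z\neq 1$, for $z=1$ would force $x=1$), and it is closed under $\vee$ by the contrapositive of the hypothesis. Since $1\notin M$, the ideal $M$ is proper. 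Then I would note that any proper ideal $I$ of $L$ satisfies $1\notin I$, whence $I\subseteq L\setminus\{1\}=M$; thus $M$ is the unique maximal ideal of $L$, which is exactly ${\rm Id}$--locality.

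For the converse, I would assume that $L$ is ${\rm Id}$--local, with unique maximal ideal $M$, and suppose $x\vee y=1$. Arguing by contradiction, assume $x\neq 1$ and $y\neq 1$. Then the principal ideals $\{z\in L\mid z\leq x\}$ and $\{z\in L\mid z\leq y\}$ are both proper, as neither contains $1$. Since every proper ideal of a bounded distributive lattice is contained in a maximal ideal, and $M$ is the only one, both $x$ and $y$ belong to $M$. As $M$ is an ideal it is closed under $\vee$, so $1=x\vee y\in M$, contradicting that $M$ is proper. Hence $x=1$ or $y=1$.

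The main obstacle is foundational rather than computational: the converse direction rests on the fact that every proper ideal extends to a maximal ideal, which in turn depends on Zorn's lemma (equivalently, on the existence of maximal elements in the poset of proper ideals of $L$); I would either cite this standard fact or invoke it through the reference \cite{brdi}. A secondary point requiring care is the degenerate case of the trivial lattice, where $0=1$: there the join condition holds vacuously, yet $L$ possesses no proper, hence no maximal, ideal, so ${\rm Id}$--locality fails. Since the lemma is quoted from \cite{brdi} and the relevant lattices in the paper are nontrivial, I would treat $L$ as nontrivial throughout, which is precisely what guarantees that $M=L\setminus\{1\}$ is nonempty and that a maximal ideal exists.
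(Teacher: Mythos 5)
The paper does not prove this lemma at all: it is quoted from \cite{brdi} as a known result, so there is no in-paper argument to compare yours against. Judged on its own, your proof is correct and complete. The forward direction (join condition implies ${\rm Id}$--locality) is handled cleanly: you show $M=L\setminus\{1\}$ is an ideal, and since every proper ideal avoids $1$, it is contained in $M$, so $M$ is the largest (hence the unique maximal) proper ideal --- note this half needs no choice principle at all. The converse correctly reduces to the standard Zorn-type fact that every proper ideal of a bounded lattice extends to a maximal one (the union of a chain of proper ideals never contains $1$), which forces $x,y\in M$ and yields the contradiction $1=x\vee y\in M$. Your remark on the trivial lattice is also apt: there the join condition holds vacuously while no maximal ideal exists, so nontriviality is genuinely needed, and it is consistent with the paper's conventions (the paper works with non--trivial algebras and its lattices of congruences always contain $\Delta\neq\nabla$ in the relevant applications, e.g.\ Corollary \ref{corolar2.6}). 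One small observation: your first direction actually proves something slightly stronger than ${\rm Id}$--locality, namely that $L\setminus\{1\}$ itself is an ideal containing every proper ideal; this is the sharper form that Lemma \ref{lema2.8} (``any ${\rm Id}$--local lattice is B--normal'') implicitly relies on, so your route fits the paper's later use of the lemma well.
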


\begin{lemma}{\rm \cite{bur}} ${\rm Rad}_{\rm Id}(L)=\{a\in L\ |\ (\forall \, x\in L)\, (a\vee x=1\Rightarrow x=1)\}$.\label{lema2.7}\end{lemma}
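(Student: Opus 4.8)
The plan is to prove the two inclusions separately, writing $R=\{a\in L\ |\ (\forall \, x\in L)\, (a\vee x=1\Rightarrow x=1)\}$ for the right--hand side. Throughout I would rely on the standard facts about ideals in a bounded distributive lattice: every proper ideal is contained in a maximal ideal (a Zorn's lemma argument, using that the union of a chain of proper ideals is again proper, since none of them contains $1$); maximal ideals are proper, so $1$ belongs to none of them; and the join of two ideals $I,J$ in ${\rm Id}(L)$ is $\{z\in L\ |\ (\exists \, i\in I)(\exists \, j\in J)\, (z\leq i\vee j)\}$.

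For the inclusion $R\subseteq {\rm Rad}_{\rm Id}(L)$, I would show that each $a\in R$ lies in every maximal ideal. Fixing $M\in {\rm Max}_{\rm Id}(L)$ and assuming $a\notin M$, the join of $M$ with the principal ideal generated by $a$ strictly contains $M$, so by maximality it equals $L$; in particular $1$ belongs to it, which yields an $m\in M$ with $m\vee a=1$. The defining property of $R$ then forces $m=1$, contradicting the properness of $M$. Hence $a\in M$, and, $M$ being arbitrary, $a\in {\rm Rad}_{\rm Id}(L)$.

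For the reverse inclusion ${\rm Rad}_{\rm Id}(L)\subseteq R$, I would argue by contraposition. If $a\notin R$, then there is some $x\in L$ with $a\vee x=1$ and $x\neq 1$. Since $x\neq 1$, the principal ideal generated by $x$ is proper, hence contained in some maximal ideal $M$, so $x\in M$. Were $a$ also in $M$, then $a\vee x=1$ would lie in $M$, contradicting properness; therefore $a\notin M$, and consequently $a\notin {\rm Rad}_{\rm Id}(L)$.

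This is essentially the lattice--theoretic transcription of the classical description of the Jacobson radical, and the computations are routine. The only step requiring any care is the existence of a maximal ideal above a given proper ideal, which is the Zorn's lemma argument and is the place where the boundedness of $L$ is used, via $1$ serving as the witness of properness.
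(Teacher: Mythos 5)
Your proof is correct. The paper itself gives no argument for this lemma---it is stated as a known preliminary fact with a citation to \cite{bur}---so there is nothing in the paper to compare against; your two-inclusion argument (maximal ideals via Zorn's lemma, the explicit description of the join of an ideal with a principal ideal, and contraposition for the reverse inclusion) is exactly the standard proof of this lattice-theoretic analogue of the Jacobson radical characterization, and all steps, including the properness bookkeeping via the element $1$, are sound.
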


\section{Direct Products of Algebras}
\label{directproducts}

In this section, we obtain a series of results concerning the congruences of finite and those of arbitrary direct products of congruence--distributive algebras from an equational class, results which we need in the sequel.

Throughout the rest of this paper, $\tau $ will be a universal algebra signature, ${\cal C}$ shall be an equational class of congruence--distributive $\tau $--algebras and, unless mentioned otherwise, ${\cal A}$ will be a non--empty algebra from ${\cal C}$, with support set $A$. See the notations in Section \ref{preliminaries} for what follows.

\begin{lemma} Let $n\in \N ^*$, ${\cal A}_1,\ldots ,{\cal A}_n$ be algebras from ${\cal C}$, with support sets $A_1,\ldots ,A_n$, respectively, and assume that $\displaystyle {\cal A}=\prod _{i=1}^n{\cal A}_i$.

\begin{enumerate}
\item\label{lemaI1} For all $i\in \overline{1,n}$, let $X_i\subseteq A_i^2$, and let $X=\{((a_1,\ldots ,a_n),(b_1,\ldots ,b_n))\ |\ (\forall \, i\in \overline{1,n})\, ((a_i,b_i)\in X_i)\}$. Then $Cg_{\cal A}(X)=Cg_{{\cal A}_1}(X_1)\times \ldots \times Cg_{{\cal A}_n}(X_n)$.
\item\label{lemaI2} Let $X\subseteq A^2$ and, for all $i\in \overline{1,n}$, $X_i=\{(x,y)\in A_i^2\ |\ (\exists \, ((a_1,\ldots ,a_n),(b_1,\ldots ,b_n))\in X)\, (a_i=x,b_i=y)\}$. Then $Cg_{\cal A}(X)=Cg_{{\cal A}_1}(X_1)\times \ldots \times Cg_{{\cal A}_n}(X_n)$.\end{enumerate}\label{lemaI}\end{lemma}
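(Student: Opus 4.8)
The plan is to prove both parts by exploiting the isomorphism $f:\prod_{i=1}^n {\rm Con}({\cal A}_i) \to {\rm Con}({\cal A})$ from Lemma \ref{totdinbj}, which guarantees that every congruence of the product decomposes uniquely as $\theta_1 \times \ldots \times \theta_n$ with $pr_i(\theta_1 \times \ldots \times \theta_n) = \theta_i$. Since $Cg_{{\cal A}_1}(X_1) \times \ldots \times Cg_{{\cal A}_n}(X_n)$ is a genuine congruence of ${\cal A}$ by part (\ref{totdinbj1}) of that lemma, the strategy in both cases is to show it is the \emph{smallest} congruence containing the relevant set, by verifying (a) that it contains the set, and (b) that it is contained in any congruence that contains the set. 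I expect part (\ref{lemaI2}) to reduce to part (\ref{lemaI1}), so the real work is in (\ref{lemaI1}).

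For part (\ref{lemaI1}), first I would check the inclusion $X \subseteq Cg_{{\cal A}_1}(X_1) \times \ldots \times Cg_{{\cal A}_n}(X_n)$: if $((a_1,\ldots,a_n),(b_1,\ldots,b_n)) \in X$, then each $(a_i,b_i) \in X_i \subseteq Cg_{{\cal A}_i}(X_i)$, so the pair lies in the product congruence by definition of $\times$. Since the product congruence is a congruence containing $X$, minimality gives $Cg_{\cal A}(X) \subseteq Cg_{{\cal A}_1}(X_1) \times \ldots \times Cg_{{\cal A}_n}(X_n)$. For the reverse inclusion I would use the decomposition $Cg_{\cal A}(X) = pr_1(Cg_{\cal A}(X)) \times \ldots \times pr_n(Cg_{\cal A}(X))$ from part (\ref{totdinbj0}), reducing everything to showing $Cg_{{\cal A}_i}(X_i) \subseteq pr_i(Cg_{\cal A}(X))$ for each $i$. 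To get this it suffices to show $X_i \subseteq pr_i(Cg_{\cal A}(X))$ and then invoke minimality of $Cg_{{\cal A}_i}(X_i)$, together with the fact that $pr_i(Cg_{\cal A}(X))$ is a congruence of ${\cal A}_i$ (again part (\ref{totdinbj0})). The inclusion $X_i \subseteq pr_i(Cg_{\cal A}(X))$ is the one point requiring a small argument: given $(a_i,b_i) \in X_i$, I would choose tuples agreeing with $a_i,b_i$ in the $i$th coordinate and equal to one another (say both equal to some fixed element) in all other coordinates, so that this witnessing pair lies in $X$ and hence in $Cg_{\cal A}(X)$, which by definition of $pr_i$ puts $(a_i,b_i)$ into $pr_i(Cg_{\cal A}(X))$.

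For part (\ref{lemaI2}), I would argue that the original set $X$ and the ``rectangular hull'' $\widetilde{X} = \{((a_1,\ldots,a_n),(b_1,\ldots,b_n)) \mid (\forall i)\, (a_i,b_i) \in X_i\}$ generate the same congruence, after which the claim follows directly by applying part (\ref{lemaI1}) to $\widetilde{X}$. Clearly $X \subseteq \widetilde{X}$, so $Cg_{\cal A}(X) \subseteq Cg_{\cal A}(\widetilde{X})$. For the reverse direction I would use part (\ref{lemaI1}) to write $Cg_{\cal A}(\widetilde{X}) = Cg_{{\cal A}_1}(X_1) \times \ldots \times Cg_{{\cal A}_n}(X_n)$ and then, as in part (\ref{lemaI1}), show each $Cg_{{\cal A}_i}(X_i) \subseteq pr_i(Cg_{\cal A}(X))$; the definition of $X_i$ here directly supplies, for each $(x,y) \in X_i$, a witnessing pair in $X$ whose $i$th projection is $(x,y)$, so $X_i \subseteq pr_i(Cg_{\cal A}(X))$ and minimality finishes the inclusion.

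The main obstacle, such as it is, lies in handling the projections cleanly rather than in any deep idea: the content is entirely the interplay between $Cg$, the product decomposition, and the projection maps $pr_i$. The one subtle point is that $pr_i(Cg_{\cal A}(X))$ need not obviously equal $Cg_{{\cal A}_i}(X_i)$ a priori---one direction is easy from $X_i \subseteq pr_i(Cg_{\cal A}(X))$, but establishing it is a congruence (so that minimality of $Cg_{{\cal A}_i}(X_i)$ applies) is exactly what part (\ref{totdinbj0}) of Lemma \ref{totdinbj} provides. Thus the proof is essentially a bookkeeping exercise built on top of Lemma \ref{totdinbj}, with the witnessing-tuple construction being the only genuinely manual step.
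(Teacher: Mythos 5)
Your overall strategy coincides with the paper's: prove both inclusions using Lemma \ref{totdinbj}, and reduce part (\ref{lemaI2}) to part (\ref{lemaI1}) via the rectangular hull (the paper's set $Y$). However, the one step you yourself single out as ``genuinely manual'' --- the verification that $X_i \subseteq pr_i(Cg_{\cal A}(X))$ in part (\ref{lemaI1}) --- is carried out incorrectly. Your witnessing pair takes equal entries (some fixed element $c_j$) in every coordinate $j \neq i$; for that pair to belong to $X$ you would need $(c_j,c_j) \in X_j$ for every $j \neq i$, and nothing guarantees that the arbitrary sets $X_j$ contain any diagonal pairs at all, so the claim ``this witnessing pair lies in $X$'' fails in general. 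The correct witness is assembled differently: for each $j \neq i$ choose an arbitrary pair $(a_j,b_j) \in X_j$, and combine these with $(a_i,b_i)$; the resulting pair lies in $X$ by the very definition of $X$, and projecting it onto the $i$-th coordinate gives $(a_i,b_i) \in pr_i(Cg_{\cal A}(X))$. This is what the paper's ``clearly'' hides.

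Note that this repair requires each $X_j$ to be non-empty, and that hypothesis is genuinely needed for the statement itself: if some $X_j = \emptyset$ while another $X_i$ contains a pair $(a,b)$ with $a \neq b$, then $X = \emptyset$, hence $Cg_{\cal A}(X) = \Delta_{\cal A}$, whereas $Cg_{{\cal A}_1}(X_1)\times \ldots \times Cg_{{\cal A}_n}(X_n)$ is strictly larger. The paper's proof glosses over this degenerate case too, but your diagonal-entry construction is exactly the point where the issue surfaces; it cannot be patched (even by replacing ``lies in $X$'' with ``lies in $Cg_{\cal A}(X)$'') without assuming non-emptiness. Your treatment of part (\ref{lemaI2}) is unaffected: there the $X_i$ are the projections of $X$, so the defining witnesses already lie in $X$, and the $X_i$ are all empty or all non-empty simultaneously.
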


\begin{proof} (\ref{lemaI1}) Clearly, $Cg_{{\cal A}_1}(X_1)\times \ldots \times Cg_{{\cal A}_n}(X_n)\supseteq X$, thus $Cg_{{\cal A}_1}(X_1)\times \ldots \times Cg_{{\cal A}_n}(X_n)\supseteq Cg_{\cal A}(X)$; also, for all $i\in \overline{1,n}$, $pr_i(Cg_{\cal A}(X))\supseteq X_i$, thus $pr_i(Cg_{\cal A}(X))\supseteq Cg_{{\cal A}_i}(X_i)$, hence $Cg_{\cal A}(X)\supseteq Cg_{{\cal A}_1}(X_1)\times \ldots \times Cg_{{\cal A}_n}(X_n)$ by Lemma \ref{totdinbj}. Therefore $Cg_{\cal A}(X)=Cg_{{\cal A}_1}(X_1)\times \ldots \times Cg_{{\cal A}_n}(X_n)$.

\noindent (\ref{lemaI2}) Denote $Y=\{((a_1,\ldots ,a_n),(b_1,\ldots ,b_n))\ |\ (\forall \, i\in \overline{1,n})\, ((a_i,b_i)\in X_i)\}\subseteq A^2$. Clearly, $X\subseteq Y$, thus $Cg_{\cal A}(X)\subseteq Cg_{\cal A}(Y)=Cg_{{\cal A}_1}(X_1)\times \ldots \times Cg_{{\cal A}_n}(X_n)$, by (\ref{lemaI1}). Also, clearly, for all $i\in \overline{1,n}$, $pr_i(Cg_{\cal A}(X))\supseteq X_i$, thus $pr_i(Cg_{\cal A}(X))\supseteq Cg_{{\cal A}_i}(X_i)$, hence $Cg_{\cal A}(X)\supseteq Cg_{{\cal A}_1}(X_1)\times \ldots \times Cg_{{\cal A}_n}(X_n)$ by Lemma \ref{totdinbj}. Therefore $Cg_{\cal A}(X)=Cg_{{\cal A}_1}(X_1)\times \ldots \times Cg_{{\cal A}_n}(X_n)$.\end{proof}

\begin{proposition} Let $n\in \N ^*$, ${\cal A}_1,\ldots ,{\cal A}_n$ be algebras from ${\cal C}$, and assume that $\displaystyle {\cal A}=\prod _{i=1}^n{\cal A}_i$. Then:

\begin{enumerate}
\item\label{propO1} the function $\displaystyle g:\prod _{i=1}^n{\cal B}({\rm Con}({\cal A}_i))\rightarrow {\cal B}({\rm Con}({\cal A}))$, defined by $g(\theta _1,\ldots ,\theta _n)=\theta _1\times \ldots \times \theta _n$ for all $\theta _1\in {\cal B}({\rm Con}({\cal A}_1)),\ldots ,\theta _n\in {\cal B}({\rm Con}({\cal A}_n))$, is a Boolean isomorphism;
\item\label{propO2} the function $\displaystyle h:\prod _{i=1}^n{\cal K}({\cal A}_i)\rightarrow {\cal K}({\cal A})$, defined by $h(\theta _1,\ldots ,\theta _n)=\theta _1\times \ldots \times \theta _n$ for all $\theta _1\in {\cal K}({\cal A}_1),\ldots ,\theta _n\in {\cal K}({\cal A}_n)$, is a bijection.\end{enumerate}\label{propO}\end{proposition}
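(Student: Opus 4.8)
The unifying observation is that both $g$ and $h$ are simply the bounded lattice isomorphism $f$ from Lemma \ref{totdinbj}, (\ref{totdinbj2}), restricted to a distinguished subset of its domain $\prod_{i=1}^n{\rm Con}({\cal A}_i)$; so the plan is to read off each statement from the corresponding restriction of $f$.

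For (\ref{propO1}): since ${\cal B}$ is a covariant functor from bounded distributive lattices to Boolean algebras, applying it to the bounded lattice isomorphism $f$ and to its (bounded lattice morphism) inverse $f^{-1}$, and using functoriality (${\cal B}(f^{-1})\circ {\cal B}(f)={\cal B}(f^{-1}\circ f)={\cal B}({\rm id})={\rm id}$, and symmetrically), I obtain that ${\cal B}(f):{\cal B}(\prod_{i=1}^n{\rm Con}({\cal A}_i))\to {\cal B}({\rm Con}({\cal A}))$ is a Boolean isomorphism. The product formula for Boolean centers recalled in Section \ref{preliminaries} identifies the domain with $\prod_{i=1}^n{\cal B}({\rm Con}({\cal A}_i))$, and, since ${\cal B}(f)$ is the restriction of $f$, it sends $(\theta_1,\ldots,\theta_n)$ to $\theta_1\times\ldots\times\theta_n$; thus ${\cal B}(f)=g$, and $g$ is a Boolean isomorphism. (In particular this already yields that $g$ is well defined, i.e. that $\theta_1\times\ldots\times\theta_n\in{\cal B}({\rm Con}({\cal A}))$ whenever each $\theta_i\in{\cal B}({\rm Con}({\cal A}_i))$.)

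For (\ref{propO2}): injectivity of $h$ is free, as $h$ is the restriction of the injective map $f$ (or directly: if $\theta_1\times\ldots\times\theta_n=\theta_1'\times\ldots\times\theta_n'$, apply $pr_i$ and use Lemma \ref{totdinbj}, (\ref{totdinbj1})). It thus suffices to show that $f$ carries $\prod_{i=1}^n{\cal K}({\cal A}_i)$ onto ${\cal K}({\cal A})$. For the forward inclusion (well-definedness), given $\theta_i\in{\cal K}({\cal A}_i)$ I would write $\theta_i=Cg_{{\cal A}_i}(X_i)$ with $X_i\subseteq A_i^2$ finite and, without loss of generality, nonempty (adjoining a reflexive pair keeps $X_i$ finite and does not change $Cg_{{\cal A}_i}(X_i)$); then the set $X$ built from the $X_i$ as in Lemma \ref{lemaI}, (\ref{lemaI1}) is finite (it has $\prod_{i=1}^n|X_i|$ elements), and that lemma gives $\theta_1\times\ldots\times\theta_n=Cg_{\cal A}(X)\in{\cal K}({\cal A})$. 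For the reverse inclusion (surjectivity), given $\theta\in{\cal K}({\cal A})$ I would write $\theta=Cg_{\cal A}(X)$ with $X\subseteq A^2$ finite; its coordinatewise projections $X_i\subseteq A_i^2$ from Lemma \ref{lemaI}, (\ref{lemaI2}) are finite, whence $Cg_{{\cal A}_i}(X_i)\in{\cal K}({\cal A}_i)$, and that lemma gives $\theta=Cg_{{\cal A}_1}(X_1)\times\ldots\times Cg_{{\cal A}_n}(X_n)=h(Cg_{{\cal A}_1}(X_1),\ldots,Cg_{{\cal A}_n}(X_n))$. Hence $h$ is a bijection.

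The only genuinely delicate point is the finiteness bookkeeping in (\ref{propO2}): one must check that a product of finitely generated congruences is finitely generated and that each projection $pr_i(\theta)$ of a finitely generated $\theta$ is finitely generated. Via Lemma \ref{lemaI}, both reduce to the elementary facts that the ``box'' spanned by finite sets $X_1,\ldots,X_n$ is finite and that the image of a finite subset of $A^2$ under a coordinate projection is finite; the only trap is the empty generating set, disposed of by the reflexive-pair remark above. Everything else is formal. I note that (\ref{propO2}) can alternatively be seen abstractly: $f$ is a lattice isomorphism, hence preserves compactness, and, the compact elements of a finite product of lattices being exactly the tuples of componentwise compact elements, $f$ restricts to a bijection between the compact elements of $\prod_{i=1}^n{\rm Con}({\cal A}_i)$ and those of ${\rm Con}({\cal A})$, which are precisely $\prod_{i=1}^n{\cal K}({\cal A}_i)$ and ${\cal K}({\cal A})$.
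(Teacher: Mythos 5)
Your proposal is correct and takes essentially the same route as the paper: part (\ref{propO1}) by applying the functor ${\cal B}$ to the bounded lattice isomorphism $f$ of Lemma \ref{totdinbj}, (\ref{totdinbj2}), together with the product formula for Boolean centers, and part (\ref{propO2}) by using Lemma \ref{lemaI}, (\ref{lemaI1}), for well--definedness, Lemma \ref{lemaI}, (\ref{lemaI2}), for surjectivity, and the injectivity of $f$ for injectivity of $h$. Your extra care with the empty generating set (replacing $X_i=\emptyset $ by a reflexive pair, which is legitimate since the algebras are non--empty and $Cg(\emptyset )=\Delta =Cg(a,a)$) addresses a degenerate case that the paper's proof passes over in silence, but this is a refinement of detail, not a difference of method.
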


\begin{proof} (\ref{propO1}) Since $\displaystyle {\cal B}(\prod _{i=1}^n{\rm Con}({\cal A}_i))=\prod _{i=1}^n{\cal B}({\rm Con}({\cal A}_i))$, it follows that $g={\cal B}(f)$, the image through the functor ${\cal B}$ of the bounded lattice isomorphism $f$ from Lemma \ref{totdinbj}, (\ref{totdinbj2}), hence $g$ is a Boolean isomorphism.

\noindent  (\ref{propO2}) With the notations in Lemma \ref{lemaI}, (\ref{lemaI1}), if $X_1,\ldots ,X_n$ are finite, then $X$ is finite, hence, if $\theta _i\in {\cal K}({\cal A}_i)$ for all $i\in \overline{1,n}$, then $\theta _1\times \ldots \times \theta _n\in {\cal K}({\cal A})$, thus $h$ is well defined. With the notations in Lemma \ref{lemaI}, (\ref{lemaI2}), if $X$ is finite, then $X_1,\ldots ,X_n$ are finite, thus $h$ is surjective. Finally, since $h$ is the restriction to $\displaystyle \prod _{i=1}^n{\cal K}({\cal A}_i)$ of the function $f$ from Lemma \ref{totdinbj}, (\ref{totdinbj2}), and $f$ is injective, it follows that $h$ is injective. Therefore $h$ is bijective.\end{proof}

\begin{lemma}
Let $({\cal A}_i)_{i\in I}$ be a non--empty family of non--empty algebras from ${\cal C}$, assume that $\displaystyle {\cal A}=\prod _{i\in I}{\cal A}_i$, let $J$ be an arbitrary non--empty set and, for every $i\in I$, let $(\theta _{i,j})_{j\in J}\subseteq {\rm Con}({\cal A}_i)$. Then $\displaystyle \bigcap _{j\in J}(\prod _{i\in I}\theta _{i,j})=\prod _{i\in I}(\bigcap _{j\in J}\theta _{i,j})$.\label{interscong}\end{lemma}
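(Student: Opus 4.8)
The plan is to establish the set equality by a direct element-chase, since both sides are subsets of $A^2$ and membership in either one unwinds to the very same doubly-universally-quantified condition; the identity is therefore essentially a harmless swap of two universal quantifiers, and no genuine congruence-theoretic content is needed.

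First I would fix an arbitrary pair $((x_i)_{i\in I},(y_i)_{i\in I})\in A^2$ and apply the definition of the product congruence recalled just before the statement. By that definition, the pair lies in $\prod_{i\in I}\theta_{i,j}$ precisely when $(x_i,y_i)\in\theta_{i,j}$ for every $i\in I$. Consequently the pair belongs to the left-hand side $\bigcap_{j\in J}\bigl(\prod_{i\in I}\theta_{i,j}\bigr)$ if and only if, for every $j\in J$ and every $i\in I$, one has $(x_i,y_i)\in\theta_{i,j}$.

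Next I would compute membership in the right-hand side. Since the meet in each ${\rm Con}({\cal A}_i)$ is ordinary intersection (as noted in the Preliminaries), for each fixed $i$ the congruence $\bigcap_{j\in J}\theta_{i,j}$ is just the set of pairs $(x,y)\in A_i^2$ with $(x,y)\in\theta_{i,j}$ for all $j\in J$. Hence our pair lies in $\prod_{i\in I}\bigl(\bigcap_{j\in J}\theta_{i,j}\bigr)$ if and only if, for every $i\in I$ and every $j\in J$, one has $(x_i,y_i)\in\theta_{i,j}$. The two conditions thus obtained differ only in the order of the universal quantifiers over $i$ and over $j$, so they are logically equivalent; this yields the desired equality of the two sets.

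There is no real obstacle here: the only point worth flagging is that one never uses compatibility with the operations, nor the fact that joins in ${\rm Con}({\cal A})$ differ from unions, because the intersections appearing on both sides — the one indexed by $j$ in ${\rm Con}({\cal A})$ and the one in each ${\rm Con}({\cal A}_i)$ — are all genuine set-theoretic intersections. If desired, I would add the routine remark that each factor $\prod_{i\in I}\theta_{i,j}$ is a congruence of ${\cal A}$ and each $\bigcap_{j\in J}\theta_{i,j}$ a congruence of ${\cal A}_i$, so that both sides are in fact congruences; but this observation is not needed for the stated equality.
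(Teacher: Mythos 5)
Your proof is correct and is essentially identical to the paper's: both unwind membership on each side to the condition that $(x_i,y_i)\in\theta_{i,j}$ for all $i\in I$ and all $j\in J$, and conclude by the harmless interchange of the two universal quantifiers. The additional remarks you flag (that no compatibility with operations is needed, and that both sides are congruences) are accurate but, as you note, inessential.
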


\begin{proof} For all $i\in I$, let $A_i$ be the support set of ${\cal A}_i$ and $a_i,b_i\in A_i$. Then: $\displaystyle ((a_i)_{i\in I},(b_i)_{i\in I})\in \bigcap _{j\in J}(\prod _{i\in I}\theta _{i,j})$ iff, for all $i\in I$ and all $j\in J$, $(a_i,b_i)\in \theta _{i,j}$, iff $\displaystyle ((a_i)_{i\in I},(b_i)_{i\in I})\in \prod _{i\in I}(\bigcap _{j\in J}\theta _{i,j})$, hence the equality in the enunciation.\end{proof}

\begin{lemma} Let $({\cal A}_i)_{i\in I}$ be a non--empty family of non--empty algebras from ${\cal C}$, and assume that $\displaystyle {\cal A}=\prod _{i\in I}{\cal A}_i$. Then:

\begin{enumerate}
\item\label{lemal1} for any $\displaystyle (\theta _i)_{i\in I}\in \prod _{i\in I}{\rm Con}({\cal A}_i)$, it follows that $\displaystyle \prod _{i\in I}\theta _i\in {\rm Con}({\cal A})$ and, for each $j\in \overline{1,n}$, $\displaystyle pr_j(\prod _{i\in I}\theta _i)=\theta _j$;

\item\label{lemal0} for any $\theta \in {\rm Con}({\cal A})$, it follows that: for each $i\in I$, $pr_i(\theta )\in {\rm Con}({\cal A}_i)$, and $\displaystyle \theta \subseteq \prod _{i\in I}pr_i(\theta )$;
\item\label{lemal2} the function $\displaystyle f:\prod _{i\in I}{\rm Con}({\cal A}_i)\rightarrow {\rm Con}({\cal A})$, defined by $\displaystyle f((\theta _i)_{i\in I})=\prod _{i\in I}\theta _i$ for all $\displaystyle (\theta _i)_{i\in I}\in \prod _{i\in I}{\rm Con}({\cal A}_i)$, is an injective bounded lattice morphism.\end{enumerate}\label{lemal}\end{lemma}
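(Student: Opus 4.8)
The plan is to establish the three claims of Lemma \ref{lemal} mostly by direct computation, leaning on the componentwise definition of the operations on the product $\mathcal{A}=\prod_{i\in I}\mathcal{A}_i$ and on the finite case already recorded in Lemma \ref{totdinbj}.

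For part (\ref{lemal1}), I would first verify that $\prod_{i\in I}\theta_i$ is a congruence of $\mathcal{A}$. It is an equivalence relation because each $\theta_i$ is, checked componentwise; and it is compatible with every fundamental operation $\sigma$ of $\tau$ because, $\sigma$ being interpreted componentwise on $\mathcal{A}$, compatibility of the product relation reduces coordinate-by-coordinate to the compatibility of each $\theta_i$ with $\sigma$ on $\mathcal{A}_i$. For the projection identity $pr_j(\prod_{i\in I}\theta_i)=\theta_j$, the inclusion $pr_j(\prod_i\theta_i)\subseteq\theta_j$ is immediate from the definition of $pr_j$, while $\theta_j\subseteq pr_j(\prod_i\theta_i)$ follows by witnessing any $(x,y)\in\theta_j$ with a pair of tuples that agree off coordinate $j$ (using that every $\theta_i$ is reflexive, so the constant choice $a_i=b_i$ works in coordinates $i\neq j$); here the nonemptiness of each $\mathcal{A}_i$ is what lets me pick such witnesses.

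For part (\ref{lemal0}), that each $pr_i(\theta)$ is a congruence of $\mathcal{A}_i$ is the statement that the image of a congruence under the $i$-th projection homomorphism is again a congruence, and the inclusion $\theta\subseteq\prod_{i\in I}pr_i(\theta)$ is unwound directly from the definitions: if $((x_t)_t,(y_t)_t)\in\theta$ then $(x_i,y_i)\in pr_i(\theta)$ for every $i$. Note that, unlike the finite case of Lemma \ref{totdinbj}(\ref{totdinbj0}), I expect \emph{only} an inclusion here, not equality, and I would not try to prove the reverse; the statement is deliberately one-sided. For part (\ref{lemal2}), well-definedness of $f$ is exactly (\ref{lemal1}), and injectivity is immediate: if $\prod_i\theta_i=\prod_i\theta_i'$ then applying $pr_j$ and using the projection identity from (\ref{lemal1}) gives $\theta_j=\theta_j'$ for all $j$. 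That $f$ preserves meets is Lemma \ref{interscong} (taking $|J|=2$ and extending, or applying it directly to arbitrary meets), that it preserves $\bot$ and $\top$ is the observation $\prod_i\Delta_{\mathcal{A}_i}=\Delta_{\mathcal{A}}$ and $\prod_i\nabla_{\mathcal{A}_i}=\nabla_{\mathcal{A}}$, and preservation of finite joins reduces, via distributivity and (\ref{lemal1}), to checking it on pairs.

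The main obstacle is join-preservation, i.e.\ showing $f((\theta_i)_i\vee(\theta_i')_i)=f((\theta_i)_i)\vee f((\theta_i')_i)$, that is $\prod_i(\theta_i\vee\theta_i')=\bigl(\prod_i\theta_i\bigr)\vee\bigl(\prod_i\theta_i'\bigr)$. The inclusion $\supseteq$ is routine since the right-hand join contains both factors and hence their componentwise join, but the inclusion $\subseteq$ is genuinely delicate for infinite $I$: the join on the right is $Cg\bigl(\prod_i\theta_i\cup\prod_i\theta_i'\bigr)$, and a pair that lies in $\theta_i\vee\theta_i'$ in \emph{every} coordinate need not be reachable by a single finite chain of transitivity steps uniform across all coordinates, so the componentwise join can strictly exceed the join of the products. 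I therefore expect that $f$ preserves \emph{finite} joins but not arbitrary joins, and accordingly I would claim only that $f$ is a bounded lattice morphism (preserving finite joins and meets, $\Delta$, $\nabla$) together with all meets, exactly as the statement of (\ref{lemal2}) asserts, rather than attempting to upgrade it to a complete-lattice embedding.
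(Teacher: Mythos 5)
Parts (\ref{lemal1}) and (\ref{lemal0}), and the meet, bound and injectivity claims in part (\ref{lemal2}), are handled correctly and essentially as in the paper: the paper verifies compatibility of $\prod_{i\in I}\theta_i$ with each operation coordinatewise, treats the projection identity and part (\ref{lemal0}) as immediate, gets meets from the componentwise description (your appeal to Lemma \ref{interscong} is the same computation) and injectivity from the projection identity. One caveat on (\ref{lemal0}): ``the image of a congruence under the $i$-th projection is a congruence'' is not a fact of general universal algebra --- projected congruences can fail to be transitive --- but in the present setting it follows from the binary case of Lemma \ref{totdinbj} applied to ${\cal A}\cong {\cal A}_i\times \prod_{j\in I\setminus \{i\}}{\cal A}_j$, so congruence--distributivity is genuinely used there.

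The genuine gap is exactly at the point you call the ``main obstacle'', and your way of dismissing it contradicts your own analysis. The binary join of $(\theta _i)_{i\in I}$ and $(\theta '_i)_{i\in I}$ in $\prod _{i\in I}{\rm Con}({\cal A}_i)$ \emph{is} the componentwise join $(\theta _i\vee \theta '_i)_{i\in I}$; hence the phenomenon you describe --- no uniform bound on the length of alternating chains across coordinates, so that $\prod _{i\in I}(\theta _i\vee \theta '_i)$ may strictly contain $(\prod _{i\in I}\theta _i)\vee (\prod _{i\in I}\theta '_i)$ --- is precisely a failure of $f$ to preserve a \emph{binary} join, not an ``arbitrary'' one. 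So the conclusion ``$f$ preserves finite joins but not arbitrary joins'' does not follow, and your promised reduction of finite-join preservation ``to checking it on pairs'' is never carried out: pairs are the problematic case. Worse, the obstacle is real, not merely something left unproved: take ${\cal C}$ to be bounded distributive lattices, $I=\N ^*$, $L_n$ the chain $0<1<\dots <2n$, $\theta _n$ the congruence of $L_n$ whose nontrivial classes are the pairs $\{2k,2k+1\}$, and $\theta '_n$ the one whose nontrivial classes are the pairs $\{2k+1,2k+2\}$. Then $\theta _n\vee \theta '_n=\nabla _{L_n}$ for every $n$, so $f((\theta _n\vee \theta '_n)_n)=\nabla _{\prod _nL_n}$; but the pair $(a,b)$, where $a$ is the all-zero tuple and the $n$-th component of $b$ is $2n$, does not lie in $(\prod _n\theta _n)\vee (\prod _n\theta '_n)$, since a chain of finite length $m$ witnessing this membership would project, in coordinate $n$, to a chain from $0$ to $2n$ each of whose steps moves by at most one, forcing $m\geq 2n$ for all $n$. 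So in the stated congruence--distributive generality $f$ need not preserve binary joins; preservation does hold if the ${\cal A}_i$ are congruence--permutable (then $\theta _i\vee \theta '_i=\theta _i\circ \theta '_i$ and the coordinatewise middle elements assemble into one middle element of the product), and without that hypothesis the conclusion must be weakened to ``$f$ is injective and preserves arbitrary intersections and the bounds''. In fairness, the paper's own proof does not bridge this either: it merely asserts that, the operations of $\prod _{i\in I}{\rm Con}({\cal A}_i)$ being componentwise, ``it is straightforward that $f$ is a bounded lattice morphism''. Your instinct about where the difficulty lies is sharper than the paper's treatment; what is missing in your write-up is the recognition that the difficulty concerns finite joins and is fatal to the statement as given, rather than something that can be deflected onto infinite joins.
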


\begin{proof} For every $i\in I$, let $A_i$ be the support set of ${\cal A}_i$.

\noindent (\ref{lemal1}) Let $\displaystyle (\theta _i)_{i\in I}\in \prod _{i\in I}{\rm Con}({\cal A}_i)$, and let $\omega $ be an operation symbol from $\tau $, of arity $k\in \N ^*$. For each $j\in \overline{1,k}$, let $\displaystyle (a_{j,i})_{i\in I},(b_{j,i})_{i\in I}\in \prod _{i\in I}A_i$ and, for every $i\in I$, let $\omega ^{\textstyle {\cal A}_i}(a_{1,i},\ldots ,a_{k,i})=a_i\in A_i$ and $\omega ^{\textstyle {\cal A}_i}(b_{1,i},\ldots ,b_{k,i})=b_i\in A_i$. Assume that, for all $j\in \overline{1,k}$, $\displaystyle ((a_{j,i})_{i\in I},(b_{j,i})_{i\in I})\in \prod _{i\in I}\theta _i$, that is, for all $i\in I$ and all $j\in \overline{1,k}$, $(a_{j,i},b_{j,i})\in \theta _i$. For all $i\in I$, since $\theta _i\in {\rm Con}({\cal A}_i)$, it follows that $(a_i,b_i)\in \theta _i$; thus $\displaystyle ((a_i)_{i\in I},(b_i)_{i\in I})\in \prod _{i\in I}{\rm Con}({\cal A}_i)$. Since $\displaystyle {\cal A}=\prod _{i\in I}{\cal A}_i$, the following hold: $\omega ^{\textstyle {\cal A}}((a_{1,i})_{i\in I},\ldots ,(a_{k,i})_{i\in I})=(\omega ^{\textstyle {\cal A}_i}(a_{1,i},\ldots ,a_{k,i}))_{i\in I}=(a_i)_{i\in I}$ and $\omega ^{\textstyle {\cal A}}((b_{1,i})_{i\in I},\ldots ,(b_{k,i})_{i\in I})=(\omega ^{\textstyle {\cal A}_i}(b_{1,i},\ldots ,b_{k,i}))_{i\in I}=(b_i)_{i\in I}$. Therefore $\displaystyle (\omega ^{\textstyle {\cal A}}((a_{1,i})_{i\in I},\ldots ,(a_{k,i})_{i\in I}),$\linebreak $\displaystyle \omega ^{\textstyle {\cal A}}((b_{1,i})_{i\in I},\ldots ,(b_{k,i})_{i\in I}))\in \prod _{i\in I}\theta _i$. Hence $\displaystyle \prod _{i\in I}\theta _i\in {\rm Con}({\cal A})$. It is immediate that, for all $j\in I$, $\displaystyle pr_j(\prod _{i\in I}\theta _i)=\theta _j$. 

\noindent (\ref{lemal0}) Straightforward.

\noindent (\ref{lemal2}) (\ref{lemal1}) ensures us that the image of $f$ is, indeed, included in ${\rm Con}({\cal A})$. Bounded distributive lattices form an equational class, thus $\displaystyle \prod _{i\in I}{\rm Con}({\cal A}_i)$ is a bounded distributive lattice, with the operations defined componentwise; from this it is straightforward that $f$ is a bounded lattice morphism. The injectivity of $f$ follows from the second statement in (\ref{lemal1}).\end{proof}

Throughout the rest of this paper, we shall assume that all non--empty algebras from ${\cal C}$ fulfill the hypothesis (H) (see Section \ref{preliminaries}).

\begin{proposition} Let $n\in \N ^*$, ${\cal A}_1$, ${\cal A}_2$, $\ldots $, ${\cal A}_n$ be non--empty algebras from ${\cal C}$, and assume that $\displaystyle {\cal A}=\prod _{i=1}^n{\cal A}_i$. Then:

\begin{enumerate}
\item\label{specprod1} $\displaystyle {\rm Max}({\cal A})=\bigcup _{i=1}^n\{\nabla _{{\cal A}_{\scriptstyle 1}}\times \ldots \times \nabla _{{\cal A}_{\scriptstyle i-1}}\times \theta \times \nabla _{{\cal A}_{\scriptstyle i+1}}\times \ldots \times \nabla _{{\cal A}_{\scriptstyle n}}\ |\ \theta \in {\rm Max}({\cal A}_i)\}$; consequently, $\displaystyle |{\rm Max}({\cal A})|=\sum _{i=1}^n|{\rm Max}({\cal A}_i)|$ and ${\rm Rad}({\cal A})={\rm Rad}({\cal A}_1)\times \ldots \times {\rm Rad}({\cal A}_n)$;
\item\label{specprod2} $\displaystyle {\rm Spec}({\cal A})=\bigcup _{i=1}^n\{\nabla _{{\cal A}_{\scriptstyle 1}}\times \ldots \times \nabla _{{\cal A}_{\scriptstyle i-1}}\times \theta \times \nabla _{{\cal A}_{\scriptstyle i+1}}\times \ldots \times \nabla _{{\cal A}_{\scriptstyle n}}\ |\ \theta \in {\rm Spec}({\cal A}_i)\}$; consequently, $\displaystyle |{\rm Spec}({\cal A})|=\sum _{i=1}^n|{\rm Spec}({\cal A}_i)|$.\end{enumerate}\label{specprod}\end{proposition}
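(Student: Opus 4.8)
The plan is to push everything through the bounded lattice isomorphism $f:\prod_{i=1}^n{\rm Con}({\cal A}_i)\to{\rm Con}({\cal A})$ of Lemma~\ref{totdinbj}, (\ref{totdinbj2}). Under $f$ every congruence of ${\cal A}$ is uniquely $\theta_1\times\cdots\times\theta_n$ with $\theta_i=pr_i(\theta)$, and since $f$ is a bounded lattice isomorphism it is an order isomorphism sending $\nabla_{\cal A}$ to $\nabla_{{\cal A}_1}\times\cdots\times\nabla_{{\cal A}_n}$. Hence $\theta_1\times\cdots\times\theta_n$ is proper iff at least one $\theta_i$ is proper, and the whole proposition reduces to deciding which tuples yield maximal, respectively prime, congruences. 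I would treat (\ref{specprod1}) and (\ref{specprod2}) separately, then read off the two ``consequently'' clauses as bookkeeping.

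For (\ref{specprod1}) I would argue order-theoretically. If $\theta=\theta_1\times\cdots\times\theta_n$ is maximal, then replacing any proper coordinate $\theta_i$ by $\nabla_{{\cal A}_i}$ gives a strictly larger congruence; this congruence is again proper unless $\theta_i$ was the \emph{only} proper coordinate, so maximality forbids two proper coordinates and forces exactly one, say the $i$-th. A second application of the same strict-enlargement argument, now inside the $i$-th coordinate, shows $\theta_i$ must be maximal in ${\cal A}_i$ (otherwise a $\psi$ with $\theta_i\subsetneq\psi\subsetneq\nabla_{{\cal A}_i}$ would insert a proper congruence strictly between $\theta$ and $\nabla_{\cal A}$). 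Conversely, $\nabla_{{\cal A}_1}\times\cdots\times\theta_i\times\cdots\times\nabla_{{\cal A}_n}$ with $\theta_i\in{\rm Max}({\cal A}_i)$ is proper, and any congruence strictly above it is forced coordinatewise to be $\nabla_{\cal A}$ by maximality of $\theta_i$. This gives the displayed description. The cardinality follows because the sets indexed by distinct $i$ are disjoint (the index $i$ is recovered as the unique coordinate with proper projection) and $\theta\mapsto\nabla\times\cdots\theta\cdots\times\nabla$ is injective by injectivity of $f$; the radical formula follows by intersecting the description coordinatewise via Lemma~\ref{interscong}, with the convention that an empty intersection is $\nabla$, so trivial factors (which have no maximal congruence) contribute the factor $\nabla_{{\cal A}_i}={\rm Rad}({\cal A}_i)$.

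For (\ref{specprod2}) the key device is a pair of complementary factor congruences: let $\eta_i$ be $\Delta$ on coordinate $i$ and $\nabla$ elsewhere, and $\bar\eta_i$ be $\nabla$ on coordinate $i$ and $\Delta$ elsewhere (both lie in ${\rm Con}({\cal A})$ by Lemma~\ref{totdinbj}, (\ref{totdinbj1})). One computes $\eta_i\cap\bar\eta_i=\Delta_{\cal A}\subseteq\phi$ for any prime $\phi=\theta_1\times\cdots\times\theta_n$. Since $\bar\eta_i\subseteq\phi$ holds iff $\theta_i=\nabla_{{\cal A}_i}$, whenever $\theta_i$ is proper primeness forces $\eta_i\subseteq\phi$, that is $\theta_j=\nabla_{{\cal A}_j}$ for all $j\neq i$. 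Thus a prime congruence is proper on exactly one coordinate $i$, and testing primeness of $\phi$ against congruences $\nabla\times\cdots\psi\cdots\times\nabla$ shows the surviving $\theta_i$ is prime in ${\cal A}_i$. The converse is immediate: for $\phi=\nabla\times\cdots\theta_i\cdots\times\nabla$ with $\theta_i\in{\rm Spec}({\cal A}_i)$, projecting any inclusion $\alpha\cap\beta\subseteq\phi$ to the $i$-th coordinate gives $pr_i(\alpha)\cap pr_i(\beta)\subseteq\theta_i$, and primeness of $\theta_i$ together with $\nabla$'s in the other coordinates yields $\alpha\subseteq\phi$ or $\beta\subseteq\phi$. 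The cardinality statement then follows exactly as in (\ref{specprod1}).

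The main obstacle I expect is the ``concentration on one coordinate'' step for prime congruences: unlike the maximal case it is not a pure order argument, and the clean way to force all but one projection to equal $\nabla$ is precisely to feed the complementary factor congruences $\eta_i,\bar\eta_i$, whose meet is $\Delta_{\cal A}$, into the two-element primeness condition. Once that is secured, the remaining work—coordinatewise projection of inclusions, and the disjointness/injectivity bookkeeping for the cardinalities and the radical—is routine.
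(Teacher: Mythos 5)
Your proof is correct, and for part (\ref{specprod1}), the converse inclusions, and the cardinality and radical bookkeeping it follows the paper's route exactly: decompose every congruence of the product as $\theta _1\times \ldots \times \theta _n$ via Lemma \ref{totdinbj}, show that a maximal congruence has exactly one proper coordinate and that this coordinate is maximal, and obtain the radical formula from Lemma \ref{interscong}. The one genuine difference is the concentration step for primes in part (\ref{specprod2}). The paper argues by contradiction with $\phi $--dependent witnesses: if $\phi _i$ and $\phi _j$ are both proper, it takes $\alpha $ and $\beta $ to be $\phi $ with the $i$--th, respectively the $j$--th, coordinate replaced by the corresponding $\nabla $, so that $\alpha \cap \beta =\phi $ while $\alpha \nsubseteq \phi $ and $\beta \nsubseteq \phi $. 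You instead feed the primeness condition the fixed complementary factor congruences $\eta _i$ and $\bar{\eta }_i$ (the projection kernels), whose meet is $\Delta _{\cal A}\subseteq \phi $, and read off that whenever the $i$--th coordinate is proper all the others must equal $\nabla $; this is a direct argument, independent of $\phi $, and it is the same device that makes factor congruences useful elsewhere in the paper (compare Lemma \ref{dinbj}). Neither choice of witnesses buys extra generality; the paper's stays among congruences comparable to $\phi $, while yours is marginally cleaner and avoids the reductio. A small bonus of your write--up is that you explicitly handle trivial factors in the radical formula, noting that an empty ${\rm Max}({\cal A}_i)$ contributes ${\rm Rad}({\cal A}_i)=\nabla _{{\cal A}_{\scriptstyle i}}$ -- a degenerate case the paper's proof passes over in silence.
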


\begin{proof} (\ref{specprod1}) It is immediate that, for every $i\in \overline{1,n}$ and every $\theta \in {\rm Max}({\cal A}_i)$, $\nabla _{{\cal A}_{\scriptstyle 1}}\times \ldots \times \nabla _{{\cal A}_{\scriptstyle i-1}}\times \theta \times \nabla _{{\cal A}_{\scriptstyle i+1}}\times \ldots \times \nabla _{{\cal A}_{\scriptstyle n}}\in {\rm Max}({\cal A})$.

Now, given any $\phi \in {\rm Max}({\cal A})$, it follows that $\phi $ is a proper congruence, thus there exist $a,b\in A$ such that $(a,b)\notin \phi $, that is, for some $i\in \overline{1,n}$, $(a_i,b_i)\notin \phi _i$, where we have denoted $a=(a_1,\ldots ,a_n)$, $b=(b_1,\ldots ,b_n)$ and $\phi =\phi _1\times \ldots \times \phi _n$, with $a_k,b_k\in A_k$ (the support set of ${\cal A}_k$) and $\phi _k\in {\rm Con}({\cal A}_k)$ for all $k\in \overline{1,n}$. So $\phi _i\neq \nabla _{{\cal A}_{\scriptstyle i}}$. Assume by absurdum that there exists a $j\in \overline{1,n}$ such that $j\neq i$ and $\phi _j\neq \nabla _{{\cal A}_{\scriptstyle j}}$. Then $\phi \subsetneq \nabla _{{\cal A}_{\scriptstyle 1}}\times \ldots \times \nabla _{{\cal A}_{\scriptstyle i-1}}\times \theta \times \nabla _{{\cal A}_{\scriptstyle i+1}}\times \ldots \times \nabla _{{\cal A}_{\scriptstyle n}}\subsetneq \nabla _{\cal A}$, which is a contradiction to the maximality of $\phi $. Hence $\phi =\nabla _{{\cal A}_{\scriptstyle 1}}\times \ldots \times \nabla _{{\cal A}_{\scriptstyle i-1}}\times \phi _i\times \nabla _{{\cal A}_{\scriptstyle i+1}}\times \ldots \times \nabla _{{\cal A}_{\scriptstyle n}}$. It is clear that the proper congruence $\phi _i\in {\rm Max}({\cal A}_i)$, because otherwise we would get another contradiction to the maximality of $\phi $.

Therefore ${\rm Max}({\cal A})$ has the form in the enunciation, otherwise written $\displaystyle {\rm Max}({\cal A})=\bigcup _{i=1}^n(\{\nabla _{{\cal A}_{\scriptstyle 1}}\}\times \ldots \times \{\nabla _{{\cal A}_{\scriptstyle i-1}}\}\times {\rm Max}({\cal A}_i)\times \{\nabla _{{\cal A}_{\scriptstyle i+1}}\}\times \ldots \times \{\nabla _{{\cal A}_{\scriptstyle n}}\})$, from which the expression of its cardinality follows by noticing that, since $\nabla _{{\cal A}_{\scriptstyle i}}\notin {\rm Max}({\cal A}_i)$ for any $i\in \overline{1,n}$, the sets $\{\nabla _{{\cal A}_{\scriptstyle 1}}\}\times \ldots \times \{\nabla _{{\cal A}_{\scriptstyle i-1}}\}\times {\rm Max}({\cal A}_i)\times \{\nabla _{{\cal A}_{\scriptstyle i+1}}\}\times \ldots \times \{\nabla _{{\cal A}_{\scriptstyle n}}\}$, with $i\in \overline{1,n}$, are mutually disjoint, and that they are in bijection to the sets ${\rm Max}({\cal A}_i)$, with $i\in \overline{1,n}$, respectively. The formula of ${\rm Rad}({\cal A})$ now follows by Lemma \ref{interscong}.

\noindent (\ref{specprod2}) Let $i\in \overline{1,n}$, $\theta \in {\rm Spec}({\cal A}_i)$ and denote $\phi =\nabla _{{\cal A}_{\scriptstyle 1}}\times \ldots \times \nabla _{{\cal A}_{\scriptstyle i-1}}\times \theta \times \nabla _{{\cal A}_{\scriptstyle i+1}}\times \ldots \times \nabla _{{\cal A}_{\scriptstyle n}}$. Let $\alpha ,\beta \in {\rm Con}({\cal A})$ such that $\alpha \cap \beta \subseteq \theta $. Then, if $\alpha =\alpha _1\times \ldots \times \alpha _n$ and $\beta =\beta _1\times \ldots \times \beta _n$, with $\alpha _k,\beta _k\in {\rm Con}({\cal A}_k)$ for all $k\in \overline{1,n}$, it follows that $\alpha _i\cap \beta _i\subseteq \theta \in {\rm Spec}({\cal A}_i)$, thus $\alpha _i\subseteq \theta $ or $\beta _i\subseteq \theta $, thus $\alpha \subseteq \phi $ or $\beta \subseteq \phi $. Therefore $\phi \in {\rm Spec}({\cal A})$.

Now let $\phi \in {\rm Spec}({\cal A})$, so $\phi $ is a proper congruence of ${\cal A}$, from which, just as above for maximal congruences, we get that, if $\phi =\phi _1\times \ldots \times \phi _n$, with $\phi _k\in {\rm Con}({\cal A}_k)$ for all $k\in \overline{1,n}$, then there exists an $i\in \overline{1,n}$ such that $\phi _i$ is a proper congruence of ${\cal A}_i$. If there also exists a $j\in \overline{1,n}$ with $j\neq i$ and $\phi _j$ a proper congruence of ${\cal A}_j$, then, by denoting $\alpha =\phi _1\times \ldots \times \phi _{i-1}\times \nabla _{{\cal A}_{\scriptstyle i}}\times \phi _{i+1}\times \ldots \times \phi _n$ and $\beta =\phi _1\times \ldots \times \phi _{j-1}\times \nabla _{{\cal A}_{\scriptstyle j}}\times \phi _{j+1}\times \ldots \times \phi _n$, we get that $\alpha \cap \beta =\phi $, but $\alpha \nsubseteq \phi $ and $\beta \nsubseteq \phi $, which is a contradiction to the primality of $\phi $. Hence $\phi =\nabla _{{\cal A}_{\scriptstyle 1}}\times \ldots \times \nabla _{{\cal A}_{\scriptstyle i-1}}\times \phi _i\times \nabla _{{\cal A}_{\scriptstyle i+1}}\times \ldots \times \nabla _{{\cal A}_{\scriptstyle n}}$, with $\phi _i\neq \nabla _{{\cal A}_{\scriptstyle i}}$. Assume by absurdum that $\phi _i\notin {\rm Spec}({\cal A}_i)$, that is there exist $\alpha _i,\beta _i\in {\rm Con}({\cal A}_i)$ such that $\alpha _i\cap \beta _i\subseteq \phi _i$, but $\alpha _i\nsubseteq \phi _i$ and $\beta _i\nsubseteq \phi _i$. Denote $\alpha =\nabla _{{\cal A}_{\scriptstyle 1}}\times \ldots \times \nabla _{{\cal A}_{\scriptstyle i-1}}\times \alpha _i\times \nabla _{{\cal A}_{\scriptstyle i+1}}\times \ldots \times \nabla _{{\cal A}_{\scriptstyle n}}$ and $\beta =\nabla _{{\cal A}_{\scriptstyle 1}}\times \ldots \times \nabla _{{\cal A}_{\scriptstyle i-1}}\times \beta _i\times \nabla _{{\cal A}_{\scriptstyle i+1}}\times \ldots \times \nabla _{{\cal A}_{\scriptstyle n}}$. Then $\alpha \cap \beta =\nabla _{{\cal A}_{\scriptstyle 1}}\times \ldots \times \nabla _{{\cal A}_{\scriptstyle i-1}}\times (\alpha _i\cap \beta _i)\times \nabla _{{\cal A}_{\scriptstyle i+1}}\times \ldots \times \nabla _{{\cal A}_{\scriptstyle n}}\subseteq \nabla _{{\cal A}_{\scriptstyle 1}}\times \ldots \times \nabla _{{\cal A}_{\scriptstyle i-1}}\times \phi _i\times \nabla _{{\cal A}_{\scriptstyle i+1}}\times \ldots \times \nabla _{{\cal A}_{\scriptstyle n}}=\phi $, but $\alpha \nsubseteq \phi $ and $\beta \nsubseteq \phi $, which is a contradiction to the primality of $\phi $. Hence $\phi _i\in {\rm Spec}({\cal A}_i)$.

Therefore ${\rm Spec}({\cal A})$ has the form in the enunciation, otherwise written $\displaystyle {\rm Spec}({\cal A})=\bigcup _{i=1}^n(\{\nabla _{{\cal A}_{\scriptstyle 1}}\}\times \ldots \times \{\nabla _{{\cal A}_{\scriptstyle i-1}}\}\times {\rm Spec}({\cal A}_i)\times \{\nabla _{{\cal A}_{\scriptstyle i+1}}\}\times \ldots \times \{\nabla _{{\cal A}_{\scriptstyle n}}\})$, from which the expression of its cardinality follows by noticing that, since $\nabla _{{\cal A}_{\scriptstyle i}}\notin {\rm Spec}({\cal A}_i)$ for any $i\in \overline{1,n}$, the sets $\{\nabla _{{\cal A}_{\scriptstyle 1}}\}\times \ldots \times \{\nabla _{{\cal A}_{\scriptstyle i-1}}\}\times {\rm Spec}({\cal A}_i)\times \{\nabla _{{\cal A}_{\scriptstyle i+1}}\}\times \ldots \times \{\nabla _{{\cal A}_{\scriptstyle n}}\}$, with $i\in \overline{1,n}$, are mutually disjoint, and that they are in bijection to the sets ${\rm Spec}({\cal A}_i)$, with $i\in \overline{1,n}$, respectively.\end{proof}

\begin{proposition}
Let $({\cal A}_i)_{i\in I}$ be a non--empty family of non--empty algebras from ${\cal C}$, and assume that $\displaystyle {\cal A}=\prod _{i\in I}{\cal A}_i$. Then:

\begin{enumerate}
\item\label{specprodarb1} $\displaystyle {\rm Max}({\cal A})\supseteq \bigcup _{j\in I}\{\prod _{i\in I}\phi _i\ |\ \phi _j\in {\rm Max}({\cal A}_j),(\forall \, i\in I\setminus \{j\})\, (\phi _i=\nabla _{{\cal A}_{\scriptstyle i}})\}$; consequently, $\displaystyle |{\rm Max}({\cal A})|\geq \sum _{j\in I}|{\rm Max}({\cal A}_j)|$ and $\displaystyle {\rm Rad}({\cal A})\subseteq \prod _{j\in I}{\rm Rad}({\cal A}_j)$;

\item\label{specprodarb2} $\displaystyle {\rm Spec}({\cal A})\supseteq \bigcup _{j\in I}\{\prod _{i\in I}\phi _i\ |\ \phi _j\in {\rm Spec}({\cal A}_j),(\forall \, i\in I\setminus \{j\})\, (\phi _i=\nabla _{{\cal A}_{\scriptstyle i}})\}$; consequently, $\displaystyle |{\rm Spec}({\cal A})|\geq \sum _{j\in I}|{\rm Spec}({\cal A}_j)|$.\end{enumerate}\label{specprodarb}\end{proposition}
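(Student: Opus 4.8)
The plan is to establish both inclusions simultaneously by a single device, and then read off the cardinality and radical statements. Fix $j\in I$ and a congruence $\phi_j\in{\rm Con}({\cal A}_j)$, and put $\theta=\prod_{i\in I}\phi_i$, where $\phi_i=\nabla_{{\cal A}_i}$ for every $i\in I\setminus\{j\}$; by Lemma \ref{lemal}, (\ref{lemal1}), $\theta\in{\rm Con}({\cal A})$ and $pr_j(\theta)=\phi_j$. The key remark is that $\theta$ is precisely the kernel of the surjective homomorphism $g=p_{\textstyle\phi_j}\circ\pi_j:{\cal A}\rightarrow{\cal A}_j/\phi_j$, where $\pi_j:{\cal A}\rightarrow{\cal A}_j$ is the canonical projection and $p_{\textstyle\phi_j}:{\cal A}_j\rightarrow{\cal A}_j/\phi_j$ the canonical surjection: indeed, $((u_i)_{i\in I},(v_i)_{i\in I})\in\theta$ iff $(u_j,v_j)\in\phi_j$ iff $g((u_i)_{i\in I})=g((v_i)_{i\in I})$. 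By the homomorphism theorem (\cite{bur}), ${\cal A}/\theta\cong{\cal A}_j/\phi_j$.

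Next I would transfer maximality and primality across this isomorphism. Composing the bounded lattice isomorphism $s_{\textstyle\theta}^{-1}:[\theta)\rightarrow{\rm Con}({\cal A}/\theta)$ with the isomorphism ${\rm Con}({\cal A}/\theta)\cong{\rm Con}({\cal A}_j/\phi_j)$ induced by ${\cal A}/\theta\cong{\cal A}_j/\phi_j$ and with $s_{\textstyle\phi_j}:{\rm Con}({\cal A}_j/\phi_j)\rightarrow[\phi_j)$, I obtain a bounded lattice isomorphism from the principal filter $[\theta)$ of ${\rm Con}({\cal A})$ onto the principal filter $[\phi_j)$ of ${\rm Con}({\cal A}_j)$ carrying $\theta$ to $\phi_j$. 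Now $\phi_j\in{\rm Max}({\cal A}_j)$ exactly when $[\phi_j)=\{\phi_j,\nabla_{{\cal A}_j}\}$ is a two--element lattice, a property preserved by the isomorphism; hence $[\theta)=\{\theta,\nabla_{\cal A}\}$, i.e. $\theta\in{\rm Max}({\cal A})$. Similarly, since ${\rm Con}({\cal A})$ and ${\rm Con}({\cal A}_j)$ are distributive, a proper congruence is prime iff it is meet--irreducible, and meet--irreducibility of the least element $\rho$ of a principal filter is intrinsic to that filter (every decomposition $\rho=\alpha\cap\beta$ forces $\alpha,\beta\in[\rho)$); this property too is carried across, so $\phi_j\in{\rm Spec}({\cal A}_j)$ gives $\theta\in{\rm Spec}({\cal A})$. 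Letting $\phi_j$ range over ${\rm Max}({\cal A}_j)$, respectively ${\rm Spec}({\cal A}_j)$, and $j$ over $I$, yields the two asserted inclusions.

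The cardinality estimates then follow by a disjointness count. For $j\neq j'$, any congruence in the $j$--indexed family has $j'$--projection $\nabla_{{\cal A}_{j'}}$, whereas one in the $j'$--indexed family has $j'$--projection a proper congruence of ${\cal A}_{j'}$; so the families are pairwise disjoint, and within a fixed $j$ the map $\phi_j\mapsto\prod_{i\in I}\phi_i$ is a bijection onto its family, with left inverse $pr_j$ by Lemma \ref{lemal}, (\ref{lemal1}). Summing over $j\in I$ gives $|{\rm Max}({\cal A})|\geq\sum_{j\in I}|{\rm Max}({\cal A}_j)|$ and $|{\rm Spec}({\cal A})|\geq\sum_{j\in I}|{\rm Spec}({\cal A}_j)|$. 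For the radical, intersecting only over the maximal congruences produced in (\ref{specprodarb1}) gives a congruence that contains ${\rm Rad}({\cal A})$ and that, by two applications of Lemma \ref{interscong}, equals $\prod_{i\in I}{\rm Rad}({\cal A}_i)$; hence ${\rm Rad}({\cal A})\subseteq\prod_{i\in I}{\rm Rad}({\cal A}_i)$.

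The delicate point is primality, and it is also the reason only inclusions (not equalities, as in the finite Proposition \ref{specprod}) are claimed: in an infinite product not every congruence has the form $\prod_{i\in I}\theta_i$, so Lemma \ref{lemal} only provides the injective morphism $f$, and the finite--case element chase cannot be copied. A naive projection argument also fails, because $pr_j$ does not commute with intersections, so $\alpha\cap\beta\subseteq\theta$ does not directly give $pr_j(\alpha)\cap pr_j(\beta)\subseteq\phi_j$. Passing to the quotient circumvents exactly this: the distributive identity $(\alpha\vee\theta)\cap(\beta\vee\theta)=(\alpha\cap\beta)\vee\theta$ converts $\alpha\cap\beta\subseteq\theta$ into a meet equal to the bottom of $[\theta)$, which is where meet--irreducibility is available. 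The remaining work is the routine verification that $\theta=\ker g$ and the bookkeeping in the radical computation.
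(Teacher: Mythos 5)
Your proof is correct, and it takes a genuinely different route from the paper's. The paper proves this proposition by reduction to the finite case: for each $j\in I$ it factors the product as ${\cal A}\cong {\cal A}_j\times \prod _{i\in I\setminus \{j\}}{\cal A}_i$ and applies Proposition \ref{specprod} with $n=2$ (itself proved by an element chase through the product decomposition of congruences from Lemma \ref{totdinbj}), and then performs essentially the same disjointness count for the cardinalities and the same application of Lemma \ref{interscong} for the radical that you do. You instead argue directly inside the infinite product: you identify $\theta =\prod _{i\in I}\phi _i$ as the kernel of the surjection ${\cal A}\rightarrow {\cal A}_j/\phi _j$, get ${\cal A}/\theta \cong {\cal A}_j/\phi _j$ from the homomorphism theorem, and transfer maximality and primality across the induced bounded lattice isomorphism $[\theta )\cong [\phi _j)$, using that maximality of the generator means the principal filter is the two--element chain, and that in a distributive lattice a proper congruence is prime iff it is (binary) meet--irreducible, a property intrinsic to the principal filter it generates. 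Both transfers are sound; in particular the meet--irreducibility equivalence is right, with distributivity needed only for the direction ``meet--irreducible implies prime,'' via $\rho =\rho \vee (\alpha \cap \beta )=(\rho \vee \alpha )\cap (\rho \vee \beta )$, and your bookkeeping (injectivity via $pr_j$ from Lemma \ref{lemal}, (\ref{lemal1}), pairwise disjointness of the $j$--indexed families, two applications of Lemma \ref{interscong}) matches the paper's. What the paper's route buys is economy: it reuses the already--proved finite proposition and never leaves the product--congruence calculus. What your route buys is self--containedness and a conceptual explanation: the one--coordinate congruences are maximal (resp.\ prime) because the corresponding quotient of ${\cal A}$ literally is ${\cal A}_j/\phi _j$, with no need to invoke the co--factor algebra $\prod _{i\in I\setminus \{j\}}{\cal A}_i$ or the finite--case analysis. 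Your closing remark on why only inclusions (not equalities) can be expected --- that congruences of an infinite product need not be product congruences, so only the injective morphism of Lemma \ref{lemal}, (\ref{lemal2}), is available --- is exactly the obstruction the paper's formulation reflects.
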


\begin{proof} (\ref{specprodarb1}) For every $j\in I$, the following hold: $\displaystyle \prod _{i\in I}{\cal A}_i={\cal A}_j\times \prod _{i\in I\setminus \{j\}}{\cal A}_i$, hence, according to Proposition \ref{specprod}, (\ref{specprod1}), applied for $n=2$, $\displaystyle {\rm Max}({\cal A})={\rm Max}(\prod _{i\in I}{\cal A}_i)={\rm Max}({\cal A}_j\times \prod _{i\in I\setminus \{j\}}{\cal A}_i)=\{\theta _j\times \nabla _{\prod _{i\in I\setminus \{j\}}{\cal A}_i}\ |\ \theta _j\in {\rm Max}({\cal A}_j)\}\cup \{\nabla _{{\cal A}_j}\times \theta \ |\ \theta \in {\rm Max}(\prod _{i\in I\setminus \{j\}}{\cal A}_i)\}\supseteq \{\theta _j\times \nabla _{\prod _{i\in I\setminus \{j\}}{\cal A}_i}\ |\ \theta _j\in {\rm Max}({\cal A}_j)\}=\{\theta _j\times \prod _{i\in I\setminus \{j\}}\nabla _{{\cal A}_i}\ |\ \theta _j\in {\rm Max}({\cal A}_j)\}$, and the latter set is isomorphic to ${\rm Max}({\cal A}_j)$, so its cardinality coincides to $|{\rm Max}({\cal A}_j)|$. Therefore, $\displaystyle {\rm Max}({\cal A})\supseteq \bigcup _{j\in I}\{\theta _j\times \prod _{i\in I\setminus \{j\}}\nabla _{{\cal A}_i}\ |\ \theta _j\in {\rm Max}({\cal A}_j)\}$, and the sets in this union are, obviously, mutually disjoint, hence $\displaystyle |{\rm Max}({\cal A})|\geq \sum _{j\in I}|{\rm Max}({\cal A}_j)|$. Also, the previous inclusion shows that $\displaystyle {\rm Rad}({\cal A})\subseteq \bigcap _{j\in I}\bigcap _{\theta _j\in {\rm Max}({\cal A}_j)}(\theta _j\times \prod _{i\in I\setminus \{j\}}\nabla _{{\cal A}_i})=\prod _{j\in I}\bigcap _{\theta _j\in {\rm Max}({\cal A}_j)}\theta _j=\prod _{j\in I}{\rm Rad}({\cal A}_j)$, by Lemma \ref{interscong}.

\noindent (\ref{specprodarb2}) Analogously to (\ref{specprodarb1}), but applying Proposition \ref{specprod}, (\ref{specprod2}), instead of Proposition \ref{specprod}, (\ref{specprod1}).\end{proof}

\section{Introducing the Congruence Boolean Lifting Property}
\label{thecblp}

In this section we introduce the property we call CBLP, which constitutes the subject of this paper, identify important classes of congruences and classes of algebras which fulfill CBLP, prove a structure theorem for algebras with CBLP, and study CBLP in quotient algebras, in direct products of algebras and in relation to other significant properties concerning congruence--distributive algebras. 

Until mentioned otherwise, $\theta $ shall be an arbitrary but fixed congruence of ${\cal A}$. Let us consider the functions $u_{\textstyle \theta }:{\rm Con}({\cal A})\rightarrow {\rm Con}({\cal A}/\theta )$ and $v_{\textstyle \theta }:{\rm Con}({\cal A})\rightarrow [\theta )$, defined by: for all $\alpha \in {\rm Con}({\cal A})$, $u_{\textstyle \theta }(\alpha )=(\alpha \vee \theta )/\theta $ and $v_{\textstyle \theta }(\alpha )=\alpha \vee \theta $.

\begin{lemma}
\begin{enumerate}
\item\label{uvstheta1} $u_{\textstyle \theta }$ and $v_{\textstyle \theta }$ are bounded lattice morphisms;
\item\label{uvstheta2} the first diagram below (in the category of bounded distributive lattices) is commutative, and hence the second diagram below (in the category of Boolean algebras) is commutative; since $s_{\textstyle \theta }$ is a bounded lattice isomorphism (see Section \ref{preliminaries}), it follows that ${\cal B}(s_{\textstyle \theta })$ is a Boolean isomorphism:\end{enumerate}

\vspace*{-10pt}

\begin{center}
\begin{tabular}{cc}
\begin{picture}(120,53)(0,0)
\put(0,30){${\rm Con}({\cal A})$}
\put(35,33){\vector (1,0){40}}
\put(15,27){\vector (3,-2){33}}
\put(94,26){\vector (-3,-2){33}}
\put(50,38){$u_{\textstyle \theta }$}
\put(23,11){$v_{\textstyle \theta }$}
\put(80,10){$s_{\textstyle \theta }$}
\put(76,30){${\rm Con}({\cal A}/\theta )$}
\put(50,0){$[\theta )$}
\end{picture}

&\hspace*{20pt}
\begin{picture}(160,53)(0,0)
\put(0,30){${\cal B}({\rm Con}({\cal A}))$}
\put(49,33){\vector (1,0){40}}

\put(15,27){\vector (3,-2){33}}
\put(110,27){\vector (-3,-2){34}}
\put(56,38){${\cal B}(u_{\textstyle \theta })$}

\put(7,10){${\cal B}(v_{\textstyle \theta })$}
\put(93,8){${\cal B}(s_{\textstyle \theta })$}
\put(91,30){${\cal B}({\rm Con}({\cal A}/\theta ))$}

\put(50,0){${\cal B}([\theta ))$}
\end{picture}
\end{tabular}\end{center}\label{uvstheta}\end{lemma}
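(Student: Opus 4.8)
The plan is to reduce everything to a single factorization identity together with functoriality of ${\cal B}$, rather than verifying the morphism properties of $u_{\textstyle \theta }$ directly. First I would observe that $v_{\textstyle \theta }$ is literally the map $h_{\textstyle \theta }$ introduced in Section~\ref{preliminaries}: for every $\alpha \in {\rm Con}({\cal A})$ we have $v_{\textstyle \theta }(\alpha )=\alpha \vee \theta \supseteq \theta $, so $v_{\textstyle \theta }$ does land in $[\theta )$, and it is already recorded there that $h_{\textstyle \theta }$ is a bounded lattice morphism. Thus $v_{\textstyle \theta }$ is a bounded lattice morphism with no further work.

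The key step is the commutativity of the first (triangular) diagram, namely $s_{\textstyle \theta }\circ u_{\textstyle \theta }=v_{\textstyle \theta }$. For any $\alpha \in {\rm Con}({\cal A})$ the congruence $\alpha \vee \theta $ lies in $[\theta )$, and by the explicit description of $s_{\textstyle \theta }^{-1}$ recalled in Section~\ref{preliminaries} one has $s_{\textstyle \theta }^{-1}(\alpha \vee \theta )=(\alpha \vee \theta )/\theta =u_{\textstyle \theta }(\alpha )$. Hence $u_{\textstyle \theta }=s_{\textstyle \theta }^{-1}\circ v_{\textstyle \theta }$, equivalently $s_{\textstyle \theta }\circ u_{\textstyle \theta }=v_{\textstyle \theta }$, which is exactly the asserted commutativity. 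This same identity simultaneously settles the remaining claim in part~(\ref{uvstheta1}): since $s_{\textstyle \theta }^{-1}$ is a bounded lattice isomorphism (in particular a bounded lattice morphism) and $v_{\textstyle \theta }$ is a bounded lattice morphism, their composite $u_{\textstyle \theta }$ is a bounded lattice morphism, whose image lies in ${\rm Con}({\cal A}/\theta )$ because $s_{\textstyle \theta }^{-1}$ maps $[\theta )$ onto ${\rm Con}({\cal A}/\theta )$.

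Finally I would apply the covariant functor ${\cal B}$ to the commutative first diagram. Functoriality (preservation of composition) turns $s_{\textstyle \theta }\circ u_{\textstyle \theta }=v_{\textstyle \theta }$ into ${\cal B}(s_{\textstyle \theta })\circ {\cal B}(u_{\textstyle \theta })={\cal B}(v_{\textstyle \theta })$, which is precisely the commutativity of the second diagram; and since $s_{\textstyle \theta }$ is a bounded lattice isomorphism and any functor carries isomorphisms to isomorphisms, ${\cal B}(s_{\textstyle \theta })$ is a Boolean isomorphism. There is essentially no obstacle in this argument: the only point that requires care is the bookkeeping identity $u_{\textstyle \theta }(\alpha )=s_{\textstyle \theta }^{-1}(\alpha \vee \theta )$, which is immediate once the explicit formula for $s_{\textstyle \theta }^{-1}$ is invoked, and everything else is formal.
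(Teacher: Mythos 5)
Your proof is correct, and it is worth noting that the paper itself dismisses this lemma with the single word ``Straightforward,'' so there is no detailed argument to compare against. Your factorization $u_{\textstyle \theta }=s_{\textstyle \theta }^{-1}\circ v_{\textstyle \theta }$ (with $v_{\textstyle \theta }=h_{\textstyle \theta }$ from Section \ref{preliminaries}) is exactly the clean way to fill in the details: it gives the morphism property of $u_{\textstyle \theta }$, the commutativity of the first diagram, and, via functoriality of ${\cal B}$, the second diagram and the isomorphism claim, all in one stroke.
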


\vspace*{-7pt}

\begin{proof} Straightforward.\end{proof}

\begin{lemma}
The following are equivalent:

\begin{enumerate}
\item\label{camcalalr1} $\theta \subseteq {\rm Rad}({\cal A})$ and $\theta \in {\cal B}({\rm Con}({\cal A}))$;
\item\label{camcalalr2} $\theta=\Delta _{\cal A}$.\end{enumerate}\label{camcalalr}\end{lemma}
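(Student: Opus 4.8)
The plan is to prove the two implications separately. The implication $(\ref{camcalalr2}) \Rightarrow (\ref{camcalalr1})$ is routine: if $\theta = \Delta _{\cal A}$, then $\theta $ is the bottom element of ${\rm Con}({\cal A})$, so it is included in every congruence and in particular in their intersection ${\rm Rad}({\cal A})$; moreover, the bottom element of any bounded lattice is complemented, its complement being the top element $\nabla _{\cal A}$ (indeed $\Delta _{\cal A} \cap \nabla _{\cal A} = \Delta _{\cal A}$ and $\Delta _{\cal A} \vee \nabla _{\cal A} = \nabla _{\cal A}$), whence $\Delta _{\cal A} \in {\cal B}({\rm Con}({\cal A}))$.

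The content lies in the converse $(\ref{camcalalr1}) \Rightarrow (\ref{camcalalr2})$. Assume $\theta \subseteq {\rm Rad}({\cal A})$ and $\theta \in {\cal B}({\rm Con}({\cal A}))$, and let $\theta ^*$ be the complement of $\theta $ in ${\rm Con}({\cal A})$, so that $\theta \vee \theta ^* = \nabla _{\cal A}$ and $\theta \cap \theta ^* = \Delta _{\cal A}$. The key reduction is that it suffices to prove $\theta ^* = \nabla _{\cal A}$: once this is known, $\theta = \theta \cap \nabla _{\cal A} = \theta \cap \theta ^* = \Delta _{\cal A}$, which is exactly $(\ref{camcalalr2})$.

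To show $\theta ^* = \nabla _{\cal A}$, I would argue by contradiction, supposing $\theta ^*$ to be a proper congruence. Since all non--empty algebras from ${\cal C}$ satisfy (H), Lemma \ref{folclor}, (\ref{folclor1}) applies and yields a maximal congruence $M$ with $\theta ^* \subseteq M$. On the other hand, $\theta \subseteq {\rm Rad}({\cal A}) = \bigcap _{\phi \in {\rm Max}({\cal A})}\phi \subseteq M$. Combining the two inclusions gives $\nabla _{\cal A} = \theta \vee \theta ^* \subseteq M$, contradicting the fact that the maximal congruence $M$ is proper. Hence $\theta ^*$ cannot be proper, i.e. $\theta ^* = \nabla _{\cal A}$, and the conclusion follows.

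The only real obstacle is spotting this leverage: once one observes that the complement $\theta ^*$ would have to live inside some maximal congruence alongside $\theta \subseteq {\rm Rad}({\cal A})$, forcing $\nabla _{\cal A}$ into that maximal congruence, the argument is immediate. It is worth noting that the degenerate case is handled uniformly: if ${\cal A}$ is trivial, then ${\rm Max}({\cal A}) = \emptyset $, so ${\rm Rad}({\cal A}) = \nabla _{\cal A} = \Delta _{\cal A}$ and the hypothesis $\theta \subseteq {\rm Rad}({\cal A})$ already forces $\theta = \Delta _{\cal A}$; in the contradiction argument this corresponds to the absence of any maximal congruence capable of accommodating a proper $\theta ^*$.
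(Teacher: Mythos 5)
Your proof is correct and follows essentially the same route as the paper's: the nontrivial implication is obtained by taking the complement of $\theta $, assuming it proper, placing it inside a maximal congruence via Lemma \ref{folclor}, (\ref{folclor1}) (using (H)), and deriving the contradiction $\nabla _{\cal A}\subseteq \mu $ from $\theta \subseteq {\rm Rad}({\cal A})\subseteq \mu $. The only differences are cosmetic (your explicit reduction to $\theta ^*=\nabla _{\cal A}$ and the remark on the trivial algebra), so nothing further is needed.
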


\begin{proof} (\ref{camcalalr2})$\Rightarrow $(\ref{camcalalr1}): Obvious.

\noindent (\ref{camcalalr1})$\Rightarrow $(\ref{camcalalr2}): Assume that $\theta \subseteq {\rm Rad}({\cal A})$ and $\theta \in {\cal B}({\rm Con}({\cal A}))$. The fact that $\theta \in {\cal B}({\rm Con}({\cal A}))$ means that there exists $\phi \in {\rm Con}({\cal A})$ such that $\theta \cap \phi =\Delta _{\cal A}$ and $\theta \vee \phi =\nabla _{\cal A}$. Now the fact that $\theta \subseteq {\rm Rad}({\cal A})$, that is $\theta \subseteq \mu $ for all $\mu\in {\rm Max}({\rm Con}({\cal A}))$, shows that $\phi \vee \mu =\nabla _{\cal A}$, for all $\mu \in {\rm Max}({\rm Con}({\cal A}))$. Assume by absurdum that $\phi \neq \nabla _{\cal A}$. Then $\phi \subseteq \mu $ for some $\mu\in {\rm Max}({\rm Con}({\cal A}))$, according to Lemma \ref{folclor}. We get that $\nabla _{\cal A}=\theta \vee \phi \subseteq \mu \vee \mu =\mu $, a contradiction to $\mu\in {\rm Max}({\rm Con}({\cal A}))\subseteq {\rm Con}({\cal A})\setminus \{\nabla _{\cal A}\}$. Thus $\phi =\nabla _{\cal A}$, hence $\theta =\theta \cap \nabla _{\cal A}=\theta \cap \phi =\Delta _{\cal A}$.\end{proof}

\begin{corollary}
If $\theta \subseteq {\rm Rad}({\cal A})$, then ${\cal B}(u_{\textstyle \theta })$ and ${\cal B}(v_{\textstyle \theta })$ are injective.\label{candsrad}\end{corollary}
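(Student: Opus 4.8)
The plan is to exploit the commutative triangle from Lemma \ref{uvstheta}, together with the fact that injectivity of a Boolean homomorphism is detected by the triviality of the preimage of the bottom element. Applying the functor ${\cal B}$ to the commutativity $s_{\textstyle \theta }\circ u_{\textstyle \theta }=v_{\textstyle \theta }$ yields ${\cal B}(v_{\textstyle \theta })={\cal B}(s_{\textstyle \theta })\circ {\cal B}(u_{\textstyle \theta })$, and ${\cal B}(s_{\textstyle \theta })$ is a Boolean isomorphism, hence injective. Therefore it suffices to prove that ${\cal B}(v_{\textstyle \theta })$ is injective: from the factorization, if the composite ${\cal B}(v_{\textstyle \theta })$ is injective then so is its right factor ${\cal B}(u_{\textstyle \theta })$, while conversely ${\cal B}(v_{\textstyle \theta })$ inherits injectivity from ${\cal B}(u_{\textstyle \theta })$ post-composed with the isomorphism ${\cal B}(s_{\textstyle \theta })$. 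So both maps are injective exactly when one of them is, and I focus on ${\cal B}(v_{\textstyle \theta })$.

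To prove ${\cal B}(v_{\textstyle \theta })$ injective, I would recall that a Boolean algebra morphism is injective if and only if the only element sent to the bottom element is the bottom element itself (since $f(\alpha)=f(\beta)$ is equivalent to $f$ vanishing on the symmetric difference of $\alpha$ and $\beta$). The bottom element of the target ${\cal B}([\theta ))$ is $\theta $, the least element of the filter $[\theta )$. Thus I compute the kernel: for $\alpha \in {\cal B}({\rm Con}({\cal A}))$, the condition ${\cal B}(v_{\textstyle \theta })(\alpha )=v_{\textstyle \theta }(\alpha )=\alpha \vee \theta =\theta $ is equivalent to $\alpha \subseteq \theta $. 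Invoking the hypothesis $\theta \subseteq {\rm Rad}({\cal A})$, any such $\alpha $ satisfies $\alpha \subseteq {\rm Rad}({\cal A})$ and $\alpha \in {\cal B}({\rm Con}({\cal A}))$; by Lemma \ref{camcalalr}, implication (\ref{camcalalr1})$\Rightarrow $(\ref{camcalalr2}), this forces $\alpha =\Delta _{\cal A}$, the bottom element of ${\cal B}({\rm Con}({\cal A}))$. Hence the kernel is trivial and ${\cal B}(v_{\textstyle \theta })$ is injective, which then gives the injectivity of ${\cal B}(u_{\textstyle \theta })$ through the diagram as above.

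There is no serious obstacle here; the proof is essentially a bookkeeping exercise once the two key ingredients are identified. The only point requiring a moment's care is the observation that the bottom of the codomain ${\cal B}([\theta ))$ is $\theta $ (not $\Delta _{\cal A}$), so that the kernel condition reads $\alpha \subseteq \theta $ rather than $\alpha =\Delta _{\cal A}$ outright; it is precisely the passage from $\alpha \subseteq \theta $ to $\alpha \subseteq {\rm Rad}({\cal A})$, enabled by the standing hypothesis, that lets Lemma \ref{camcalalr} conclude the argument. The reduction between the two maps via ${\cal B}(s_{\textstyle \theta })$ is what makes it unnecessary to analyze ${\cal B}(u_{\textstyle \theta })$ separately.
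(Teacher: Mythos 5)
Your proof is correct and follows essentially the same route as the paper's: reduce to ${\cal B}(v_{\textstyle \theta })$ via the commutative triangle of Lemma \ref{uvstheta} and the fact that ${\cal B}(s_{\textstyle \theta })$ is a Boolean isomorphism, then show the kernel of ${\cal B}(v_{\textstyle \theta })$ is trivial using Lemma \ref{camcalalr} and the standard fact that a Boolean morphism with trivial kernel is injective. The paper's proof is the same argument, only stated more tersely.
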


\begin{proof} Since $s_{\textstyle \theta }$ is a bounded lattice isomorphism, it follows that ${\cal B}(s_{\textstyle \theta })$ is a Boolean isomorphism. Now Lemma \ref{uvstheta}, (\ref{uvstheta2}), shows that ${\cal B}(v_{\textstyle \theta })$ is injective iff ${\cal B}(u_{\textstyle \theta })$ is injective. Let $\alpha \in {\cal B}({\rm Con}({\cal A}))$ such that ${\cal B}(v_{\textstyle \theta })(\alpha )=\theta $, that is $\alpha \vee \theta =\theta $, so that $\alpha \subseteq \theta \subseteq {\rm Rad}({\cal A})$. Thus $\alpha \in {\cal B}({\rm Con}({\cal A}))$ and $\alpha \subseteq {\rm Rad}({\cal A})$, hence $\alpha =\Delta _{\cal A}$ by Lemma \ref{camcalalr}. The fact that ${\cal B}(v_{\textstyle \theta })$ is a Boolean morphism now shows that ${\cal B}(v_{\textstyle \theta })$ is injective.\end{proof}

\begin{definition}
We say that $\theta $ has the {\em Congruence Boolean Lifting Property} (abbreviated {\em CBLP}) iff ${\cal B}(u_{\textstyle \theta })$ is surjective.\end{definition}

\begin{remark}
As shown by Lemma \ref{uvstheta}, (\ref{uvstheta2}), since ${\cal B}(s_{\textstyle \theta })$ is bijective, we have: $\theta $ has CBLP iff ${\cal B}(v_{\textstyle \theta })$ is surjective.

Furthermore, according to Corollary \ref{candsrad}, if $\theta \subseteq {\rm Rad}({\cal A})$, then we have: $\theta $ has CBLP iff ${\cal B}(u_{\textstyle \theta })$ is bijective iff ${\cal B}(v_{\textstyle \theta })$ is bijective.\label{usiv}\end{remark}

\begin{remark}
Obviously, $v_{\theta }$ is surjective, because, for any $\psi \in [\theta )\subseteq {\rm Con}({\cal A})$, $v_{\theta }(\psi )=\psi \vee \theta =\psi $.\label{vesurj}\end{remark}

\begin{definition}
Let $\Omega \subseteq {\rm Con}({\cal A})$. We say that ${\cal A}$ has the {\em $\Omega $--Congruence Boolean Lifting Property} (abbreviated {\em $\Omega $--CBLP}) iff every $\omega \in \Omega $ has CBLP. We say that ${\cal A}$ has the {\em Congruence Boolean Lifting Property (CBLP)} iff ${\cal A}$ has ${\rm Con}({\cal A})$--CBLP.\end{definition}

The definition of CBLP is inspired by a property in \cite[Lemma $4$]{banasch}.

\begin{proposition}\begin{enumerate}
\item\label{toatebool1} If $[\theta )\subseteq {\cal B}({\rm Con}({\cal A}))$, then each $\phi \in [\theta )$ has CBLP and fulfills ${\cal B}({\rm Con}({\cal A}/\phi ))={\rm Con}({\cal A}/\phi )$.
\item\label{toatebool0} If ${\cal B}({\rm Con}({\cal A}))={\rm Con}({\cal A})$, then ${\cal A}$ has CBLP and, for each $\phi \in {\rm Con}({\cal A})$, ${\cal B}({\rm Con}({\cal A}/\phi ))={\rm Con}({\cal A}/\phi )$.\end{enumerate}\label{toatebool}\end{proposition}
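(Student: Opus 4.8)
The plan is to deduce part (\ref{toatebool0}) as the special case $\theta =\Delta _{\cal A}$ of part (\ref{toatebool1}): since $[\Delta _{\cal A})={\rm Con}({\cal A})$, the hypothesis $[\Delta _{\cal A})\subseteq {\cal B}({\rm Con}({\cal A}))$ of (\ref{toatebool1}) is exactly ${\cal B}({\rm Con}({\cal A}))={\rm Con}({\cal A})$, and the conclusion ``each $\phi \in [\Delta _{\cal A})$ has CBLP'' becomes ``${\cal A}$ has CBLP''. So I would only prove (\ref{toatebool1}). Fix $\phi \in [\theta )$. The key preliminary observation is that $\theta \subseteq \phi $ forces $[\phi )\subseteq [\theta )\subseteq {\cal B}({\rm Con}({\cal A}))$, so \emph{every} element of the principal filter $[\phi )$ is complemented in ${\rm Con}({\cal A})$; this single inclusion is what drives both assertions.

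First I would establish ${\cal B}({\rm Con}({\cal A}/\phi ))={\rm Con}({\cal A}/\phi )$. Since $s_{\textstyle \phi }:{\rm Con}({\cal A}/\phi )\rightarrow [\phi )$ is a bounded lattice isomorphism (as recalled for $s_{\textstyle \theta }$ in Section \ref{preliminaries}), and a bounded lattice isomorphism restricts to a Boolean isomorphism of Boolean centers that both preserves and reflects complementation, the claim is equivalent to ${\cal B}([\phi ))=[\phi )$, that is, to every element of $[\phi )$ being complemented within the bounded lattice $[\phi )$, whose bounds are $\phi $ and $\nabla _{\cal A}$. Given $\psi \in [\phi )$, the inclusion above provides a complement $\overline{\psi }\in {\rm Con}({\cal A})$ with $\psi \cap \overline{\psi }=\Delta _{\cal A}$ and $\psi \vee \overline{\psi }=\nabla _{\cal A}$. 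I would then verify that $\overline{\psi }\vee \phi \in [\phi )$ is a complement of $\psi $ relative to $[\phi )$: indeed $\psi \vee (\overline{\psi }\vee \phi )=\nabla _{\cal A}\vee \phi =\nabla _{\cal A}$, while distributivity of ${\rm Con}({\cal A})$ gives $\psi \cap (\overline{\psi }\vee \phi )=(\psi \cap \overline{\psi })\vee (\psi \cap \phi )=\Delta _{\cal A}\vee \phi =\phi $, using $\phi \subseteq \psi $. Hence every $\psi \in [\phi )$ is Boolean in $[\phi )$, as required.

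Next I would show that $\phi $ has CBLP. By Remark \ref{usiv} this amounts to the surjectivity of ${\cal B}(v_{\textstyle \phi }):{\cal B}({\rm Con}({\cal A}))\rightarrow {\cal B}([\phi ))$, where $v_{\textstyle \phi }(\alpha )=\alpha \vee \phi $ (this map is well defined on Boolean centers since $v_{\textstyle \phi }$ is a bounded lattice morphism). Take any $\psi \in {\cal B}([\phi ))\subseteq [\phi )$; since $[\phi )\subseteq {\cal B}({\rm Con}({\cal A}))$ we have $\psi \in {\cal B}({\rm Con}({\cal A}))$, and, as $\phi \subseteq \psi $, we get ${\cal B}(v_{\textstyle \phi })(\psi )=\psi \vee \phi =\psi $. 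Thus every element of the codomain is its own preimage, so ${\cal B}(v_{\textstyle \phi })$ is surjective and $\phi $ has CBLP. This completes (\ref{toatebool1}), and hence (\ref{toatebool0}) by the reduction above.

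The only real content is the relativization of complements in the second paragraph, which is precisely where congruence--distributivity of ${\cal A}$ (distributivity of ${\rm Con}({\cal A})$) is used in an essential way; everything else is unwinding the definition of CBLP through $v_{\textstyle \phi }$ together with the filter inclusion $[\phi )\subseteq [\theta )$. I expect no serious obstacle here: the point to get right is that the hypothesis is a statement about the whole filter $[\theta )$, which is exactly what lets it descend to each sub-filter $[\phi )$ and simultaneously yield both the CBLP of $\phi $ and the fact that ${\rm Con}({\cal A}/\phi )$ is a Boolean lattice.
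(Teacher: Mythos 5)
Your proposal is correct; every step checks out, and your reduction of part (\ref{toatebool0}) to part (\ref{toatebool1}) via $[\Delta _{\cal A})={\rm Con}({\cal A})$ is exactly the paper's own justification for the second part. For part (\ref{toatebool1}), however, your route is organized genuinely differently from the paper's. The paper proves both conclusions in one stroke, working on the quotient side with $u_{\textstyle \phi }$: any $\gamma \in {\rm Con}({\cal A}/\phi )$ equals $\alpha /\phi $ for some $\alpha \in [\phi )\subseteq {\cal B}({\rm Con}({\cal A}))$, hence $\gamma =(\alpha \vee \phi )/\phi ={\cal B}(u_{\textstyle \phi })(\alpha )$; this yields the chain ${\cal B}({\rm Con}({\cal A}/\phi ))\supseteq {\cal B}(u_{\textstyle \phi })({\cal B}({\rm Con}({\cal A})))\supseteq {\rm Con}({\cal A}/\phi )\supseteq {\cal B}({\rm Con}({\cal A}/\phi ))$, which collapses to equalities and gives simultaneously the surjectivity of ${\cal B}(u_{\textstyle \phi })$ (so $\phi $ has CBLP) and ${\cal B}({\rm Con}({\cal A}/\phi ))={\rm Con}({\cal A}/\phi )$. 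The fact the paper exploits, and which you instead re-derive by hand, is that functoriality of ${\cal B}$ (a bounded lattice morphism carries complemented elements to complemented elements) makes the containment of the image inside ${\cal B}({\rm Con}({\cal A}/\phi ))$ automatic; your second paragraph computes an explicit complement $\overline{\psi }\vee \phi $ of $\psi $ relative to $[\phi )$, which is extra work the paper avoids, but it has the merit of isolating exactly where distributivity of ${\rm Con}({\cal A})$ enters. Your CBLP argument via $v_{\textstyle \phi }$ and Remark \ref{usiv} (each $\psi \in {\cal B}([\phi ))$ lies in ${\cal B}({\rm Con}({\cal A}))$ and is its own lift, since $\psi \vee \phi =\psi $) is the filter-side mirror image of the paper's key computation $\alpha /\phi =(\alpha \vee \phi )/\phi $, transported through the isomorphism $s_{\textstyle \phi }$. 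In short: same underlying insight, namely $[\phi )\subseteq {\cal B}({\rm Con}({\cal A}))$ together with elements of $[\phi )$ being fixed under join with $\phi $, but your decomposition into two separately proved claims, with the hand-made relative complement, is a legitimately different and slightly longer path, whereas the paper's single inclusion chain buys both conclusions at once.
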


\begin{proof} (\ref{toatebool1}) Assume that $[\theta )\subseteq {\cal B}({\rm Con}({\cal A}))$, and let $\phi \in [\theta )$, so that $[\phi )\subseteq [\theta )\subseteq {\cal B}({\rm Con}({\cal A}))$. Let $\gamma \in {\cal B}({\rm Con}({\cal A}/\phi ))$. Then $\gamma \in {\rm Con}({\cal A}/\phi )$, so $\gamma =\alpha /\phi $, for some $\alpha \in [\phi )\subseteq {\cal B}({\rm Con}({\cal A}))$, thus $\gamma =\alpha /\phi =(\alpha \vee \phi )/\phi =u_{\phi }(\alpha )={\cal B}(u_{\phi })(\alpha )$. Therefore ${\cal B}({\rm Con}({\cal A}/\phi ))\supseteq {\cal B}(u_{\phi })({\cal B}({\rm Con}({\cal A})))\supseteq {\rm Con}({\cal A}/\phi )\supseteq {\cal B}({\rm Con}({\cal A}/\phi ))$, hence ${\cal B}(u_{\phi })({\cal B}({\rm Con}({\cal A})))={\cal B}({\rm Con}({\cal A}/\phi ))={\rm Con}({\cal A}/\phi )$, thus $\phi $ has CBLP.

\noindent (\ref{toatebool0}) This statement can be derived from Remarks \ref{vesurj} and \ref{usiv}, but it also follows from (\ref{toatebool1}), by the fact that ${\rm Con}({\cal A})=[\Delta _{\cal A})$.\end{proof}

\begin{remark} $\Delta _{\cal A}$ and $\nabla _{\cal A}$ have CBLP. For $\Delta _{\cal A}$, we can apply Remark \ref{usiv} and the fact that $\Delta _{\cal A}\subseteq {\rm Rad}({\cal A})$, or we can notice that $[\Delta _{\cal A})={\rm Con}({\cal A})$ and $v_{\Delta _{\cal A}}$ is the identity of ${\rm Con}({\cal A})$, thus it is a bounded lattice isomorphism, hence ${\cal B}(v_{\Delta _{\cal A}})$ is a Boolean isomorphism, so it is surjective, thus $\Delta _{\cal A}$ has CBLP. For $\nabla _{\cal A}$ we can apply Proposition \ref{toatebool}, (\ref{toatebool1}), or simply notice that $[\nabla _{\cal A})=\{\nabla _{\cal A}\}$, thus ${\cal B}([\nabla _{\cal A}))=\{\nabla _{\cal A}\}$, hence ${\cal B}(v_{\nabla _{\cal A}})$ is surjective, thus $\nabla _{\cal A}$ has CBLP.\label{deltanabla}\end{remark}

In what follows, the complementation in the Boolean algebra ${\cal B}({\rm Con}({\cal A}))$ shall be denoted by $\neg \, $, and, for every $\phi \in {\rm Con}({\cal A})$, the complementation in the Boolean algebra ${\cal B}([\phi ))$ shall be denoted by $\neg _{\phi }$. Notice that $\theta /\theta =\Delta _{{\cal A}/\theta }$ and $\nabla _{\cal A}/\theta =\nabla _{{\cal A}/\theta }$.

\begin{remark} If $\theta \in {\rm Max}({\cal A})$, then the following hold:

\begin{itemize}
\item $[\theta )=\{\alpha \in {\rm Con}({\cal A})\ |\ \theta \subseteq \alpha \}=\{\theta ,\nabla _{\cal A}\}$, with $\theta \neq \nabla _{\cal A}$, thus $[\theta )$ is the two--element chain, which is a Boolean algebra, so ${\cal B}([\theta ))=[\theta )=\{\theta ,\nabla _{\cal A}\}$;
\item ${\rm Con}({\cal A}/\theta )=\{\alpha /\theta \ |\ \alpha \in [\theta )\}=\{\theta /\theta ,\nabla _{\cal A}/\theta \}$, with $\theta /\theta \neq \nabla _{\cal A}/\theta $, because $s_{\textstyle \theta }$ is injective (see Section \ref{preliminaries}); thus ${\rm Con}({\cal A}/\theta )$ is the two--element chain, which is a Boolean algebra, thus ${\cal B}({\rm Con}({\cal A}/\theta ))={\rm Con}({\cal A}/\theta )=\{\theta /\theta ,\nabla _{\cal A}/\theta \}$.\end{itemize}\label{R1}\end{remark}

\begin{remark} ${\cal B}([\theta ))=\{\theta ,\nabla _{\cal A}\}$ iff ${\cal B}({\rm Con}({\cal A}/\theta ))=\{\theta /\theta ,\nabla _{\cal A}/\theta \}$. Indeed, since $s_{\textstyle \theta }$ is a bounded lattice isomorphism (see Section \ref{preliminaries}), it follows that ${\cal B}(s_{\textstyle \theta }):{\cal B}({\rm Con}({\cal A}/\theta ))\rightarrow {\cal B}( [\theta ))$ is a Boolean isomorphism, whose inverse is ${\cal B}(s_{\textstyle \theta }^{-1})$. Therefore ${\cal B}([\theta ))={\cal B}(s_{\textstyle \theta })({\cal B}({\rm Con}({\cal A}/\theta )))=\{s_{\textstyle \theta }(\alpha )\ |\ \alpha \in {\cal B}({\rm Con}({\cal A}/\theta ))\}$ and  ${\cal B}({\rm Con}({\cal A}/\theta ))={\cal B}(s_{\textstyle \theta }^{-1})({\cal B}([\theta )))=\{s_{\textstyle \theta }^{-1}(\alpha )\ |\ \alpha \in {\cal B}([\theta ))\}$, hence the equivalence above.\label{R2}\end{remark}

\begin{lemma}\begin{enumerate}

\item\label{L2(1)} If $\theta \in {\rm Spec}({\cal A})$, then ${\cal B}([\theta ))=\{\theta ,\nabla _{\cal A}\}$ and ${\cal B}({\rm Con}({\cal A}/\theta ))=\{\theta /\theta ,\nabla _{\cal A}/\theta \}$.
\item\label{L2(2)} If $\theta \in {\rm Max}({\cal A})$, then ${\cal B}([\theta ))=\{\theta ,\nabla _{\cal A}\}$ and ${\cal B}({\rm Con}({\cal A}/\theta ))=\{\theta /\theta ,\nabla _{\cal A}/\theta \}$.\end{enumerate}\label{L2}\end{lemma}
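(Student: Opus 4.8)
The plan is to prove (\ref{L2(1)}) directly from the definition of a prime congruence, and then obtain (\ref{L2(2)}) as an immediate consequence of the inclusion ${\rm Max}({\cal A})\subseteq {\rm Spec}({\cal A})$.

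For (\ref{L2(1)}), I first record that $\theta$ and $\nabla_{\cal A}$ are, respectively, the bottom and the top element of the lattice $[\theta )$, and that the bottom and top of any bounded lattice always lie in its Boolean center; this gives the inclusion $\{\theta ,\nabla_{\cal A}\}\subseteq {\cal B}([\theta ))$. For the reverse inclusion, I would take an arbitrary $\alpha\in {\cal B}([\theta ))$ and let $\beta\in [\theta )$ be its complement in $[\theta )$. Since $[\theta )$ is a sublattice of ${\rm Con}({\cal A})$ with bottom $\theta$ and top $\nabla_{\cal A}$, the complementation condition translates into $\alpha\cap\beta=\theta$ and $\alpha\vee\beta=\nabla_{\cal A}$. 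The key step is to invoke the primality of $\theta$: from $\alpha\cap\beta=\theta$, hence $\alpha\cap\beta\subseteq\theta$, we obtain $\alpha\subseteq\theta$ or $\beta\subseteq\theta$. Combined with $\theta\subseteq\alpha$ and $\theta\subseteq\beta$ (which hold because $\alpha,\beta\in[\theta )$), this forces $\alpha=\theta$ or $\beta=\theta$. In the second case, $\beta=\theta$ yields $\alpha=\alpha\vee\theta=\alpha\vee\beta=\nabla_{\cal A}$. In either case $\alpha\in\{\theta ,\nabla_{\cal A}\}$, so ${\cal B}([\theta ))=\{\theta ,\nabla_{\cal A}\}$.

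The second equality in (\ref{L2(1)}), namely ${\cal B}({\rm Con}({\cal A}/\theta ))=\{\theta /\theta ,\nabla_{\cal A}/\theta \}$, then follows at once from Remark \ref{R2}, which transports the description of ${\cal B}([\theta ))$ across the Boolean isomorphism ${\cal B}(s_{\textstyle \theta })$.

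For (\ref{L2(2)}), since all algebras in ${\cal C}$ fulfill (H), Lemma \ref{folclor}, (\ref{folclor2}), gives ${\rm Max}({\cal A})\subseteq {\rm Spec}({\cal A})$; thus any $\theta\in {\rm Max}({\cal A})$ is prime, and (\ref{L2(2)}) is the special case of (\ref{L2(1)}) (alternatively, it is immediate from Remark \ref{R1}). I do not expect a genuine obstacle here: the only point requiring care is the correct translation of the complementation inside the sublattice $[\theta )$ into the relations $\alpha\cap\beta=\theta$ and $\alpha\vee\beta=\nabla_{\cal A}$ in ${\rm Con}({\cal A})$, after which primality does all the work.
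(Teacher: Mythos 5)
Your proof is correct and takes essentially the same route as the paper: both apply primality of $\theta $ to $\alpha \cap \beta =\theta $ to conclude ${\cal B}([\theta ))=\{\theta ,\nabla _{\cal A}\}$, transfer this to ${\rm Con}({\cal A}/\theta )$ via Remark \ref{R2}, and obtain (\ref{L2(2)}) from ${\rm Max}({\cal A})\subseteq {\rm Spec}({\cal A})$. Incidentally, your citation of Lemma \ref{folclor}, (\ref{folclor2}), for that inclusion is the accurate one; the paper's proof cites (\ref{folclor1}) at this point, which is evidently a slip.
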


\begin{proof} (\ref{L2(1)}) Assume that $\theta \in {\rm Spec}({\cal A})$. Let $\alpha \in {\cal B}([\theta ))$ and denote $\beta=\neg _{\theta }\alpha $. Then $\alpha \cap \beta =\theta $, thus $\alpha \cap \beta \subseteq \theta $, hence $\alpha \subseteq \theta $ or $\beta \subseteq \theta $ since $\theta $ is a prime congruence. But $\alpha ,\beta \in [\theta )$, that is $\theta \subseteq \alpha $ and $\theta \subseteq \beta $. Therefore $\alpha =\theta $ or $\beta =\theta $. If $\beta =\theta $, then $\alpha =\neg _{\theta }\beta =\neg _{\theta }\theta =\nabla _{\cal A}$. Hence ${\cal B}([\theta ))\subseteq \{\theta ,\nabla _{\cal A}\}$. But, clearly, $\{\theta ,\nabla _{\cal A}\subseteq {\cal B}([\theta ))\}$. Therefore ${\cal B}([\theta ))=\{\theta ,\nabla _{\cal A}\}$. Hence ${\cal B}({\rm Con}({\cal A}/\theta ))=\{\theta /\theta ,\nabla _{\cal A}/\theta \}$ by Remark \ref{R2}.

\noindent (\ref{L2(2)}) This is part of Remark \ref{R1}, but also follows from (\ref{L2(1)}) and Lemma \ref{folclor}, (\ref{folclor1}).\end{proof}

\begin{lemma}\begin{enumerate}
\item\label{L1(1)} If ${\cal B}([\theta ))=\{\theta ,\nabla _{\cal A}\}$, then $\theta $ has CBLP.
\item\label{L1(2)} If ${\cal B}({\rm Con}({\cal A}/\theta ))=\{\theta /\theta ,\nabla _{\cal A}/\theta \}$, then $\theta $ has CBLP.\end{enumerate}\label{L1}\end{lemma}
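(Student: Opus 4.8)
The plan is to prove both statements by showing that the map $\mathcal{B}(u_{\theta})$ (equivalently, $\mathcal{B}(v_{\theta})$, by Remark \ref{usiv}) is surjective under the stated hypotheses. I would handle the two parts in a way that exploits the equivalence established in Remark \ref{R2}, namely that $\mathcal{B}([\theta))=\{\theta,\nabla_{\cal A}\}$ if and only if $\mathcal{B}({\rm Con}({\cal A}/\theta))=\{\theta/\theta,\nabla_{\cal A}/\theta\}$. Because of this equivalence, the hypotheses of (\ref{L1(1)}) and (\ref{L1(2)}) are in fact the same condition, so it suffices to prove one of them and invoke Remark \ref{R2} for the other.

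First I would prove (\ref{L1(2)}), working directly with $u_{\theta}$. Assume ${\cal B}({\rm Con}({\cal A}/\theta))=\{\theta/\theta,\nabla_{\cal A}/\theta\}$. To show $\mathcal{B}(u_{\theta})$ is surjective, I must show that every element of ${\cal B}({\rm Con}({\cal A}/\theta))$ lies in the image of $\mathcal{B}(u_{\theta})$. Since the target has only two elements, I just need to hit $\theta/\theta=\Delta_{{\cal A}/\theta}$ and $\nabla_{\cal A}/\theta=\nabla_{{\cal A}/\theta}$. The key point is that $u_{\theta}=\mathcal{B}(u_{\theta})$ restricted to ${\cal B}({\rm Con}({\cal A}))$ is a bounded lattice (Boolean) morphism by Lemma \ref{uvstheta}, (\ref{uvstheta1}), so it preserves bottom and top: $\mathcal{B}(u_{\theta})(\Delta_{\cal A})=(\Delta_{\cal A}\vee\theta)/\theta=\theta/\theta=\Delta_{{\cal A}/\theta}$ and $\mathcal{B}(u_{\theta})(\nabla_{\cal A})=(\nabla_{\cal A}\vee\theta)/\theta=\nabla_{\cal A}/\theta=\nabla_{{\cal A}/\theta}$. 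Since $\Delta_{\cal A},\nabla_{\cal A}\in{\cal B}({\rm Con}({\cal A}))$, both elements of the two-element target are attained, so $\mathcal{B}(u_{\theta})$ is surjective, i.e.\ $\theta$ has CBLP.

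For part (\ref{L1(1)}), I would simply observe that the hypothesis ${\cal B}([\theta))=\{\theta,\nabla_{\cal A}\}$ is, by Remark \ref{R2}, equivalent to the hypothesis of part (\ref{L1(2)}), so (\ref{L1(1)}) follows immediately from (\ref{L1(2)}). Alternatively, one can argue symmetrically using $v_{\theta}$ in place of $u_{\theta}$: since $v_{\theta}$ is a bounded lattice morphism, $\mathcal{B}(v_{\theta})(\Delta_{\cal A})=\Delta_{\cal A}\vee\theta=\theta$ and $\mathcal{B}(v_{\theta})(\nabla_{\cal A})=\nabla_{\cal A}$, so both elements of ${\cal B}([\theta))=\{\theta,\nabla_{\cal A}\}$ are in the image, making $\mathcal{B}(v_{\theta})$ surjective; by Remark \ref{usiv} this again says $\theta$ has CBLP.

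I do not expect any serious obstacle here: the whole content is that the Boolean center of a principal filter (resp.\ of the quotient's congruence lattice) is the smallest possible two-element Boolean algebra, and a bounded lattice morphism automatically surjects onto a two-element codomain once it hits top and bottom, which it always does. The only thing to be careful about is bookkeeping: making sure to identify $\theta/\theta$ with $\Delta_{{\cal A}/\theta}$ and $\nabla_{\cal A}/\theta$ with $\nabla_{{\cal A}/\theta}$ (noted just before Remark \ref{R1}), and keeping straight whether one is arguing via $u_{\theta}$ or $v_{\theta}$, but the translation between the two is exactly the content of Lemma \ref{uvstheta}, (\ref{uvstheta2}), together with Remark \ref{usiv}.
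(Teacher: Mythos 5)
Your proof is correct and follows essentially the same route as the paper: both arguments hit the two elements of the Boolean center by applying the lifting morphism to $\Delta_{\cal A}$ and $\nabla_{\cal A}$ (which lie in ${\cal B}({\rm Con}({\cal A}))$), and both use Remark \ref{R2} to transfer the conclusion between the two parts. The only difference is cosmetic: the paper proves (\ref{L1(1)}) directly via $v_{\theta}$ and deduces (\ref{L1(2)}) from Remark \ref{R2}, whereas you prove (\ref{L1(2)}) directly via $u_{\theta}$ and deduce (\ref{L1(1)}) --- your ``alternative'' argument for (\ref{L1(1)}) is exactly the paper's proof.
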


\begin{proof} (\ref{L1(1)}) Assume that ${\cal B}([\theta ))=\{\theta ,\nabla _{\cal A}\}$. Since, clearly, $\Delta _{\cal A},\nabla _{\cal A}\in {\cal B}({\rm Con}({\cal A}))$, it follows that ${\cal B}(v_{\textstyle \theta })({\cal B}({\rm Con}({\cal A})))\supseteq {\cal B}(v_{\textstyle \theta })(\{\Delta _{\cal A},\nabla _{\cal A}\})=\{v_{\textstyle \theta }(\Delta _{\cal A}),v_{\textstyle \theta }(\nabla _{\cal A})\}=\{\Delta _{\cal A}\vee \theta ,\nabla _{\cal A}\vee \theta \}=\{\theta ,\nabla _{\cal A}\}={\cal B}([\theta ))$, thus ${\cal B}(v_{\textstyle \theta })({\cal B}({\rm Con}({\cal A})))={\cal B}([\theta ))$, that is ${\cal B}(v_{\textstyle \theta })$ is surjective, so $\theta $ has CBLP.

\noindent (\ref{L1(2)}) By (\ref{L1(1)}) and Remark \ref{R2}.\end{proof}

\begin{proposition}\begin{enumerate}
\item\label{primecblp1} Any prime congruence of ${\cal A}$ has CBLP.

\item\label{primecblp2} Any maximal congruence of ${\cal A}$ has CBLP.\end{enumerate}\label{primecblp}\end{proposition}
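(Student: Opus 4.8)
The plan is to derive both parts of Proposition~\ref{primecblp} directly from the machinery already assembled, rather than argue from scratch. The key observation is that the two lemmas immediately preceding the statement do almost all the work: Lemma~\ref{L2} tells us that prime and maximal congruences $\theta$ force ${\cal B}([\theta ))=\{\theta ,\nabla _{\cal A}\}$, while Lemma~\ref{L1}, (\ref{L1(1)}), tells us that this smallness of the Boolean center of the principal filter is already enough to guarantee that $\theta$ has CBLP. So the entire proof is a two-step composition.

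For part (\ref{primecblp1}), I would take an arbitrary $\theta \in {\rm Spec}({\cal A})$. By Lemma~\ref{L2}, (\ref{L2(1)}), we get ${\cal B}([\theta ))=\{\theta ,\nabla _{\cal A}\}$. Then Lemma~\ref{L1}, (\ref{L1(1)}), applied to this $\theta$, yields that $\theta$ has CBLP. For part (\ref{primecblp2}), I would take $\theta \in {\rm Max}({\cal A})$; since ${\cal A}$ fulfills hypothesis (H), Lemma~\ref{folclor}, (\ref{folclor2}), gives ${\rm Max}({\cal A})\subseteq {\rm Spec}({\cal A})$, so $\theta$ is in particular a prime congruence, and part (\ref{primecblp1}) applies. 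Alternatively, one can invoke Lemma~\ref{L2}, (\ref{L2(2)}), directly followed by Lemma~\ref{L1}, (\ref{L1(1)}); either route is immediate.

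There is essentially no obstacle here: the work has been front-loaded into Lemmas~\ref{L2} and \ref{L1}, and this proposition is the clean corollary that packages them. The only thing to be careful about is that the lemmas are stated for the fixed-but-arbitrary $\theta$ of the running hypothesis, so I would phrase the proof as ``let $\theta$ be an arbitrary prime (respectively maximal) congruence and apply the cited lemmas to it,'' making explicit that the arbitrariness of $\theta$ is what upgrades the single-congruence statement to the claim about every prime or maximal congruence. I would write it as follows.

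\begin{proof} (\ref{primecblp1}) Let $\theta \in {\rm Spec}({\cal A})$. By Lemma \ref{L2}, (\ref{L2(1)}), ${\cal B}([\theta ))=\{\theta ,\nabla _{\cal A}\}$, hence $\theta $ has CBLP by Lemma \ref{L1}, (\ref{L1(1)}).

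\noindent (\ref{primecblp2}) Since ${\cal A}$ fulfills (H), Lemma \ref{folclor}, (\ref{folclor2}), gives ${\rm Max}({\cal A})\subseteq {\rm Spec}({\cal A})$, so any maximal congruence of ${\cal A}$ is prime, thus it has CBLP by (\ref{primecblp1}).\end{proof}
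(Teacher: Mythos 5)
Your proof is correct and follows exactly the paper's own argument: part (i) by composing Lemma \ref{L2}, (\ref{L2(1)}), with Lemma \ref{L1}, (\ref{L1(1)}), and part (ii) by reducing maximal congruences to prime ones via Lemma \ref{folclor}, (\ref{folclor2}). No differences worth noting.
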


\begin{proof} (\ref{primecblp1}) By Lemma \ref{L2}, (\ref{L2(1)}), and Lemma \ref{L1}.

\noindent (\ref{primecblp2}) By (\ref{primecblp1}) and Lemma \ref{folclor}, (\ref{folclor2}).\end{proof}

\begin{lemma} ${\cal B}({\rm Con}({\cal A}))\subseteq {\cal K}({\cal A})$.\label{boolfg}\end{lemma}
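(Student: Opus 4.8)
The plan is to exploit three facts about ${\rm Con}({\cal A})$: it is distributive, it is an algebraic lattice (so every element is the join of the compact congruences below it), and, by hypothesis (H), its top element $\nabla _{\cal A}$ is compact. First I would take an arbitrary $\theta \in {\cal B}({\rm Con}({\cal A}))$ and fix its complement $\phi =\neg \, \theta $, so that $\theta \vee \phi =\nabla _{\cal A}$ and $\theta \cap \phi =\Delta _{\cal A}$. The goal is to produce a \emph{single} compact congruence below $\theta $ that already equals $\theta $.

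Since ${\rm Con}({\cal A})$ is algebraic, I can write $\theta =\bigvee \{\kappa \in {\cal K}({\cal A})\ |\ \kappa \subseteq \theta \}$ and, likewise, $\phi $ as the join of the compact congruences it contains. Consequently $\nabla _{\cal A}=\theta \vee \phi $ is the join of all compact congruences lying below $\theta $ or below $\phi $. Now I would invoke the compactness of $\nabla _{\cal A}$: finitely many members of this family already join to $\nabla _{\cal A}$. Collecting those below $\theta $ into one congruence $\kappa $ and those below $\phi $ into one congruence $\lambda $, and using that ${\cal K}({\cal A})$ is a sublattice of ${\rm Con}({\cal A})$ containing $\Delta _{\cal A}$ (so it is closed under finite joins and the empty join causes no trouble), I obtain $\kappa ,\lambda \in {\cal K}({\cal A})$ with $\kappa \subseteq \theta $, $\lambda \subseteq \phi $, and $\kappa \vee \lambda =\nabla _{\cal A}$.

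The last step identifies $\theta $ with $\kappa $ by distributivity. Indeed,
\[
\theta =\theta \cap \nabla _{\cal A}=\theta \cap (\kappa \vee \lambda )=(\theta \cap \kappa )\vee (\theta \cap \lambda )=\kappa \vee (\theta \cap \lambda ),
\]
and since $\lambda \subseteq \phi $ we get $\theta \cap \lambda \subseteq \theta \cap \phi =\Delta _{\cal A}$, whence $\theta =\kappa \vee \Delta _{\cal A}=\kappa \in {\cal K}({\cal A})$. As $\theta $ was arbitrary, this yields ${\cal B}({\rm Con}({\cal A}))\subseteq {\cal K}({\cal A})$.

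I do not expect a genuine obstacle in this argument; the one point deserving care is the extraction of a finite subfamily, which rests essentially on (H) (the statement would fail without the compactness of $\nabla _{\cal A}$), together with the harmless degenerate case in which $\kappa $ or $\lambda $ reduces to $\Delta _{\cal A}$, handled by $\Delta _{\cal A}\in {\cal K}({\cal A})$.
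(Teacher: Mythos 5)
Your proof is correct and follows essentially the same route as the paper's: both write the Boolean congruence as a join of compact congruences, invoke (H) (compactness of $\nabla _{\cal A}$) to extract a compact $\kappa \subseteq \theta $ with $\kappa \vee \neg \, \theta =\nabla _{\cal A}$, and then use distributivity (equivalently, uniqueness of complements) to conclude $\kappa =\theta $. The only cosmetic difference is that the paper decomposes $\theta $ into principal congruences and keeps $\neg \, \theta $ whole, whereas you also decompose the complement into compacts and split the finite subfamily into $\kappa $ and $\lambda $ — a harmless extra step, since $\lambda \subseteq \neg \, \theta $ makes $\lambda $ redundant in the final computation.
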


\begin{proof} Let $\phi \in {\cal B}({\rm Con}({\cal A}))$. Then $\displaystyle \phi =Cg(\phi )=\bigvee _{(a,b)\in \phi }Cg(a,b)$, $\phi \cap \neg \, \phi =\Delta _{\cal A}$ and $\phi \vee \neg \, \phi =\nabla _{\cal A}$, so, by the hypothesis (H), there exists a finite set $X\subseteq \phi $ such that $Cg(X)\vee \neg \, \phi =\nabla _{\cal A}$. But then $Cg(X)\subseteq \phi $, thus $Cg(X)\cap \neg \, \phi \subseteq \phi \cap \neg \, \phi =\Delta _{\cal A}$, hence we also have $Cg(X)\cap \neg \, \phi =\Delta _{\cal A}$. Therefore $Cg(X)=\neg \, \neg \, \phi =\phi $, so $\phi =Cg(X)\in {\cal K}({\cal A})$, hence ${\cal B}({\rm Con}({\cal A}))\subseteq {\cal K}({\cal A})$.\end{proof}

\begin{corollary}
$u_{\textstyle \theta }({\cal K}({\cal A}))={\cal K}({\cal A}/\theta )$.\label{corsuper}\end{corollary}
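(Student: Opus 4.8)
The claim is the set equality $u_{\textstyle \theta }({\cal K}({\cal A}))={\cal K}({\cal A}/\theta )$, where $u_{\textstyle \theta }(\alpha )=(\alpha \vee \theta )/\theta $ and ${\cal K}$ denotes the finitely generated (equivalently, compact) congruences. The plan is to prove the two inclusions separately, using Lemma \ref{superlema} as the essential computational tool for handling finitely generated congruences under the quotient map.

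First I would prove $u_{\textstyle \theta }({\cal K}({\cal A}))\subseteq {\cal K}({\cal A}/\theta )$. Take an arbitrary $\alpha \in {\cal K}({\cal A})$, so $\alpha =Cg_{\cal A}(X)$ for some finite $X\subseteq A^2$. Then $u_{\textstyle \theta }(\alpha )=(Cg_{\cal A}(X)\vee \theta )/\theta $, which by Lemma \ref{superlema} equals $Cg_{{\cal A}/\theta }(X/\theta )$. Since $X$ is finite, $X/\theta $ is finite, so $Cg_{{\cal A}/\theta }(X/\theta )$ is a finitely generated congruence of ${\cal A}/\theta $, i.e.\ an element of ${\cal K}({\cal A}/\theta )$. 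This gives the forward inclusion.

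For the reverse inclusion $u_{\textstyle \theta }({\cal K}({\cal A}))\supseteq {\cal K}({\cal A}/\theta )$, I would start with an arbitrary $\gamma \in {\cal K}({\cal A}/\theta )$, so $\gamma =Cg_{{\cal A}/\theta }(Y)$ for some finite $Y\subseteq (A/\theta )^2$. Each pair in $Y$ has the form $(a/\theta ,b/\theta )=(p_{\textstyle \theta }(a),p_{\textstyle \theta }(b))$ for suitable $a,b\in A$; choosing one such preimage pair for each of the finitely many elements of $Y$ produces a finite set $X\subseteq A^2$ with $X/\theta =Y$. Then $\alpha :=Cg_{\cal A}(X)\in {\cal K}({\cal A})$, and by Lemma \ref{superlema} again, $u_{\textstyle \theta }(\alpha )=(Cg_{\cal A}(X)\vee \theta )/\theta =Cg_{{\cal A}/\theta }(X/\theta )=Cg_{{\cal A}/\theta }(Y)=\gamma $. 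Hence $\gamma \in u_{\textstyle \theta }({\cal K}({\cal A}))$, completing this inclusion and therefore the equality.

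The proof is genuinely short because Lemma \ref{superlema} does all the heavy lifting by commuting the generation operator $Cg$ with the quotient. The only point requiring any care is the preimage-selection step in the reverse inclusion: I must ensure the chosen set $X$ is finite (which follows since $Y$ is finite and I pick one preimage pair per element of $Y$) and that $X/\theta $ recovers exactly $Y$ (immediate from the definition of $X/\theta $ as the image under $p_{\textstyle \theta }\times p_{\textstyle \theta }$). There is no substantial obstacle here; the mild subtlety is merely recognizing that surjectivity of $p_{\textstyle \theta }$ guarantees such preimages exist, so the whole argument reduces to two applications of Lemma \ref{superlema} together with the trivial observation that finite sets have finite images and finitely many chosen preimages.
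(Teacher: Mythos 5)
Your proof is correct and follows exactly the paper's approach: the paper's own proof is the one-line remark that both inclusions follow from Lemma \ref{superlema}, and your argument is simply the careful expansion of that remark (applying the lemma to a finite generating set for the forward inclusion, and to a finite set of chosen preimages for the reverse one). No discrepancy to report.
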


\begin{proof} By Lemma \ref{superlema}, both inclusions hold.\end{proof}

\begin{proposition}\begin{enumerate}
\item\label{prop1(1)} If ${\cal B}({\rm Con}({\cal A}))={\cal K}({\cal A})$ and ${\cal B}({\rm Con}({\cal A}/\theta ))={\cal K}({\cal A}/\theta )$, then $\theta $ has CBLP.

\item\label{prop1(2)}
 If every non--empty algebra ${\cal N}$ from ${\cal C}$ has ${\cal B}({\rm Con}({\cal N}))={\cal K}({\cal N})$, then every non--empty algebra from ${\cal C}$ has CBLP.\end{enumerate}\label{prop1}\end{proposition}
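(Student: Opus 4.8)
The plan is to read off the surjectivity of ${\cal B}(u_{\textstyle \theta })$ directly from the two identifications of Boolean centres with compact congruences, using Corollary \ref{corsuper} as the only substantive ingredient (and that ingredient is already established).

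For part (\ref{prop1(1)}) I would begin from the observation that, by definition, ${\cal B}(u_{\textstyle \theta })$ is the restriction of $u_{\textstyle \theta }$ to ${\cal B}({\rm Con}({\cal A}))$, landing in ${\cal B}({\rm Con}({\cal A}/\theta ))$. Under the first hypothesis ${\cal B}({\rm Con}({\cal A}))={\cal K}({\cal A})$ this domain is exactly ${\cal K}({\cal A})$, so the image of ${\cal B}(u_{\textstyle \theta })$ equals $u_{\textstyle \theta }({\cal B}({\rm Con}({\cal A})))=u_{\textstyle \theta }({\cal K}({\cal A}))$. By Corollary \ref{corsuper} this is ${\cal K}({\cal A}/\theta )$, and invoking the second hypothesis ${\cal B}({\rm Con}({\cal A}/\theta ))={\cal K}({\cal A}/\theta )$ shows the image is precisely ${\cal B}({\rm Con}({\cal A}/\theta ))$. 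Hence ${\cal B}(u_{\textstyle \theta })$ is surjective, which is exactly CBLP for $\theta $ by definition. Beyond chaining these four equalities there is nothing to compute.

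For part (\ref{prop1(2)}) I would derive it from part (\ref{prop1(1)}). Fix a non--empty algebra ${\cal A}$ from ${\cal C}$ and an arbitrary $\theta \in {\rm Con}({\cal A})$; the goal is that $\theta $ has CBLP. Applying the hypothesis to ${\cal N}={\cal A}$ gives ${\cal B}({\rm Con}({\cal A}))={\cal K}({\cal A})$. Since ${\cal C}$ is an equational class, it is closed under homomorphic images, so the quotient ${\cal A}/\theta $ again belongs to ${\cal C}$; it is non--empty because ${\cal A}$ is, hence it fulfils (H) and the hypothesis applies to ${\cal N}={\cal A}/\theta $, giving ${\cal B}({\rm Con}({\cal A}/\theta ))={\cal K}({\cal A}/\theta )$. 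Part (\ref{prop1(1)}) then yields that $\theta $ has CBLP, and since both ${\cal A}$ and $\theta $ were arbitrary, every non--empty algebra from ${\cal C}$ has CBLP.

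The only point requiring a moment's care is the opening move of part (\ref{prop1(1)}): recognising that ${\cal B}(u_{\textstyle \theta })$ agrees with $u_{\textstyle \theta }$ on its domain and that the hypothesis lets me replace that domain by ${\cal K}({\cal A})$ so as to apply Corollary \ref{corsuper}. I do not expect a genuine obstacle here; the mathematical content is entirely concentrated in Corollary \ref{corsuper} (the surjectivity $u_{\textstyle \theta }({\cal K}({\cal A}))={\cal K}({\cal A}/\theta )$), and the remainder of both parts is bookkeeping with the hypothesised equalities and the closure of ${\cal C}$ under quotients.
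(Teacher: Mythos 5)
Your proposal is correct and follows essentially the same argument as the paper: both proofs chain the hypothesised equalities with Corollary \ref{corsuper} to identify the image of ${\cal B}(u_{\textstyle \theta })$ with ${\cal B}({\rm Con}({\cal A}/\theta ))$, and both deduce part (\ref{prop1(2)}) by applying part (\ref{prop1(1)}) to ${\cal A}$ and to each quotient ${\cal A}/\phi $ (your explicit remark that ${\cal A}/\theta $ lies in ${\cal C}$ because equational classes are closed under homomorphic images is left implicit in the paper, but is the same reasoning).
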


\begin{proof} (\ref{prop1(1)}) If ${\cal A}$ and ${\cal A}/\theta $ are such that their Boolean congruences coincide to their compact congruences, then, by Corollary \ref{corsuper}, ${\cal B}({\rm Con}({\cal A}/\theta ))={\cal K}({\cal A}/\theta )=u_{\textstyle \theta }({\cal K}({\cal A}))=u_{\textstyle \theta }({\cal B}({\rm Con}({\cal A}/\theta )))={\cal B}(u_{\textstyle \theta })({\cal B}({\rm Con}({\cal A}/\theta )))$, thus $\theta $ has CBLP.

\noindent (\ref{prop1(2)}) If every non--empty algebra ${\cal N}$ from ${\cal C}$ has ${\cal B}({\rm Con}({\cal N}))={\cal K}({\cal N})$, then ${\cal B}({\rm Con}({\cal A}))={\cal K}({\cal A})$ and, for each $\phi \in {\rm Con}({\cal A})$, ${\cal B}({\rm Con}({\cal A}/\phi ))={\cal K}({\cal A}/\phi )$, hence, according to (\ref{prop1(1)}), each $\phi \in {\rm Con}({\cal A})$ has CBLP, that is ${\cal A}$ has CBLP. Since ${\cal A}$ is an arbitrary non--empty algebra from ${\cal C}$, it follows that every non--empty algebra from ${\cal C}$ has CBLP.\end{proof}

\begin{corollary}
Any bounded distributive lattice has CBLP.\label{d01cblp}\end{corollary}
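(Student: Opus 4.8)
The plan is to deduce this from Proposition~\ref{prop1}, (\ref{prop1(2)}), applied to the equational class $\mathcal{C}$ of bounded distributive lattices. Every lattice is congruence--distributive, the class is equational, and each bounded distributive lattice $L$ fulfills (H) because $\nabla _L=Cg(0,1)$ is a principal, hence finitely generated, congruence. Consequently it suffices to establish that ${\cal B}({\rm Con}(L))={\cal K}(L)$ for every bounded distributive lattice $L$; Proposition~\ref{prop1}, (\ref{prop1(2)}), then gives CBLP for the whole class, in particular for any prescribed $L$.

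One inclusion, ${\cal B}({\rm Con}(L))\subseteq {\cal K}(L)$, is exactly Lemma~\ref{boolfg}. The task therefore reduces to showing ${\cal K}(L)\subseteq {\cal B}({\rm Con}(L))$, i.e.\ that every finitely generated congruence of $L$ is complemented in ${\rm Con}(L)$. Since ${\cal B}({\rm Con}(L))$ is a Boolean subalgebra of the distributive lattice ${\rm Con}(L)$, it is closed under finite joins, and every compact congruence is a finite join of principal congruences; so it is enough to complement a single principal congruence $Cg(a,b)$. Using $Cg(a,b)=Cg(a\wedge b,a\vee b)$, I may assume $a\le b$.

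For $a\le b$ I would propose the explicit complement $\phi =Cg(0,a)\vee Cg(b,1)$. The join condition $Cg(a,b)\vee \phi =\nabla _L$ is immediate, since collapsing $0\sim a$, $a\sim b$ and $b\sim 1$ forces $0\sim 1$, and $Cg(0,1)=\nabla _L$. The hard part will be the meet condition $Cg(a,b)\cap \phi =\Delta _L$. Here I would use the classical description of principal congruences in a distributive lattice (see \cite{gralgu}): for $a\le b$, $(x,y)\in Cg(a,b)$ iff $x\wedge a=y\wedge a$ and $x\vee b=y\vee b$. Thus $Cg(a,b)$ is the kernel of the homomorphism $x\mapsto (x\wedge a,x\vee b)$ into $(a]\times [b)$, whereas $\phi $ is contained in the kernel of the retraction $f\colon x\mapsto (x\vee a)\wedge b$ onto $[a,b]$, because $f(0)=f(a)=a$ and $f(b)=f(1)=b$. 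Hence $Cg(a,b)\cap \phi $ is contained in the kernel of the combined map $x\mapsto (x\wedge a,(x\vee a)\wedge b,x\vee b)$, and it remains only to prove that this map is injective.

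Injectivity is where distributivity is essential, and I would verify it by prime--filter separation: given $x\neq y$, pick a prime filter $F$ of $L$ containing exactly one of them, say $x\in F$ and $y\notin F$, and split into cases according to the position of $a$ and $b$ relative to $F$ (recall $a\le b$, so $a\in F$ implies $b\in F$). When $b\notin F$ the coordinate $x\vee b$ separates the images (as $x\vee b\in F$ but $y\vee b\notin F$); when $a\notin F$ and $b\in F$ the coordinate $(x\vee a)\wedge b$ separates them; and when $a\in F$ the coordinate $x\wedge a$ does. Each check uses only the prime--filter rules $u\wedge v\in F\Leftrightarrow (u\in F\text{ and }v\in F)$ and $u\vee v\in F\Leftrightarrow (u\in F\text{ or }v\in F)$. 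This yields $Cg(a,b)\cap \phi =\Delta _L$, completing the argument; alternatively, one may simply invoke the classical fact that the compact congruences of a distributive lattice form a relatively complemented lattice, which together with (H) (providing a largest compact congruence) makes ${\cal K}(L)$ a Boolean algebra whose complements agree with those in ${\rm Con}(L)$.
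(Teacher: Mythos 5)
Your proof is correct, and its skeleton is exactly the paper's: both reduce the corollary to Proposition \ref{prop1}, (\ref{prop1(2)}), applied to the variety of bounded distributive lattices (congruence--distributive, and satisfying (H) because $Cg(0,1)=\nabla _L$), via the equality ${\cal B}({\rm Con}(L))={\cal K}(L)$. The difference lies in how that equality is obtained: the paper simply cites \cite[p. $127$]{blyth} for it, whereas you prove the nontrivial inclusion ${\cal K}(L)\subseteq {\cal B}({\rm Con}(L))$ from scratch. Your reduction to a single principal congruence is legitimate (${\cal B}({\rm Con}(L))$ is a sublattice of the distributive lattice ${\rm Con}(L)$, hence closed under finite joins, and $Cg(a,b)=Cg(a\wedge b,a\vee b)$ allows $a\leq b$), your candidate complement $Cg(0,a)\vee Cg(b,1)$ is the right one, and the verification holds up: the maps $x\mapsto (x\wedge a,x\vee b)$ and $x\mapsto (x\vee a)\wedge b$ are lattice morphisms precisely because $L$ is distributive, the first has kernel exactly $Cg(a,b)$ by the classical description of principal congruences, the second has kernel containing $\phi $, and your prime--filter case analysis (the three cases $b\notin F$; $a\notin F,b\in F$; $a\in F$ are exhaustive since $a\leq b$) gives injectivity of the combined map, hence $Cg(a,b)\cap \phi =\Delta _L$. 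In short, your route buys self--containedness, replacing the paper's external citation by a complete elementary argument (at the cost of length), while the paper's version buys brevity by outsourcing the key lattice--theoretic fact to Blyth; the closing alternative you mention (compact congruences of a distributive lattice form a relatively complemented lattice, which with (H) makes ${\cal K}(L)$ Boolean) is essentially the content of that citation.
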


\begin{proof} By Proposition \ref{prop1} and the fact that, if ${\cal A}$ is a bounded distributive lattice, then, according to \cite[p. $127$]{blyth}, $\displaystyle {\cal B}({\rm Con}({\cal A}))=\{\bigvee _{i=1}^nCg(a_i,b_i)\ |\ n\in \N ^*,(\forall \, i\in \overline{1,n})\, (a_i,b_i\in A)\}={\cal K}({\cal A})$.\end{proof}

\begin{remark} In bounded non--distributive lattices, the CBLP is neither always present, nor always absent. Indeed, let ${\cal D}$ be the diamond and ${\cal P}$ be the pentagon, with the elements denoted as in the following Hasse diagrams:\vspace*{-15pt}

\begin{center}
\begin{tabular}{ccccc}
\begin{picture}(40,100)(0,0)
\put(30,25){\line(0,1){40}}
\put(30,25){\line(1,1){20}}
\put(30,25){\line(-1,1){20}}
\put(30,65){\line(1,-1){20}}
\put(30,65){\line(-1,-1){20}}
\put(30,25){\circle*{3}}
\put(10,45){\circle*{3}}
\put(3,42){$a$}
\put(30,45){\circle*{3}}
\put(33,42){$b$}
\put(50,45){\circle*{3}}
\put(53,42){$c$}
\put(30,65){\circle*{3}}
\put(28,15){$0$}
\put(28,68){$1$}
\put(25,0){${\cal D}$}
\end{picture}
&\hspace*{10pt}
\begin{picture}(40,100)(0,0)
\put(30,25){\line(1,1){10}}
\put(30,25){\line(-1,1){20}}
\put(30,65){\line(1,-1){10}}
\put(30,65){\line(-1,-1){20}}
\put(40,35){\line(0,1){20}}
\put(30,25){\circle*{3}}
\put(10,45){\circle*{3}}
\put(3,43){$x$}
\put(40,35){\circle*{3}}
\put(43,33){$y$}
\put(30,65){\circle*{3}}

\put(40,55){\circle*{3}}
\put(43,53){$z$}
\put(25,0){${\cal P}$}
\put(28,15){$0$}
\put(28,68){$1$}
\end{picture}
&\hspace*{30pt}
\begin{picture}(40,100)(0,0)
\put(20,30){\line(0,1){15}}
\put(20,30){\circle*{3}}
\put(20,45){\circle*{3}}
\put(16,20){$\Delta _{\cal D}$}
\put(16,48){$\nabla _{\cal D}$}
\put(5,0){${\rm Con}({\cal D})$}
\end{picture}
&\hspace*{10pt}
\begin{picture}(40,100)(0,0)
\put(20,45){\line(1,1){10}}
\put(20,45){\line(-1,1){10}}
\put(20,65){\line(1,-1){10}}
\put(20,65){\line(-1,-1){10}}
\put(20,45){\line(0,-1){15}}
\put(16,20){$\Delta _{\cal P}$}
\put(16,68){$\nabla _{\cal P}$}
\put(20,30){\circle*{3}}
\put(23,40){$\gamma $}
\put(2,53){$\alpha $}
\put(32,52){$\beta $}
\put(20,45){\circle*{3}}
\put(10,55){\circle*{3}}
\put(30,55){\circle*{3}}
\put(20,65){\circle*{3}}
\put(5,0){${\rm Con}({\cal P})$}
\end{picture}
&\hspace*{30pt}
\begin{picture}(40,100)(0,0)
\put(20,25){\line(1,1){10}}
\put(20,25){\line(-1,1){10}}
\put(20,45){\line(1,-1){10}}
\put(20,45){\line(-1,-1){10}}
\put(20,25){\circle*{3}}
\put(-9,33){$x/\gamma $}
\put(33,33){$y/\gamma =z/\gamma $}
\put(10,35){\circle*{3}}
\put(30,35){\circle*{3}}
\put(20,45){\circle*{3}}
\put(13,15){$0/\gamma $}
\put(13,48){$1/\gamma $}
\put(12,0){${\cal P}/\gamma $}
\end{picture}
\end{tabular}
\end{center}\vspace*{-5pt}

Let us denote, for any set $M$ and any partition $\pi $ of $M$, by $eq(\pi )$ the equivalence on $M$ which corresponds to $\pi $; also, if $\pi =\{M_1,\ldots ,M_n\}$ for some $n\in \N ^*$, then we shall denote by $eq(M_1,\ldots ,M_n)=eq(\pi )$.

The well--known fact that the classes of a congruence of a lattice $L$ are convex sublattices of $L$ make it easy to prove that ${\rm Con}({\cal D})=\{\Delta _{\cal D},\nabla _{\cal D}\}$, which is isomorphic to the two--element Boolean algebra, ${\cal L}_2$, and ${\rm Con}({\cal P})=\{\Delta _{\cal P},\alpha ,\beta ,\gamma ,\nabla _{\cal P}\}$, where $\alpha =eq(\{0,y,z\},\{x,1\})$, $\beta =eq(\{0,x\},\{y,z,1\})$ and $\gamma =eq(\{0\},\{x\},\{y,z\},\{1\})$, with the lattice structure represented above.

Thus ${\cal B}({\rm Con}({\cal D}))={\rm Con}({\cal D})=\{\Delta _{\cal D},\nabla _{\cal D}\}$, hence ${\cal D}$ has CBLP by Remark \ref{deltanabla}. The lattice structure of ${\rm Con}({\cal P})$ is the one represented above. By Remark \ref{deltanabla}, $\Delta _{\cal P}$ and $\nabla _{\cal P}$ have CBLP. $[\alpha )$ and $[\beta )$ are isomorphic to the standard Boolean algebra, ${\cal L}_2$: $[\alpha )=\{\alpha ,\nabla _{\cal P}\}$ and $[\beta )=\{\beta ,\nabla _{\cal P}\}$, thus ${\cal B}([\alpha ))=[\alpha )=\{\alpha ,\nabla _{\cal P}\}$ and ${\cal B}([\beta ))=[\beta )=\{\beta ,\nabla _{\cal P}\}$, hence $\alpha $ and $\beta $ have CBLP by Lemma \ref{L1}, (\ref{L1(1)}). But ${\cal P}/\gamma $ is isomorphic to the four--element Boolean algebra, ${\cal L}_2^2$, which, being a finite Boolean algebra, is isomorphic to its congruence lattice, so ${\rm Con}({\cal P}/\gamma )$ is isomorphic to ${\cal L}_2^2$, thus ${\cal B}({\rm Con}({\cal P}/\gamma ))={\rm Con}({\cal P}/\gamma )$ is isomorphic to ${\cal L}_2^2$, while the lattice structure of ${\rm Con}({\cal P})$ shows that ${\cal B}({\rm Con}({\cal P}))=\{\Delta _{\cal P},\nabla _{\cal P}\}$, which is isomorphic to ${\cal L}_2$, thus ${\cal B}(u_{\gamma }):{\cal B}({\rm Con}({\cal P}))\rightarrow {\cal B}({\rm Con}({\cal P}/\gamma ))$ can not be surjective, which means that $\gamma $ does not have CBLP. Therefore ${\cal P}$ does not have CBLP.\label{exdsip}\end{remark}

Now let us recall some definitions and results from \cite[Chapter $4$]{bj} and \cite[Chapter IV, Section $9$]{bur} concerning discriminator varieties. The {\em discriminator function} on a set $A$ is the mapping $t:A^3\rightarrow A$ defined by: for all $a,b,c\in A$,$$t(a,b,c)=\begin{cases}a, & \mbox{if }a\neq b,\\ c, & \mbox{if }a=b.\end{cases}$$A {\em discriminator term} on the algebra ${\cal A}$ is a term from the first order language associated to $\tau $ with the property that $t^{\cal A}$ is the discriminator function on $A$. The algebra ${\cal A}$ is called a {\em discriminator algebra} iff there exists a discriminator term on ${\cal A}$. An equational class ${\cal D}$ is called a {\em discriminator equational class} iff it is generated by a class of algebras which have a common discriminator term (equivalently, iff the subdirectly irreducible algebras from ${\cal D}$ have a common discriminator term).

\begin{proposition}{\rm \cite{bj}} Let ${\cal D}$ be a discriminator equational class and ${\cal A}$ be an algebra from ${\cal D}$. Then:\begin{itemize}
\item\label{discriminator1} ${\cal A}$ is an arithmetical algebra;
\item\label{discriminator2} any compact congruence of ${\cal A}$ is principal;
\item\label{discriminator3} any principal congruence of ${\cal A}$ is a factor congruence.\end{itemize}\label{discriminator}\end{proposition}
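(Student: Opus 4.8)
The plan is to prove the three assertions about a discriminator equational class $\mathcal{D}$ and an algebra $\mathcal{A} \in \mathcal{D}$, relying on the standard discriminator-term machinery. Recall that a discriminator term $t$ gives us, for each triple $(a,b,c)$, the value $a$ when $a \neq b$ and $c$ when $a = b$; the key is that congruences can be described arithmetically using $t$. I expect the main obstacle to be organizing the term manipulations cleanly, since each of the three parts reduces to showing that certain congruence-theoretic statements can be witnessed by explicit applications of $t$.

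\textbf{Arithmeticity.} First I would establish that $\mathcal{A}$ is arithmetical, i.e.\ both congruence-distributive and congruence-permutable. Both follow from the existence of suitable Mal'cev-type terms built from the discriminator. For congruence-permutability, the term $p(x,y,z) = t(t(x,y,z),t(x,y,x),z)$ (or a standard variant) is a Mal'cev term satisfying $p(x,x,z) = z$ and $p(x,z,z) = x$, which forces every pair of congruences to permute. For congruence-distributivity one exhibits Jónsson terms; for discriminator varieties these come directly from $t$ as well. Since $\mathcal{D}$ is generated by algebras sharing a common discriminator term, these identities hold throughout $\mathcal{D}$, and in particular in $\mathcal{A}$. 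I would cite the standard construction from \cite[Chapter IV, Section $9$]{bur} rather than verifying the Jónsson identities by hand.

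\textbf{Compact congruences are principal.} For the second assertion, the crucial computation is that in a discriminator algebra the principal congruences are closed under join, so that every finite join of principal congruences is again principal. Concretely, given $(a,b)$ and $(c,d)$, one shows $Cg(a,b) \vee Cg(c,d) = Cg(e,f)$ for a suitable pair $(e,f)$ obtained by applying $t$ to combine the two pairs (for instance expressing both collapses through a single pair via the discriminator). Iterating, any finite join of principal congruences is principal. Since by the Preliminaries the compact elements of $\mathrm{Con}(\mathcal{A})$ are exactly the finitely generated congruences, and finitely generated congruences are finite joins of principal congruences, it follows that every compact congruence is principal.

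\textbf{Principal congruences are factor congruences.} For the third assertion I would show that each $Cg(a,b)$ has a complement in $\mathrm{Con}(\mathcal{A})$ that permutes with it. The discriminator lets us explicitly describe $Cg(a,b)$ as $\{(x,y) : t(a,b,x) = t(a,b,y)\}$-type relations, and produces a complementary congruence via $t(a,b,-)$ acting as a retraction that separates the two blocks. Because $\mathcal{A}$ is congruence-permutable (from the first part), any complemented congruence automatically permutes with its complement, so it lies in $\mathcal{B}(\mathrm{Con}(\mathcal{A}))$ and is a factor congruence in the sense defined before Lemma \ref{dinbj}. The main delicacy here is writing down the complement correctly and checking $Cg(a,b) \cap \theta^* = \Delta_{\mathcal{A}}$ and $Cg(a,b) \vee \theta^* = \nabla_{\mathcal{A}}$; once permutability is in hand, this is the standard discriminator-variety fact, so again I would invoke \cite{bj} and \cite{bur} for the explicit verification rather than reproduce all the term identities.
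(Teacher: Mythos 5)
The first thing to note is that the paper does not prove this proposition at all: it is quoted with a citation to \cite{bj} (and the surrounding text points to \cite[Chapter IV, Section 9]{bur}), so the only meaningful comparison is with the standard proofs in those references, which your outline follows. Your architecture is the standard one: arithmeticity from Mal'cev/Pixley-type terms built from $t$; ``compact implies principal'' from closure of principal congruences under finite joins (via an explicit pair, e.g.\ $Cg(a,b)\vee Cg(c,d)=Cg(t(a,b,c),t(b,a,d))$) together with the fact, recalled in the paper's preliminaries, that compact congruences are exactly the finitely generated ones and $\Delta_{\cal A}=Cg(a,a)$; and the factor-congruence property from an explicit complement (the standard device is the switching term $s(x,y,u,v)=t(t(x,y,u),t(x,y,v),v)$, the complement of $Cg(a,b)$ being $\{(x,y)\ |\ s(a,b,x,y)=x\}$) combined with congruence-permutability, exactly as in the paper's remark that in an arithmetical algebra the factor congruences are all of ${\cal B}({\rm Con}({\cal A}))$. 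Since you, like the paper, defer the substantive term verifications to \cite{bj} and \cite{bur}, the proposal sits at the same level of rigor as the paper's own treatment.

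There is, however, a concrete error in the one term computation you actually committed to: $p(x,y,z)=t(t(x,y,z),t(x,y,x),z)$ is \emph{not} a Mal'cev term. Using the discriminator identities $t(x,y,y)\approx x$, $t(x,y,x)\approx x$ and $t(x,x,y)\approx y$, one computes $p(x,y,y)=t(t(x,y,y),t(x,y,x),y)=t(x,x,y)=y$, whereas a Mal'cev term must return $x$ here. The fix is simpler than your construction: the discriminator function itself satisfies $t(x,y,y)=x$, $t(x,x,y)=y$ and $t(x,y,x)=x$, so these three equations hold in the generating class and hence throughout the variety; thus $t$ is already a Pixley term, and by Pixley's theorem this single term yields congruence-distributivity and congruence-permutability simultaneously, with no need for separate Jónsson terms. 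With that correction, part (i) of your argument is complete, and the rest stands as a correct (if largely delegated) account of the cited results.
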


\begin{corollary}
All non--empty algebras from a discriminator equational class which satisfy (H) have CBLP.\label{cordiscrim}\end{corollary}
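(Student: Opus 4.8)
The plan is to reduce CBLP to the coincidence ${\cal B}({\rm Con}(-))={\cal K}(-)$ of Boolean and compact congruences, and then to invoke Proposition~\ref{prop1}. Fix a non--empty algebra ${\cal A}$ from the discriminator equational class ${\cal D}$ which fulfills (H). The auxiliary results of this section (Lemma~\ref{boolfg}, Corollary~\ref{corsuper} and Proposition~\ref{prop1}) use only congruence--distributivity together with (H), so they are available for any member of ${\cal D}$ satisfying (H). The heart of the matter is to show that ${\cal B}({\rm Con}({\cal N}))={\cal K}({\cal N})$ for every non--empty ${\cal N}\in {\cal D}$ fulfilling (H); granting this for ${\cal A}$ and for all its quotients, Proposition~\ref{prop1}, (\ref{prop1(1)}), will yield that every congruence of ${\cal A}$ has CBLP, that is, that ${\cal A}$ has CBLP.

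For the key identity I would prove the two inclusions separately. The inclusion ${\cal B}({\rm Con}({\cal N}))\subseteq {\cal K}({\cal N})$ is precisely Lemma~\ref{boolfg}, valid since ${\cal N}$ satisfies (H). For the converse, I would chain the two relevant items of Proposition~\ref{discriminator}: every compact (equivalently, finitely generated) congruence of ${\cal N}$ is principal, and every principal congruence of ${\cal N}$ is a factor congruence. Since every algebra from ${\cal D}$ is arithmetical (Proposition~\ref{discriminator}), ${\cal N}$ is congruence--permutable, so by the characterization of factor congruences recalled in Section~\ref{preliminaries} the factor congruences of ${\cal N}$ are exactly the elements of ${\cal B}({\rm Con}({\cal N}))$. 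Reading the chain ``compact $\Rightarrow $ principal $\Rightarrow $ factor $\Rightarrow $ Boolean'' gives ${\cal K}({\cal N})\subseteq {\cal B}({\rm Con}({\cal N}))$, and hence equality.

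Finally, I must ensure the identity is available for every quotient ${\cal A}/\theta $, $\theta \in {\rm Con}({\cal A})$. Because ${\cal D}$ is an equational class, it is closed under homomorphic images, so ${\cal A}/\theta \in {\cal D}$ and Proposition~\ref{discriminator} applies to it. The single point requiring care is that ${\cal A}/\theta $ again satisfies (H): from $\nabla _{\cal A}\in {\cal K}({\cal A})$ and Corollary~\ref{corsuper} (itself resting on Lemma~\ref{superlema}) one obtains $\nabla _{{\cal A}/\theta }=\nabla _{\cal A}/\theta =u_{\textstyle \theta }(\nabla _{\cal A})\in u_{\textstyle \theta }({\cal K}({\cal A}))={\cal K}({\cal A}/\theta )$, so (H) passes to ${\cal A}/\theta $. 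Thus ${\cal B}({\rm Con}({\cal A}/\theta ))={\cal K}({\cal A}/\theta )$ by the previous paragraph, and together with ${\cal B}({\rm Con}({\cal A}))={\cal K}({\cal A})$ Proposition~\ref{prop1}, (\ref{prop1(1)}), shows that $\theta $ has CBLP. As $\theta $ is arbitrary, ${\cal A}$ has CBLP. I expect the preservation of (H) under quotients to be the only genuine obstacle; everything else is a direct assembly of Proposition~\ref{discriminator}, Lemma~\ref{boolfg} and Proposition~\ref{prop1}. (If one assumes in addition that every non--empty algebra of ${\cal D}$ satisfies (H), the argument shortens: the key identity then holds throughout ${\cal D}$, and Proposition~\ref{prop1}, (\ref{prop1(2)}), applies at once.)
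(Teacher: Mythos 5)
Your proof is correct and takes essentially the same route as the paper's: the key identity ${\cal B}({\rm Con}({\cal A}))={\cal K}({\cal A})$ is obtained from Lemma \ref{boolfg} in one direction and from Proposition \ref{discriminator} (compact $\Rightarrow$ principal $\Rightarrow$ factor $\Rightarrow$ Boolean, using arithmeticity) in the other, and then Proposition \ref{prop1} is invoked. Your explicit check that (H) passes to quotients via Corollary \ref{corsuper} fills in a detail that the paper's proof leaves implicit when it applies Proposition \ref{prop1}, so it is a welcome (and correct) refinement rather than a different approach.
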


\begin{proof} Let ${\cal D}$ be a discriminator equational class and ${\cal A}$ be an algebra from ${\cal D}$ which satisfies (H). Then, by Proposition \ref{discriminator}, ${\cal A}$ is an arithmetical algebra, thus its set of factor congruences coincides to ${\cal B}({\rm Con}({\cal A}))$, hence ${\cal K}({\cal A})\subseteq {\cal B}({\rm Con}({\cal A}))$. By Lemma \ref{boolfg}, the converse inclusion holds, as well. Therefore ${\cal B}({\rm Con}({\cal A}))={\cal K}({\cal A})$, so ${\cal A}$ has CBLP by Proposition \ref{prop1}.\end{proof}

\begin{remark}
Among the discriminator equational classes with all members satisfying (H), there are important classes of algebras of logic such as: Boolean algebras, Post algebras, $n$--valued MV--algebras, monadic algebras, cylindric algebras etc.. Recently, in \cite{lpt}, it has been proven that ${\rm G\ddot{o}del}$ residuated lattices form a discriminator equational class. By Corollary \ref{cordiscrim}, it follows that all the algebras in these classes have CBLP.\label{remdiscrim}\end{remark}

From here until the end of this section, $\theta $ shall no longer be fixed.

For any $\theta \in {\rm Con}({\cal A})$, we shall denote by $V(\theta )=\{\pi \in {\rm Spec}({\cal A})\ |\ \theta \subseteq \pi \}$ and by $D(\theta )={\rm Spec}({\cal A})\setminus V(\theta )$.

\begin{lemma}
Let $\sigma ,\tau \in {\rm Con}({\cal A})$, $I$ be a non--empty set and $(\theta _i)_{i\in I}\subseteq {\rm Con}({\cal A})$. Then:\begin{enumerate}
\item\label{ltop1} $V(\sigma )=\emptyset $ iff $D(\sigma )={\rm Spec}({\cal A})$ iff $\sigma =\nabla _{\cal A}$;
\item\label{ltop2} $V(\sigma )={\rm Spec}({\cal A})$ iff $D(\sigma )=\emptyset $ iff $\sigma =\Delta _{\cal A}$;
\item\label{ltop3} $V(\sigma \cap \tau )=V(\sigma )\cup V(\tau )$ and $D(\sigma \cap \tau )=D(\sigma )\cap D(\tau )$;
\item\label{ltop4} $\displaystyle V(\bigvee _{i\in I}\theta _i)=\bigcap _{i\in I}V(\theta _i)$ and $\displaystyle D(\bigvee _{i\in I}\theta _i)=\bigcup _{i\in I}D(\theta _i)$;
\item\label{ltop5} $V(\sigma )\subseteq V(\tau )$ iff $D(\sigma )\supseteq D(\tau )$ iff $\sigma \supseteq \tau $;
\item\label{ltop6} $V(\sigma )=V(\tau )$ iff $D(\sigma )=D(\tau )$ iff $\sigma =\tau $;
\item\label{ltop7} $\{D(\theta )\ |\ \theta \in {\rm Con}({\cal A})\}$ is a topology on ${\rm Spec}({\cal A})$.\end{enumerate}\label{ltop}\end{lemma}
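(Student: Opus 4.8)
The plan is to derive all seven items from two ingredients: the defining complementarity $D(\cdot)={\rm Spec}({\cal A})\setminus V(\cdot)$, and Lemma \ref{folclor}, which (through hypothesis (H)) supplies the facts that power the nontrivial directions. I would treat the purely complementary equivalences and the monotonicity statements first, isolate the one genuinely substantial step, and then assemble the topology from the rest.

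For (\ref{ltop1}) and (\ref{ltop2}), the equivalence between each $V$-statement and its $D$-statement is immediate from $D(\sigma)={\rm Spec}({\cal A})\setminus V(\sigma)$. For the characterizations: if $\sigma=\nabla_{\cal A}$ then no prime congruence, being proper, can contain it, so $V(\nabla_{\cal A})=\emptyset$; conversely, if $\sigma$ is proper, then Lemma \ref{folclor}, (\ref{folclor1}) and (\ref{folclor2}), yields a maximal, hence prime, congruence above $\sigma$, so $V(\sigma)\neq\emptyset$. Dually, $V(\Delta_{\cal A})={\rm Spec}({\cal A})$ is immediate, and if $V(\sigma)={\rm Spec}({\cal A})$ then $\sigma$ lies below every prime congruence; applying Lemma \ref{folclor}, (\ref{folclor3}), to $\Delta_{\cal A}$ gives $\bigcap_{\pi\in{\rm Spec}({\cal A})}\pi=\Delta_{\cal A}$, whence $\sigma\subseteq\Delta_{\cal A}$ and so $\sigma=\Delta_{\cal A}$. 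The degenerate case of a trivial ${\cal A}$, where ${\rm Spec}({\cal A})=\emptyset$ and $\Delta_{\cal A}=\nabla_{\cal A}$, would be disposed of separately and trivially.

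For (\ref{ltop3}), the inclusion $V(\sigma)\cup V(\tau)\subseteq V(\sigma\cap\tau)$ is just the inclusion-reversing monotonicity of $V$, while the reverse inclusion is exactly the definition of a prime congruence: $\sigma\cap\tau\subseteq\pi$ forces $\sigma\subseteq\pi$ or $\tau\subseteq\pi$; the $D$-identity then follows by De Morgan. For (\ref{ltop4}) I would use only the universal property of the join, namely that $\bigvee_{i\in I}\theta_i\subseteq\pi$ holds iff $\theta_i\subseteq\pi$ for every $i\in I$; no primeness is needed, and the corresponding $D$-identity is again obtained by complementation.

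The crux is the reverse implication in (\ref{ltop5}). The forward direction ``$\sigma\supseteq\tau\Rightarrow V(\sigma)\subseteq V(\tau)$'' is monotonicity once more. For ``$V(\sigma)\subseteq V(\tau)\Rightarrow\sigma\supseteq\tau$'', I would split according to whether $\sigma=\nabla_{\cal A}$ (in which case $\tau\subseteq\nabla_{\cal A}=\sigma$ trivially) or $\sigma$ is proper; in the latter case Lemma \ref{folclor}, (\ref{folclor3}), gives $\sigma=\bigcap_{\pi\in V(\sigma)}\pi$, and since each such $\pi$ also lies in $V(\tau)$, i.e.\ $\tau\subseteq\pi$, we conclude $\tau\subseteq\bigcap_{\pi\in V(\sigma)}\pi=\sigma$. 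This is the single place where (H) is indispensable, and I expect it to be the main obstacle, the rest being bookkeeping. The equivalence with $D(\sigma)\supseteq D(\tau)$ is complementation. Then (\ref{ltop6}) is pure antisymmetry applied twice to (\ref{ltop5}). Finally, (\ref{ltop7}) is assembled directly: $\emptyset=D(\Delta_{\cal A})$ and ${\rm Spec}({\cal A})=D(\nabla_{\cal A})$ by (\ref{ltop2}) and (\ref{ltop1}); stability under finite intersections is $D(\sigma)\cap D(\tau)=D(\sigma\cap\tau)$ from (\ref{ltop3}); and stability under arbitrary unions is $\bigcup_{i\in I}D(\theta_i)=D(\bigvee_{i\in I}\theta_i)$ from (\ref{ltop4}).
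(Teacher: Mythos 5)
Your proposal is correct, and on items (\ref{ltop1})--(\ref{ltop4}), (\ref{ltop6}) and (\ref{ltop7}) it coincides with the paper's own proof: the same appeal to Lemma \ref{folclor}, (\ref{folclor1}) and (\ref{folclor2}), for (\ref{ltop1}), to $\bigcap_{\pi\in{\rm Spec}({\cal A})}\pi=\Delta_{\cal A}$ (Lemma \ref{folclor}, (\ref{folclor3})) for (\ref{ltop2}), to primality for (\ref{ltop3}), to the universal property of the join for (\ref{ltop4}), and the same assembly of the topology in (\ref{ltop7}) from the preceding items. The genuine divergence is (\ref{ltop5}), which you rightly single out as the crux. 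The paper settles the nontrivial direction by a ``generic point'' shortcut: it asserts $\tau\in V(\tau)$ and infers from an inclusion of $V$--sets that $\tau\in V(\sigma)$, i.e. $\sigma\subseteq\tau$ (with the inclusion directions moreover garbled relative to the enunciation). That shortcut is legitimate only when $\tau$ is itself a prime congruence, since $V(\tau)$ contains prime congruences only; for a general $\tau$ the claim $\tau\in V(\tau)$ fails (take $\tau=\Delta_{\cal A}$ in any algebra in which $\Delta_{\cal A}$ is not prime). Your argument --- dispose of $\sigma=\nabla_{\cal A}$ trivially and, for proper $\sigma$, write $\sigma=\bigcap_{\pi\in V(\sigma)}\pi$ by Lemma \ref{folclor}, (\ref{folclor3}), then note that every such $\pi$ lies in $V(\tau)$ and hence contains $\tau$ --- is the correct repair, and it is exactly the mechanism the paper itself uses for (\ref{ltop2}); so your route buys rigor precisely where the paper's one--line argument is deficient. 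The only blemish is your side remark that (\ref{ltop5}) is ``the single place where (H) is indispensable'': your own treatments of (\ref{ltop1}) and (\ref{ltop2}) also invoke Lemma \ref{folclor}, which rests on (H); this is an inaccuracy of commentary, not a gap in the proof.
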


\begin{proof} (\ref{ltop1}) $D(\sigma )={\rm Spec}({\cal A})$ iff $V(\sigma )=\emptyset $ iff $\sigma =\nabla _{\cal A}$, according to Lemma \ref{folclor}, (\ref{folclor1}) and (\ref{folclor2}).

\noindent (\ref{ltop2}) By Lemma \ref{folclor}, (\ref{folclor3}), $\displaystyle \bigcap _{\pi \in {\rm Spec}({\cal A})}\pi =\Delta _{\cal A}$. $D(\sigma )=\emptyset $ iff $V(\sigma )={\rm Spec}({\cal A})$ iff $\sigma \subseteq \pi $ for all $\pi \in {\rm Spec}({\cal A})$ iff $\displaystyle \sigma \subseteq \bigcap _{\pi \in {\rm Spec}({\cal A})}\pi =\Delta _{\cal A}$ iff $\sigma =\Delta _{\cal A}$.

\noindent (\ref{ltop3}) Every $\pi \in {\rm Spec}({\cal A})$ satisfies: $\pi \in V(\sigma )\cup V(\tau )$ iff $\sigma \subseteq \pi $ or $\tau \subseteq \pi $ iff $\sigma \cap \tau \subseteq \pi $ iff $\pi \in V(\sigma \cap \tau )$. Thus $V(\sigma \cap \tau )=V(\sigma )\cup V(\tau )$, hence $D(\sigma \cap \tau )=D(\sigma )\cap D(\tau )$.

\noindent (\ref{ltop4}) Every $\phi \in {\rm Con}({\cal A})$ satisfies: $\displaystyle \phi \in \bigcap _{i\in I}V(\theta _i)$ iff, for all $i\in I$, $\theta _i\subseteq \phi $ iff $\displaystyle \bigvee _{i\in I}\theta _i\subseteq \phi $ iff $\displaystyle \phi \in V(\bigvee _{i\in I}\theta _i)$. Thus $\displaystyle V(\bigvee _{i\in I}\theta _i)=\bigcap _{i\in I}V(\theta _i)$, hence $\displaystyle D(\bigvee _{i\in I}\theta _i)=\bigcup _{i\in I}D(\theta _i)$.

\noindent (\ref{ltop5}) $D(\sigma )\supseteq D(\tau )$ iff $V(\sigma )\subseteq V(\tau )$. $\sigma \subseteq \tau $ clearly implies $V(\sigma )\subseteq V(\tau )$. $\tau \in V(\tau )$, therefore $V(\sigma )\subseteq V(\tau )$ implies $\tau \in V(\sigma )$, that is $\sigma \subseteq \tau $.

\noindent (\ref{ltop6}) By (\ref{ltop5}).

\noindent (\ref{ltop7}) By (\ref{ltop1}), (\ref{ltop2}), (\ref{ltop3}) and (\ref{ltop4}).\end{proof}

\begin{lemma}
Let $\sigma ,\tau \in {\rm Con}({\cal A})$. Then: $D(\sigma )=V(\tau )$ iff $\sigma ,\tau \in {\cal B}({\rm Con}({\cal A}))$ and $\tau =\neg \, \sigma $.\label{lprep}\end{lemma}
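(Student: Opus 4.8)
The plan is to observe that, by definition, $V(\tau )={\rm Spec}({\cal A})\setminus D(\tau )$, so the hypothesis $D(\sigma )=V(\tau )$ is exactly the statement that the two sets $D(\sigma )$ and $D(\tau )$ are complementary subsets of ${\rm Spec}({\cal A})$, that is, $D(\sigma )\cap D(\tau )=\emptyset $ and $D(\sigma )\cup D(\tau )={\rm Spec}({\cal A})$. The entire argument then rests on translating this set--theoretic complementarity into the lattice--theoretic statement that $\sigma $ and $\tau $ are complements of one another in ${\rm Con}({\cal A})$, using the fact that $D$ carries meets to intersections and joins to unions (Lemma \ref{ltop}).

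For the forward implication, I would assume $D(\sigma )=V(\tau )$, so that $D(\sigma )\cap D(\tau )=\emptyset $ and $D(\sigma )\cup D(\tau )={\rm Spec}({\cal A})$. By Lemma \ref{ltop}, (\ref{ltop3}), $D(\sigma \cap \tau )=D(\sigma )\cap D(\tau )=\emptyset $, whence $\sigma \cap \tau =\Delta _{\cal A}$ by Lemma \ref{ltop}, (\ref{ltop2}); dually, by Lemma \ref{ltop}, (\ref{ltop4}), $D(\sigma \vee \tau )=D(\sigma )\cup D(\tau )={\rm Spec}({\cal A})$, whence $\sigma \vee \tau =\nabla _{\cal A}$ by Lemma \ref{ltop}, (\ref{ltop1}). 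These two identities say precisely that $\sigma $ and $\tau $ are mutually complementary in the lattice ${\rm Con}({\cal A})$; since ${\cal A}$ is congruence--distributive, ${\rm Con}({\cal A})$ is distributive and complements are unique, so $\sigma ,\tau \in {\cal B}({\rm Con}({\cal A}))$ and $\tau =\neg \, \sigma $.

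For the converse, I would assume $\sigma ,\tau \in {\cal B}({\rm Con}({\cal A}))$ with $\tau =\neg \, \sigma $, so that $\sigma \cap \tau =\Delta _{\cal A}$ and $\sigma \vee \tau =\nabla _{\cal A}$. Applying Lemma \ref{ltop}, (\ref{ltop3}) and (\ref{ltop2}), gives $D(\sigma )\cap D(\tau )=D(\sigma \cap \tau )=D(\Delta _{\cal A})=\emptyset $, and applying Lemma \ref{ltop}, (\ref{ltop4}) and (\ref{ltop1}), gives $D(\sigma )\cup D(\tau )=D(\sigma \vee \tau )=D(\nabla _{\cal A})={\rm Spec}({\cal A})$. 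Hence $D(\sigma )$ and $D(\tau )$ partition ${\rm Spec}({\cal A})$, so $D(\sigma )={\rm Spec}({\cal A})\setminus D(\tau )=V(\tau )$, as required. I do not expect a genuine obstacle here: the argument is symmetric in $\sigma $ and $\tau $ and amounts to the single observation that $D$ is a lattice morphism onto the topology which is order--reflecting (Lemma \ref{ltop}, (\ref{ltop6})), so complementarity transfers in both directions. The one point that must not be overlooked is the appeal to distributivity of ${\rm Con}({\cal A})$, which is exactly what upgrades ``$\sigma $ and $\tau $ are mutually complementary'' to ``$\tau =\neg \, \sigma $ with both elements in the Boolean center''.
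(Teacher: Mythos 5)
Your proof is correct and follows essentially the same route as the paper: both translate the set--theoretic complementarity of $D(\sigma )$ and $D(\tau )$ in ${\rm Spec}({\cal A})$ into the lattice--theoretic complementarity $\sigma \cap \tau =\Delta _{\cal A}$, $\sigma \vee \tau =\nabla _{\cal A}$ via Lemma \ref{ltop}. The only cosmetic difference is that the paper writes this as a single chain of equivalences (using the injectivity of $D$ from Lemma \ref{ltop}, (\ref{ltop6})), whereas you argue the two implications separately.
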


\begin{proof} By Lemma \ref{ltop}, (\ref{ltop1}), (\ref{ltop2}), (\ref{ltop3}), (\ref{ltop4}) and (\ref{ltop6}), the following hold: $D(\sigma )=V(\tau )$ iff $D(\sigma )={\rm Spec}({\cal A})\setminus D(\tau )$ iff $D(\sigma )\cup  D(\tau )={\rm Spec}({\cal A})$ and $D(\sigma )\cap  D(\tau )=\emptyset $ iff $D(\sigma \vee \tau )=D(\nabla _{\cal A})$ and $D(\sigma \cap \tau )=D(\Delta _{\cal A})$ iff $\sigma \vee \tau =\nabla _{\cal A}$ and $\sigma \cap \tau =\Delta _{\cal A}$ iff $\sigma ,\tau \in {\cal B}({\rm Con}({\cal A}))$ and $\tau =\neg \, \sigma $.\end{proof}

\begin{lemma}
The set of the clopen sets of the topological space $({\rm Spec}({\cal A}),\{D(\theta )\ |\ \theta \in {\rm Con}({\cal A})\})$ is $\{V(\alpha )\ |\ \alpha \in {\cal B}({\rm Con}({\cal A})\}))$.\label{lclp}\end{lemma}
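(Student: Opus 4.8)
The plan is to read off the clopen sets straight from the definition of the topology and then reduce the whole statement to Lemma \ref{lprep}. By construction, the open sets of the space $({\rm Spec}({\cal A}),\{D(\theta )\ |\ \theta \in {\rm Con}({\cal A})\})$ are exactly the sets $D(\sigma )$ with $\sigma \in {\rm Con}({\cal A})$. Taking complements and using $D(\theta )={\rm Spec}({\cal A})\setminus V(\theta )$, the closed sets are exactly the sets $V(\tau )$ with $\tau \in {\rm Con}({\cal A})$. Hence a subset $C\subseteq {\rm Spec}({\cal A})$ is clopen iff it admits both presentations simultaneously, i.e. iff there exist $\sigma ,\tau \in {\rm Con}({\cal A})$ with $C=D(\sigma )=V(\tau )$; in particular this forces $D(\sigma )=V(\tau )$.

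For the inclusion of the clopen sets into $\{V(\alpha )\ |\ \alpha \in {\cal B}({\rm Con}({\cal A}))\}$, I would take a clopen $C$ and pick $\sigma ,\tau $ with $C=D(\sigma )=V(\tau )$. Then $D(\sigma )=V(\tau )$, so Lemma \ref{lprep} yields $\tau \in {\cal B}({\rm Con}({\cal A}))$ (together with $\sigma =\neg \, \tau $); thus $C=V(\tau )$ with $\tau $ a Boolean congruence, as required. For the reverse inclusion, given $\alpha \in {\cal B}({\rm Con}({\cal A}))$ I would set $\sigma =\neg \, \alpha $, so that $\sigma ,\alpha \in {\cal B}({\rm Con}({\cal A}))$ and $\alpha =\neg \, \sigma $; Lemma \ref{lprep} then gives $D(\neg \, \alpha )=V(\alpha )$, exhibiting $V(\alpha )$ as $D(\neg \, \alpha )$ (open) and as $V(\alpha )$ (closed), hence clopen. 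Combining the two inclusions gives the claimed equality.

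The argument is essentially bookkeeping once Lemma \ref{lprep} is in hand, since that lemma already packages precisely the characterization of pairs $(\sigma ,\tau )$ with $D(\sigma )=V(\tau )$ in terms of the Boolean center. The only place asking for a little care is the translation ``$C$ clopen iff $D(\sigma )=V(\tau )$ for some $\sigma ,\tau $'', which relies on the preliminary observation that the closed sets are exactly the $V(\tau )$; I do not expect any genuine obstacle beyond making this reduction explicit.
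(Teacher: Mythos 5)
Your proof is correct and follows essentially the same route as the paper's: identify the closed sets as the $V(\tau )$, characterize clopen sets as those admitting both a $D(\sigma )$ and a $V(\tau )$ presentation, and invoke Lemma \ref{lprep} to translate this into membership of the Boolean center. The only difference is that you spell out the two inclusions separately, whereas the paper compresses them into a single chain of equivalences.
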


\begin{proof} The set of the closed sets of the topological space $({\rm Spec}({\cal A}),\{D(\theta )\ |\ \theta \in {\rm Con}({\cal A})\})$ is $\{V(\theta )\ |\ \theta \in {\rm Con}({\cal A})\})$. Hence a subset $S\subseteq {\rm Spec}({\cal A})$ is clopen in this topological space iff $S=D(\sigma )=V(\tau )$ for some $\sigma ,\tau \in {\rm Con}({\cal A})$, which is equivalent to $\sigma ,\tau \in {\cal B}({\rm Con}({\cal A})$ and $\tau =\neg \, \sigma $ according to Lemma \ref{lprep}. Hence, by Lemma \ref{lprep}, $S$ is clopen iff $S=V(\tau )$ for some $\tau \in {\cal B}({\rm Con}({\cal A})$.\end{proof}

We recall that a topological space $(X,{\cal T})$ is said to be {\em strongly zero--dimensional} iff, for every $U,V\in {\cal T}$ such that $X=U\cup V$, there exist two clopen sets $C$ and $D$ of $(X,{\cal T})$ such that $C\subseteq U$, $D\subseteq V$, $C\cap D=\emptyset $ and $C\cup D=X$.

\begin{note}
The equivalence between statements (\ref{blpbnorm1}) and (\ref{blpbnorm2}) in the next proposition is implied by \cite[Lemma $4$]{banasch} in the particular case when the intersection in ${\rm Con}({\cal A})$ is completely distributive with respect to the join, but, for the sake of completeness, we shall provide a proof for it in our setting.\end{note}

\begin{proposition}
The following are equivalent:

\begin{enumerate}
\item\label{blpbnorm1} ${\cal A}$ has CBLP;
\item\label{blpbnorm2} the lattice ${\rm Con}({\cal A})$ is B--normal;
\item\label{blpbnorm0} for any $n\in \N ^*$ and every $\phi _1,\ldots ,\phi _n\in {\rm Con}({\cal A})$ such that $\phi _1\vee \ldots \vee \phi _n=\nabla _{\cal A}$, there exist $\alpha _1,\ldots ,\alpha _n\in {\cal B}({\rm Con}({\cal A}))$ such that $\alpha _1\cap \ldots \cap \alpha _n=\Delta _{\cal A}$ and $\phi _1\vee \alpha _1=\ldots =\phi _n\vee \alpha _n=\nabla _{\cal A}$; 
\item\label{blpbnorm3} for every $\phi, \psi \in {\cal K}({\cal A})$ such that $\phi \vee \psi =\nabla _{\cal A}$, there exist $\alpha ,\beta \in {\cal B}({\rm Con}({\cal A}))$ such that $\alpha \cap \beta =\Delta _{\cal A}$ and $\phi \vee \alpha =\psi \vee \beta =\nabla _{\cal A}$;
\item\label{blpbnorm5} for any $n\in \N ^*$ and every $\phi _1,\ldots ,\phi _n\in {\cal K}({\cal A})$ such that $\phi _1\vee \ldots \vee \phi _n=\nabla _{\cal A}$, there exist $\alpha _1,\ldots ,\alpha _n\in {\cal B}({\rm Con}({\cal A}))$ such that $\alpha _1\cap \ldots \cap \alpha _n=\Delta _{\cal A}$ and $\phi _1\vee \alpha _1=\ldots =\phi _n\vee \alpha _n=\nabla _{\cal A}$;
\item\label{blpbnorm6} the topological space $({\rm Spec}({\cal A}),\{D(\theta )\ |\ \theta \in {\rm Con}({\cal A})\})$ is strongly zero--dimensional.\end{enumerate}\label{blpbnorm}\end{proposition}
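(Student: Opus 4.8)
The plan is to treat B--normality of ${\rm Con}({\cal A})$, item (\ref{blpbnorm2}), as the central condition and route everything through it. Concretely, I would prove the equivalences (\ref{blpbnorm1})$\Leftrightarrow$(\ref{blpbnorm2}) and (\ref{blpbnorm2})$\Leftrightarrow$(\ref{blpbnorm6}) separately, and close the chain (\ref{blpbnorm2})$\Rightarrow$(\ref{blpbnorm0})$\Rightarrow$(\ref{blpbnorm5})$\Rightarrow$(\ref{blpbnorm3})$\Rightarrow$(\ref{blpbnorm2}), in which the two middle arrows are mere specializations ((\ref{blpbnorm0}) restricted to finitely generated congruences gives (\ref{blpbnorm5}), and (\ref{blpbnorm5}) with $n=2$ gives (\ref{blpbnorm3})).

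For (\ref{blpbnorm1})$\Rightarrow$(\ref{blpbnorm2}): given $x\vee y=\nabla_{\cal A}$, set $\theta=x\cap y$; then $x$ and $y$ are mutually complementary elements of ${\cal B}([\theta))$, and CBLP of $\theta$ (surjectivity of ${\cal B}(v_{\textstyle\theta})$) yields $\alpha\in{\cal B}({\rm Con}({\cal A}))$ with $\alpha\vee\theta=x$; taking $e=\neg\,\alpha$, $f=\alpha$ and using $\theta\subseteq y$ gives $e\cap f=\Delta_{\cal A}$, $x\vee e=\nabla_{\cal A}$ and $y\vee f=y\vee x=\nabla_{\cal A}$. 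Conversely, for (\ref{blpbnorm2})$\Rightarrow$(\ref{blpbnorm1}), given $\theta$ and $\gamma\in{\cal B}([\theta))$ with $[\theta)$--complement $\gamma'=\neg_{\theta}\gamma$, I apply B--normality to $\gamma\vee\gamma'=\nabla_{\cal A}$ to obtain $e,f$; then $f\subseteq\neg\,e\subseteq\gamma$, and the distributive computation $\gamma=\gamma\cap(\gamma'\vee f)=(\gamma\cap\gamma')\vee(\gamma\cap f)=\theta\vee f$ shows $\alpha=f$ lifts $\gamma$, so ${\cal B}(v_{\textstyle\theta})$ is onto. The implication (\ref{blpbnorm2})$\Rightarrow$(\ref{blpbnorm0}) I would prove by induction on $n$, splitting $\phi_1\vee\ldots\vee\phi_{n+1}=\nabla_{\cal A}$ as $(\phi_1\vee\ldots\vee\phi_n)\vee\phi_{n+1}=\nabla_{\cal A}$, extracting $e,f$ from B--normality, applying the inductive hypothesis to $\phi_1\vee e,\ldots,\phi_n\vee e$ to get $\beta_1,\ldots,\beta_n$, and setting $\alpha_i=e\vee\beta_i$ for $i\le n$, $\alpha_{n+1}=f$; the identity $\bigcap_{i=1}^n(e\vee\beta_i)=e\vee\bigcap_{i=1}^n\beta_i=e$ (distributivity of ${\rm Con}({\cal A})$) together with $e\cap f=\Delta_{\cal A}$ yields $\bigcap_{i=1}^{n+1}\alpha_i=\Delta_{\cal A}$.

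The one arrow that genuinely uses hypothesis (H) is (\ref{blpbnorm3})$\Rightarrow$(\ref{blpbnorm2}): given arbitrary $x,y$ with $x\vee y=\nabla_{\cal A}$, I would write $x$ and $y$ as joins of compact congruences below them; since ${\rm Con}({\cal A})$ is algebraic and $\nabla_{\cal A}$ is compact by (H), finitely many of these join to $\nabla_{\cal A}$, so there are $\phi,\psi\in{\cal K}({\cal A})$ with $\phi\subseteq x$, $\psi\subseteq y$ and $\phi\vee\psi=\nabla_{\cal A}$; feeding this compact pair into (\ref{blpbnorm3}) and enlarging back ($x\vee\alpha\supseteq\phi\vee\alpha=\nabla_{\cal A}$) recovers B--normality. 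Finally, for (\ref{blpbnorm2})$\Leftrightarrow$(\ref{blpbnorm6}) I would translate through the order--reversing dictionary of Lemma \ref{ltop} and the clopen description $\{V(\alpha)\mid\alpha\in{\cal B}({\rm Con}({\cal A}))\}$ of Lemma \ref{lclp}: a covering of ${\rm Spec}({\cal A})$ by opens $D(\sigma),D(\tau)$ corresponds to $\sigma\vee\tau=\nabla_{\cal A}$, a disjoint clopen cover $V(\alpha),V(\beta)$ corresponds to $\alpha\cap\beta=\Delta_{\cal A}$ and $\alpha\vee\beta=\nabla_{\cal A}$, and a containment $V(\alpha)\subseteq D(\sigma)$ to $\alpha\vee\sigma=\nabla_{\cal A}$.

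The main obstacle I anticipate is exactly this topological equivalence (\ref{blpbnorm2})$\Leftrightarrow$(\ref{blpbnorm6}): strong zero--dimensionality demands a \emph{disjoint} clopen refinement with $C\cup D=X$ and $C\cap D=\emptyset$, whereas B--normality only supplies $e,f$ with $e\cap f=\Delta_{\cal A}$ and \emph{not} $e\vee f=\nabla_{\cal A}$, so $V(e),V(f)$ need not partition the spectrum. The fix is to reduce to finding a single clopen $C=V(\alpha)$ with $V(\tau)\subseteq C\subseteq D(\sigma)$ and take $D=X\setminus C$; the required element is $\alpha=\neg\,f$, for which $\alpha\subseteq\tau$ (from $\tau\vee f=\nabla_{\cal A}$) and $\sigma\vee\alpha\supseteq\sigma\vee e=\nabla_{\cal A}$ (from $e\subseteq\neg\,f$). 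Getting this refinement right, rather than the routine induction of (\ref{blpbnorm2})$\Rightarrow$(\ref{blpbnorm0}) or the compactness step, is where the care is needed.
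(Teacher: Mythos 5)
Your proposal is correct, and several of its arrows coincide with the paper's own: your (\ref{blpbnorm1})$\Leftrightarrow $(\ref{blpbnorm2}) is the paper's argument (lift the complemented element of $[\theta )$ through ${\cal B}(v_{\textstyle \theta })$, and conversely recover the lift from B--normality via $f\subseteq \neg \, e\subseteq \gamma $ and $\gamma =\theta \vee f$); your (\ref{blpbnorm3})$\Rightarrow $(\ref{blpbnorm2}) is the paper's compactness argument from (H) and algebraicity; and the subtlety you flag in (\ref{blpbnorm2})$\Rightarrow $(\ref{blpbnorm6}) --- that B--normality yields $e\cap f=\Delta _{\cal A}$ but not $e\vee f=\nabla _{\cal A}$, so the clopen cover $V(e),V(f)$ need not be disjoint --- is resolved by exactly the paper's device: the paper passes to the complementary pair $(D(\alpha ),D(\neg \, \alpha ))$, which is your $C=V(\neg \, f)$, $D=V(f)$. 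Where you genuinely differ is in the treatment of the $n$--ary conditions (\ref{blpbnorm0}) and (\ref{blpbnorm5}). The paper proves (\ref{blpbnorm2})$\Rightarrow $(\ref{blpbnorm0}) by invoking \cite[Proposition $12$]{blpiasi}, and separately proves (\ref{blpbnorm3})$\Rightarrow $(\ref{blpbnorm5}) by an induction that must stay inside ${\cal K}({\cal A})$ and therefore needs Lemma \ref{boolfg} (${\cal B}({\rm Con}({\cal A}))\subseteq {\cal K}({\cal A})$) to guarantee that $\phi _i\vee \alpha $ is again compact. You instead run a single induction, at the level of arbitrary congruences, to get (\ref{blpbnorm2})$\Rightarrow $(\ref{blpbnorm0}) directly --- your key identity $\bigcap _{i=1}^n(e\vee \beta _i)=e\vee \bigcap _{i=1}^n\beta _i$ holds by distributivity of ${\rm Con}({\cal A})$ --- after which (\ref{blpbnorm0})$\Rightarrow $(\ref{blpbnorm5})$\Rightarrow $(\ref{blpbnorm3}) are mere specializations and the cycle is closed by the compactness step. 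This buys you a self--contained proof (no external citation) in which Lemma \ref{boolfg} is not needed at all, whereas the paper's arrangement keeps each pair of equivalences modular and reuses a published lattice--theoretic result rather than reproving it. The only omission, a cosmetic one, is the base case $n=1$ of your induction ($\phi _1=\nabla _{\cal A}$, take $\alpha _1=\Delta _{\cal A}$), which is immediate.
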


\begin{proof} (\ref{blpbnorm1})$\Rightarrow $(\ref{blpbnorm2}): Let $\phi ,\psi \in {\rm Con}({\cal A})$ such that $\phi \cap \psi =\nabla _{\cal A}$. Denote $v={\cal B}(v_{(\phi \cap \psi )}):{\cal B}({\rm Con}({\cal A}))\rightarrow {\cal B}([\phi \vee \psi ))$. By Remark \ref{usiv}, the Boolean morphism $v$ is surjective. $\varphi ,\psi \in {\cal B}([\phi \vee \psi ))$, because $\psi $ is the complement of $\phi $ in the lattice $[\phi \vee \psi )$, that is $\psi =\neg _{(\phi \vee \psi )}\, \phi $. Thus there exists $\alpha \in {\cal B}({\cal A})$ such that $\alpha \vee (\psi \cap \phi )=v(\alpha )=\phi =\neg _{(\phi \vee \psi )}\, \psi $, hence $\psi =\neg _{(\phi \vee \psi )}\, \phi =\neg _{(\phi \vee \psi )}\, v(\alpha )=v(\neg \, \alpha )=\neg \, \alpha\vee (\phi \cap \psi )$. Therefore $\phi \vee \neg \, \alpha =\alpha \vee (\psi \cap \phi )\vee \neg \, \alpha =(\psi \cap \phi )\vee \alpha \vee \neg \, \alpha =(\psi \cap \phi )\vee \nabla _{\cal A}=\nabla _{\cal A}$, $\psi \vee \alpha =\neg \, \alpha\vee (\phi \cap \psi )\vee \alpha =(\phi \cap \psi )\vee \alpha \vee \neg \, \alpha =(\phi \cap \psi )\vee \nabla _{\cal A}=\nabla _{\cal A}$ and, of course, $\alpha \cap \neg \, \alpha =\Delta _{\cal A}$. So ${\rm Con}({\cal A})$ is B--normal.

\noindent (\ref{blpbnorm2})$\Rightarrow $(\ref{blpbnorm1}): Let $\theta \in {\rm Con}({\cal A})$ and let us denote by $v={\cal B}(v_{\theta }):{\cal B}({\rm Con}({\cal A}))\rightarrow {\cal B}([\theta ))$. Let $\phi \in {\cal B}([\theta ))$, so that there exists $\psi \in {\rm Con}({\cal A})$ such that $\phi \vee \psi =\nabla _{\cal A}$ and $\phi \wedge \psi =\theta $, that is $\psi =\neg _{\theta }\, \phi $. Since $\phi \vee \psi =\nabla _{\cal A}$ and ${\rm Con}({\cal A})$ is B--normal, it follows that there exist $\alpha ,\beta \in {\cal B}({\rm Con}({\cal A}))$ such that $\alpha \cap \beta =\Delta _{\cal A}$ and $\alpha \vee \phi =\beta \vee \psi =\nabla _{\cal A}$. Then $\theta \vee \alpha =(\phi \cap \psi )\vee \alpha =(\phi \vee \alpha )\cap (\psi \vee \alpha )=\nabla _{\cal A}\cap (\psi \vee \alpha )=\psi \vee \alpha $. $\alpha \cap \beta =\Delta _{\cal A}$ in the Boolean algebra ${\cal B}({\rm Con}({\cal A}))$, thus $\beta \leq \neg \, \alpha $, so, since $\beta \vee \psi =\nabla _{\cal A}$, it follows that $\nabla _{\cal A}=\beta \vee \psi \leq \neg \, \alpha \vee \psi $, hence $\neg \, \alpha \vee \psi =\nabla _{\cal A}$. Therefore $\alpha =\alpha \cap (\neg \, \alpha \vee \psi)=(\alpha \cap \neg \, \alpha )\vee (\alpha \cap \psi )=\Delta _{\cal A}\vee (\alpha \cap \psi )=\alpha \cap \psi $, thus $\alpha \leq \psi $, so $\psi \vee \alpha =\psi $. Hence $\theta \vee \alpha =\psi \vee \alpha =\psi $, so that $v(\alpha )=\theta \vee \alpha =\psi $, thus $v(\neg \, \alpha )=\neg _{\theta }\, v(\alpha )=\neg _{\theta }\, \psi =\phi $. Hence $v$ is surjective, that is $\theta $ has CBLP. Therefore ${\cal A}$ has CBLP.

\noindent (\ref{blpbnorm2})$\Rightarrow $(\ref{blpbnorm3}): Trivial.

\noindent (\ref{blpbnorm3})$\Rightarrow $(\ref{blpbnorm2}): Let $\phi, \psi \in {\rm Con}({\cal A})$ such that $\phi \vee \psi =\nabla _{\cal A}$. Since the lattice ${\rm Con}({\cal A})$ is algebraic, it follows that there exist $(\phi _i)_{i\in I}\subseteq {\cal K}({\cal A})$ and $(\psi _j)_{j\in J}\subseteq {\cal K}({\cal A})$ such that $\displaystyle \phi =\bigvee _{i\in I}\phi _i$ and $\displaystyle \psi =\bigvee _{j\in J}\psi _j$, thus $\displaystyle \bigvee _{i\in I}\phi _i\vee \bigvee _{j\in J}\psi _j=\nabla _{\cal A}$. Since $\nabla _{\cal A}$ is compact, it follows that there exist $I_0\subseteq I$ and $J_0\subseteq J$ such that $I_0$ and $J_0$ are finite and $\displaystyle \bigvee _{i\in I_0}\phi _i\vee \bigvee _{j\in J_0}\psi _j=\nabla _{\cal A}$. Let $\displaystyle \phi _0=\bigvee _{i\in I_0}\phi _i$ and $\displaystyle \psi _0=\bigvee _{j\in J_0}\psi _j$. Then $\phi _0\vee \psi _0=\nabla _{\cal A}$ and, clearly, $\phi _0,\psi _0\in {\cal K}({\cal A})$. It follows that there exist $\alpha ,\beta \in {\cal B}({\rm Con}({\cal A}))$ such that $\alpha \cap \beta =\Delta _{\cal A}$ and $\phi _0\vee \alpha =\psi _0\vee \beta =\nabla _{\cal A}$. Since, obviously, $\phi _0\subseteq \phi$ and $\psi _0\subseteq \psi$, we obtain $\phi \vee \alpha =\psi \vee \beta =\nabla _{\cal A}$.

\noindent (\ref{blpbnorm0})$\Rightarrow $(\ref{blpbnorm2}): Trivial: just take $n=2$.

\noindent (\ref{blpbnorm2})$\Rightarrow $(\ref{blpbnorm0}): Assume that the lattice ${\rm Con}({\cal A})$ is B--normal, and let $n\in \N ^*$ and $\phi _1,\ldots ,\phi _n\in {\rm Con}({\cal A})$ such that $\phi _1\vee \ldots \vee \phi _n=\nabla _{\cal A}$. Then, by \cite[Proposition $12$]{blpiasi}, it follows that there exist $\beta _1,\ldots ,\beta _n\in {\cal B}({\rm Con}({\cal A}))$ such that $\beta _1\vee \ldots \vee \beta _n=\nabla _{\cal A}$, $\beta _i\cap \beta _j=\Delta _{\cal A}$ for all $i,j\in \overline{1,n}$ with $i\neq j$ and $\phi _i\supseteq \beta _i$ for all $i\in \overline{1,n}$. For all $i\in \overline{1,n}$, let $\alpha _i=\neg \, \beta _i\in {\cal B}({\rm Con}({\cal A}))$. Then $\alpha _1\cap \ldots \cap \alpha _n=\neg \, (\beta _1\vee \ldots \vee \beta _n)=\neg \, \nabla _{\cal A}=\Delta _{\cal A}$ and, for all $i\in \overline{1,n}$, $\phi _i\vee \alpha _i\geq \beta _i\vee \alpha _i=\beta _i\vee \neg \, \beta _i=\nabla _{\cal A}$, thus $\phi _i\vee \alpha _i=\nabla _{\cal A}$.

\noindent (\ref{blpbnorm5})$\Rightarrow $(\ref{blpbnorm3}): Trivial: just take $n=2$.

\noindent (\ref{blpbnorm3})$\Rightarrow $(\ref{blpbnorm5}): We apply induction on $n\in \N ^*$. For $n=1$, we have $\phi _1=\nabla _{\cal A}$. We may take $\alpha _1=\Delta _{\cal A}\in {\cal B}({\rm Con}({\cal A}))$, and we get $\phi _1\vee \alpha _1=\nabla _{\cal A}\vee \Delta _{\cal A}=\nabla _{\cal A}$.

Now assume that the statement in (\ref{blpbnorm5}) is valid for some $n\in \N ^*$, and let $\phi _1,\ldots ,\phi _{n+1}\in {\cal K}({\cal A})$ such that $\phi _1\vee \ldots \vee \phi _{n+1}=\nabla _{\cal A}$. Then $\phi _1\vee \ldots \vee \phi _n\in {\cal K}({\cal A})$ and $(\phi _1\vee \ldots \vee \phi _n)\vee \phi _{n+1}=\nabla _{\cal A}$, so, by the hypothesis (\ref{blpbnorm3}), it follows that there exist $\alpha ,\beta \in {\cal B}({\rm Con}({\cal A}))$ such that $\alpha \cap \beta =\Delta _{\cal A}$, $\phi _{n+1}\vee \beta =\nabla _{\cal A}$ and $(\phi _1\vee \alpha )\vee \ldots \vee (\phi _n\vee \alpha )=\phi _1\vee \ldots \vee \phi _n\vee \alpha =\nabla _{\cal A}$. Then $\alpha \in {\cal K}({\cal A})$ by Lemma \ref{boolfg}, hence $\phi _1\vee \alpha ,\ldots ,\phi _n\vee \alpha \in {\cal K}({\cal A})$, thus, by the induction hypothesis, it follows that there exist $\gamma _1,\ldots ,\gamma _n\in {\cal B}({\rm Con}({\cal A}))$ such that $\gamma _1\cap \ldots \cap \gamma _n=\Delta _{\cal A}$ and $\phi _1\vee \alpha \vee \gamma _1=\ldots =\phi _n\vee \alpha \vee \gamma _n=\nabla _{\cal A}$. Let $\alpha _{n+1}=\beta \in {\cal B}({\rm Con}({\cal A}))$ and, for all $i\in \overline{1,n}$, $\alpha _i=\alpha \vee \gamma _i\in {\cal B}({\rm Con}({\cal A}))$. Then, for all $i\in \overline{1,n+1}$, $\phi _i\vee \alpha _i=\nabla _{\cal A}$, and $\alpha _1\cap \ldots \cap \alpha _{n+1}=(\alpha \vee \gamma _1)\cap (\alpha \vee \gamma _n)\cap \beta =[\alpha \vee (\gamma _1\cap \ldots \cap \gamma _n)]\cap \beta =(\alpha \vee \Delta _{\cal A})\cap \beta =\alpha \cap \beta =\Delta _{\cal A}$.

\noindent (\ref{blpbnorm2})$\Rightarrow $(\ref{blpbnorm6}): Let $U,V\in \{D(\theta )\ |\ \theta \in {\rm Con}({\cal A})\}$ such that $U\cup V={\rm Spec}({\cal A})$, that is $U=D(\sigma )$ and $V=D(\tau )$ for some $\sigma ,\tau \in {\rm Con}({\cal A})$ such that $D(\nabla _{\cal A})={\rm Spec}({\cal A})=D(\sigma )\cup D(\tau )=D(\sigma \vee \tau )$, which means that $\sigma \vee \tau =\nabla _{\cal A}$, according to Lemma \ref{ltop}, (\ref{ltop1}), (\ref{ltop4}) and (\ref{ltop6}). Since the lattice ${\rm Con}({\cal A})$ is B--normal, it follows that there exist $\alpha ,\beta \in {\cal B}({\rm Con}({\cal A}))$ such that $\alpha \cap \beta =\Delta _{\cal A}$ and $\alpha \vee \tau =\beta \vee \sigma =\nabla _{\cal A}$, hence $\beta \subseteq \neg \, \alpha $ and thus $\neg \, \alpha \vee \sigma =\nabla _{\cal A}$. We have obtained $\alpha \vee \tau =\neg \, \alpha \vee \sigma =\nabla _{\cal A}$, thus $\neg \, \alpha \subseteq \tau $ and $\alpha =\neg \, \neg \, \alpha \subseteq \sigma $, therefore $D(\neg \, \alpha )\subseteq D(\tau )=V$ and $D(\alpha )\subseteq D(\sigma )=U$, by Lemma \ref{ltop}, (\ref{ltop5}). By Lemma \ref{ltop}, (\ref{ltop1}), (\ref{ltop2}), (\ref{ltop3}) and (\ref{ltop4}), $D(\alpha )\cap D(\neg \, \alpha )=D(\alpha \cap \neg \, \alpha )=D(\Delta _{\cal A})=\emptyset $ and $D(\alpha )\cup D(\neg \, \alpha )=D(\alpha \vee \neg \, \alpha )=D(\nabla _{\cal A})={\rm Spec}({\cal A})$. By Lemma \ref{lclp}, $D(\alpha )$ and $D(\neg \, \alpha )$ are clopen sets of the topological space $({\rm Spec}({\cal A}),\{D(\theta )\ |\ \theta \in {\rm Con}({\cal A})\})$. Therefore this topological space is strongly zero--dimensional.

\noindent (\ref{blpbnorm6})$\Rightarrow $(\ref{blpbnorm2}): Let $\sigma ,\tau \in {\rm Con}({\cal A})$ such that $\sigma \vee \tau =\nabla _{\cal A}$. Then $D(\sigma )\cup D(\tau )=D(\sigma \vee \tau )=D(\nabla _{\cal A})={\rm Spec}({\cal A})$, according to Lemma \ref{ltop}, (\ref{ltop4}) and (\ref{ltop1}). By the hypothesis of this implication and Lemma \ref{lclp}, it follows that there exist $\alpha ,\beta \in {\cal B}({\rm Con}({\cal A}))$ such that $D(\alpha )\subseteq D(\sigma )$, $D(\beta )\subseteq D(\tau )$, $D(\alpha )\cap D(\beta )=\emptyset $ and $D(\alpha )\cup D(\beta )={\rm Spec}({\cal A})$. Then $D(\alpha )\cup D(\tau )=D(\beta )\cup D(\sigma )={\rm Spec}({\cal A})$. By Lemma \ref{ltop}, (\ref{ltop1}), (\ref{ltop2}), (\ref{ltop3}), (\ref{ltop4}) and (\ref{ltop6}), it follows that $D(\alpha \cap \beta )=D(\Delta _{\cal A})$ and $D(\alpha \vee \tau )=D(\beta \vee \sigma )=D(\nabla _{\cal A})$, thus $\alpha \cap \beta =\Delta _{\cal A}$ and $\alpha \vee \tau =\beta \vee \sigma =\nabla _{\cal A}$, hence the lattice ${\rm Con}({\cal A})$ is B--normal.\end{proof}

\begin{corollary} ${\cal A}$ has CBLP iff, for all $\theta \in {\rm Con}({\cal A})$, ${\cal A}/\theta $ has CBLP.\label{corolar4.7}\end{corollary}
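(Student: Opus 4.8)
The plan is to read this off from the characterization in Proposition \ref{blpbnorm}, which states that an algebra from ${\cal C}$ has CBLP exactly when its congruence lattice is B--normal. The backward implication is immediate: if every quotient ${\cal A}/\theta$ has CBLP, then in particular ${\cal A}/\Delta _{\cal A}$ does, and since ${\cal A}/\Delta _{\cal A}$ is isomorphic to ${\cal A}$ (so that their congruence lattices are isomorphic as bounded lattices, whence B--normality transfers), ${\cal A}$ has CBLP. Thus the entire content of the corollary lies in the forward implication.

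For the forward direction, I would fix $\theta \in {\rm Con}({\cal A})$ and aim to show that ${\rm Con}({\cal A}/\theta )$ is B--normal, whence ${\cal A}/\theta $ has CBLP by Proposition \ref{blpbnorm}. Note that the proposition does apply to ${\cal A}/\theta $: since ${\cal C}$ is an equational class it is closed under quotients, so ${\cal A}/\theta $ is a non--empty algebra of ${\cal C}$ and therefore fulfills (H) by our standing assumption. Because $s_{\textstyle \theta }:{\rm Con}({\cal A}/\theta )\to [\theta )$ is a bounded lattice isomorphism (Section \ref{preliminaries}) and B--normality is a bounded--lattice invariant---it is phrased entirely through $\vee $, $\cap $, the two bounds, and the Boolean center, all preserved by a bounded lattice isomorphism---it suffices to prove that the principal filter $[\theta )$ is B--normal whenever ${\rm Con}({\cal A})$ is.

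The key step, and the only one needing an argument, is that B--normality descends to principal filters. Here I would exploit the bounded lattice morphism $h_{\textstyle \theta }:{\rm Con}({\cal A})\to [\theta )$, $h_{\textstyle \theta }(\phi )=\phi \vee \theta $, from Section \ref{preliminaries}, recalling that the bottom of $[\theta )$ is $\theta $ and its top is $\nabla _{\cal A}$. Given $x,y\in [\theta )$ with $x\vee y=\nabla _{\cal A}$, B--normality of ${\rm Con}({\cal A})$ yields $e,f\in {\cal B}({\rm Con}({\cal A}))$ with $e\cap f=\Delta _{\cal A}$ and $x\vee e=y\vee f=\nabla _{\cal A}$. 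The natural Boolean witnesses inside $[\theta )$ are $h_{\textstyle \theta }(e)=e\vee \theta $ and $h_{\textstyle \theta }(f)=f\vee \theta $: a bounded lattice morphism carries complemented elements to complemented elements, so both lie in ${\cal B}([\theta ))$. One then checks, using that $h_{\textstyle \theta }$ preserves meets and that ${\rm Con}({\cal A})$ is distributive, that $(e\vee \theta )\cap (f\vee \theta )=(e\cap f)\vee \theta =\Delta _{\cal A}\vee \theta =\theta $, the bottom of $[\theta )$, while $x\vee (e\vee \theta )\supseteq x\vee e=\nabla _{\cal A}$ and $y\vee (f\vee \theta )\supseteq y\vee f=\nabla _{\cal A}$. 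This exhibits $[\theta )$ as B--normal and closes the argument.

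I do not anticipate a serious obstacle. Congruence--distributivity ensures ${\rm Con}({\cal A})$ is distributive, so B--normality is the appropriate notion throughout and Proposition \ref{blpbnorm} is available on both sides. The only point to watch is the bookkeeping of bottom and top elements when passing among ${\rm Con}({\cal A})$, $[\theta )$, and ${\rm Con}({\cal A}/\theta )$---in particular that the bottom of $[\theta )$ is $\theta $ rather than $\Delta _{\cal A}$, while its top remains $\nabla _{\cal A}$---which is exactly what the morphism $h_{\textstyle \theta }$ manages automatically once the Boolean witnesses are pushed through it.
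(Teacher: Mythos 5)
Your proposal is correct and follows essentially the same route as the paper: both directions rest on Proposition \ref{blpbnorm} (CBLP iff B--normality of the congruence lattice), the forward implication is proved by pushing the Boolean witnesses through the join--with--$\theta$ morphism (your $h_{\textstyle \theta }$ is literally the paper's $v_{\textstyle \theta }$) to show $[\theta )$ is B--normal and then transferring along the isomorphism $s_{\textstyle \theta }$ to ${\rm Con}({\cal A}/\theta )$, and the converse is handled identically by taking $\theta =\Delta _{\cal A}$. The only cosmetic difference is that the paper phrases the witness computations via the Boolean morphism ${\cal B}(v_{\textstyle \theta })$ while you verify the same identities directly using distributivity and monotonicity.
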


\begin{proof} Assume that ${\cal A}$ has CBLP, which means that ${\cal A}$ is B--normal, according to Proposition \ref{blpbnorm}. Let $\theta \in {\rm Con}({\cal A})$. Let us prove that the lattice $[\theta )$ is B--normal. So let $\phi ,\psi \in [\theta )$ such that $\phi \vee \psi =\nabla _{\cal A}$. Then, since ${\cal A}$ is B--normal, it follows that there exist $\alpha ,\beta \in {\cal B}({\rm Con}({\cal A}))$ such that $\alpha \cap \beta =\Delta _{\cal A}$ and $\phi \vee \alpha =\psi \vee \beta =\nabla _{\cal A}$. Let us consider the Boolean morphism ${\cal B}(v_{\textstyle \theta }):{\cal B}({\rm Con}({\cal A}))\rightarrow {\cal B}([\theta ))$. We have: ${\cal B}(v_{\textstyle \theta })(\alpha ),{\cal B}(v_{\textstyle \theta })(\beta )\in {\cal B}([\theta ))$, ${\cal B}(v_{\textstyle \theta })(\alpha )\cap {\cal B}(v_{\textstyle \theta })(\beta )={\cal B}(v_{\textstyle \theta })(\alpha \cap \beta )={\cal B}(v_{\textstyle \theta })(\Delta _{\cal A})=\theta $, ${\cal B}(v_{\textstyle \theta })(\phi )\vee {\cal B}(v_{\textstyle \theta })(\alpha )={\cal B}(v_{\textstyle \theta })(\phi \vee \alpha )={\cal B}(v_{\textstyle \theta })(\nabla _{\cal A})=\nabla _{\cal A}$ and ${\cal B}(v_{\textstyle \theta })(\psi )\vee {\cal B}(v_{\textstyle \theta })(\beta )={\cal B}(v_{\textstyle \theta })(\psi \vee \beta )={\cal B}(v_{\textstyle \theta })(\nabla _{\cal A})=\nabla _{\cal A}$, hence the lattice $[\theta )$ is B--normal. Since the lattices ${\rm Con}({\cal A}/\theta )$ and $[\theta )$ are isomorphic, it follows that ${\rm Con}({\cal A}/\theta )$ is B--normal, hence ${\cal A}/\theta $ has CBLP, according to Proposition \ref{blpbnorm}. For the converse implication, just take $\theta =\Delta _{\cal A}$, so that ${\cal A}/\theta ={\cal A}/\Delta _{\cal A}$ is isomorphic to ${\cal A}$.\end{proof}

\begin{corollary}
Let $n\in \N ^*$, ${\cal A}_1,\ldots ,{\cal A}_n$ be algebras and $\displaystyle {\cal A}=\prod _{i=1}^n{\cal A}_i$. Then: ${\cal A}$ has CBLP iff, for all $i\in \overline{1,n}$, ${\cal A}_i$ has CBLP.\label{corolar4.8}\end{corollary}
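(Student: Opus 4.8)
The plan is to reduce everything to the characterization of CBLP by B--normality of the congruence lattice (Proposition \ref{blpbnorm}, equivalence of (\ref{blpbnorm1}) and (\ref{blpbnorm2})), and then to transport B--normality across the canonical isomorphism $\mathrm{Con}(\mathcal{A})\cong\prod_{i=1}^n\mathrm{Con}(\mathcal{A}_i)$ supplied by Lemma \ref{totdinbj}, (\ref{totdinbj2}). Recall from that lemma that the isomorphism is the componentwise map $f(\theta_1,\ldots,\theta_n)=\theta_1\times\ldots\times\theta_n$, so that joins and meets in $\mathrm{Con}(\mathcal{A})$ are computed componentwise, and that by Proposition \ref{propO}, (\ref{propO1}) its restriction to the Boolean centers is the Boolean isomorphism $\mathcal{B}(\mathrm{Con}(\mathcal{A}))\cong\prod_{i=1}^n\mathcal{B}(\mathrm{Con}(\mathcal{A}_i))$. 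The whole argument is thus exactly the assertion that a finite direct product of bounded distributive lattices is B--normal iff each factor is B--normal.

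For the forward implication I would use the behaviour of CBLP under quotients. For each $i$, the $i$--th projection $\mathcal{A}\to\mathcal{A}_i$ is a surjective homomorphism whose kernel is $\theta_i=\nabla_{\mathcal{A}_1}\times\ldots\times\nabla_{\mathcal{A}_{i-1}}\times\Delta_{\mathcal{A}_i}\times\nabla_{\mathcal{A}_{i+1}}\times\ldots\times\nabla_{\mathcal{A}_n}$ (a congruence of $\mathcal{A}$ by Lemma \ref{totdinbj}, (\ref{totdinbj1})), so that $\mathcal{A}/\theta_i\cong\mathcal{A}_i$. If $\mathcal{A}$ has CBLP, then $\mathcal{A}/\theta_i$ has CBLP by Corollary \ref{corolar4.7}, and therefore so does $\mathcal{A}_i$. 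Alternatively, one can argue directly: given $x,y\in\mathrm{Con}(\mathcal{A}_i)$ with $x\vee y=\nabla_{\mathcal{A}_i}$, pad them with $\nabla$'s in the remaining coordinates, apply B--normality of $\mathrm{Con}(\mathcal{A})$, and read off the $i$--th components of the resulting Boolean witnesses.

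For the converse I would verify B--normality of $\mathrm{Con}(\mathcal{A})$ directly. Take $\phi,\psi\in\mathrm{Con}(\mathcal{A})$ with $\phi\vee\psi=\nabla_{\mathcal{A}}$ and write $\phi=\phi_1\times\ldots\times\phi_n$, $\psi=\psi_1\times\ldots\times\psi_n$ under $f^{-1}$; since joins are componentwise, $\phi_i\vee\psi_i=\nabla_{\mathcal{A}_i}$ for every $i\in\overline{1,n}$. Using that each $\mathcal{A}_i$ has CBLP, i.e.\ each $\mathrm{Con}(\mathcal{A}_i)$ is B--normal (Proposition \ref{blpbnorm}), choose $\alpha_i,\beta_i\in\mathcal{B}(\mathrm{Con}(\mathcal{A}_i))$ with $\alpha_i\cap\beta_i=\Delta_{\mathcal{A}_i}$ and $\phi_i\vee\alpha_i=\psi_i\vee\beta_i=\nabla_{\mathcal{A}_i}$. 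Then $\alpha=\alpha_1\times\ldots\times\alpha_n$ and $\beta=\beta_1\times\ldots\times\beta_n$ lie in $\mathcal{B}(\mathrm{Con}(\mathcal{A}))$ by Proposition \ref{propO}, (\ref{propO1}), and computing meets and joins componentwise yields $\alpha\cap\beta=\Delta_{\mathcal{A}}$ and $\phi\vee\alpha=\psi\vee\beta=\nabla_{\mathcal{A}}$. Hence $\mathrm{Con}(\mathcal{A})$ is B--normal and $\mathcal{A}$ has CBLP by Proposition \ref{blpbnorm}.

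There is no genuine obstacle here; the only point requiring care is the bookkeeping, namely that $\vee$, $\cap$, complementation, and membership in the Boolean center are all inherited componentwise through $f$ and $g$, which is precisely what Lemma \ref{totdinbj} and Proposition \ref{propO} provide. The choice between the quotient argument and the direct padding argument for the forward direction is purely a matter of taste.
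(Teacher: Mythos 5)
Your proof is correct. It follows the same skeleton as the paper's: reduce CBLP to B--normality of the congruence lattice (Proposition \ref{blpbnorm}) and transport the question across the isomorphism ${\rm Con}({\cal A})\cong\prod_{i=1}^n{\rm Con}({\cal A}_i)$ of Lemma \ref{totdinbj}, (\ref{totdinbj2}). The one real difference is how the statement ``a finite product of lattices is B--normal iff each factor is'' gets handled: the paper disposes of it in one line by citing \cite[Propositions $12$ and $13$]{blpiasi}, whereas you prove it inside the argument --- the converse by assembling Boolean witnesses componentwise via Proposition \ref{propO}, (\ref{propO1}), and the forward direction either by the padding argument (which is essentially the lattice--theoretic proof the citation stands in for) or by noting that ${\cal A}_i\cong{\cal A}/\theta_i$ for the kernel $\theta_i$ of the $i$--th projection and invoking Corollary \ref{corolar4.7}. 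What your version buys is self--containedness, plus the observation (via the quotient route) that the forward implication needs nothing about products beyond each ${\cal A}_i$ being a homomorphic image of ${\cal A}$; what the paper's buys is brevity. All the componentwise bookkeeping you rely on (meets, joins, Boolean centers, and the decomposition of an arbitrary or Boolean congruence of the product) is indeed covered by Lemma \ref{totdinbj} and Proposition \ref{propO}, so there is no gap.
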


\begin{proof} By Lemma \ref{totdinbj}, (\ref{totdinbj2}), ${\rm Con}({\cal A})$ is isomorphic to $\displaystyle \prod _{i=1}^n{\rm Con}({\cal A}_i)$. By \cite[Propositions $12$ and $13$]{blpiasi}, $\displaystyle \prod _{i=1}^n{\rm Con}({\cal A}_i)$ is B--normal iff, for all $i\in \overline{1,n}$, ${\rm Con}({\cal A}_i)$ is B--normal. By Proposition \ref{blpbnorm}, we obtain that: ${\cal A}$ has CBLP iff ${\rm Con}({\cal A})$ is B--normal iff, for all $i\in \overline{1,n}$, ${\rm Con}({\cal A}_i)$ is B--normal, iff, for all $i\in \overline{1,n}$, ${\cal A}_i$ has CBLP.\end{proof}

In the following results, we shall designate most lattices by their underlying sets.

\begin{remark} It is well known and straightforward that, if $L$ is a lattice, $S$ is a sublattice of $L$ and $\theta \in {\rm Con}(L)$, then $\theta \cap S^2\in {\rm Con}(S)$.\label{congrsl}\end{remark}

\begin{remark} Let $L$ be a lattice with $1$ and $M$ be a lattice with $0$. We shall denote by $L\dotplus M$ the ordinal sum between $L$ and $M$. Let $c$ be the common element of $L$ and $M$ in the lattice $L\dotplus M$. Using a notation from Remark \ref{exdsip}, for any $\phi \in {\rm Con}(L)$ and any $\psi \in {\rm Con}(M)$, we shall denote by $\phi \dotplus \psi =eq((L/\phi \setminus c/\phi )\cup \{c/\phi \cup c/\psi \}\cup (M/\psi \setminus c/\psi ))$.

\begin{enumerate}
\item\label{ordsum1} Then ${\rm Con}(L\dotplus M)=\{\phi \dotplus \psi \ |\ \phi \in {\rm Con}(L),\psi \in {\rm Con}(M)\}$.

Indeed, it is straightforward that, for any $\phi \in {\rm Con}(L)$ and any $\psi \in {\rm Con}(M)$, we have $\phi \dotplus \psi \in {\rm Con}(L\dotplus M)$, and the fact that $L\dotplus M$ has no other congruences follows from Remark \ref{congrsl}.

\item\label{ordsum2} It is easy to notice that $(L\dotplus M)/(\Delta _L\dotplus \nabla _M)\cong L$ and $(L\dotplus M)/(\nabla _L\dotplus \Delta _M)\cong M$.

\item\label{ordsum3} It is immediate that, for all $\alpha ,\phi \in {\rm Con}(L)$ and all $\beta ,\psi \in {\rm Con}(M)$, $\alpha \dotplus \beta \subseteq \phi \dotplus \phi $ iff $\alpha \subseteq \phi $ and $\beta \subseteq \psi $ iff $\alpha \times \beta \subseteq \phi \times \psi $, which, together with the form of ${\rm Con}(L\dotplus M)$ established above, shows that the mapping $\phi \dotplus \psi \mapsto \phi \times \psi $ is an order isomorphism, and thus a bounded lattice isomorphism between ${\rm Con}(L\dotplus M)$ and ${\rm Con}(L\times M)$, which, in turn, is isomorphic to ${\rm Con}(L)\times {\rm Con}(M)$ by Lemma \ref{totdinbj}, (\ref{totdinbj2}). From this and Proposition \ref{propO}, (\ref{propO1}), we deduce that the Boolean algebras ${\cal B}({\rm Con}(L\dotplus M))$, ${\cal B}({\rm Con}(L\times M))$ and ${\cal B}({\rm Con}(L))\times {\cal B}({\rm Con}(M))$ are isomorphic.\end{enumerate}

In what follows, we shall keep the notations from this remark.\label{ordsum}\end{remark}

\begin{corollary}\begin{enumerate}
\item\label{ordscudsip1} The lattice of congruences of any ordinal sum of finite lattices which are either distributive or isomorphic to the diamond is a Boolean algebra, hence any such ordinal sum has CBLP (regardless of whether it is finite). The lattice of congruences of any direct product of finite lattices which are either distributive or isomorphic to the diamond is a Boolean algebra, hence any such direct product has CBLP (regardless of whether it is finite).
\item\label{ordscudsip2} If $L$ is a lattice with $1$, $M$ is a lattice with $0$, and $L\dotplus M$ has the CBLP, then both $L$ and $M$ have the CBLP. 
\item\label{ordscudsip3} Any ordinal sum of bounded lattices which contains the pentagon does not have the CBLP.\end{enumerate}\label{ordscudsip}\end{corollary}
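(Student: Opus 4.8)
My plan is to deduce all three parts from two facts that are already available: the admissible building blocks have Boolean congruence lattices, and CBLP is controlled by the behaviour of ${\rm Con}$ under ordinal sums, direct products, and quotients.

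\emph{Part (i).} First I would settle the blocks. If $F$ is a finite distributive lattice, then Corollary \ref{d01cblp} gives ${\cal B}({\rm Con}(F))={\cal K}(F)$; finiteness forces every congruence to be compact, so ${\cal K}(F)={\rm Con}(F)$, and therefore ${\rm Con}(F)$ is a Boolean algebra. For the diamond ${\cal D}$, Remark \ref{exdsip} records ${\rm Con}({\cal D})=\{\Delta _{\cal D},\nabla _{\cal D}\}$, the two--element Boolean algebra. For an ordinal sum of such blocks $(L_k)_{k\in K}$ I would use Remark \ref{ordsum}, (\ref{ordsum3}), which identifies ${\rm Con}(L\dotplus M)$ with ${\rm Con}(L)\times {\rm Con}(M)$; the same reasoning (congruence classes are convex, and consecutive summands overlap only in the cut point) extends to an arbitrary ordinal sum, giving that its congruence lattice is the product $\prod _k{\rm Con}(L_k)$. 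An arbitrary product of Boolean algebras is Boolean, so this congruence lattice is Boolean; hence ${\cal B}({\rm Con})={\rm Con}$ and CBLP follows from Proposition \ref{toatebool}, (\ref{toatebool0}). For a \emph{finite} direct product, Lemma \ref{totdinbj}, (\ref{totdinbj2}), gives ${\rm Con}(\prod _{i=1}^n{\cal A}_i)\cong \prod _{i=1}^n{\rm Con}({\cal A}_i)$, again a finite product of Boolean algebras and hence Boolean, so CBLP holds (this also follows directly from Corollary \ref{corolar4.8}).

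\emph{The main obstacle} is the infinite direct product. Here Lemma \ref{lemal} only provides an injection $\prod _i{\rm Con}({\cal A}_i)\hookrightarrow {\rm Con}(\prod _i{\cal A}_i)$, not an isomorphism, so ${\rm Con}$ need not equal the product and the ``product of Boolean algebras'' shortcut is unavailable; in fact the congruence lattice of an infinite power of the two--element chain is the ideal lattice of a power set Boolean algebra, which is not Boolean, so for infinitely many factors one should target the CBLP conclusion rather than Booleanness of ${\rm Con}$. To obtain CBLP in that case I would verify the compact form of B--normality, Proposition \ref{blpbnorm}, (\ref{blpbnorm3}): writing ${\cal A}=\prod _i{\cal A}_i$, given $\phi ,\psi \in {\cal K}({\cal A})$ with $\phi \vee \psi =\nabla _{\cal A}$, the description of compact congruences of direct products from Section \ref{directproducts} (Lemma \ref{lemaI}, Proposition \ref{propO}) concentrates the relevant data on finitely many coordinates, reducing the lifting problem to the finite--product case already settled, where the factor congruence lattices are Boolean.

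\emph{Parts (ii) and (iii).} Part (ii) is a quotient argument: by Remark \ref{ordsum}, (\ref{ordsum2}), $L\cong (L\dotplus M)/(\Delta _L\dotplus \nabla _M)$ and $M\cong (L\dotplus M)/(\nabla _L\dotplus \Delta _M)$, so $L$ and $M$ are quotients of $L\dotplus M$; since CBLP passes to all quotients by Corollary \ref{corolar4.7}, the CBLP of $L\dotplus M$ forces CBLP of both $L$ and $M$. For part (iii), suppose an ordinal sum $S$ of bounded lattices has the pentagon ${\cal P}$ among its summands and, for contradiction, that $S$ has CBLP. Writing $S=L\dotplus M$ with $L$ the part up to and including ${\cal P}$ and $M$ the part strictly above it, and then $L=L'\dotplus {\cal P}$ with $L'$ the part strictly below ${\cal P}$ (with the obvious conventions when $M$ or $L'$ is empty), two applications of part (ii) show that ${\cal P}$ inherits CBLP; equivalently, ${\cal P}$ is the quotient of $S$ that collapses everything below and everything above it, so Corollary \ref{corolar4.7} applies directly. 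But Remark \ref{exdsip} establishes that ${\cal P}$ does not have CBLP, a contradiction; hence $S$ does not have CBLP.
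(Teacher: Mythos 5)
Your parts (ii) and (iii), and the ordinal--sum and finite--product portions of part (i), are correct and essentially identical to the paper's own proof: Boolean congruence lattices for the blocks (the result from \cite{blyth} for finite distributive lattices, Remark \ref{exdsip} for the diamond), Remark \ref{ordsum}, (\ref{ordsum3}), to transport this to ordinal sums, Proposition \ref{toatebool}, (\ref{toatebool0}), to conclude CBLP, and Corollary \ref{corolar4.7} together with Remark \ref{ordsum}, (\ref{ordsum2}), and Remark \ref{exdsip} for (ii) and (iii). Moreover, your observation about infinite direct products is correct and is in fact sharper than the paper: ${\rm Con}({\cal L}_2^{\N })$ is the ideal lattice of the power set of $\N $, in which the ideal of finite sets has no complement, so the assertion in part (i) that the congruence lattice of such a direct product is Boolean is false whenever infinitely many factors are non--trivial. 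The paper's proof does not address this; it disposes of direct products with the words ``and the same holds'', which Lemma \ref{totdinbj}, (\ref{totdinbj2}), justifies only for finitely many factors.

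The genuine gap is in your proposed repair of the infinite--product case. Lemma \ref{lemaI} and Proposition \ref{propO} are stated and proved only for finite direct products; for infinite ones the paper offers only the injection of Lemma \ref{lemal}, (\ref{lemal2}), and no description of ${\cal K}({\cal A})$. Worse, compact congruences of an infinite product do not ``concentrate the relevant data on finitely many coordinates'': $\nabla _{\cal A}=Cg(0,1)$ is principal, hence compact, and $pr_i(\nabla _{\cal A})=\nabla _{{\cal A}_i}$ for every $i$; the same happens for $Cg(x,y)$ whenever $x$ and $y$ differ in infinitely many coordinates. So your appeal to Proposition \ref{blpbnorm}, (\ref{blpbnorm3}), does not reduce the problem to the finite--product case, and B--normality of ${\rm Con}({\cal A})$ remains unproved. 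A repair that does work: write ${\cal A}\cong B\times {\cal D}^J$, where $B$ is the (possibly infinite) product of the distributive factors and $J$ indexes the diamond factors; $B$ is a bounded distributive lattice, hence has CBLP by Corollary \ref{d01cblp}; since ${\cal D}$ is a single fixed finite \emph{simple} lattice, one checks that every congruence of ${\cal D}^J$ equals $\{(x,y)\ |\ \{j\in J\ |\ x_j\neq y_j\}\in {\cal J}\}$ for a unique ideal ${\cal J}$ of subsets of $J$, so ${\rm Con}({\cal D}^J)$ is again an ideal lattice of a power--set Boolean algebra, whose complemented elements are the principal ideals and which is easily verified to be B--normal; finally, Lemma \ref{totdinbj}, (\ref{totdinbj2}), and the two--factor case of Corollary \ref{corolar4.8} give CBLP for ${\cal A}$. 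With this argument the CBLP conclusion of part (i) survives, but the Booleanness claim there should be weakened to products with only finitely many non--trivial factors.
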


\begin{proof} (\ref{ordscudsip1}) As we have seen in Remark \ref{exdsip}, ${\rm Con}({\cal D})$ is isomorphic to the two--element Boolean algebra. According to a result in \cite{blyth}, the lattice of congruences of any finite distributive lattice is a Boolean algebra. By Remark \ref{ordsum}, (\ref{ordsum3}), it follows that, if $L$ is an ordinal sum of finite lattices which are either distributive or isomorphic to ${\cal D}$, then ${\rm Con}(L)$ is a Boolean algebra, that is ${\cal B}({\rm Con}(L))={\rm Con}(L)$, hence $L$ has CBLP by Proposition \ref{toatebool}, (\ref{toatebool0}), and the same holds if if $L$ is a direct product of finite lattices which are either distributive or isomorphic to ${\cal D}$.

\noindent (\ref{ordscudsip2}) By Corollary \ref{corolar4.7} and Remark \ref{ordsum}, \ref{ordsum2}.

\noindent (\ref{ordscudsip3}) By (\ref{ordscudsip2}) and Remark \ref{exdsip}, in which we have shown that ${\cal P}$ does not have CBLP.\end{proof}

\begin{remark} If a lattice has CBLP, then its sublattices do not necessarily have CBLP. To illustrate this property, we provide an example of a non--modular lattice with CBLP. Let $E$ be the following bounded non--distributive lattice, in which ${\cal P}$ is embedded; we know from Remark \ref{exdsip} that ${\cal P}$ does not have the CBLP.\vspace*{-15pt}

\begin{center}
\begin{tabular}{cc}
\begin{picture}(40,100)(0,0)
\put(40,20){\line(0,1){60}}
\put(40,20){\line(1,1){30}}
\put(40,20){\line(-1,1){30}}
\put(40,80){\line(1,-1){30}}
\put(40,80){\line(-1,-1){30}}
\put(40,20){\circle*{3}}
\put(40,40){\circle*{3}}
\put(40,60){\circle*{3}}
\put(40,80){\circle*{3}}
\put(10,50){\circle*{3}}
\put(70,50){\circle*{3}}
\put(3,48){$a$}
\put(33,38){$b$}
\put(43,57){$d$}
\put(73,48){$c$}
\put(38,10){$0$}
\put(38,84){$1$}
\put(37,-2){$E$}\end{picture}
&\hspace*{45pt}
\begin{picture}(40,100)(0,0)
\put(20,25){\line(0,1){40}}
\put(20,25){\circle*{3}}
\put(20,45){\circle*{3}}
\put(20,65){\circle*{3}}
\put(16,15){$\Delta _E$}
\put(23,43){$\varepsilon $}
\put(16,68){$\nabla _E$}
\put(6,-2){${\rm Con}(E)$}
\end{picture}\end{tabular}\end{center}\vspace*{-5pt}

By using Remark \ref{congrsl} and the calculations in Remark \ref{exdsip}, it is easy to obtain that, if we denote by $\varepsilon =eq(\{0\},\{a\},\{b,d\},\{c\},\{1\})$, then ${\rm Con}(E)=\{\Delta _E,\varepsilon ,\nabla _E\}$, which is isomorphic to the three--element chain (hence ${\cal B}({\rm Con}(E))=\{\Delta _E,\nabla _E\}$ is isomorphic to the two--element Boolean algebra, but we do not even need its form). By Remark \ref{deltanabla}, $\Delta _E$ and $\nabla _E$ have CBLP. $E/\varepsilon $ is isomorphic to ${\cal D}$, hence, by Remark \ref{exdsip}, ${\rm Con}(E/\varepsilon )=\{\Delta _{ E/\varepsilon },\nabla _{E/\varepsilon }\}$, which is isomorphic to the two--element Boolean algebra, thus ${\cal B}({\rm Con}(E/\varepsilon ))={\rm Con}(E/\varepsilon )=\{\Delta _{E/\varepsilon },\nabla _{E/\varepsilon }\}$, hence $\varepsilon $ has CBLP by Lemma  \ref{L2}, (\ref{L2(2)}).\end{remark}

\begin{corollary} If an algebra has CBLP, then its subalgebras do not necessarily have CBLP.\end{corollary}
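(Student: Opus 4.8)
The plan is to read the counterexample directly off the Remark immediately preceding the statement, which was assembled precisely for this purpose. That Remark exhibits a bounded (non--modular, hence non--distributive) lattice $E$ whose congruence lattice was computed to be the three--element chain ${\rm Con}(E)=\{\Delta _E,\varepsilon ,\nabla _E\}$; since $\Delta _E$, $\nabla _E$ (by Remark \ref{deltanabla}) and $\varepsilon $ (by Lemma \ref{L2}, (\ref{L2(2)})) were all shown to have CBLP, $E$ has CBLP. Note that, although $E$ is not a distributive lattice, it is congruence--distributive and, being finite, it satisfies (H), so the CBLP framework applies to it; the same remarks hold for ${\cal P}$.

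First I would single out the sublattice of $E$ realising the embedded pentagon. Inspecting the Hasse diagram of $E$, the elements $0<a<1$ together with the chain $0<b<d<1$, with $a$ incomparable to both $b$ and $d$, form a subset isomorphic to ${\cal P}$ via $x\mapsto a$, $y\mapsto b$, $z\mapsto d$. One checks that this subset is closed under $\wedge $ and $\vee $ (for instance $a\vee b=a\vee d=1$ and $a\wedge b=a\wedge d=0$, while $b$ and $d$ lie on a chain), so it is a sublattice of $E$, that is, a subalgebra of $E$ in the signature of bounded lattices. Then I would invoke Remark \ref{exdsip}, where ${\cal P}$ was shown not to have CBLP.

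Combining these observations yields the conclusion: $E$ is an algebra with CBLP admitting a subalgebra isomorphic to ${\cal P}$, and ${\cal P}$ does not have CBLP, which is exactly what the corollary asserts. The only point requiring any care --- and it is a minor one --- is confirming that the selected copy of ${\cal P}$ is genuinely closed under the two lattice operations and contains $0$ and $1$; this is routine from the Hasse diagram, and no further argument is needed beyond the two preceding Remarks.
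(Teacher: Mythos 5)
Your proposal is correct and takes essentially the same route as the paper: the corollary is stated without a separate proof precisely because it follows from the preceding Remark, namely the lattice $E$ with CBLP containing the pentagon ${\cal P}$ (which Remark \ref{exdsip} shows fails CBLP) as a bounded sublattice. Your extra verifications --- the explicit embedding $x\mapsto a$, $y\mapsto b$, $z\mapsto d$, the closure of $\{0,a,b,d,1\}$ under $\vee $ and $\wedge $, and the observation that $E$ and ${\cal P}$ fit the congruence--distributive, (H)--satisfying framework --- are accurate and only make explicit what the paper leaves implicit.
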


\begin{remark} Let $\theta \in {\rm Con}({\cal A})$. Then the inequality of cardinalities $|{\cal B}({\rm Con}({\cal A}/\theta ))|\leq |{\cal B}({\rm Con}({\cal A}))|$ does not imply the surjectivity of ${\cal B}(u_{\theta })$, that is it does not imply that $\theta $ has CBLP.

Indeed, let $Z$ be the ordinal sum between ${\cal P}$ and the two--element chain, ${\cal L}_2$, which, as Corollary \ref{ordscudsip}, (\ref{ordscudsip3}), ensures us, does not have CBLP:\vspace*{-8pt}

\begin{center}
\begin{tabular}{ccccc}
\begin{picture}(40,100)(0,0)
\put(30,20){\line(1,1){10}}
\put(30,20){\line(-1,1){20}}
\put(30,60){\line(1,-1){10}}
\put(30,60){\line(-1,-1){20}}
\put(40,30){\line(0,1){20}}
\put(30,60){\line(0,1){20}}
\put(30,20){\circle*{3}}
\put(10,40){\circle*{3}}
\put(40,30){\circle*{3}}
\put(30,60){\circle*{3}}
\put(30,80){\circle*{3}}
\put(40,50){\circle*{3}}
\put(3,38){$x$}
\put(43,27){$y$}
\put(43,47){$z$}
\put(28,10){$0$}
\put(34,57){$u$}
\put(28,83){$1$}
\put(0,-17){$Z={\cal P}\dotplus {\cal L}_2$}
\end{picture}
&\hspace*{9pt}
\begin{picture}(60,100)(0,0)
\put(30,25){\line(0,1){20}}
\put(10,45){\line(0,1){20}}
\put(50,45){\line(0,1){20}}
\put(30,65){\line(0,1){20}}
\put(30,25){\line(1,1){20}}
\put(50,5){\line(-1,1){40}}
\put(50,5){\line(1,1){20}}
\put(30,45){\line(1,1){20}}
\put(30,45){\line(-1,1){20}}
\put(70,25){\line(-1,1){40}}
\put(30,65){\line(-1,-1){20}}
\put(30,85){\line(1,-1){20}}
\put(30,85){\line(-1,-1){20}}
\put(30,25){\circle*{3}}
\put(30,45){\circle*{3}}
\put(10,45){\circle*{3}}
\put(50,45){\circle*{3}}
\put(50,5){\circle*{3}}
\put(47,-6){$\Delta _Z$}
\put(14,-17){${\rm Con}(Z)$}
\put(70,25){\circle*{3}}
\put(10,65){\circle*{3}}
\put(50,65){\circle*{3}}
\put(30,65){\circle*{3}}
\put(0,63){$\zeta _1$}
\put(34,63){$\zeta _7$}
\put(54,63){$\zeta _8$}
\put(0,43){$\zeta _5$}
\put(34,43){$\zeta _6$}
\put(54,43){$\zeta _4$}
\put(28,15){$\zeta _3$}
\put(74,23){$\zeta _2$}
\put(30,85){\circle*{3}}
\put(26,89){$\nabla _Z$}
\end{picture}
&\hspace*{14pt}
\begin{picture}(60,100)(0,0)
\put(30,25){\line(1,1){20}}
\put(30,25){\line(-1,1){20}}
\put(30,65){\line(1,-1){20}}
\put(30,65){\line(-1,-1){20}}
\put(30,25){\circle*{3}}
\put(10,45){\circle*{3}}
\put(50,45){\circle*{3}}
\put(30,65){\circle*{3}}
\put(26,15){$\Delta _Z$}
\put(26,68){$\nabla _Z$}
\put(1,42){$\zeta _1$}
\put(53,42){$\zeta _2$}
\put(6,-17){${\cal B}({\rm Con}(Z))$}\end{picture}
&\hspace*{9pt}
\begin{picture}(60,100)(0,0)
\put(30,25){\line(1,1){20}}
\put(30,25){\line(-1,1){20}}
\put(30,65){\line(1,-1){20}}
\put(30,65){\line(-1,-1){20}}
\put(30,25){\circle*{3}}
\put(10,45){\circle*{3}}
\put(50,45){\circle*{3}}
\put(30,65){\circle*{3}}
\put(23,15){$0/\zeta _4$}
\put(6,68){$1/\zeta _4=u/\zeta _4$}
\put(-10,42){$x/\zeta _4$}
\put(53,42){$y/\zeta _4=z/\zeta _4$}
\put(21,-17){$Z/\zeta _4$}\end{picture}
&\hspace*{44pt}
\begin{picture}(60,100)(0,0)
\put(30,25){\line(1,1){20}}
\put(30,25){\line(-1,1){20}}
\put(30,65){\line(1,-1){20}}
\put(30,65){\line(-1,-1){20}}
\put(30,25){\circle*{3}}
\put(10,45){\circle*{3}}
\put(50,45){\circle*{3}}
\put(30,65){\circle*{3}}
\put(26,15){$\Delta _{Z/\zeta _4}$}
\put(26,68){$\nabla _{Z/\zeta _4}$}
\put(2,42){$\xi $}
\put(53,42){$\zeta $}
\put(-28,-17){${\rm Con}(Z/\zeta _4)={\cal B}({\rm Con}(Z/\zeta _4))$}\end{picture}\end{tabular}\end{center}\vspace*{12pt}

The congruences of $Z$ are easy to calculate by using Remark \ref{ordsum}, (\ref{ordsum1}), the calculations in Remark \ref{exdsip} and the fact that the finite Boolean algebra ${\cal L}_2$ is isomorphic to its lattice of congruences: ${\rm Con}({\cal L}_2)=\{\Delta _{{\cal L}_2},\nabla _{{\cal L}_2}\}$: ${\rm Con}(Z)=\{\Delta _Z,\zeta _1,\zeta _2,\zeta _3,\zeta _4,\zeta _5,\zeta _6,\zeta _7,\zeta _8,\nabla _Z \}$, where $\zeta _1=eq(\{0,x,y,z,u\},\{1\})$, $\zeta _2=eq(\{0\},\{x\},\{y\},\{z\},$\linebreak $\{u,1\})$, $\zeta _3=eq(\{0\},\{x\},\{y,z\},\{u\},\{1\})$, $\zeta _4=eq(\{0\},\{x\},\{y,z\},\{u,1\})$, $\zeta _5=eq(\{0,y,z\},\{x,u\},\{1\})$, $\zeta _6=eq(\{0,x\},\{y,z,u\},\{1\})$, $\zeta _7=eq(\{0,y,z\},\{x,u,1\})$ and $\zeta _8=eq(\{0,x\},\{y,z,u,1\})$, with the lattice structure represented above. Therefore ${\cal B}({\rm Con}(Z))=\{\Delta _Z,\zeta _1,\zeta _2,\nabla _Z\}$, which is isomorphic to the four--element Boolean algebra, ${\cal L}_2^2$. Now let us look at the congruence $\zeta _4$. $Z/\zeta _4$ is isomorphic to ${\cal L}_2^2$, hence it is isomorphic to its lattice of congruences: ${\rm Con}(Z/\zeta _4)={\cal B}({\rm Con}(Z/\zeta _4))=\{\Delta _{Z/\zeta _4},\xi ,\zeta ,\nabla _{Z/\zeta _4}\}$, where $\xi =eq(\{0/\zeta _4,x/\zeta _4\},\{y/\zeta _4,1/\zeta _4\})$ and $\zeta =eq(\{0/\zeta _4,y/\zeta _4\},\{x/\zeta _4,1/\zeta _4\})$. Thus ${\cal B}({\rm Con}(Z/\zeta _4))$ is isomorphic to ${\cal B}({\rm Con}(Z))$. Let us calculate ${\cal B}(u_{\zeta _4})$ in each element of ${\cal B}({\rm Con}(Z))$: ${\cal B}(u_{\zeta _4})(\Delta _Z)=u_{\zeta _4}(\Delta _Z)=\Delta _{Z/\zeta _4}$, ${\cal B}(u_{\zeta _4})(\nabla _Z)=u_{\zeta _4}(\nabla _Z)=\nabla _{Z/\zeta _4}$, ${\cal B}(u_{\zeta _4})(\zeta _1)=u_{\zeta _4}(\zeta _1)=(\zeta _1\vee \zeta _4)/\zeta _4=\nabla _Z/\zeta _4=\nabla _{Z/\zeta _4}$ and ${\cal B}(u_{\zeta _4})(\zeta _2)=u_{\zeta _4}(\zeta _2)=(\zeta _2\vee \zeta _4)/\zeta _4=\zeta _4/\zeta _4=\Delta _{Z/\zeta _4}$, hence ${\cal B}(u_{\zeta _4})({\cal B}({\rm Con}(Z)))=\{\Delta _{Z/\zeta _4},\nabla _{Z/\zeta _4}\}\subsetneq {\cal B}({\rm Con}(Z/\zeta _4))$, so ${\cal B}(u_{\zeta _4})$ is not surjective, which means that $\zeta _4$ does not have CBLP.\end{remark}

We say that ${\cal A}$ is {\em local} iff it has exactly one maximal congruence.

\begin{corollary}
If the algebra ${\cal A}$ is local, then the lattice ${\rm Con}({\cal A})$ is ${\rm Id}$--local.\label{corolar2.6}\end{corollary}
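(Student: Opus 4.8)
The plan is to reduce the assertion to the lattice--theoretic characterization of ${\rm Id}$--locality supplied by Lemma \ref{lema2.5}. Since ${\cal A}$ belongs to ${\cal C}$, it is congruence--distributive, so $L:={\rm Con}({\cal A})$ is a bounded distributive lattice, whose top element is $\nabla _{\cal A}$ and whose bottom element is $\Delta _{\cal A}$; thus Lemma \ref{lema2.5} is applicable to it. By that lemma, proving that ${\rm Con}({\cal A})$ is ${\rm Id}$--local amounts to establishing the implication: for all $\phi ,\psi \in {\rm Con}({\cal A})$, if $\phi \vee \psi =\nabla _{\cal A}$, then $\phi =\nabla _{\cal A}$ or $\psi =\nabla _{\cal A}$.

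To prove this implication, I would argue by contradiction. Let $M$ denote the unique maximal congruence of the local algebra ${\cal A}$, and suppose $\phi ,\psi \in {\rm Con}({\cal A})$ satisfy $\phi \vee \psi =\nabla _{\cal A}$ while both $\phi $ and $\psi $ are proper congruences, that is $\phi \neq \nabla _{\cal A}$ and $\psi \neq \nabla _{\cal A}$. Since every algebra from ${\cal C}$ fulfills the hypothesis (H), Lemma \ref{folclor}, (\ref{folclor1}), guarantees that each proper congruence of ${\cal A}$ is included in a maximal congruence; as $M$ is the only maximal congruence, this forces $\phi \subseteq M$ and $\psi \subseteq M$. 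Consequently $\nabla _{\cal A}=\phi \vee \psi \subseteq M$, so $M=\nabla _{\cal A}$, contradicting the fact that a maximal congruence is proper.

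Hence at least one of $\phi ,\psi $ equals $\nabla _{\cal A}$, the required implication holds, and Lemma \ref{lema2.5} yields that ${\rm Con}({\cal A})$ is ${\rm Id}$--local. There is no serious obstacle in this argument: the proof is entirely formal, the only points requiring attention being the remark that ${\rm Con}({\cal A})$ is genuinely a bounded distributive lattice (so that Lemma \ref{lema2.5} applies) and the invocation of (H) via Lemma \ref{folclor} to ensure that the two proper congruences lie below a maximal one, which by locality must be the single congruence $M$.
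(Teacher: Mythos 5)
Your proof is correct and follows essentially the same route as the paper's: both argue by contradiction, use Lemma \ref{folclor}, (\ref{folclor1}), to place the two proper congruences under the unique maximal congruence, derive the contradiction $\nabla _{\cal A}\subseteq M$, and conclude via the characterization of ${\rm Id}$--locality in Lemma \ref{lema2.5}. Your version merely makes explicit what the paper leaves implicit, namely that ${\rm Con}({\cal A})$ is a bounded distributive lattice and that Lemma \ref{lema2.5} is the bridge to the conclusion.
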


\begin{proof} Assume that ${\cal A}$ is local and let $\theta $ be the unique maximal congruence of ${\cal A}$. Let $\alpha ,\beta \in {\rm Con}({\cal A})$ such that $\alpha \vee \beta =\nabla _{\cal A}$. Assume by absurdum that $\alpha \neq \nabla _{\cal A}$ and $\beta \neq \nabla _{\cal A}$. Then, according to Lemma \ref{folclor}, (\ref{folclor1}), it follows that $\alpha \subseteq \theta $ and $\beta \subseteq \theta $, thus $\alpha \vee \beta \subseteq \theta $, which is a contradiction to the choice of $\alpha $ and $\beta $. Hence $\alpha =\nabla _{\cal A}$ or $\beta =\nabla _{\cal A}$.\end{proof}

\begin{lemma}
Any {\rm Id}--local lattice is B--normal.\label{lema2.8}\end{lemma}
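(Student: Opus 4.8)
The plan is to reduce the statement to the order-theoretic characterization of {\rm Id}--locality furnished by Lemma \ref{lema2.5}, and then to verify B--normality directly by exhibiting witnesses taken from the two trivially complemented elements $0$ and $1$, which lie in ${\cal B}(L)$ for any bounded lattice $L$.

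First I would invoke Lemma \ref{lema2.5}, which tells us that $L$ being {\rm Id}--local is equivalent to the condition that, for all $x,y\in L$, $x\vee y=1$ implies $x=1$ or $y=1$. Accordingly, I would fix arbitrary $x,y\in L$ with $x\vee y=1$; the goal is then to produce $e,f\in {\cal B}(L)$ with $e\wedge f=0$ and $x\vee e=y\vee f=1$, as required by the definition of a B--normal lattice.

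Next I would split into the two cases granted by Lemma \ref{lema2.5}. If $x=1$, set $e=0$ and $f=1$; then $e,f\in {\cal B}(L)$, $e\wedge f=0\wedge 1=0$, and $x\vee e=1\vee 0=1$ while $y\vee f=y\vee 1=1$. Symmetrically, if $y=1$, set $e=1$ and $f=0$, which again yields $e\wedge f=1\wedge 0=0$, $x\vee e=x\vee 1=1$ and $y\vee f=1\vee 0=1$. In either case every requirement in the definition of B--normality is met, so $L$ is B--normal.

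There is essentially no obstacle here, so rather than a genuine difficulty the only point worth flagging is the asymmetry of the witnesses: one of $e,f$ must be taken to be $0$ and the other $1$, rather than, say, both equal to $0$, precisely because B--normality demands $x\vee e=1$ and $y\vee f=1$ to hold simultaneously. Since $0,1\in {\cal B}(L)$ in every bounded lattice, no nontrivial complemented elements need to be constructed, which is what makes the argument immediate once Lemma \ref{lema2.5} is in hand.
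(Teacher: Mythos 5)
Your proof is correct and follows exactly the route the paper takes: the paper's proof of this lemma is simply ``By Lemma \ref{lema2.5}'', and your argument is the straightforward expansion of that citation, using the characterization of ${\rm Id}$--locality and the trivial witnesses $0,1\in {\cal B}(L)$. Nothing is missing; you have merely spelled out the details the paper leaves implicit.
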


\begin{proof} By Lemma \ref{lema2.5}.\end{proof}

\begin{corollary}
Any local algebra has CBLP.\label{corolar4.9}\end{corollary}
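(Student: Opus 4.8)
The plan is to chain together three results that have already been established in the excerpt, since the statement is an immediate corollary. First I would invoke Corollary \ref{corolar2.6}: assuming that ${\cal A}$ is local, i.e.\ that ${\cal A}$ has exactly one maximal congruence, that corollary guarantees that the lattice ${\rm Con}({\cal A})$ is ${\rm Id}$--local. This is the step that transfers the ``single maximal element'' hypothesis from the algebra ${\cal A}$ to a purely lattice--theoretic property of ${\rm Con}({\cal A})$.

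Next I would apply Lemma \ref{lema2.8}, which asserts that any ${\rm Id}$--local lattice is B--normal; combining it with the previous step yields that ${\rm Con}({\cal A})$ is B--normal. Finally, I would close the argument with Proposition \ref{blpbnorm}, specifically the equivalence of conditions (\ref{blpbnorm1}) and (\ref{blpbnorm2}), which states that ${\cal A}$ has CBLP if and only if ${\rm Con}({\cal A})$ is B--normal. Since we have just shown that ${\rm Con}({\cal A})$ is B--normal, it follows that ${\cal A}$ has CBLP, as desired.

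There is no real obstacle here: the entire content of the corollary has been prepared by Corollary \ref{corolar2.6}, Lemma \ref{lema2.8}, and Proposition \ref{blpbnorm}, so the proof is a three--link implication chain with no calculations to grind through. The only thing to be careful about is that ${\cal A}$ is assumed non--empty and congruence--distributive and satisfies hypothesis (H) (as stipulated for all algebras from ${\cal C}$ in the running conventions), so that ``local'' is meaningful and Corollary \ref{corolar2.6} applies; under those standing assumptions the argument is complete.
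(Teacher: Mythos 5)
Your proof is correct and is exactly the paper's own argument: the paper also chains Corollary \ref{corolar2.6}, Lemma \ref{lema2.8}, and Proposition \ref{blpbnorm} in that order to conclude that ${\rm Con}({\cal A})$ is ${\rm Id}$--local, hence B--normal, hence ${\cal A}$ has CBLP. Nothing is missing.
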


\begin{proof} Assume that ${\cal A}$ is a local algebra. Then, by Corollary \ref{corolar2.6}, Lemma \ref{lema2.8} and Proposition \ref{blpbnorm}, it follows that ${\rm Con}({\cal A})$ is {\rm Id}--local, thus it is B--normal, hence ${\cal A}$ has CBLP.\end{proof}

\begin{corollary}
Any finite direct product of local algebras has CBLP.\label{finprodloc}\end{corollary}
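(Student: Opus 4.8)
The plan is to combine the two results that have just been established, since between them they cover exactly the two ingredients needed. First I would invoke Corollary \ref{corolar4.9}, which states that any local algebra has CBLP; applying this to each factor, every local algebra in the given finite family has CBLP. Second, I would appeal to Corollary \ref{corolar4.8}, which characterizes the CBLP of a finite direct product $\prod_{i=1}^n {\cal A}_i$ as equivalent to the conjunction of the CBLP for all of the factors ${\cal A}_i$.

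Concretely, let ${\cal A}_1,\ldots,{\cal A}_n$ be local algebras from ${\cal C}$ and put ${\cal A}=\prod_{i=1}^n {\cal A}_i$. By Corollary \ref{corolar4.9}, each ${\cal A}_i$ has CBLP. Then the ``if'' direction of Corollary \ref{corolar4.8} yields immediately that ${\cal A}$ has CBLP, which is the assertion. No further computation is required.

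There is essentially no obstacle here: the statement is a direct corollary of the preceding two, with the real content carried entirely by the characterization in Proposition \ref{blpbnorm} (through $\mathrm{B}$--normality) and the localness-implies-CBLP argument of Corollary \ref{corolar4.9}. The only point worth a moment's attention is the standing hypothesis that all non--empty algebras from ${\cal C}$ satisfy (H), which is already in force throughout this part of the paper and is what makes Corollaries \ref{corolar4.8} and \ref{corolar4.9} applicable to the ${\cal A}_i$ and to ${\cal A}$ alike; once that is noted, the two references compose without friction.
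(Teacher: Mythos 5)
Your proof is correct and is exactly the argument the paper intends: the corollary is stated without proof precisely because it follows immediately from Corollary \ref{corolar4.9} (each local factor has CBLP) combined with Corollary \ref{corolar4.8} (a finite direct product has CBLP iff every factor does), which is the composition you spell out. Your remark about the standing hypothesis (H) being what makes both corollaries applicable is also accurate and consistent with the paper's conventions.
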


We recall that a {\em normal algebra} is an algebra whose lattice of congruences is normal.

\begin{remark}\begin{itemize}
\item Any algebra with CBLP is a normal algebra.
\item Any local algebra is a normal algebra.
\item Any bounded distributive lattice is a normal algebra.\end{itemize}

The first statement follows from Proposition \ref{blpbnorm}, while the second follows from the first and Corollary \ref{corolar4.9}, and the third follows from the first and Corollary \ref{d01cblp}.\label{remarca4.10}\end{remark} 

\begin{lemma}
Let $\theta \in {\rm Con}({\cal A})$. If $\theta \vee {\rm Rad}({\cal A})=\nabla _{\cal A}$, then $\theta =\nabla _{\cal A}$.\label{lema4.11}\end{lemma}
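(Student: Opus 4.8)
The plan is to argue by contraposition, equivalently by \emph{reductio ad absurdum}: I would assume that $\theta $ is a proper congruence, that is $\theta \neq \nabla _{\cal A}$, and derive that $\theta \vee {\rm Rad}({\cal A})\neq \nabla _{\cal A}$. The whole argument rests on the defining property of the radical, namely that ${\rm Rad}({\cal A})=\bigcap _{\nu \in {\rm Max}({\cal A})}\nu $ is contained in every maximal congruence of ${\cal A}$; so it suffices to trap both $\theta $ and ${\rm Rad}({\cal A})$ inside a single maximal congruence and use that such a congruence is proper.

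Concretely, under hypothesis (H) Lemma \ref{folclor}, (\ref{folclor1}), guarantees that any proper congruence is included in some maximal congruence. Assuming $\theta \neq \nabla _{\cal A}$, I would pick $\mu \in {\rm Max}({\cal A})$ with $\theta \subseteq \mu $. Since ${\rm Rad}({\cal A})=\bigcap _{\nu \in {\rm Max}({\cal A})}\nu \subseteq \mu $ as well, and since $\theta \vee {\rm Rad}({\cal A})$ is by definition the least congruence containing both $\theta $ and ${\rm Rad}({\cal A})$, it follows that $\theta \vee {\rm Rad}({\cal A})\subseteq \mu $. As $\mu $ is maximal, hence proper, we have $\mu \neq \nabla _{\cal A}$, and therefore $\theta \vee {\rm Rad}({\cal A})\subseteq \mu \subsetneq \nabla _{\cal A}$. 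This contradicts the hypothesis $\theta \vee {\rm Rad}({\cal A})=\nabla _{\cal A}$, so $\theta =\nabla _{\cal A}$.

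The only point needing a moment's care is the existence of the maximal congruence $\mu $, which is exactly where (H) enters via Lemma \ref{folclor}, (\ref{folclor1}); if ${\cal A}$ is trivial then $\Delta _{\cal A}=\nabla _{\cal A}$ is the sole congruence and the statement holds vacuously, so there is no genuine obstacle. This is a direct transcription of the radical--type reasoning already used in the proof of Lemma \ref{camcalalr}, and I expect no real difficulty.
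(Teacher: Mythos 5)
Your proof is correct and follows essentially the same route as the paper's: assume $\theta $ is proper, use Lemma \ref{folclor}, (\ref{folclor1}), to enclose it in a maximal congruence $\mu $, note that ${\rm Rad}({\cal A})\subseteq \mu $ as well, and contradict the properness of $\mu $. The paper merely orders the steps slightly differently (first deducing $\theta \vee \phi =\nabla _{\cal A}$ for every maximal $\phi $, then observing $\phi _0=\theta \vee \phi _0=\nabla _{\cal A}$), which is the same argument.
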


\begin{proof} Let $\theta \in {\rm Con}({\cal A})$ such that $\theta \vee {\rm Rad}({\cal A})=\nabla _{\cal A}$. Since ${\rm Rad}({\cal A})\subseteq \phi $ for all $\phi \in {\rm Max}({\cal A})$, it follows that $\theta \vee \phi =\nabla _{\cal A}$ for all $\phi \in {\rm Max}({\cal A})$. Assume by absurdum that $\theta $ is a proper congruence of ${\cal A}$, so that $\theta \subseteq \phi _0$ for some $\phi _0\in {\rm Max}({\cal A})$, by Lemma \ref{folclor}, (\ref{folclor1}). But then it follows that $\phi _0=\theta \vee \phi _0=\nabla _{\cal A}$, which is a contradiction to the fact that $\phi _0$ is a maximal, and thus a proper congruence of ${\cal A}$. Therefore $\theta =\nabla _{\cal A}$.\end{proof}

\begin{proposition}
If ${\cal A}$ is a normal algebra, then ${\rm Rad}({\cal A})$ has CBLP.\label{propozitie4.12}\end{proposition}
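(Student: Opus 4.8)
The plan is to prove CBLP for $\theta ={\rm Rad}({\cal A})$ directly via its equivalent form in Remark \ref{usiv}: since $\theta \subseteq {\rm Rad}({\cal A})$ trivially, Corollary \ref{candsrad} already gives injectivity of ${\cal B}(v_{\textstyle \theta })$ for free, so the whole task reduces to showing that ${\cal B}(v_{\textstyle \theta }):{\cal B}({\rm Con}({\cal A}))\rightarrow {\cal B}([\theta ))$ is surjective. I would therefore fix an arbitrary $\gamma \in {\cal B}([\theta ))$ and let $\delta =\neg _{\theta }\, \gamma $ be its complement in the Boolean algebra ${\cal B}([\theta ))$, so that $\gamma \vee \delta =\nabla _{\cal A}$ and $\gamma \cap \delta =\theta ={\rm Rad}({\cal A})$ (the bottom of the filter $[\theta )$). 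The goal is to produce a $b\in {\cal B}({\rm Con}({\cal A}))$ with $v_{\textstyle \theta }(b)=b\vee {\rm Rad}({\cal A})=\gamma $.

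Next I would invoke the hypothesis that ${\cal A}$ is a normal algebra, i.e. that ${\rm Con}({\cal A})$ is a normal lattice. Applied to the relation $\gamma \vee \delta =\nabla _{\cal A}$, normality yields $e,f\in {\rm Con}({\cal A})$ with $e\cap f=\Delta _{\cal A}$ and $\gamma \vee e=\delta \vee f=\nabla _{\cal A}$. These $e,f$ need not be Boolean, which is exactly the difficulty to overcome. The idea is to ``trim'' them against the complementary pair $\gamma ,\delta $: set $a=\delta \cap e$ and $b=\gamma \cap f$. Using congruence--distributivity, $\delta =\delta \cap (\gamma \vee e)=(\delta \cap \gamma )\vee (\delta \cap e)={\rm Rad}({\cal A})\vee a$ and, symmetrically, $\gamma ={\rm Rad}({\cal A})\vee b$. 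Moreover $a\cap b=(\delta \cap \gamma )\cap (e\cap f)={\rm Rad}({\cal A})\cap \Delta _{\cal A}=\Delta _{\cal A}$, so $a$ and $b$ are already disjoint in ${\rm Con}({\cal A})$.

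The crux, and the step I expect to be the main obstacle, is upgrading $a$ and $b$ from a merely disjoint pair to a genuinely complementary (hence Boolean) pair; this is where Lemma \ref{lema4.11} does the decisive work. Indeed, $a\vee b\vee {\rm Rad}({\cal A})=({\rm Rad}({\cal A})\vee a)\vee ({\rm Rad}({\cal A})\vee b)=\delta \vee \gamma =\nabla _{\cal A}$, so $(a\vee b)\vee {\rm Rad}({\cal A})=\nabla _{\cal A}$, and Lemma \ref{lema4.11} forces $a\vee b=\nabla _{\cal A}$. Combined with $a\cap b=\Delta _{\cal A}$, this shows $a,b\in {\cal B}({\rm Con}({\cal A}))$ with $a=\neg \, b$. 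Since $b$ is Boolean, ${\cal B}(v_{\textstyle \theta })(b)=v_{\textstyle \theta }(b)=b\vee {\rm Rad}({\cal A})=\gamma $, so $\gamma $ lies in the image of ${\cal B}(v_{\textstyle \theta })$. As $\gamma $ was arbitrary, ${\cal B}(v_{\textstyle \theta })$ is surjective, and by Remark \ref{usiv} we conclude that ${\rm Rad}({\cal A})$ has CBLP. The only subtlety to watch is that the meets and joins of $\gamma ,\delta $ computed inside the filter $[\theta )$ agree with those in ${\rm Con}({\cal A})$, which holds because $[\theta )$ is a sublattice with bottom $\theta ={\rm Rad}({\cal A})$ and top $\nabla _{\cal A}$.
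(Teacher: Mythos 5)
Your proof is correct and takes essentially the same route as the paper's: both reduce the claim to surjectivity of ${\cal B}(v_{\textstyle {\rm Rad}({\cal A})})$ via Remark \ref{usiv}, apply normality of ${\rm Con}({\cal A})$ to a complementary pair in ${\cal B}([{\rm Rad}({\cal A})))$, and then use distributivity together with Lemma \ref{lema4.11} to upgrade the disjoint pair supplied by normality to a complementary pair in ${\cal B}({\rm Con}({\cal A}))$. The only cosmetic difference is that you trim the normality witnesses to $a=\delta \cap e$ and $b=\gamma \cap f$ before invoking Lemma \ref{lema4.11}, whereas the paper keeps the witnesses $\alpha ,\beta $ intact and instead derives $\beta \subseteq \phi $ afterwards, so the Boolean preimage produced is effectively the same element.
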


\begin{proof} We shall consider the Boolean morphism ${\cal B}(v_{\textstyle {\rm Rad}({\cal A})}):{\cal B}({\rm Con}({\cal A}))\rightarrow {\cal B}([{\rm Rad}({\cal A})))$. Let $\phi \in {\cal B}([{\rm Rad}({\cal A})))$, so that $\phi \vee \psi =\nabla _{\cal A}$ and $\phi \cap \psi ={\rm Rad}({\cal A})$ for some $\psi \in {\rm Con}({\cal A})$. But the algebra ${\cal A}$ is normal, thus the lattice ${\rm Con}({\cal A})$ is normal, hence there exist $\alpha ,\beta \in {\rm Con}({\cal A})$ such that $\alpha \cap \beta =\Delta _{\cal A}$ and $\phi \vee \alpha =\psi \vee \beta =\nabla _{\cal A}$, thus $\alpha \vee \beta \vee \phi =\alpha \vee \beta \vee \psi =\nabla _{\cal A}$, so $\alpha \vee \beta \vee {\rm Rad}({\cal A})=\alpha \vee \beta \vee (\phi \wedge \psi )=(\alpha \vee \beta \vee \phi )\wedge (\alpha \vee \beta \vee \psi )=\nabla _{\cal A}\wedge \nabla _{\cal A}=\nabla _{\cal A}$, hence $\alpha \vee \beta =\nabla _{\cal A}$, by Lemma \ref{lema4.11}. We have obtained that $\alpha \vee \beta =\nabla _{\cal A}$ and $\alpha \wedge \beta =\Delta _{\cal A}$, thus $\alpha ,\beta \in {\cal B}({\rm Con}({\cal A}))$. $\phi =\phi \vee \Delta _{\cal A}=\phi \vee (\alpha \wedge \beta)=(\phi \vee \alpha )\wedge (\phi \vee \beta )=\nabla _{\cal A}\wedge (\phi \vee \beta)=\phi \vee \beta $. We obtain: ${\cal B}(v_{\textstyle {\rm Rad}({\cal A})})(\beta )=\beta \vee {\rm Rad}({\cal A})=\beta \vee (\phi \cap \psi )=(\beta \vee \phi )\cap (\beta \vee \psi )=\phi \cap \nabla _{\cal A}=\phi $. Therefore ${\cal B}(v_{\textstyle {\rm Rad}({\cal A})})$ is surjective, hence ${\rm Rad}({\cal A})$ has CBLP by Remark \ref{usiv}.\end{proof}

The following property generalizes property $(\star )$ for residuated lattices from \cite{ggcm}, \cite{dcggcm}.

\begin{definition}

We say that ${\cal A}$ satisfies {\em the property $(\star )$} iff: for all $\theta \in {\rm Con}({\cal A})$, there exist $\alpha \in {\cal K}({\cal A})$ and $\beta \in {\cal B}({\rm Con}({\cal A}))$ such that $\alpha \subseteq {\rm Rad}({\cal A})$ and $\theta =\alpha \vee \beta $.\end{definition}

\begin{proposition}
If ${\cal A}$ satisfies $(\star )$, then ${\cal A}$ has CBLP.\label{propozitie4.14}\end{proposition}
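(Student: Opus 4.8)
The plan is to reduce the statement to the B--normality of the lattice $\mathrm{Con}(\mathcal{A})$ and then invoke the characterization in Proposition \ref{blpbnorm}, namely the equivalence (\ref{blpbnorm1})$\Leftrightarrow$(\ref{blpbnorm2}). Thus it suffices to prove that whenever $\phi ,\psi \in \mathrm{Con}(\mathcal{A})$ satisfy $\phi \vee \psi =\nabla _{\cal A}$, there exist $e,f\in {\cal B}(\mathrm{Con}(\mathcal{A}))$ with $e\cap f=\Delta _{\cal A}$ and $\phi \vee e=\psi \vee f=\nabla _{\cal A}$.

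First I would apply the property $(\star )$ to each of $\phi $ and $\psi $, obtaining decompositions $\phi =\alpha \vee \beta $ and $\psi =\alpha ^{\prime }\vee \beta ^{\prime }$, where $\alpha ,\alpha ^{\prime }\in {\cal K}(\mathcal{A})$ satisfy $\alpha ,\alpha ^{\prime }\subseteq \mathrm{Rad}(\mathcal{A})$ and $\beta ,\beta ^{\prime }\in {\cal B}(\mathrm{Con}(\mathcal{A}))$. Since the complemented elements of the distributive lattice $\mathrm{Con}(\mathcal{A})$ are closed under joins, $\beta \vee \beta ^{\prime }$ again lies in ${\cal B}(\mathrm{Con}(\mathcal{A}))$, while $\alpha \vee \alpha ^{\prime }\subseteq \mathrm{Rad}(\mathcal{A})$.

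The crucial step, which I expect to carry the whole argument, is to promote the Boolean part $\beta \vee \beta ^{\prime }$ to the top element. Rewriting the hypothesis as $\nabla _{\cal A}=\phi \vee \psi =(\alpha \vee \alpha ^{\prime })\vee (\beta \vee \beta ^{\prime })$ and using $\alpha \vee \alpha ^{\prime }\subseteq \mathrm{Rad}(\mathcal{A})$, I obtain $(\beta \vee \beta ^{\prime })\vee \mathrm{Rad}(\mathcal{A})=\nabla _{\cal A}$; Lemma \ref{lema4.11} then forces $\beta \vee \beta ^{\prime }=\nabla _{\cal A}$. This is precisely where the interplay between the radical and the Boolean center does the real work: the finitely generated subradical parts $\alpha ,\alpha ^{\prime }$ coming from $(\star )$ are exactly what allows Lemma \ref{lema4.11} to be applied, upgrading ``the Boolean parts cover everything modulo $\mathrm{Rad}(\mathcal{A})$'' to ``the Boolean parts cover everything''.

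Finally, with $\beta \vee \beta ^{\prime }=\nabla _{\cal A}$ in hand, I would set $e=\neg \, \beta $ and $f=\neg \, \beta ^{\prime }$, both Boolean congruences. Since $\beta \subseteq \phi $ and $\beta ^{\prime }\subseteq \psi $, we get $\phi \vee e\supseteq \beta \vee \neg \, \beta =\nabla _{\cal A}$ and $\psi \vee f\supseteq \beta ^{\prime }\vee \neg \, \beta ^{\prime }=\nabla _{\cal A}$, while De Morgan in ${\cal B}(\mathrm{Con}(\mathcal{A}))$ gives $e\cap f=\neg \, \beta \cap \neg \, \beta ^{\prime }=\neg \, (\beta \vee \beta ^{\prime })=\neg \, \nabla _{\cal A}=\Delta _{\cal A}$. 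This verifies the B--normality of $\mathrm{Con}(\mathcal{A})$, and hence, by Proposition \ref{blpbnorm}, that $\mathcal{A}$ has CBLP. Everything after the application of Lemma \ref{lema4.11} is routine Boolean algebra, so the single genuine obstacle is recognizing that $(\star )$ feeds exactly the hypothesis needed by Lemma \ref{lema4.11}.
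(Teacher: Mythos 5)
Your proof is correct and follows essentially the same route as the paper: decompose both congruences via $(\star )$, use Lemma \ref{lema4.11} to force the join of the Boolean parts to be $\nabla _{\cal A}$, and take complements to exhibit the B--normality witnesses, concluding via Proposition \ref{blpbnorm}. The only cosmetic difference is that the paper verifies the compact-congruence criterion (item (\ref{blpbnorm3}) of Proposition \ref{blpbnorm}) by starting from $\phi ,\psi \in {\cal K}({\cal A})$, whereas you verify B--normality (item (\ref{blpbnorm2})) directly for arbitrary $\phi ,\psi $ — which is equally valid, since $(\star )$ applies to all congruences.
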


\begin{proof} Assume that ${\cal A}$ satisfies $(\star )$, and let $\phi ,\psi \in {\cal K}({\cal A})$ such that $\phi \vee \psi =\nabla _{\cal A}$. Condition $(\star )$ ensures us that $\phi =\alpha \vee \beta $ and $\psi =\gamma \vee \delta $ for some $\alpha ,\gamma \in {\cal K}({\cal A})$ such that $\alpha \subseteq {\rm Rad}({\cal A})$ and $\gamma \subseteq {\rm Rad}({\cal A})$, and some $\beta ,\delta \in {\cal B}({\rm Con}({\cal A}))$. Then $\beta \vee \delta \in {\cal B}({\rm Con}({\cal A}))$ and $\beta \vee \delta \vee {\rm Rad}({\cal A})\supseteq \beta \vee \delta \vee \alpha \vee \gamma =\alpha \vee \beta \vee \gamma \vee \delta =\phi \vee \psi =\nabla _{\cal A}$, so $\beta \vee \delta \vee {\rm Rad}({\cal A})=\nabla _{\cal A}$, hence $\beta \vee \delta =\nabla _{\cal A}$ by Lemma \ref{lema4.11}. Then $\neg \, \beta ,\neg \, \delta \in {\cal B}({\rm Con}({\cal A}))$, $\neg \, \beta \cap \neg \, \delta=\neg \, (\beta \vee \delta )=\neg \, \nabla _{\cal A}=\Delta _{\cal A}$, $\phi \vee \neg \, \beta =\alpha \vee \beta \vee \neg \, \beta =\alpha \vee \nabla _{\cal A}=\nabla _{\cal A}$ and $\psi \vee \neg \, \delta =\gamma \vee \delta \vee \neg \, \delta =\gamma \vee \nabla _{\cal A}=\nabla _{\cal A}$. By Proposition \ref{blpbnorm}, it follows that ${\cal A}$ has CBLP.\end{proof}

\begin{proposition}

${\cal A}$ satisfies $(\star )$ iff, for all $\theta \in {\rm Con}({\cal A})$, ${\cal A}/\theta $ satisfies $(\star )$.\label{starcaturi}\end{proposition}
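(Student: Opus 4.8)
The plan is to split the equivalence into its two implications, noting at once that the reverse direction is essentially free while the forward direction contains all the work. For the reverse direction, assuming every quotient ${\cal A}/\theta$ satisfies $(\star )$, I would instantiate $\theta =\Delta _{\cal A}$: since ${\cal A}/\Delta _{\cal A}$ is isomorphic to ${\cal A}$ and the property $(\star )$ is transported along isomorphisms (it is phrased entirely in terms of ${\rm Con}$, ${\cal B}({\rm Con})$, ${\cal K}$ and ${\rm Rad}$, all of which are isomorphism invariants), it follows immediately that ${\cal A}$ itself satisfies $(\star )$.

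For the forward direction, I would assume ${\cal A}$ satisfies $(\star )$ and fix an arbitrary $\theta \in {\rm Con}({\cal A})$; observe that ${\cal A}/\theta $ lies in ${\cal C}$ (equational classes are closed under quotients) and is non--empty, so it fulfills (H), which legitimizes the use of the radical and maximal--congruence formulas for ${\cal A}/\theta $. Given an arbitrary $\gamma \in {\rm Con}({\cal A}/\theta )$, I would pull it back through the isomorphism $s_{\textstyle \theta }$, writing $\gamma =\phi /\theta =u_{\textstyle \theta }(\phi )$ for the unique $\phi \in [\theta )$ with $s_{\textstyle \theta }(\gamma )=\phi $ (here $\theta \subseteq \phi $ forces $u_{\textstyle \theta }(\phi )=(\phi \vee \theta )/\theta =\phi /\theta $). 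Applying $(\star )$ to $\phi $ in ${\cal A}$ supplies $\alpha \in {\cal K}({\cal A})$ with $\alpha \subseteq {\rm Rad}({\cal A})$ and $\beta \in {\cal B}({\rm Con}({\cal A}))$ with $\phi =\alpha \vee \beta $; since $u_{\textstyle \theta }$ is a bounded lattice morphism (Lemma \ref{uvstheta}), this gives $\gamma =u_{\textstyle \theta }(\alpha )\vee u_{\textstyle \theta }(\beta )$. I would then verify that $u_{\textstyle \theta }(\alpha )$ and $u_{\textstyle \theta }(\beta )$ are the required witnesses: $u_{\textstyle \theta }(\alpha )\in {\cal K}({\cal A}/\theta )$ by Corollary \ref{corsuper}, and $u_{\textstyle \theta }(\beta )={\cal B}(u_{\textstyle \theta })(\beta )\in {\cal B}({\rm Con}({\cal A}/\theta ))$ by functoriality of ${\cal B}$.

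The one nontrivial point, and the step I expect to be the main obstacle, is the radical containment $u_{\textstyle \theta }(\alpha )\subseteq {\rm Rad}({\cal A}/\theta )$. To settle it I would first use that $s_{\textstyle \theta }^{-1}=\cdot /\theta $ is a lattice isomorphism of $[\theta )$ onto ${\rm Con}({\cal A}/\theta )$, hence preserves arbitrary meets, together with the description ${\rm Max}({\cal A}/\theta )=\{\phi '/\theta \mid \phi '\in {\rm Max}({\cal A}),\theta \subseteq \phi '\}$ recorded in Section \ref{preliminaries}, to obtain ${\rm Rad}({\cal A}/\theta )=(\bigcap \{\phi '\mid \phi '\in {\rm Max}({\cal A}),\theta \subseteq \phi '\})/\theta $. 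For each such $\phi '$ one has ${\rm Rad}({\cal A})\subseteq \phi '$ (the radical being the intersection of all maximal congruences) and $\theta \subseteq \phi '$, so ${\rm Rad}({\cal A})\vee \theta \subseteq \phi '$; intersecting over all admissible $\phi '$ and applying the order--preserving map $\cdot /\theta $ yields $({\rm Rad}({\cal A})\vee \theta )/\theta \subseteq {\rm Rad}({\cal A}/\theta )$. Since $\alpha \subseteq {\rm Rad}({\cal A})$ gives $u_{\textstyle \theta }(\alpha )=(\alpha \vee \theta )/\theta \subseteq ({\rm Rad}({\cal A})\vee \theta )/\theta $, the desired containment follows, and the decomposition $\gamma =u_{\textstyle \theta }(\alpha )\vee u_{\textstyle \theta }(\beta )$ exhibits $(\star )$ for ${\cal A}/\theta $, completing the proof.
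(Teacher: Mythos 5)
Your proof is correct and takes essentially the same route as the paper: the converse by specializing to $\theta =\Delta _{\cal A}$, and the forward direction by lifting a quotient congruence to $\phi \supseteq \theta $, applying $(\star )$ to $\phi $, pushing the decomposition down through the bounded lattice morphism $u_{\textstyle \theta }$, and invoking Corollary \ref{corsuper} for compactness, functoriality of ${\cal B}$ for Booleanness, and the description of ${\rm Max}({\cal A}/\theta )$ together with meet--preservation of $\cdot /\theta $ for the radical containment. Your element--wise verification that ${\rm Rad}({\cal A})\vee \theta $ lies below each maximal congruence above $\theta $ is just a rephrasing of the paper's chain of inclusions $({\rm Rad}({\cal A})\vee \theta )/\theta \subseteq (\bigcap _{\mu \in {\rm Max}({\cal A}),\, \mu \supseteq \theta }\mu )/\theta ={\rm Rad}({\cal A}/\theta )$.
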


\begin{proof} Assume that ${\cal A}$ satisfies $(\star )$, and let $\theta \in {\rm Con}({\cal A})$. Let $\phi \in {\rm Con}({\cal A})$ such that $\theta \subseteq \phi $. Then there exist $\alpha \in {\cal K}({\cal A})$ and $\beta \in {\cal B}({\rm Con}({\cal A}))$ such that $\alpha \subseteq {\rm Rad}({\cal A})$ and $\phi =\alpha \vee \beta $. We obtain: $\phi =\phi \vee \theta =\phi \vee \theta \vee \theta $, thus $\phi /\theta =(\phi \vee \theta \vee \theta)/\theta =(\alpha \vee \theta \vee \beta \vee \theta)/\theta =(\alpha \vee \theta )/\theta \vee (\beta \vee \theta )/\theta $. Since $\beta \in {\cal B}({\rm Con}({\cal A}))$, we have $(\beta \vee \theta )/\theta =u_{\textstyle \theta }(\beta )={\cal B}(u_{\textstyle \theta })(\beta )\in {\cal B}({\rm Con}({\cal A}/\theta ))$, while $(\alpha \vee \theta )/\theta \in {\cal K}({\cal A}/\theta )$ by Corollary \ref{corsuper}. Finally, $\displaystyle (\alpha \vee \theta )/\theta \subseteq ({\rm Rad}({\cal A})\vee \theta )/\theta =((\bigcap _{\mu \in {\rm Max}({\cal A})}\mu )\vee \theta )/\theta \subseteq ((\bigcap _{\stackrel{\scriptstyle \mu \in {\rm Max}({\cal A})}{\mu \supseteq \theta }}\mu )\vee \theta )/\theta =(\bigcap _{\stackrel{\scriptstyle \mu \in {\rm Max}({\cal A})}{\mu \supseteq \theta }}\mu )/\theta =\bigcap _{\stackrel{\scriptstyle \mu \in {\rm Max}({\cal A})}{\mu \supseteq \theta }}\mu /\theta ={\rm Rad}({\cal A}/\theta )$. Therefore ${\cal A}/\theta $ satisfies $(\star )$.

For the converse implication, just take $\theta =\Delta _{\cal A}$, so that ${\cal A}/\theta ={\cal A}/\Delta _{\cal A}$ is isomorphic to ${\cal A}$.\end{proof}

\begin{proposition}
Let $n\in \N ^*$ and ${\cal A}_1,\ldots ,{\cal A}_n$ be algebras such that $\displaystyle {\cal A}=\prod _{i=1}^n{\cal A}_i$. Then: ${\cal A}$ satisfies $(\star )$ iff, for all $i\in \overline{1,n}$, ${\cal A}_i$ satisfies $(\star )$.\end{proposition}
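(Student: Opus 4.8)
The plan is to reduce the equivalence to a coordinatewise argument by means of the product decompositions established in Section \ref{directproducts}. Recall that, by Lemma \ref{totdinbj}, (\ref{totdinbj2}), the map $f$ sending $(\theta _1,\ldots ,\theta _n)$ to $\theta _1\times \ldots \times \theta _n$ is a bounded lattice isomorphism from $\prod _{i=1}^n{\rm Con}({\cal A}_i)$ onto ${\rm Con}({\cal A})$; by Proposition \ref{propO}, (\ref{propO1}) and (\ref{propO2}), it restricts to a Boolean isomorphism on the Boolean centres and to a bijection between the lattices of compact congruences; and by Proposition \ref{specprod}, (\ref{specprod1}), ${\rm Rad}({\cal A})={\rm Rad}({\cal A}_1)\times \ldots \times {\rm Rad}({\cal A}_n)$. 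Since $f$ preserves joins and the order, the three ingredients occurring in $(\star )$ --- membership in ${\cal K}$, membership in ${\cal B}({\rm Con})$, and inclusion in ${\rm Rad}$ --- will all transfer componentwise.

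For the direct implication I would sidestep any computation by exploiting that each factor is a quotient of the product. Fixing $i\in \overline{1,n}$, I would set $\theta =\nabla _{{\cal A}_1}\times \ldots \times \nabla _{{\cal A}_{i-1}}\times \Delta _{{\cal A}_i}\times \nabla _{{\cal A}_{i+1}}\times \ldots \times \nabla _{{\cal A}_n}$, which is a congruence of ${\cal A}$ by Lemma \ref{totdinbj}, (\ref{totdinbj1}). This $\theta $ is precisely the kernel of the $i$--th projection ${\cal A}\rightarrow {\cal A}_i$, so ${\cal A}/\theta \cong {\cal A}_i$. Because ${\cal A}$ satisfies $(\star )$, Proposition \ref{starcaturi} yields that ${\cal A}/\theta $ satisfies $(\star )$; since $(\star )$ is plainly invariant under isomorphism, ${\cal A}_i$ satisfies $(\star )$.

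For the converse, I would assume each ${\cal A}_i$ satisfies $(\star )$ and take an arbitrary $\theta \in {\rm Con}({\cal A})$. By Lemma \ref{totdinbj}, (\ref{totdinbj0}), $\theta =\theta _1\times \ldots \times \theta _n$ with $\theta _i=pr_i(\theta )$; applying $(\star )$ in each factor supplies $\alpha _i\in {\cal K}({\cal A}_i)$ and $\beta _i\in {\cal B}({\rm Con}({\cal A}_i))$ with $\alpha _i\subseteq {\rm Rad}({\cal A}_i)$ and $\theta _i=\alpha _i\vee \beta _i$. Setting $\alpha =\alpha _1\times \ldots \times \alpha _n$ and $\beta =\beta _1\times \ldots \times \beta _n$, I would check the four requirements: $\alpha \in {\cal K}({\cal A})$ and $\beta \in {\cal B}({\rm Con}({\cal A}))$ by Proposition \ref{propO}, (\ref{propO2}) and (\ref{propO1}); $\alpha \subseteq {\rm Rad}({\cal A})$ from the componentwise inclusions together with Proposition \ref{specprod}, (\ref{specprod1}); and finally $\theta =\alpha \vee \beta $, since $f$ is a lattice morphism and hence $\alpha \vee \beta =(\alpha _1\vee \beta _1)\times \ldots \times (\alpha _n\vee \beta _n)=\theta _1\times \ldots \times \theta _n=\theta $.

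I do not expect a genuine obstacle in either direction: the content is the bookkeeping of componentwise translations and the citation of the correct structural results. The single point that warrants care is the identity $\theta =\alpha \vee \beta $ in the converse, where the join in ${\rm Con}({\cal A})$ does not coincide with any set--theoretic union, so this equality must be justified through the lattice isomorphism $f$ rather than by manipulating relations directly. Everything else follows at once from the finite--product decompositions of ${\cal K}$, ${\cal B}({\rm Con})$ and ${\rm Rad}$.
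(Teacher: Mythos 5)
Your proof is correct. For the converse implication (all ${\cal A}_i$ satisfy $(\star )$ implies ${\cal A}$ does), your argument is exactly the paper's: project $\theta $ onto the factors, decompose componentwise, form $\alpha =\alpha _1\times \ldots \times \alpha _n$ and $\beta =\beta _1\times \ldots \times \beta _n$, and cite Proposition \ref{propO}, Proposition \ref{specprod}, (\ref{specprod1}), and the lattice isomorphism of Lemma \ref{totdinbj}, (\ref{totdinbj2}), for the join identity --- including the point you rightly flag, that $\theta =\alpha \vee \beta $ must be justified through $f$ and not by manipulating relations. Where you genuinely diverge is the direct implication. The paper argues by projection: given arbitrary $\theta _i\in {\rm Con}({\cal A}_i)$, it forms $\theta =\theta _1\times \ldots \times \theta _n$, applies $(\star )$ in ${\cal A}$ to get $\theta =\alpha \vee \beta $, and then sets $\alpha _i=pr_i(\alpha )$, $\beta _i=pr_i(\beta )$, using once more Lemma \ref{totdinbj}, Proposition \ref{propO} and Proposition \ref{specprod}, (\ref{specprod1}), to see that $pr_i$ carries joins to joins, ${\cal K}({\cal A})$ into ${\cal K}({\cal A}_i)$, ${\cal B}({\rm Con}({\cal A}))$ into ${\cal B}({\rm Con}({\cal A}_i))$, and ${\rm Rad}({\cal A})$ onto ${\rm Rad}({\cal A}_i)$. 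You instead identify ${\cal A}_i$ with the quotient of ${\cal A}$ by the kernel $\nabla _{{\cal A}_1}\times \ldots \times \Delta _{{\cal A}_i}\times \ldots \times \nabla _{{\cal A}_n}$ of the $i$--th projection and invoke Proposition \ref{starcaturi} together with isomorphism--invariance of $(\star )$. This is legitimate: Proposition \ref{starcaturi} precedes the present proposition and its proof involves no products, so there is no circularity; and $(\star )$ is indeed isomorphism--invariant (a fact the paper never states explicitly, but immediate, since an isomorphism of algebras induces an isomorphism of congruence lattices preserving compact elements, complements and the radical). What your route buys is economy: it reuses an already--proven closure property instead of redoing the componentwise bookkeeping, at the price of appealing to the homomorphism theorem and the tacit invariance fact. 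The paper's route is longer but stays entirely inside the explicit product decompositions of Section \ref{directproducts} and produces the componentwise form of the decomposition directly. Neither direction of your argument has a gap.
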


\begin{proof} Assume that, for all $i\in \overline{1,n}$, ${\cal A}_i$ satisfies $(\star )$, and let $\theta \in {\rm Con}({\cal A})$. For all $i\in \overline{1,n}$, let $\theta _i=pr_i(\theta )\in {\rm Con}({\cal A}_i)$ by Lemma \ref{totdinbj}. Then, for each $i\in \overline{1,n}$, there exist $\alpha _i\in {\cal K}({\cal A}_i)$ and $\beta _i\in {\cal B}({\rm Con}({\cal A}_i))$ such that $\alpha _i\subseteq {\rm Rad}({\cal A}_i)$ and $\theta _i=\alpha _i\vee \beta _i$. Let $\alpha =\alpha _1\times \ldots \times \alpha _n,\beta =\beta _1\times \ldots \times \beta _n$. By Proposition \ref{propO}, (\ref{propO2}) and (\ref{propO1}), we have $\alpha \in {\cal K}({\cal A})$ and $\beta \in {\cal B}({\rm Con}({\cal A}))$. Also, $\alpha =\alpha _1\times \ldots \times \alpha _n\subseteq {\rm Rad}({\cal A}_1)\times \ldots \times {\rm Rad}({\cal A}_n)={\rm Rad}({\cal A})$ by Lemma \ref{totdinbj} and Proposition \ref{specprod}, (\ref{specprod1}), and $\theta =\theta _1\times \ldots \times \theta _n=(\alpha _1\vee \beta _1)\times \ldots \times (\alpha _n\vee \beta _n)=(\alpha _1\times \ldots \times \alpha _n)\vee (\beta _1\times \ldots \times \beta _n)=\alpha \vee \beta $, by Lemma \ref{totdinbj}, (\ref{totdinbj2}). Therefore ${\cal A}$ satisfies $(\star )$.

Now assume that ${\cal A}$ satisfies $(\star )$, and, for all $i\in \overline{1,n}$, let $\theta _i\in {\rm Con}({\cal A}_i)$. Denote $\theta =\theta _1\times \ldots \times \theta _n\in {\rm Con}({\cal A})$, by Lemma \ref{totdinbj}. Then there exist $\alpha \in {\cal K}({\cal A})$ and $\beta \in {\cal B}({\rm Con}({\cal A}))$ such that $\alpha \subseteq {\rm Rad}({\cal A})$ and $\theta =\alpha \vee \beta $. For each $i\in \overline{1,n}$, let $\alpha _i=pr_i(\alpha )$ and $\beta _i=pr_i(\beta )$. By Lemma \ref{totdinbj}, Proposition \ref{propO}, (\ref{propO2}) and (\ref{propO1}), and Proposition \ref{specprod}, (\ref{specprod1}), it follows that, for all $i\in \overline{1,n}$, $\theta _i=pr_i(\theta )=pr_i(\alpha \vee \beta )=pr_i(\alpha )\vee pr_i(\beta )=\alpha _i\vee \beta _i$, $\alpha _i\in {\cal K}({\cal A}_i)$, $\beta _i\in {\cal B}({\rm Con}({\cal A}_i))$ and $\alpha _i=pr_i(\alpha )\subseteq pr_i({\rm Rad}({\cal A}))={\rm Rad}({\cal A}_i)$. Thus, for all $i\in \overline{1,n}$, ${\cal A}_i$ satisfies $(\star )$.\end{proof}

\section{CBLP Versus BLP in Residuated Lattices and Bounded Distributive Lattices}
\label{cblpversusblp}

In this section, we recall some results on the Boolean Lifting Property (BLP) for residuated lattices and bounded distributive lattices, as well as the reticulation functor between these categories of algebras, and obtain new results, concerning the relationships between CBLP and BLP in these categories, and the behaviour of the reticulation functor with respect to CBLP. From these results it is easy to derive notable properties concerning the image of the reticulation functor.

We refer the reader to \cite{bal}, \cite{gal}, \cite{haj}, \cite{ior}, \cite{kow}, \cite{pic}, \cite{tur} for a further study of the results on residuated lattices that we use in this section. For the results on bounded distributive lattices, we refer the reader to \cite{bal}, \cite{blyth}, \cite{bur}.

Throughout this section, all algebras will be designated by their underlying sets.

We recall that a {\em residuated lattice} is an algebra $(A,\vee ,\wedge ,\odot ,\rightarrow ,0,1)$ of type $(2,2,2,2,0,0)$ such that $(A,\vee ,\wedge ,0,1)$ is a bounded lattice, $(A,\odot ,1)$ is a commutative monoid and every $a,b,c\in A$ satisfy {\em the law of residuation}: $a\leq b\rightarrow c$ iff $a\odot b\leq c$, where $\leq $ is the order of $(A,\vee ,\wedge )$. The operation $\odot $ is called {\em product} or {\em multiplication}, and the operation $\rightarrow $ is called {\em implication} or {\em residuum}. It is well known that residuated lattices form an equational class. A {\em $G\ddot{o}del$ algebra} is a residuated lattice in which $\odot =\wedge $.

Throughout this section, unless mentioned otherwise, $(A,\vee ,\wedge ,\odot ,\rightarrow ,0,1)$ shall be an arbitrary residuated lattice. We recall the definitions of the derivative operations $\neg \, $ (the {\em negation}) and $\leftrightarrow $ (the {\em equivalence} or the {\em biresiduum}) on the elements of $A$: for all $a,b\in A$, $\neg \, a=a\rightarrow 0$ and $a\leftrightarrow b=(a\rightarrow b)\wedge (b\rightarrow a)$. We also recall that, for all $a\in A$ and any $n\in \N $, we denote: $a^0=1$ and $a^{n+1}=a^n\odot a$. Next we shall recall some things about the arithmetic of a residuated lattice, its Boolean center, its filters and congruences, as well as the Boolean Lifting Property in a residuated lattice, and we shall prove several new results regarding these notions.

\begin{lemma}{\rm \cite{bal}, \cite{gal}, \cite{haj}, \cite{ior}, \cite{kow}, \cite{pic}, \cite{tur}} For any $a,b\in A$, the following hold:

\begin{enumerate}
\item\label{aritmlr1} $a\rightarrow b=1$ iff $a\leq b$; $a\leftrightarrow b=1$ iff $a=b$;
\item\label{aritmlr2} $a\odot (a\rightarrow b)\leq b$.
\end{enumerate}\label{aritmlr}\end{lemma}

A {\em filter} of $A$ is a non--empty subset $F$ of $A$ such that, for all $x,y\in A$:

\begin{itemize}
\item if $x,y\in F$, then $x\odot y\in F$;
\item if $x\in F$ and $x\leq y$, then $y\in F$.\end{itemize}

The set of the filters of $A$ is denoted by ${\rm Filt}(A)$. $({\rm Filt}(A),\subseteq )$ is a bounded poset, with first element $\{1\}$ and last element $A$. Clearly, a filter equals $A$ iff it contains $0$.

The intersection of any family of filters of $A$ is a filter of $A$, hence, for any $X\subseteq A$, there exists a smallest filter of $A$ which includes $X$; this filter is denoted by $[X)$ and called the {\em filter generated by $X$}. For every $x\in A$, $[\{x\})$ is denoted, simply, by $[x)$, and called the {\em principal filter generated by $x$}. Clearly, $[\emptyset )=\{1\}=[1)$, while, for any $\emptyset \neq X\subseteq A$, $[X)=\{a\in A\ |\ (\exists \, n\in \N ^*)\, (\exists \, x_1,\ldots ,x_n\in X)\, (x_1\odot \ldots \odot x_n\leq a)\}=\{a\in A\ |\ (\exists \, n\in \N )\, (\exists \, x_1,\ldots ,x_n\in X)\, (x_1\odot \ldots \odot x_n\leq a)\}$, where we make the convention that the product of the empty family is $1$. Thus, for any $x\in A$, $[x)=\{a\in A\ |\ (\exists \, n\in \N ^*)\, (x^n\leq a)\}=\{a\in A\ |\ (\exists \, n\in \N )\, (x^n\leq a)\}$. We denote by ${\rm PFilt}(A)$ the set of the principal filters of $A$.

For every $F,G\in {\rm Filt}(A)$, we denote by $F\vee G=[F\cup G)$. Moreover, for any $(F_i)_{i\in I}\subseteq {\rm Filt}(A)$, we denote by $\displaystyle \bigvee _{i\in I}F_i=[\bigcup _{i\in I}F_i)$. $({\rm Filt}(A),\vee ,\cap ,\{1\},A)$ is a complete bounded distributive lattice, orderred by set inclusion. ${\rm PFilt}(A)$ is a bounded sublattice of ${\rm Filt}(A)$, because $\{1\}=[1)$, $A=[0)$ and, for all $x,y\in A$, $[x)\vee [y)=[x\odot y)$ and $[x)\cap [y)=[x\vee y)$.

To every filter $F$ of $A$, one can associate a congruence $\sim _F$ of $A$, defined by: for all $x,y\in A$, $x\sim _Fy$ iff $x\leftrightarrow y\in F$. Let $F$ be a filter of $A$. The congruence class of any $x\in A$ with respect to $\sim _F$ is denoted by $x/F$, and the quotient set of $A$ with respect to $\sim _F$ is denoted by $A/F$. Residuated lattices form an equational class, thus $A/F$ becomes a residuated lattice, with the operations defined canonically. We shall denote by $p_F:A\rightarrow A/F$ the canonical surjective morphism. Notice that $1/F=F$. For any $x,y\in A$, $x\leq y$ implies $x/F\leq y/F$, and $x/F\leq y/F$ iff $x\rightarrow y\in F$. For any $X\subseteq A$, we denote by $X/F=p_F(X)=\{x/F\ |\ x\in X\}$. We have: ${\rm Filt}(A/F)=\{G/F\ |\ G\in {\rm Filt}(A),F\subseteq G\}$. It is well known and straightforward that the function $h_A:{\rm Filt}(A)\rightarrow {\rm Con}(A)$, for all $F\in {\rm Filt}(A)$, $h_A(F)=\sim _F$, is a bounded lattice isomorphism; hence the bounded lattice ${\rm Con}(A)$ is distributive.

By ${\cal B}(A)$ we denote the set of the complemented elements of the underlying bounded lattice of $A$, which, although not necessarily distributive, is uniquely complemented, and has ${\cal B}(A)$ as a bounded sublattice. Moreover, ${\cal B}(A)$ is a Boolean algebra. ${\cal B}(A)$ is called the {\em Boolean center of $A$}.

If $B$ is a residuated lattice and $f:A\rightarrow B$ is a residuated lattice morphism, then $f({\cal B}(A))\subseteq {\cal B}(B)$, thus, just as in the case of bounded distributive lattices, we can define ${\cal B}(f):{\cal B}(A)\rightarrow {\cal B}(B)$ by: ${\cal B}(f)(x)=f(x)$ for all $x\in {\cal B}(A)$. Then ${\cal B}(f)$ is a Boolean morphism. Hence ${\cal B}$ becomes a covariant functor from the category of residuated lattices to the category of Boolean algebras. We believe that there is no danger of confusion between this functor and the functor ${\cal B}$ from the category of bounded distributive lattices to the category of Boolean algebras.

\begin{proposition}{\rm \cite{kow}} Any residuated lattice is an arithmetical algebra and satisfies (H).\end{proposition}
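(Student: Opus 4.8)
The plan is to verify the three ingredients separately: congruence--distributivity, congruence--permutability, and the hypothesis (H). Congruence--distributivity is already essentially recorded in the preceding discussion, and exhibiting a finite generating set for $\nabla _A$ will be quick, so the substantive work lies in producing a Maltsev term in order to obtain permutability.

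For congruence--distributivity I would simply invoke the bounded lattice isomorphism $h_A\colon {\rm Filt}(A)\rightarrow {\rm Con}(A)$, $F\mapsto \sim _F$: since $({\rm Filt}(A),\vee ,\cap ,\{1\},A)$ is a distributive lattice, its isomorphic copy ${\rm Con}(A)$ is distributive as well, which is exactly the definition of congruence--distributivity. For (H), I would show that $\nabla _A=Cg(0,1)$, so that $\nabla _A$ is even principally generated, hence compact. To this end, note first that $0\leftrightarrow 1=(0\rightarrow 1)\wedge (1\rightarrow 0)=1\wedge 0=0$, using $0\le 1$ together with residuation to compute $1\rightarrow 0=0$. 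Passing through $h_A$, a congruence $\sim _F$ contains the pair $(0,1)$ precisely when $0\leftrightarrow 1=0\in F$, that is, precisely when $F=A$; hence the smallest congruence containing $(0,1)$ is $\sim _A=\nabla _A$, giving $Cg(0,1)=\nabla _A\in {\cal K}(A)$, and so (H) holds.

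The main obstacle is congruence--permutability, which I would settle by exhibiting a Maltsev term. I propose $m(x,y,z)=((x\rightarrow y)\rightarrow z)\wedge ((z\rightarrow y)\rightarrow x)$, a term in the residuated lattice signature which is visibly symmetric, in the sense that $m(x,y,z)=m(z,y,x)$. To verify $m(x,x,z)=z$, observe that the first conjunct is $(x\rightarrow x)\rightarrow z=1\rightarrow z=z$, while $z\le (z\rightarrow x)\rightarrow x$ holds because $z\odot (z\rightarrow x)\le x$ by Lemma \ref{aritmlr}, (\ref{aritmlr2}), followed by residuation; hence the meet equals $z$. The companion identity $m(x,z,z)=x$ then follows at once from the symmetry of $m$. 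Thus $m$ is a Maltsev term valid throughout the equational class of residuated lattices, and by Maltsev's theorem every residuated lattice is congruence--permutable.

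Combining the three points, $A$ is both congruence--distributive and congruence--permutable, that is, arithmetical, and it satisfies (H). The only place demanding genuine insight is the guess of the correct ternary term $m$; once it is written down, all verifications reduce to the single inequality $a\odot (a\rightarrow b)\le b$ and the residuation law.
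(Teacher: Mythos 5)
Your proof is correct, but it follows a genuinely different route from the paper, which states this proposition as a citation from the literature (Kowalski--Ono) and never proves it in full. What the paper does supply, scattered through Section 5, are two of your three ingredients: congruence--distributivity is obtained exactly as you do it, via the bounded lattice isomorphism $h_A\colon {\rm Filt}(A)\rightarrow {\rm Con}(A)$ onto the distributive filter lattice; and (H) is proved by a compactness argument in ${\rm Filt}(A)$ --- if $A=\bigvee_{i\in I}F_i$ then $0=x_1\odot\ldots\odot x_n$ for finitely many $x_k$ drawn from finitely many $F_{i_k}$, so finitely many filters already join to $A$. Your (H) argument is different and in fact stronger: you show $\nabla_A=Cg(0,1)$ is \emph{principal} (not merely compact), using $0\leftrightarrow 1=0$ and the surjectivity of $h_A$ to see that the only congruence containing $(0,1)$ is $\nabla_A$; this is clean, though note it leans on the nontrivial fact that \emph{every} congruence arises from a filter, which the paper (reasonably) takes as well known. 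The real added value of your proposal is congruence--permutability, which the paper leaves entirely to the citation: your term $m(x,y,z)=((x\rightarrow y)\rightarrow z)\wedge ((z\rightarrow y)\rightarrow x)$ is a correct Maltsev term --- the identities $m(x,x,z)=z$ and $m(x,z,z)=x$ reduce, as you say, to $a\leq (a\rightarrow b)\rightarrow b$, i.e.\ to Lemma~2.12(ii) plus residuation --- and Maltsev's theorem then yields permutability for the whole variety. In short: the paper buys the result by reference and proves only the fragments it needs; you buy it by an explicit construction, at the cost of having to produce (and verify) the right ternary term.
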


\begin{definition}{\rm \cite{ggcm}}
For any filter $F$ of $A$, we say that $F$ has the {\em Boolean Lifting Property} (abbreviated {\em BLP}) iff the Boolean morphism ${\cal B}(p_F):{\cal B}(A)\rightarrow {\cal B}(A/F)$ is surjective; also, we say that $\sim _F$ has the {\em Boolean Lifting Property (BLP)} iff $F$ has BLP.

We say that $A$ has the {\em Boolean Lifting Property (BLP)} iff each filter of $A$ has the BLP (equivalently, iff each congruence of $A$ has the BLP).\end{definition}

\begin{remark}{\rm \cite{ggcm}}
For any filter $F$ of $A$, ${\cal B}(A)/F\subseteq {\cal B}(A/F)$ and the image of ${\cal B}(p_F)$ is ${\cal B}(A)/F$, hence: $F$ has BLP iff ${\cal B}(A)/F={\cal B}(A/F)$ iff ${\cal B}(A)/F\supseteq {\cal B}(A/F)$.\end{remark}

\begin{lemma}{\rm \cite{ggcm}}
${\cal B}({\rm Filt}(A))=\{[e)\ |\ e\in {\cal B}(A)\}$.\label{boolfilt}\end{lemma}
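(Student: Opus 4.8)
The plan is to prove the two inclusions separately, exploiting the identities $[x)\vee [y)=[x\odot y)$, $[x)\cap [y)=[x\vee y)$, $[1)=\{1\}$ and $[0)=A$, together with the distributivity of ${\rm Filt}(A)$, all recalled just before the statement. For the inclusion $\{[e)\ |\ e\in {\cal B}(A)\}\subseteq {\cal B}({\rm Filt}(A))$, I would fix $e\in {\cal B}(A)$, whose lattice complement is $\neg\, e$, and exhibit $[\neg\, e)$ as its complement in ${\rm Filt}(A)$: since $e\vee \neg\, e=1$ and $e\odot \neg\, e=0$ (the latter because $e\odot \neg\, e=e\odot (e\rightarrow 0)\leq 0$ by Lemma \ref{aritmlr}, (\ref{aritmlr2})), the two identities give $[e)\cap [\neg\, e)=[e\vee \neg\, e)=[1)=\{1\}$ and $[e)\vee [\neg\, e)=[e\odot \neg\, e)=[0)=A$, so $[e)$ is complemented in ${\rm Filt}(A)$.

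The substance is the reverse inclusion. Let $F\in {\cal B}({\rm Filt}(A))$ with complement $G$, so that $F\cap G=\{1\}$ and $F\vee G=[F\cup G)=A$. First I would extract a single generator: since $0\in A=[F\cup G)$, there are finitely many elements of $F\cup G$ whose product is $\leq 0$; grouping the factors lying in $F$ and those lying in $G$, and using that filters are closed under $\odot$ and contain $1$, I obtain $a\in F$ and $b\in G$ with $a\odot b=0$. Because $a,b\leq a\vee b$ and filters are up-sets, $a\vee b\in F\cap G=\{1\}$, hence $a\vee b=1$. From $a\odot b=0$ and the law of residuation I get $b\leq a\rightarrow 0=\neg\, a$, whence $1=a\vee b\leq a\vee \neg\, a$, so $a\vee \neg\, a=1$; by the standard characterization of the Boolean center of a residuated lattice (from the references cited at the start of this section), this yields $a\in {\cal B}(A)$.

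It then remains to identify $F$ with $[a)$, and this is where I expect the only real care to be needed. The inclusion $[a)\subseteq F$ is immediate from $a\in F$. For the converse I would use distributivity of ${\rm Filt}(A)$: since $[a)\vee [b)=[a\odot b)=[0)=A$, and since $[b)\subseteq G$ gives $F\cap [b)\subseteq F\cap G=\{1\}$, I compute $F=F\cap ([a)\vee [b))=(F\cap [a))\vee (F\cap [b))=[a)\vee \{1\}=[a)$. Thus $F=[a)$ with $a\in {\cal B}(A)$, completing the inclusion. The main obstacle is the reverse inclusion as a whole: producing a \emph{single} Boolean generator $a$ (rather than merely a pair of filters summing to $A$) and then pinning $F$ down to exactly $[a)$; both difficulties dissolve once one observes that $a\vee b=1$ together with $a\odot b=0$ forces $a$ to be Boolean, and that distributivity makes the complement unique.
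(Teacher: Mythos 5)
The paper does not prove this lemma at all: it is imported verbatim from \cite{ggcm}, so there is no internal proof to compare against. Your argument is correct and essentially reconstructs the standard one. Both directions are sound: the compactness-style extraction of a single pair $a\in F$, $b\in G$ with $a\odot b=0$ from $F\vee G=A$ is exactly the argument the paper itself uses later (after Lemma \ref{atentie}) to show that $A$ is a compact element of ${\rm Filt}(A)$, and the distributivity computation $F=(F\cap [a))\vee (F\cap [b))=[a)$ correctly pins $F$ down to a principal filter. The only two places where you lean on outside facts are legitimately standard and are indeed in the references you point to: (i) that the lattice complement of $e\in {\cal B}(A)$ is $\neg\, e$, which follows in one line since a complement $f$ of $e$ satisfies $e\odot f\leq e\wedge f=0$, hence $f\leq \neg\, e$ and so $e\vee \neg\, e\geq e\vee f=1$; and (ii) that $a\vee \neg\, a=1$ forces $a\in {\cal B}(A)$, which follows from distributivity of $\odot$ over $\vee$: $a\wedge \neg\, a=(a\wedge \neg\, a)\odot (a\vee \neg\, a)\leq (a\odot \neg\, a)\vee (\neg\, a\odot a)=0$. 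Including these two one-line verifications would make your proof fully self-contained; as written, there is no gap.
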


Let us define $i_A:A\rightarrow {\rm Filt}(A)$, for all $x\in A$, $i_A(x)=[x)$. Clearly, $i_A$ is an injective bounded lattice anti--morphism between the underlying bounded lattice of $A$ and ${\rm Filt}(A)$. Now let us define ${\cal B}(i_A):{\cal B}(A)\rightarrow {\cal B}({\rm Filt}(A))$, for all $e\in {\cal B}(A)$, ${\cal B}(i_A)(e)=[e)$.

\begin{lemma}
${\cal B}(i_A)$ is well defined and it is a Boolean anti--isomorphism.\label{atentie}\end{lemma}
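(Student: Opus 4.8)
The plan is to recognize ${\cal B}(i_A)$ as simply the restriction–corestriction of the bounded lattice anti-morphism $i_A$ to the Boolean centers, and to read off every required property from that identification together with Lemma \ref{boolfilt}. Concretely, ${\cal B}(i_A)$ is $i_A$ restricted to the bounded sublattice ${\cal B}(A)$ of the underlying lattice of $A$ and corestricted to the bounded sublattice ${\cal B}({\rm Filt}(A))$ of ${\rm Filt}(A)$, so the whole argument is bookkeeping around these two sublattices.

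First I would dispatch well-definedness and surjectivity in one stroke. By Lemma \ref{boolfilt}, ${\cal B}({\rm Filt}(A))=\{[e)\ |\ e\in {\cal B}(A)\}$, so for every $e\in {\cal B}(A)$ the filter $[e)={\cal B}(i_A)(e)$ indeed lies in ${\cal B}({\rm Filt}(A))$ (well-definedness), and conversely every element of ${\cal B}({\rm Filt}(A))$ is of the form $[e)$ for some $e\in {\cal B}(A)$ (surjectivity). Injectivity is inherited verbatim from the injectivity of $i_A$. For the anti-morphism property I would invoke the already-established fact that $i_A$ is a bounded lattice anti-morphism: restricting it to the bounded sublattice ${\cal B}(A)$ and landing in the bounded sublattice ${\cal B}({\rm Filt}(A))$ keeps it a bounded lattice anti-morphism, i.e. $[e\vee f)=[e)\cap [f)$ and $[e\wedge f)=[e)\vee [f)$ for all $e,f\in {\cal B}(A)$, with $[0)=A$ and $[1)=\{1\}$ realized as the top and bottom of ${\cal B}({\rm Filt}(A))$. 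At this point ${\cal B}(i_A)$ is a bijective bounded lattice anti-morphism between two Boolean algebras.

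Finally I would observe that such a bijective bounded lattice anti-morphism automatically preserves complements, which upgrades it to a Boolean anti-isomorphism. Indeed, for $e\in {\cal B}(A)$, applying ${\cal B}(i_A)$ to the identities $e\wedge \neg \, e=\Delta$ (the bottom $1$ of $A$, giving $[1)=\{1\}$) and $e\vee \neg \, e$ (the top $0$ of $A$, giving $[0)=A$) yields ${\cal B}(i_A)(e)\vee {\cal B}(i_A)(\neg \, e)=A$ and ${\cal B}(i_A)(e)\cap {\cal B}(i_A)(\neg \, e)=\{1\}$; by uniqueness of complements in the Boolean algebra ${\cal B}({\rm Filt}(A))$ this forces ${\cal B}(i_A)(\neg \, e)=\neg \, {\cal B}(i_A)(e)$.

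I do not expect a genuine obstacle here. The single point deserving care is confirming that the image of $i_A$ restricted to ${\cal B}(A)$ is exactly ${\cal B}({\rm Filt}(A))$ — neither smaller nor larger — which is precisely the content of Lemma \ref{boolfilt}; the underlying nontrivial input (that $i_A$ is a bounded lattice anti-morphism, resting on $[x)\vee [y)=[x\odot y)=[x\wedge y)$) is supplied by the excerpt and need not be redone.
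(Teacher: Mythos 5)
Your proof is correct and follows essentially the same route as the paper: well-definedness and surjectivity from Lemma \ref{boolfilt}, injectivity inherited from $i_A$, the anti-morphism property by restriction to the bounded sublattices ${\cal B}(A)$ and ${\cal B}({\rm Filt}(A))$, and then the automatic upgrade to a Boolean anti-isomorphism (which the paper leaves implicit and you spell out via uniqueness of complements). The only blemish is a mismatched parenthetical in your last step --- $e\wedge \neg\, e$ equals the bottom $0$ of $A$, giving $[0)=A$, while $e\vee \neg\, e$ equals the top $1$, giving $[1)=\{1\}$ --- but your displayed conclusions are the correct ones, so this is a labelling slip rather than a gap.
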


\begin{proof} By Lemma \ref{boolfilt}, ${\cal B}(i_A)$ is well defined and surjective. Since $i_A$ is injective, it follows that ${\cal B}(i_A)$ is injective. Since $i_A$ is a bounded lattice anti--morphism, ${\cal B}(A)$ is a bounded sublattice of $A$ and ${\cal B}({\rm Filt}(A))$ is a bounded sublattice of ${\rm Filt}(A)$, it follows that ${\cal B}(i_A)$ is a bounded lattice anti--morphism between two Boolean algebras, thus ${\cal B}(i_A)$ is a Boolean anti--morphism. Hence ${\cal B}(i_A)$ is a Boolean anti--isomorphism.\end{proof}

Now let $(F_i)_{i\in I}$ be a non--empty family of filters of $A$ such that $\displaystyle A\subseteq \bigvee _{i\in I}F_i$, that is $\displaystyle A=\bigvee _{i\in I}F_i$, that is $\displaystyle 0\in \bigvee _{i\in I}F_i=[\bigcup _{i\in I}F_i)$, which means that there exist $n\in \N ^*$ and $\displaystyle x_1,\ldots ,x_n\in \bigcup _{i\in I}F_i$ such that $x_1\odot \ldots \odot x_n\leq 0$, that is $x_1\odot \ldots \odot x_n=0$. Then $x_1\in F_{\textstyle i_1},\ldots ,x_n\in F_{\textstyle i_n}$ for some $i_1,\ldots ,i_n\in I$. So $0=x_1\odot \ldots \odot x_n\in [F_{\textstyle i_1}\cup \ldots \cup F_{\textstyle i_n})=F_{\textstyle i_1}\vee \ldots \vee F_{\textstyle i_n}$, thus $A=F_{\textstyle i_1}\vee \ldots \vee F_{\textstyle i_n}$, hence $A\subseteq F_{\textstyle i_1}\vee \ldots \vee F_{\textstyle i_n}$. Therefore $A$ is a compact element of the bounded distributive lattice ${\rm Filt}(A)$. Since ${\rm Filt}(A)$ is isomorphic to ${\rm Con}(A)$, it follows that $\nabla _A$ is a compact element of the bounded distributive lattice ${\rm Con}(A)$, which means that $A$ fulfills the hypothesis (H).

Until mentioned otherwise, $F$ will be a filter of $A$, arbitrary but fixed. We shall denote by $\delta _F:{\rm Filt}(A)\rightarrow {\rm Filt}(A/F)$ the function defined by: for all $G\in {\rm Filt}(A)$, $\delta _F(G)=(G\vee F)/F$.

\begin{lemma}\begin{enumerate}
\item\label{lamunca1} For all $a\in A$, $([a)\vee F)/F=[a/F)$.
\item\label{lamunca2} For all $J,K\in {\rm Filt}(A)$ such that $F\subseteq J$ and $F\subseteq K$, $(J\vee K)/F=J/F\vee K/F$.
\item\label{lamunca3} $\delta _F$ is well defined and it is a bounded lattice morphism.
\item\label{lamunca4} The following diagram is commutative:

\vspace*{-30pt}

\begin{center}\begin{picture}(140,80)(0,0)
\put(30,50){$A$}
\put(24,5){$A/F$}
\put(100,50){${\rm Filt}(A)$}
\put(93,5){${\rm Filt}(A/F)$}
\put(21,30){$p_F$}

\put(33,48){\vector(0,-1){34}}
\put(113,30){$\delta _F$}
\put(110,48){\vector(0,-1){34}}
\put(61,57){$i_A$}
\put(38,53){\vector(1,0){60}}
\put(57,12){$i_{A/F}$}
\put(45,8){\vector(1,0){45}}\end{picture}\end{center}

\vspace*{-20pt}

\item\label{lamunca5} $F$ has BLP iff ${\cal B}(\delta _F)$ is surjective.\end{enumerate}\label{lamunca}\end{lemma}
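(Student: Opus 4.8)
The plan is to prove the five parts in the order given, since each feeds the next: part (\ref{lamunca1}) carries the computational weight, parts (\ref{lamunca2}) and (\ref{lamunca3}) assemble $\delta _F$ into a bounded lattice morphism, part (\ref{lamunca4}) then drops out of (\ref{lamunca1}), and part (\ref{lamunca5}) combines the diagram with the Boolean anti--isomorphisms of Lemma \ref{atentie}. Throughout I would use the explicit description of generated filters together with the saturation property of filters containing $F$.

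For (\ref{lamunca1}) I would first record that $[a)\vee F=\{y\in A\ |\ (\exists \, n\in \N )(\exists \, f\in F)(a^n\odot f\leq y)\}$, using that $F$ is closed under $\odot $ and upward closed. For the inclusion $([a)\vee F)/F\subseteq [a/F)$, from $a^n\odot f\leq y$ and $f/F=1/F=1$ (since $f\leftrightarrow 1=f\in F$) one gets $(a/F)^n=a^n/F\leq y/F$. For the reverse inclusion, an element $x/F\in [a/F)$ satisfies $(a/F)^n\leq x/F$, i.e.\ $a^n\rightarrow x\in F$; putting $f=a^n\rightarrow x$ and invoking $a^n\odot (a^n\rightarrow x)\leq x$ from Lemma \ref{aritmlr}, (\ref{aritmlr2}), places $x$ in $[a)\vee F$, whence $x/F\in ([a)\vee F)/F$. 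For (\ref{lamunca2}), both sides are filters of $A/F$ because $F\subseteq J\vee K$; writing a member of $J\vee K$ as $y\geq j\odot k$ with $j\in J$, $k\in K$ and passing to the quotient gives $(J\vee K)/F\subseteq J/F\vee K/F$, while the reverse inclusion is immediate from $J/F,K/F\subseteq (J\vee K)/F$ and the fact that $J/F\vee K/F$ is the filter generated by $J/F\cup K/F$.

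For (\ref{lamunca3}), well--definedness is the correspondence ${\rm Filt}(A/F)=\{G/F\ |\ F\subseteq G\}$ applied to $G\vee F$; preservation of $\{1\}$ and $A$ is direct (note $F/F=\{1/F\}$); preservation of $\vee $ reduces to (\ref{lamunca2}) after the identity $(G\vee H)\vee F=(G\vee F)\vee (H\vee F)$. The one genuinely delicate point is preservation of $\cap $. Here I would exploit that ${\rm Filt}(A)$ is distributive, being isomorphic via $h_A$ to the distributive lattice ${\rm Con}(A)$, to rewrite $(G\cap H)\vee F=(G\vee F)\cap (H\vee F)$; then, writing $G'=G\vee F$, $H'=H\vee F$, I would use the saturation identity $p_F^{-1}(G'/F)=G'$ valid for every $G'\supseteq F$ (which follows once more from Lemma \ref{aritmlr}, (\ref{aritmlr2}), exactly as in (\ref{lamunca1})) to obtain $(G'\cap H')/F=G'/F\cap H'/F$, i.e.\ $\delta _F(G\cap H)=\delta _F(G)\cap \delta _F(H)$.

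Part (\ref{lamunca4}) is then immediate: for $x\in A$, $\delta _F(i_A(x))=([x)\vee F)/F=[x/F)=i_{A/F}(p_F(x))$, the central equality being precisely (\ref{lamunca1}). Finally, for (\ref{lamunca5}) I would restrict the commutative square of (\ref{lamunca4}) to Boolean centers and apply the functor ${\cal B}$, yielding ${\cal B}(\delta _F)\circ {\cal B}(i_A)={\cal B}(i_{A/F})\circ {\cal B}(p_F)$, where ${\cal B}(i_A)$ and ${\cal B}(i_{A/F})$ are bijections by Lemma \ref{atentie}. Since precomposing with the bijection ${\cal B}(i_A)$ leaves the image unchanged and postcomposing with the bijection ${\cal B}(i_{A/F})$ preserves surjectivity, ${\cal B}(\delta _F)$ is surjective iff ${\cal B}(p_F)$ is, that is, iff $F$ has BLP. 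I expect the main obstacle to be the meet--preservation in (\ref{lamunca3}) (which needs both distributivity of ${\rm Filt}(A)$ and the saturation identity); the remaining steps are bookkeeping, with the only other subtlety being the careful tracking of the anti--isomorphisms in (\ref{lamunca5}).
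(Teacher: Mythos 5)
Your proposal is correct and follows the same overall plan as the paper: the same five-part decomposition in the same order, the explicit description of generated filters plus residuation (Lemma \ref{aritmlr}, (\ref{aritmlr2})) for part (\ref{lamunca1}), the identity $(G\vee H)\vee F=(G\vee F)\vee (H\vee F)$ plus distributivity of ${\rm Filt}(A)$ for part (\ref{lamunca3}), and the restriction of the commutative square to Boolean centers together with the anti--isomorphisms of Lemma \ref{atentie} for part (\ref{lamunca5}). There are two local differences worth noting. First, for the inclusion $J/F\vee K/F\subseteq (J\vee K)/F$ in part (\ref{lamunca2}), the paper runs an element chase (from $(x\odot y)/F\leq a/F$ it extracts $z\in F\subseteq K$ with $x\odot y\odot z\leq a$), whereas you argue abstractly: $(J\vee K)/F$ is a filter of $A/F$ by the correspondence ${\rm Filt}(A/F)=\{G/F\ |\ G\in {\rm Filt}(A),\, F\subseteq G\}$, it contains $J/F\cup K/F$, and $J/F\vee K/F$ is the smallest such filter; this is shorter and equally valid. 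Second, in part (\ref{lamunca3}) the paper's chain $((G\vee F)\cap (H\vee F))/F=(G\vee F)/F\cap (H\vee F)/F$ is asserted without justification, while you explicitly supply the missing ingredient, namely the saturation identity $p_F^{-1}(G')/F$-style fact $(G'\cap H')/F=G'/F\cap H'/F$ for filters $G',H'\supseteq F$, proved via Lemma \ref{aritmlr}, (\ref{aritmlr2}); on this point your write-up is actually more complete than the paper's. One tiny quibble: you justify distributivity of ${\rm Filt}(A)$ by transport from ${\rm Con}(A)$ via $h_A$, whereas the paper states distributivity of ${\rm Filt}(A)$ directly and derives that of ${\rm Con}(A)$ from it; since the paper provides the former outright, nothing is circular, but the direction of inference you cite is reversed relative to the paper's development.
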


\begin{proof} (\ref{lamunca1}) Let $a\in A$. Then $[a/F)=\{b/F\ |\ b\in A,(\exists \, n\in \N )((a/F)^n\leq b/F)\}=\{b/F\ |\ b\in A,(\exists \, n\in \N )\, (a^n/F\leq b/F)\}=\{b/F\ |\ b\in A,(\exists \, n\in \N )\, (a^n\rightarrow b\in F)\}$ and $([a)\vee F)/F=[[a)\cup F)/F=[\{a\}\cup F)/F=\{b/F\ |\ b\in [\{a\}\cup F)\}=\{b/F\ |\ (\exists \, n,k\in \N )\, (\exists \, x_1,\ldots ,x_k\in F)\, (a^n\odot x_1\odot \ldots \odot x_n\leq b)\}=\{b/F\ |\ (\exists \, n\in \N )\, (\exists \, x\in F)\, (a^n\odot x\leq b)\}$, since any product of a finite family of elements of $F$ belongs to $F$, and the converse is trivial; we shall be using this property repeatedly in what follows. Let $b\in A$ such that $b/F\in [a/F)$. Then $a^n\rightarrow b\in F$ for some $n\in \N $, thus there exists an $x\in F$ such that $a^n\rightarrow b=x$, so $x\rightarrow a^n\rightarrow b$, hence $a^n\odot x\leq b$ by the law of residuation, therefore $b/F\in ([a)\vee F)/F$. In what follows, we shall be using the law of residuation without mentioning it. Now let $b\in A$ such that $b/F\in ([a)\vee F)/F$. Then there exist $n\in \N $ and $x\in F$ such that $a^n\odot x\leq b$, thus $x\leq a^n\rightarrow b$, hence $a^n\rightarrow b\in F$, therefore $b/F\in [a/F)$. Therefore $([a)\vee F)/F=[a/F)$.

\noindent (\ref{lamunca2}) Let $J$ and $K$ be as in the enunciation. Then $(J\vee K)/F=[J\cup K)/F=\{a/F\ |\ a\in [J\cup K)\}=\{a/F\ |\ a\in A,(\exists \, n,k\in \N )\, (\exists \, x_1,\ldots ,x_n\in J)\, (\exists \, y_1,\ldots ,y_k\in K)\, (x_1\odot \ldots \odot x_n\odot y_1\odot \ldots \odot y_k\leq a)\}=\{a/F\ |\ a\in A,(\exists \, x\in J)\, (\exists \, y\in K)\, (x\odot y\leq a)$. And $J/F\vee K/F=[J/F\cup K/F)=\{a/F\ |\ a\in A,(\exists \, n,k\in \N )\, (\exists \, x_1,\ldots ,x_n\in J)\, (\exists \, y_1,\ldots ,y_k\in K)\, (x_1/F\odot \ldots \odot x_n/F\odot y_1/F\odot \ldots \odot y_k/F\leq a/F)\}=\{a/F\ |\ a\in A,(\exists \, n,k\in \N )\, (\exists \, x_1,\ldots ,x_n\in J)\, (\exists \, y_1,\ldots ,y_k\in K)\, ((x_1\odot \ldots \odot x_n)/F\odot (y_1\odot \ldots \odot y_k)/F\leq a/F)\}=\{a/F\ |\ a\in A,(\exists \, x\in J)\, (\exists \, y\in K)\, (x/F\odot y/F\leq a/F)\}=\{a/F\ |\ a\in A,(\exists \, x\in J)\, (\exists \, y\in K)\, ((x\odot y)/F\leq a/F)\}$. If $x,y,a\in A$ such that $x\odot y\leq a$, then $(x\odot y)/F\leq a/F$, thus $(J\vee K)/F\subseteq J/F\vee K/F$. Now let $a\in A$ such that $a/F\in J/F\vee K/F$, thus there exist $x\in J$ and $y\in K$ such that $(x\odot y)/F\leq a/F$, that is $(x\odot y)\rightarrow a\in F$, so $(x\odot y)\rightarrow a=z$ for some $z\in F$, thus $z\leq (x\odot y)\rightarrow a$, that is $x\odot y\odot z\leq a$. We have: $x\in J$, $y\in K$ and $z\in F\subseteq K$, thus $y\odot z\in K$. So $a/F\in (J\vee K)/F$. Therefore $(J\vee K)/F=J/F\vee K/F$.

\noindent (\ref{lamunca3}) For all $G\in {\rm Filt}(A)$, $G\vee F\supseteq F$, thus $(G\vee F)/F\in {\rm Filt}(A/F)$, so $\delta _F$ is well defined. $\delta _F(\{1\})=(\{1\}\vee F)/F=F/F=\{1/F\}$; $\delta _F(A)=(A\vee F)/F=A/F$. Now let $G,H\in {\rm Filt}(A)$. By (\ref{lamunca2}), we have: $\delta _F(G\vee H)=(G\vee H\vee F)/F=(G\vee F\vee H\vee F)/F=(G\vee F)/F\vee (H\vee F)/F=\delta _F(G)\vee \delta _F(H)$. By the distributivity of the lattice of filters of a residuated lattice, we have: $\delta _F(G\cap H)=((G\cap H)\vee F)/F=((G\vee F)\cap (H\vee F))/F=(G\vee F)/F\cap (H\vee F)/F=\delta _F(G)\cap \delta _F(H)$. Therefore $\delta _F$ is a bounded lattice morphism.

\noindent (\ref{lamunca4}) Let $a\in A$. By (\ref{lamunca1}), $\delta _F(i_A(a))=\delta _F([a))=([a)\vee F)/F=[a/F)=i_{A/F}(a/F)=i_{A/F}(p_F(a))$. Therefore $\delta _F\circ i_A=i_{A/F}\circ p_F$.

\noindent (\ref{lamunca5}) By taking the restrictions to the Boolean centers in the commutative diagram in (\ref{lamunca4}), we get the following commutative diagram, where we have denoted by ${\cal B}(i_A)$ the restriction of $i_A$ to ${\cal B}(A)$, and the same goes for ${\cal B}(i_{A/F})$:

\vspace*{-23pt}

\begin{center}\begin{picture}(140,80)(0,0)
\put(22,50){${\cal B}(A)$}

\put(16,5){${\cal B}(A/F)$}
\put(100,50){${\cal B}({\rm Filt}(A))$}
\put(93,5){${\cal B}({\rm Filt}(A/F))$}
\put(7,30){${\cal B}(p_F)$}
\put(33,48){\vector(0,-1){34}}
\put(123,30){${\cal B}(\delta _F)$}
\put(121,48){\vector(0,-1){34}}
\put(58,57){${\cal B}(i_A)$}
\put(44,53){\vector(1,0){54}}
\put(53,12){${\cal B}(i_{A/F})$}
\put(51,8){\vector(1,0){39}}\end{picture}\end{center}

\vspace*{-3pt}

Thus ${\cal B}(\delta _F)\circ {\cal B}(i_A)={\cal B}(i_{A/F})\circ {\cal B}(p_F)$. By Lemma \ref{atentie}, ${\cal B}(i_A)$ and ${\cal B}(i_{A/F})$ are Boolean anti--isomorphisms. Hence: $F$ has BLP iff ${\cal B}(p_F)$ is surjective iff ${\cal B}(\delta _F)$ is surjective.\end{proof}

Now let us consider the congruence $h_A(F)=\sim _F$ associated to $F$ and the bounded lattice morphism $u_{\textstyle \sim _F}:{\rm Con}(A)\rightarrow {\rm Con}(A)/\sim _F$, for all $\theta \in {\rm Con}(A)$, $u_{\textstyle \sim _F}(\theta )=(\theta \vee \sim _F)/\sim _F$  (see Section \ref{thecblp}).

\begin{lemma}

The following diagram is commutative:\vspace*{-24pt}

\begin{center}\begin{picture}(140,80)(0,0)
\put(22,50){${\rm Filt}(A)$}
\put(16,5){${\rm Filt}(A/F)$}
\put(103,50){${\rm Con}(A)$}
\put(96,5){${\rm Con}(A/F)$}
\put(26,30){$\delta _F$}
\put(37,48){\vector(0,-1){34}}
\put(123,30){$u_{\textstyle \sim _F}$}
\put(121,48){\vector(0,-1){34}}
\put(70,57){$h_A$}
\put(53,53){\vector(1,0){48}}
\put(65,12){$h_{A/F}$}
\put(60,8){\vector(1,0){34}}\end{picture}\end{center}\vspace*{-10pt}\label{capace}\end{lemma}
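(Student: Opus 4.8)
The statement to prove is that $h_{A/F}\circ \delta _F=u_{\textstyle \sim _F}\circ h_A$ as maps ${\rm Filt}(A)\rightarrow {\rm Con}(A/F)$, where I recall that $A/F$ is by definition the quotient by $\sim _F$, so that $p_F=p_{\textstyle \sim _F}$ and the codomain ${\rm Con}(A/{\sim _F})$ of $u_{\textstyle \sim _F}$ coincides with ${\rm Con}(A/F)$. The plan is to fix an arbitrary filter $G\in {\rm Filt}(A)$, unfold both composites, and reduce the claim to a single ``quotient of a quotient'' identity which I will then verify element--wise. Evaluating the two paths gives, on the one hand, $h_{A/F}(\delta _F(G))=h_{A/F}((G\vee F)/F)=\sim _{(G\vee F)/F}$, and, on the other hand, $u_{\textstyle \sim _F}(h_A(G))=u_{\textstyle \sim _F}(\sim _G)=(\sim _G\vee \sim _F)/\sim _F$. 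Since $h_A$ is a bounded lattice isomorphism, $\sim _G\vee \sim _F=h_A(G)\vee h_A(F)=h_A(G\vee F)=\sim _{G\vee F}$, so the right--hand path equals $\sim _{G\vee F}/\sim _F$. Writing $J=G\vee F$ (a filter with $F\subseteq J$, hence $\sim _F\subseteq \sim _J$ since $h_A$ is order preserving), the statement thus reduces to the identity $\sim _{J/F}=\sim _J/\sim _F$ in ${\rm Con}(A/F)$.

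To prove this identity I would compare the two congruences on a pair $(x/F,y/F)$ with $x,y\in A$. Using that $p_F$ is a residuated lattice morphism and $\leftrightarrow $ is a derived operation, $(x/F)\leftrightarrow (y/F)=(x\leftrightarrow y)/F$, so $(x/F,y/F)\in \sim _{J/F}$ iff $(x\leftrightarrow y)/F\in J/F$. The crux is the claim that, for $a\in A$ and $F\subseteq J$, one has $a/F\in J/F$ iff $a\in J$: the nontrivial direction follows because $a/F=b/F$ for some $b\in J$ forces $a\leftrightarrow b\in F\subseteq J$, whence $b\rightarrow a\geq a\leftrightarrow b$ lies in $J$, and then $a\geq b\odot (b\rightarrow a)\in J$ by Lemma \ref{aritmlr}, (\ref{aritmlr2}), together with the upward closure of filters. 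Applying this with $a=x\leftrightarrow y$ gives $(x/F,y/F)\in \sim _{J/F}$ iff $x\leftrightarrow y\in J$ iff $(x,y)\in \sim _J$. On the other side, $\sim _J/\sim _F=s_{\textstyle \sim _F}^{-1}(\sim _J)=\{(a/F,b/F)\ |\ (a,b)\in \sim _J\}$, and since $\sim _F\subseteq \sim _J$ the correspondence recalled in Section \ref{preliminaries} gives $(x/F,y/F)\in \sim _J/\sim _F$ iff $(x,y)\in \sim _J$ (if $(x/F,y/F)=(a/F,b/F)$ with $(a,b)\in \sim _J$, then $(x,a),(b,y)\in \sim _F\subseteq \sim _J$ force $(x,y)\in \sim _J$ by transitivity). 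Both memberships thus reduce to $(x,y)\in \sim _J$, so $\sim _{J/F}=\sim _J/\sim _F$ and the diagram commutes.

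The only step requiring genuine work is the filter membership equivalence $a/F\in J/F\Leftrightarrow a\in J$ for $F\subseteq J$; everything else is either bookkeeping or an appeal to facts already in hand, namely that $h_A$ is a lattice isomorphism and that, for $\sim _F\subseteq \sim _J$, the map $s_{\textstyle \sim _F}^{-1}$ sends $\sim _J$ to $\sim _J/\sim _F$ with the expected membership behaviour. I expect that short residuation estimate to be the main (if minor) obstacle. A more structural alternative would be to read $\sim _{J/F}=\sim _J/\sim _F$ as the naturality of the filter--congruence correspondence under quotients, i.e. as the assertion that $h$ intertwines the correspondence--theorem isomorphisms underlying $\delta _F$ and $s_{\textstyle \sim _F}^{-1}$; but since all these maps are already available, the direct element--wise verification above is the most economical.
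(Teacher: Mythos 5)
Your proof is correct and follows essentially the same route as the paper's: evaluate both composites at an arbitrary $G\in {\rm Filt}(A)$, use that $h_A$ is a lattice isomorphism to rewrite $(\sim _G\vee \sim _F)/\sim _F$ as $\sim _{G\vee F}/\sim _F$, and then establish the pointwise equivalence by the same residuation argument --- your general claim that $a/F\in J/F$ iff $a\in J$ for $F\subseteq J$ is exactly the paper's inline computation with $a=x\leftrightarrow y$ and $J=G\vee F$. The only (harmless) difference is organizational: you isolate the membership--lifting claim as a separate step and explicitly verify representative--independence of membership in $\sim _J/\sim _F$ via transitivity, which the paper handles implicitly by comparing the defining conditions over all pairs $(x,y)$.
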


\begin{proof} Let $G\in {\rm Filt}(A)$. Then $h_{A/F}(\delta _F(G))=h_{A/F}((G\vee F)/F)=\sim _{(G\vee F)/F}=\{(x/F,y/F)\ |\ x,y\in A,x/F\leftrightarrow y/F\in (G\vee F)/F\}=\{(x/F,y/F)\ |\ x,y\in A,(x\leftrightarrow y)/F\in (G\vee F)/F\}$ and $u_{\textstyle \sim _F}(h_A(G))=u_{\textstyle \sim _F}(\sim _G)=(\sim _G\vee \sim _F)/\sim _F=(h_A(G)\vee h_A(F))/\sim _F=h_A(G\vee F)/\sim _F=\sim _{G\vee F}/\sim _F=\{(x/\sim _F,y/\sim _F)\ |\ x,y\in A, (x,y)\in /\sim _{G\vee F}\}=\{(x/F,y/F)\ |\ x,y\in A, x\leftrightarrow y\in G\vee F\}$. For any $x,y\in A$, if $x\leftrightarrow y\in G\vee F$, then $(x\leftrightarrow y)/F\in (G\vee F)/F$; conversely, if $(x\leftrightarrow y)/F\in (G\vee F)/F$, then $(x\leftrightarrow y)/F=z/F$ for some $z\in G\vee F$, thus $(x\leftrightarrow y)\leftrightarrow z\in F$, so, since $(x\leftrightarrow y)\leftrightarrow z\leq z\rightarrow (x\leftrightarrow y)$, it follows that $z\rightarrow (x\leftrightarrow y)\in F$, that is $z\rightarrow (x\leftrightarrow y)=t$ for some $t\in F$, hence $t\leq z\rightarrow (x\leftrightarrow y)$, thus $t\odot z\leq x\leftrightarrow y$, with $t\in F\subseteq G\vee F$ and $z\in G\vee F$, hence $t\odot z\in G\vee F$, thus $x\leftrightarrow y\in G\vee F$. Hence $h_{A/F}(\delta _F(G))=u_{\textstyle \sim _F}(h_A(G))$. Therefore $h_{A/F}\circ \delta _F=u_{\textstyle \sim _F}\circ h_A$.\end{proof}

\begin{proposition}\begin{enumerate}
\item\label{happy1} For every filter $F$ of $A$: $F$ has BLP iff $\sim _F$ has CBLP.
\item\label{happy2} $A$ has BLP iff $A$ has CBLP.\end{enumerate}\label{happy}\end{proposition}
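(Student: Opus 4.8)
The plan is to prove part (\ref{happy1}) by combining the two lemmas immediately preceding the statement, and then to deduce part (\ref{happy2}) by quantifying over all filters. First I would recall that, by definition, $\sim _F$ has CBLP iff ${\cal B}(u_{\textstyle \sim _F})$ is surjective, where $u_{\textstyle \sim _F}:{\rm Con}(A)\rightarrow {\rm Con}(A/\sim _F)$; here $A/\sim _F$ is exactly the quotient residuated lattice $A/F$ (the classes $x/F$ are the classes modulo $\sim _F$), so ${\rm Con}(A/\sim _F)={\rm Con}(A/F)$ and the bounded lattice isomorphism $h_{A/F}$ is available on the quotient. On the other side, Lemma \ref{lamunca}, part (\ref{lamunca5}), already reduces the statement ``$F$ has BLP'' to the surjectivity of ${\cal B}(\delta _F)$, where $\delta _F:{\rm Filt}(A)\rightarrow {\rm Filt}(A/F)$. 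Thus part (\ref{happy1}) will follow once I show that ${\cal B}(\delta _F)$ is surjective iff ${\cal B}(u_{\textstyle \sim _F})$ is surjective.

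To this end I would apply the functor ${\cal B}$ to the commutative square of Lemma \ref{capace}, namely $h_{A/F}\circ \delta _F=u_{\textstyle \sim _F}\circ h_A$. Since ${\cal B}$ is a functor, it carries this to a commutative square of Boolean algebras, ${\cal B}(h_{A/F})\circ {\cal B}(\delta _F)={\cal B}(u_{\textstyle \sim _F})\circ {\cal B}(h_A)$; and since $h_A$ and $h_{A/F}$ are bounded lattice isomorphisms, ${\cal B}(h_A)$ and ${\cal B}(h_{A/F})$ are Boolean isomorphisms, with ${\cal B}(h_A)^{-1}={\cal B}(h_A^{-1})$ by functoriality. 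Rearranging gives ${\cal B}(u_{\textstyle \sim _F})={\cal B}(h_{A/F})\circ {\cal B}(\delta _F)\circ {\cal B}(h_A)^{-1}$, an expression for ${\cal B}(u_{\textstyle \sim _F})$ as ${\cal B}(\delta _F)$ sandwiched between two bijections. Since pre-- and post--composition with bijections preserves surjectivity in both directions, ${\cal B}(u_{\textstyle \sim _F})$ is surjective iff ${\cal B}(\delta _F)$ is, which completes part (\ref{happy1}).

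For part (\ref{happy2}) I would use that $h_A:{\rm Filt}(A)\rightarrow {\rm Con}(A)$ is a bijection sending $F\mapsto \sim _F$, so that the congruences of $A$ are precisely the $\sim _F$ with $F\in {\rm Filt}(A)$. Hence $A$ has CBLP, i.e.\ every $\theta \in {\rm Con}(A)$ has CBLP, iff every $\sim _F$ has CBLP, iff (by part (\ref{happy1})) every $F\in {\rm Filt}(A)$ has BLP, iff $A$ has BLP. I do not expect a genuine obstacle here, since all the analytic content was already discharged in Lemma \ref{lamunca} and in the proof of the commutative diagram of Lemma \ref{capace}; the remaining argument is a formal diagram chase. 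The only points requiring care are the bookkeeping identification $A/\sim _F=A/F$ (so that $u_{\textstyle \sim _F}$ and $\delta _F$ have matching targets through $h_{A/F}$) and the fact that ${\cal B}$, being a functor, carries bounded lattice isomorphisms to Boolean isomorphisms and preserves the commutativity of the square.
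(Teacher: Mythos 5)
Your proposal is correct and follows essentially the same route as the paper's proof: both apply the functor ${\cal B}$ to the commutative square of Lemma \ref{capace}, use that ${\cal B}(h_A)$ and ${\cal B}(h_{A/F})$ are Boolean isomorphisms together with Lemma \ref{lamunca}, (\ref{lamunca5}), to transfer surjectivity between ${\cal B}(\delta _F)$ and ${\cal B}(u_{\textstyle \sim _F})$, and then deduce the second part from the bijectivity of $h_A$. The only difference is presentational: you make explicit the rearrangement ${\cal B}(u_{\textstyle \sim _F})={\cal B}(h_{A/F})\circ {\cal B}(\delta _F)\circ {\cal B}(h_A)^{-1}$, which the paper leaves as an immediate chain of equivalences.
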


\begin{proof} (\ref{happy1}) By applying the functor ${\cal B}$ from the category of bounded distributive lattices to the category of Boolean algebras to the commutative diagram in Lemma \ref{capace}, we get the following commutative diagram in the category of Boolean algebras:\vspace*{-23pt}

\begin{center}\begin{picture}(140,80)(0,0)
\put(22,50){${\cal B}({\rm Filt}(A))$}
\put(13,5){${\cal B}({\rm Filt}(A/F))$}
\put(111,50){${\cal B}({\rm Con}(A))$}
\put(106,5){${\cal B}({\rm Con}(A/F))$}
\put(18,30){${\cal B}(\delta _F)$}
\put(43,48){\vector(0,-1){34}}
\put(136,30){${\cal B}(u_{\textstyle \sim _F})$}
\put(134,48){\vector(0,-1){34}}
\put(75,57){${\cal B}(h_A)$}
\put(67,53){\vector(1,0){43}}
\put(70,12){${\cal B}(h_{A/F})$}
\put(71,8){\vector(1,0){33}}\end{picture}\end{center}

\vspace*{-4pt}

This means that: ${\cal B}(h_{A/F})\circ {\cal B}(\delta _F)={\cal B}(u_{\textstyle \sim _F})\circ {\cal B}(h_A)$. $h_A$ and $h_{A/F}$ are bounded lattice isomorphisms, hence ${\cal B}(h_A)$ and ${\cal B}(h_{A/F})$ are Boolean isomorphisms. By Lemma \ref{lamunca}, (\ref{lamunca5}), we get that: $F$ has BLP iff ${\cal B}(\delta _F)$ is surjective iff ${\cal B}(u_{\textstyle \sim _F})$ is surjective iff $\sim _F$ has CBLP.

\noindent (\ref{happy2}) By (\ref{happy1}) and the fact that $h_A:{\rm Filt}(A)\rightarrow {\rm Con}(A)$, for all $F\in {\rm Filt}(A)$, $h_A(F)=\sim _F$, is a bijection.\end{proof}

\begin{definition}{\rm \cite{dcggcm}} $A$ is a {\em Gelfand residuated lattice} iff any prime filter of $A$ is included in a unique maximal filter of $A$.\end{definition}

\begin{proposition}{\rm \cite{dcggcm}} $A$ is Gelfand iff the lattice ${\rm Filt}(A)$ is normal.\label{lrgelfand}\end{proposition}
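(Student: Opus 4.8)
The plan is to work entirely inside the lattice $\mathrm{Filt}(A)$, which (as recalled just above the statement) is a bounded distributive algebraic lattice whose top element $A$ is compact, i.e.\ $A$ satisfies (H). Via the bounded lattice isomorphism $h_A:\mathrm{Filt}(A)\to\mathrm{Con}(A)$ together with the correspondence $F\mapsto\,\sim_F$, the prime filters of $A$ correspond exactly to the meet--prime elements of $\mathrm{Filt}(A)$ and the maximal filters to the maximal proper elements. Under this dictionary, normality of $\mathrm{Filt}(A)$ reads: whenever $F\vee G=A$ there exist $H,K\in\mathrm{Filt}(A)$ with $H\cap K=\{1\}$, $F\vee H=A$ and $G\vee K=A$; and the Gelfand condition becomes the spectral statement that every meet--prime element lies below a unique maximal element. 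I would then prove the two implications separately.

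First I would treat the direction ``$\mathrm{Filt}(A)$ normal $\Rightarrow A$ Gelfand'', which is the short one. Let $P$ be a prime filter with $P\subseteq M_1$ and $P\subseteq M_2$ for maximal filters $M_1\neq M_2$. Distinct maximal filters are incomparable, so $M_1\vee M_2$ strictly contains $M_1$ and hence equals $A$ by maximality. Normality applied to $M_1\vee M_2=A$ yields $H,K$ with $H\cap K=\{1\}$, $M_1\vee H=A$ and $M_2\vee K=A$. From $M_1\vee H=A$ and maximality of $M_1$ we get $H\not\subseteq M_1$; since $P\subseteq M_1$ this forces $H\not\subseteq P$, and symmetrically $K\not\subseteq P$. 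But $H\cap K=\{1\}\subseteq P$, so meet--primality of $P$ gives $H\subseteq P$ or $K\subseteq P$, a contradiction. Hence $M_1=M_2$, so $A$ is Gelfand.

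The substantial direction is ``$A$ Gelfand $\Rightarrow \mathrm{Filt}(A)$ normal'', and this is where I expect the main obstacle. The approach is topological: I would put the hull--kernel topology on the set of prime filters, with closed sets $V(F)=\{P\ |\ F\subseteq P\}$, exactly as in the analogue of Lemma \ref{ltop}. The Gelfand hypothesis makes the assignment of each prime filter to the unique maximal filter above it a well-defined retraction $r$ onto the maximal spectrum, and the standard consequence is that the maximal spectrum is compact and Hausdorff. Given $F\vee G=A$ one has $V(F)\cap V(G)=V(F\vee G)=V(A)=\emptyset$, so the traces of $V(F)$ and $V(G)$ on the maximal spectrum are disjoint compact sets; separating them there and transporting the separation back along $r$ should produce the required $H$ and $K$. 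The delicate points, which I would carry out in detail, are (i) proving that $r$ is continuous and that the maximal spectrum is Hausdorff, which is the real content and the step most likely to demand genuine work, and (ii) converting the open separation into honest elements $H,K\in\mathrm{Filt}(A)$ satisfying $H\cap K=\{1\}$ and $F\vee H=G\vee K=A$, for which I would exploit the compactness of the top element $A$ to reduce infinite joins to finite ones.

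An alternative that avoids rebuilding this machinery is to isolate the purely lattice--theoretic fact ``a bounded distributive algebraic lattice with compact top is normal iff each of its meet--prime elements lies below a unique maximal element'' and apply it to $\mathrm{Filt}(A)$, the Gelfand condition being precisely its hypothesis under the prime-filter/meet-prime correspondence noted above. Either way, the easy implication is as in the second paragraph, and essentially all of the difficulty concentrates in deriving normality from the uniqueness of the maximal filter above each prime filter.
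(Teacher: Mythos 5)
First, note that the paper itself offers no proof of Proposition \ref{lrgelfand}: it is quoted from \cite{dcggcm}, so there is no internal argument to compare yours against; your attempt has to stand on its own. Your direction ``${\rm Filt}(A)$ normal $\Rightarrow A$ Gelfand'' does stand: the identification of prime filters of $A$ with the meet--prime elements of ${\rm Filt}(A)$ is legitimate (by distributivity of ${\rm Filt}(A)$, the element--wise and the meet--prime notions of prime filter coincide), and the argument ``$M_1\vee M_2=A$, apply normality, use $H\cap K=\{1\}\subseteq P$ and primality of $P$ to get a contradiction'' is complete and correct.

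The converse direction, however, contains a genuine gap, and it is located exactly where the whole content of the theorem lies. You assert that, under the Gelfand hypothesis, ``the standard consequence is that the maximal spectrum is compact and Hausdorff'' and that the retraction $r$ is continuous, and you defer both. Neither is routine: Hausdorffness of ${\rm Max}$ and continuity of $r$ are not formal consequences of the uniqueness of the maximal filter above each prime filter; they are the residuated--lattice analogue of the De Marco--Orsatti theorem for pm--rings, and every known proof requires a substantive argument (for instance, showing that $r^{-1}(V(F)\cap {\rm Max})=\{P\in {\rm Spec}\ |\ P\vee F\neq A\}$ and then proving this set is closed in the hull--kernel topology, which needs a compactness argument in the patch topology or an auxiliary operator such as $O(M)=\bigcap\{P\ {\rm prime}\ |\ P\subseteq M\}$ together with its interaction with joins of filters). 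Your fallback --- ``isolate the purely lattice--theoretic fact that a bounded distributive algebraic lattice with compact top is normal iff each meet--prime element lies below a unique maximal element and apply it to ${\rm Filt}(A)$'' --- is a true statement, but invoking it without proof merely renames the theorem: that fact \emph{is} the hard direction, stated one abstraction level up. So the architecture of your proof is the standard and correct one, and the translation steps between filters and the topology (via the analogues of Lemma \ref{ltop} and hypothesis (H)) are fine, but as written the proof of ``Gelfand $\Rightarrow$ normal'' is an outline whose two flagged steps are precisely the missing mathematics.
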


\begin{corollary}

$A$ is Gelfand iff the lattice ${\rm Con}(A)$ is normal.\label{gelfand}\end{corollary}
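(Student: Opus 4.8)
The plan is to reduce the statement to Proposition \ref{lrgelfand}, which characterizes the Gelfand property through normality of the filter lattice, and then to transport normality across the bounded lattice isomorphism $h_A:{\rm Filt}(A)\rightarrow {\rm Con}(A)$, $h_A(F)=\sim _F$, recalled in Section \ref{cblpversusblp}.

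First I would note that normality is a purely lattice--theoretic property: a bounded distributive lattice $L$ is normal iff for all $x,y\in L$ with $x\vee y=1$ there exist $e,f\in L$ such that $e\wedge f=0$ and $x\vee e=y\vee f=1$. Since this condition is phrased entirely in terms of the join, the meet, and the bounds $0$ and $1$, it is invariant under bounded lattice isomorphisms. The argument then amounts to chaining two equivalences. By Proposition \ref{lrgelfand}, $A$ is Gelfand iff ${\rm Filt}(A)$ is normal. Because $h_A$ is a bounded lattice isomorphism, it carries the top element $A$ of ${\rm Filt}(A)$ to the top element $\nabla _A$ of ${\rm Con}(A)$, the bottom $\{1\}$ to $\Delta _A$, and preserves joins and meets; hence ${\rm Filt}(A)$ is normal iff ${\rm Con}(A)$ is normal. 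Concretely, given $\theta ,\eta \in {\rm Con}(A)$ with $\theta \vee \eta =\nabla _A$, I would pull them back to $F=h_A^{-1}(\theta )$ and $G=h_A^{-1}(\eta )$ in ${\rm Filt}(A)$, obtaining $F\vee G=A$, apply normality of ${\rm Filt}(A)$ to get $E,H\in {\rm Filt}(A)$ with $E\cap H=\{1\}$ and $F\vee E=G\vee H=A$, and then push forward to get $h_A(E),h_A(H)\in {\cal B}({\rm Con}(A))$, or rather in ${\rm Con}(A)$, witnessing normality of ${\rm Con}(A)$; the reverse direction is symmetric, using $h_A^{-1}$ in place of $h_A$. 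Composing the two equivalences yields the corollary.

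There is no genuine obstacle here. The only point worth making explicit is the invariance of normality under bounded lattice isomorphisms, which is immediate from the definition, so the entire proof reduces to composing Proposition \ref{lrgelfand} with the isomorphism $h_A$. The brevity is expected, since this is stated as a corollary to the filter--lattice characterization already available.
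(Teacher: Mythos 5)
Your proposal is correct and follows exactly the paper's own argument: invoke Proposition \ref{lrgelfand} and transport normality across the bounded lattice isomorphism $h_A:{\rm Filt}(A)\rightarrow {\rm Con}(A)$, the paper's proof being precisely this observation stated in one line. Your spelled-out verification that normality is invariant under bounded lattice isomorphisms (including the self-correction that the witnesses need only lie in ${\rm Con}(A)$, not in ${\cal B}({\rm Con}(A))$) is just the detail the paper leaves implicit.
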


\begin{proof} By Proposition \ref{lrgelfand} and the fact that the bounded distributive lattices ${\rm Filt}(A)$ and ${\rm Con}(A)$ are isomorphic.\end{proof}

The following corollary is part of \cite[Theorem $6.20$]{dcggcm}, but here we provide a different proof for it, by using the equivalence between CBLP and BLP in residuated lattices.

\begin{corollary}
Any residuated lattice with BLP is Gelfand.\label{cordinp2}\end{corollary}

\begin{proof} By Proposition \ref{happy}, (\ref{happy2}), Proposition \ref{blpbnorm}, Corollary \ref{gelfand} and the trivial fact that any B--normal lattice is normal.\end{proof}

An element of $a\in A$ is said to be {\em idempotent} iff $a^2=a$. The set of the idempotents of $A$ is denoted by ${\cal I}(A)$. An element of $a\in A$ is said to be {\em regular} iff $\neg \, \neg \, a=a$. The set of the regular elements of $A$ is denoted by ${\rm Reg}(A)$.

\begin{definition}{\rm \cite{dcggcm}} Let $F$ be an arbitrary filter of $A$. We say that $F$ has the {\em Idempotent Lifting Property} (abbreviated {\em ILP}) iff ${\cal I}(A/F)={\cal I}(A)/F$.

We say that $A$ has the {\em Idempotent Lifting Property (ILP)} iff all of its filters have the ILP.\end{definition}

\begin{proposition}{\cite{dcggcm}}
Neither of the properties BLP and ILP in residuated lattices implies the other.\end{proposition}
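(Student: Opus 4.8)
The plan is to prove both non--implications by producing explicit counterexamples, one for each direction. It is convenient to record first a general fact that orients the search: \emph{every finite residuated lattice has ILP}. Indeed, if $F$ is a filter of a finite residuated lattice $B$ and $x/F\in {\cal I}(B/F)$, then $(x/F)^n=x/F$ for all $n\in \N ^*$; the sequence $x\geq x^2\geq x^3\geq \cdots $ is decreasing (as $x\leq 1$ and $\odot $ is isotone), so in the finite lattice $B$ it stabilizes at some $e=x^N=x^{N+1}$, which is idempotent and satisfies $e/F=x^N/F=x/F$. Hence ${\cal I}(B/F)\subseteq {\cal I}(B)/F$, and the reverse inclusion always holds, so $F$ has ILP. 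Consequently a witness for ``BLP does not imply ILP'' must be an infinite algebra, while ``ILP does not imply BLP'' can be realised on a finite one.

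For \textbf{ILP $\not\Rightarrow $ BLP} I would take the finite Heyting algebra $H=\{0,a,b,c,1\}$ with $0<a,b<c<1$, $a\wedge b=0$ and $a\vee b=c$ (the four--element Boolean algebra with a new top adjoined). As a residuated lattice a Heyting algebra has $\odot =\wedge $, so every element is idempotent, ${\cal I}(H)=H$, and hence $H$ has ILP (directly, or by the remark above, $H$ being finite). On the other hand $H$ fails BLP: computing the quotient by the filter $F=\{c,1\}$ one finds the four classes $\{0\},\{a\},\{b\},\{c,1\}$, so that $H/F$ is the four--element Boolean algebra and ${\cal B}(H/F)=H/F$ has four elements, whereas ${\cal B}(H)=\{0,1\}$ yields ${\cal B}(H)/F=\{0/F,1/F\}$; thus ${\cal B}(p_F)$ is not surjective and $\sim _F$ has no BLP. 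Equivalently, $H$ is not Gelfand, since its prime filter $\{1\}$ is contained in the two distinct maximal filters $\{a,c,1\}$ and $\{b,c,1\}$; by Corollary \ref{cordinp2} a residuated lattice with BLP is Gelfand, so $H$ cannot have BLP.

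For \textbf{BLP $\not\Rightarrow $ ILP} the witness must, by the observation above, be infinite. The strategy is to exhibit an infinite residuated lattice $A$ that has BLP but admits a filter $F$ and a non--idempotent $x\notin F$ whose powers $x>x^2>x^3>\cdots $ do not stabilize, such that $x/F$ is idempotent in $A/F$ while no idempotent of $A$ lies in the class $x/F$; then $x/F\in {\cal I}(A/F)\setminus {\cal I}(A)/F$, so $F$ fails ILP. There is room for this because $x\rightarrow x^2\geq x$ always (from $x\odot x=x^2$), so one can hope to arrange $x\rightarrow x^n\in F$ for every $n$ (forcing $x/F$ to be idempotent) without $x\in F$ (which would collapse $x/F$ to $1/F$, a liftable idempotent). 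To keep BLP under control, I would look for $A$ among the algebras whose Boolean congruences coincide with their compact congruences (Proposition \ref{prop1}) or among finite direct products of local residuated lattices (Corollary \ref{finprodloc}), rather than among local algebras themselves.

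The main obstacle is precisely the construction and verification of this second example. The difficulty is that the easiest ways to guarantee BLP tend to destroy the phenomenon one wants: a \emph{local} residuated lattice collapses its unique maximal filter to the two--element quotient, and its other proper quotients merely reproduce sub--behaviours that still satisfy ILP, so the non--liftable idempotent cannot be produced there. One therefore needs a genuinely non--local algebra with BLP in which some non--maximal filter both \emph{retains} an infinite MV--type component and \emph{rounds} a non--idempotent element to a new idempotent of the quotient; a concrete such residuated lattice is given in \cite{dcggcm}, and exhibiting it, together with the verification that every Boolean element lifts while this particular idempotent does not, completes the proof.
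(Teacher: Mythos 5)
This proposition is not proved in the paper at all: it is imported from \cite{dcggcm} as a cited result, so your attempt has to stand on its own as a proof of both non--implications. The first half does stand. Your preliminary lemma that every finite residuated lattice has ILP is correct (the decreasing chain $x\geq x^2\geq x^3\geq \cdots $ stabilizes at an idempotent $x^N$, and $x^N/F=(x/F)^N=x/F$), and your five--element ${\rm G\ddot{o}del}$ algebra $H$ is a complete, correct witness for ILP $\not\Rightarrow $ BLP; it is in fact exactly the algebra the paper itself uses in Remark \ref{rcuex} and in the closing example of Section \ref{cblpversusblp}, and both of your verifications (the direct computation showing ${\cal B}(H)/F$ has two elements while ${\cal B}(H/F)$ has four, and the indirect argument that $H$ is not Gelfand combined with Corollary \ref{cordinp2}) are sound.

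The genuine gap is the second non--implication. For BLP $\not\Rightarrow $ ILP you produce no witness: you describe what properties a witness would have to have, correctly conclude it must be infinite, and then write that "a concrete such residuated lattice is given in \cite{dcggcm}" --- but \cite{dcggcm} is precisely the citation the proposition carries, so as a proof this is circular; the statement remains unproved. Nor is this direction a routine omission that anyone could fill in mechanically: your own lemma excludes finite algebras; every totally ordered residuated lattice has BLP (a chain and all its quotients have Boolean center $\{0,1\}$), so one would naturally look for a chain failing ILP, yet the standard chains all satisfy ILP --- in MV--algebras BLP and ILP are equivalent (Proposition \ref{bvsi}), in product--type components $x\rightarrow x^2=x$ so an idempotent class is forced to be the class of $1$, and in {\L}ukasiewicz--type components the inequality $x\rightarrow x^{n+1}\geq (x\rightarrow x^2)^n$ forces the terminal idempotent $x^N$ into the class of $x$. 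So the required counterexample is genuinely delicate, and leaving it to a citation is a real hole, not a cosmetic one. A secondary weakness: your reason for excluding local algebras from the search ("its other proper quotients merely reproduce sub--behaviours that still satisfy ILP") is itself an unproven assertion; it does not damage correctness only because you use it as heuristic guidance rather than as a step of the argument.
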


\begin{proposition}{\rm \cite{dcggcm}} For any filter $F$ of $A$, ${\rm Reg}(A/F)={\rm Reg}(A)/F$.\label{regtriv}\end{proposition}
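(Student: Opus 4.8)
The plan is to prove the two inclusions separately, exploiting that the canonical surjection $p_F:A\rightarrow A/F$ is a residuated lattice morphism and therefore commutes with the negation. Indeed, since $p_F$ preserves $\rightarrow $ and $0$, for every $a\in A$ we have $\neg \, (a/F)=(a/F)\rightarrow 0=p_F(a)\rightarrow p_F(0)=p_F(a\rightarrow 0)=(\neg \, a)/F$, and hence also $\neg \, \neg \, (a/F)=(\neg \, \neg \, a)/F$. This single compatibility, together with two standard identities about negation, will do essentially all the work, so I expect no genuine obstacle in this statement.

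The inclusion ${\rm Reg}(A)/F\subseteq {\rm Reg}(A/F)$ is immediate: if $a\in {\rm Reg}(A)$, that is $\neg \, \neg \, a=a$, then, using the compatibility above, $\neg \, \neg \, (a/F)=(\neg \, \neg \, a)/F=a/F$, so $a/F\in {\rm Reg}(A/F)$. Thus the image of any regular element is regular.

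For the reverse inclusion I would use the triple negation law $\neg \, \neg \, \neg \, a=\neg \, a$, valid in every residuated lattice. This follows from $a\leq \neg \, \neg \, a$, which in turn is obtained from $a\odot \neg \, a=a\odot (a\rightarrow 0)\leq 0$ (Lemma \ref{aritmlr}, (\ref{aritmlr2})) and the law of residuation: from the antitonicity of $\neg $ we get $\neg \, \neg \, \neg \, a\leq \neg \, a$, while instantiating $a\leq \neg \, \neg \, a$ at $\neg \, a$ gives $\neg \, a\leq \neg \, \neg \, \neg \, a$. Consequently, for every $x\in A$ the element $\neg \, \neg \, x$ is regular, since $\neg \, \neg \, (\neg \, \neg \, x)=\neg \, (\neg \, \neg \, \neg \, x)=\neg \, (\neg \, x)=\neg \, \neg \, x$. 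Now, given any regular element $x/F$ of $A/F$, that is $\neg \, \neg \, (x/F)=x/F$, I would set $y=\neg \, \neg \, x\in {\rm Reg}(A)$; then $y/F=(\neg \, \neg \, x)/F=\neg \, \neg \, (x/F)=x/F$, exhibiting $x/F$ as an element of ${\rm Reg}(A)/F$. Combining the two inclusions yields ${\rm Reg}(A/F)={\rm Reg}(A)/F$.

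The nearest thing to an obstacle here is simply assembling the two negation facts ($p_F$ commuting with $\neg $, and the triple negation law), and observing that double negation always lands in ${\rm Reg}(A)$; both are routine features of the arithmetic of residuated lattices, so the argument should go through without difficulty.
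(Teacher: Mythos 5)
Your proof is correct. Note that the paper itself gives no argument for this statement --- it is quoted from \cite{dcggcm} --- so there is nothing to compare against line by line; your argument is the standard (and essentially the only natural) one: the canonical surjection $p_F$ commutes with $\neg\,$, so regular elements map to regular elements, and conversely the triple negation law $\neg\,\neg\,\neg\,a=\neg\,a$ shows that any $\neg\,\neg\,x$ is regular in $A$, whence a regular class $x/F=\neg\,\neg\,(x/F)=(\neg\,\neg\,x)/F$ lifts to the regular element $\neg\,\neg\,x$. All the auxiliary facts you invoke ($a\leq\neg\,\neg\,a$ via Lemma \ref{aritmlr}, (\ref{aritmlr2}), and residuation, antitonicity of $\neg\,$) are indeed routine consequences of the law of residuation, so the proof is complete as written.
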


MV--algebras form a subclass of the class of BL--algebras, which, in turn, form a subclass of the class of residuated lattices. If $A$ is a BL--algebra, then so is $A/F$ for any $F\in {\rm Filt}(A)$; the same goes for MV--algebras.

\begin{proposition}{\rm \cite{dcggcm}} Any BL--algebra is a Gelfand residuated lattice.\label{blgelfand}\end{proposition}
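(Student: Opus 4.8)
The plan is to verify the definition of Gelfand residuated lattice directly: I will show that every prime filter of a BL--algebra $A$ lies below exactly one maximal filter. The whole argument rests on the prelinearity axiom $(x\rightarrow y)\vee (y\rightarrow x)=1$, valid for all $x,y\in A$ because $A$ is a BL--algebra; this is precisely the feature a general residuated lattice lacks, and it is what forces the Gelfand property.

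First I would establish that, for any prime filter $P$ of $A$, the quotient $A/P$ is totally ordered. Fix $x,y\in A$. Using prelinearity together with the formula $[x)\cap [y)=[x\vee y)$ recalled above, $[x\rightarrow y)\cap [y\rightarrow x)=[(x\rightarrow y)\vee (y\rightarrow x))=[1)=\{1\}\subseteq P$. Since $P$ is a prime filter, it follows that $[x\rightarrow y)\subseteq P$ or $[y\rightarrow x)\subseteq P$, that is $x\rightarrow y\in P$ or $y\rightarrow x\in P$. Recalling that $x/P\leq y/P$ iff $x\rightarrow y\in P$, we obtain $x/P\leq y/P$ or $y/P\leq x/P$, so $A/P$ is a chain. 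Note that the hypothesis $\{1\}\subseteq P$ holds for every filter; what makes the disjunction genuine is that prelinearity collapses the intersection $[x\rightarrow y)\cap [y\rightarrow x)$ to $\{1\}$.

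Next I would observe that the filters of a totally ordered residuated lattice $C$ form a chain under inclusion: if $G,H\in {\rm Filt}(C)$ with $G\nsubseteq H$ and $H\nsubseteq G$, pick $a\in G\setminus H$ and $b\in H\setminus G$; since $C$ is a chain we may assume $a\leq b$, and then upward closure of $G$ gives $b\in G$, a contradiction (the case $b\leq a$ is symmetric). Applying this to $C=A/P$ and transporting along the order isomorphism $G\mapsto G/P$ between $\{G\in {\rm Filt}(A)\ |\ P\subseteq G\}$ and ${\rm Filt}(A/P)$ (recall that ${\rm Filt}(A/P)=\{G/P\ |\ G\in {\rm Filt}(A),P\subseteq G\}$), I conclude that the filters of $A$ containing $P$ form a chain.

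Finally, since $A$ satisfies hypothesis (H), every proper filter --- in particular the prime, hence proper, filter $P$ --- is contained in at least one maximal filter; this is the filter counterpart of Lemma \ref{folclor}, (\ref{folclor1}), available through the isomorphism $h_A:{\rm Filt}(A)\rightarrow {\rm Con}(A)$. Any two maximal filters above $P$ lie in the chain of filters above $P$, hence are comparable, and comparable maximal proper filters must coincide; thus $P$ is contained in a unique maximal filter. Therefore $A$ is Gelfand. I expect the only delicate step to be the first one, where prelinearity is indispensable: it is exactly what makes every prime quotient a chain and thereby linearly orders the filters above a prime filter.
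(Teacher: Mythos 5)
Your proof is correct. Note first that the paper does not actually prove this proposition: it is quoted from \cite{dcggcm} with no argument given, so there is nothing internal to compare against; your proposal supplies a genuine, self-contained proof. Each step checks out against facts recorded in the paper: the identity $[x)\cap [y)=[x\vee y)$ and the equivalence $x/P\leq y/P$ iff $x\rightarrow y\in P$ are both stated in Section \ref{cblpversusblp}; the correspondence ${\rm Filt}(A/P)=\{G/P\ |\ G\in {\rm Filt}(A),P\subseteq G\}$ is stated there as well (and the fact that $G\mapsto G/P$ is order-reflecting, which you use implicitly, is the standard correspondence argument: if $G/P\subseteq H/P$ and $g\in G$, then $g/P=h/P$ for some $h\in H$, so $h\rightarrow g\in P\subseteq H$ and $h\odot (h\rightarrow g)\leq g$ gives $g\in H$); and the existence of a maximal filter above any proper filter follows from (H) for residuated lattices together with Lemma \ref{folclor}, (\ref{folclor1}), transported through $h_A$, exactly as you say. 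Your primality step is also consistent with the paper's lattice-theoretic notion of prime (meet-prime in ${\rm Filt}(A)$, matching the definition of prime congruence), and it works equally well under the elementwise definition since $(x\rightarrow y)\vee (y\rightarrow x)=1\in P$. One remark worth making: your argument uses only prelinearity, never divisibility, so it actually proves the stronger statement that every prelinear (MTL-type) residuated lattice is Gelfand, of which the proposition is a special case.
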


\begin{proposition}{\rm \cite{bal}, \cite{gal}, \cite{haj}, \cite{ior}, \cite{kow}, \cite{pic}, \cite{tur}} If $A$ is an MV--algebra, then ${\cal B}(A)={\cal I}(A)$.\label{bvsi}\end{proposition}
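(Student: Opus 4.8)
The plan is to prove the two inclusions ${\cal B}(A)\subseteq {\cal I}(A)$ and ${\cal I}(A)\subseteq {\cal B}(A)$ separately. Recall that ${\cal B}(A)$ consists of the complemented elements of the underlying bounded lattice of $A$, while ${\cal I}(A)=\{a\in A\ |\ a\odot a=a\}$. The first inclusion holds in any residuated lattice, whereas the second is where the specific arithmetic of MV--algebras enters.

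For ${\cal B}(A)\subseteq {\cal I}(A)$, I would start from an element $e\in {\cal B}(A)$ with complement $f$, so that $e\vee f=1$ and $e\wedge f=0$. The key fact is that in any residuated lattice the product distributes over joins, since $e\odot (-)$ is a left adjoint of $e\rightarrow (-)$ via the law of residuation. Because $e\odot f\leq e$ and $e\odot f\leq f$, we get $e\odot f\leq e\wedge f=0$, hence $e\odot f=0$. Then $e=e\odot 1=e\odot (e\vee f)=(e\odot e)\vee (e\odot f)=(e\odot e)\vee 0=e\odot e$, so $e$ is idempotent.

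For ${\cal I}(A)\subseteq {\cal B}(A)$, given $a\in {\cal I}(A)$, that is $a\odot a=a$, I would show that $\neg \, a$ is a complement of $a$. First, by the currying identity $a\rightarrow (a\rightarrow 0)=(a\odot a)\rightarrow 0$ (a consequence of the law of residuation applied twice) together with idempotency, $a\rightarrow \neg \, a=(a\odot a)\rightarrow 0=a\rightarrow 0=\neg \, a$. Using the MV--algebra formula for the meet, $a\wedge \neg \, a=a\odot (a\rightarrow \neg \, a)=a\odot \neg \, a=a\odot (a\rightarrow 0)\leq 0$ by Lemma \ref{aritmlr}, (\ref{aritmlr2}), so $a\wedge \neg \, a=0$. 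Using the MV--algebra formula for the join, $a\vee \neg \, a=(a\rightarrow \neg \, a)\rightarrow \neg \, a=\neg \, a\rightarrow \neg \, a=1$ by Lemma \ref{aritmlr}, (\ref{aritmlr1}). Hence $a$ is complemented, that is $a\in {\cal B}(A)$.

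The main obstacle is the reverse inclusion, which rests on the two MV--specific identities $a\wedge b=a\odot (a\rightarrow b)$ (divisibility) and $a\vee b=(a\rightarrow b)\rightarrow b$ (the MV join, obtained from the MV axiom); both of these fail in a general residuated lattice, where idempotents need not be complemented. Once these identities are invoked the computation is routine, so the real content lies in bringing the MV--algebra structure to bear rather than in any delicate estimate.
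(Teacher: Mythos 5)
The paper does not prove this proposition at all: it is quoted as a known fact, with citations to the standard monographs, so there is no internal argument to compare yours against; your proof supplies what the paper leaves to the literature, and it is correct. The inclusion ${\cal B}(A)\subseteq {\cal I}(A)$ works exactly as you say: in any residuated lattice in the paper's sense (bounded, commutative, integral), $\odot $ distributes over $\vee $ by residuation and $e\odot f\leq e\wedge f$, so a complemented element satisfies $e=e\odot (e\vee f)=(e\odot e)\vee (e\odot f)=e\odot e$. The converse inclusion is also handled correctly: currying gives $a\rightarrow \neg \, a=(a\odot a)\rightarrow 0=\neg \, a$ for idempotent $a$; divisibility gives $a\wedge \neg \, a=a\odot (a\rightarrow \neg \, a)=a\odot \neg \, a=0$ by Lemma \ref{aritmlr}, (\ref{aritmlr2}); and the MV identity $x\vee y=(x\rightarrow y)\rightarrow y$ gives $a\vee \neg \, a=\neg \, a\rightarrow \neg \, a=1$ by Lemma \ref{aritmlr}, (\ref{aritmlr1}). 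You also locate the MV--specific content correctly: divisibility alone (the BL setting) is not enough, since in a G\"odel chain every element is idempotent while only $0$ and $1$ are complemented, which is consistent with the paper's Proposition \ref{bvsir}, asserting only ${\cal B}(A)={\cal I}(A)\cap {\rm Reg}(A)$ for BL--algebras. Incidentally, that proposition offers a shortcut you could have taken instead of the explicit computation: since every element of an MV--algebra is regular, ${\rm Reg}(A)=A$ and ${\cal B}(A)={\cal I}(A)\cap {\rm Reg}(A)={\cal I}(A)$ follows at once; but that route leans on another unproved cited result, whereas your argument is self--contained from the residuation axioms and the two MV identities.
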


\begin{corollary}
If $A$ is an MV--algebra, then:\begin{itemize}
\item for any filter $F$ of $A$: $\sim _F$ has CBLP iff $F$ has BLP iff $F$ has ILP;
\item $A$ has CBLP iff $A$ has BLP iff $A$ has ILP.\end{itemize}\end{corollary}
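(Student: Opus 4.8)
The plan is to reduce the statement to two facts already in hand: Proposition~\ref{happy}, which equates BLP and CBLP in residuated lattices, and Proposition~\ref{bvsi}, which identifies the Boolean center with the set of idempotents in any MV--algebra. No new congruence--theoretic work should be required; the corollary is essentially a bookkeeping consequence of these two results.

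For the first bullet, the equivalence ``$\sim _F$ has CBLP iff $F$ has BLP'' is exactly Proposition~\ref{happy}, (\ref{happy1}), so nothing needs to be proved there. It then remains to establish ``$F$ has BLP iff $F$ has ILP''. I would start from the characterization of BLP recalled just after its definition, namely that $F$ has BLP iff ${\cal B}(A)/F={\cal B}(A/F)$, together with the definition of ILP, namely that $F$ has ILP iff ${\cal I}(A)/F={\cal I}(A/F)$. Since $A$ is an MV--algebra, Proposition~\ref{bvsi} gives ${\cal B}(A)={\cal I}(A)$ as subsets of $A$, whence their images under $p_F$ coincide, i.e. ${\cal B}(A)/F={\cal I}(A)/F$. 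Because $A/F$ is again an MV--algebra, Proposition~\ref{bvsi} applied to $A/F$ gives ${\cal B}(A/F)={\cal I}(A/F)$. Substituting these two equalities into the BLP condition turns it verbatim into the ILP condition, which yields the desired equivalence.

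For the second bullet, I would simply globalize the first. By definition $A$ has BLP (respectively ILP) iff every filter of $A$ has BLP (respectively ILP), while $A$ has CBLP iff $A$ has BLP by Proposition~\ref{happy}, (\ref{happy2}). The first bullet gives, filter by filter, that BLP and ILP are equivalent, so the three global properties coincide.

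The only point requiring care is the legitimacy of the two substitutions in the first bullet: that the set equality ${\cal B}(A)={\cal I}(A)$ really does pass to the quotient images ${\cal B}(A)/F$ and ${\cal I}(A)/F$ (immediate, since these are the $p_F$--images of one and the same subset of $A$), and that $A/F$ inherits the MV--algebra structure so that Proposition~\ref{bvsi} may be invoked for it as well. Both are granted in the excerpt, so no genuine obstacle remains.
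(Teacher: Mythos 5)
Your proposal is correct and follows essentially the same route as the paper, whose entire proof is "By Propositions \ref{happy} and \ref{bvsi}": Proposition \ref{happy} handles the CBLP--BLP equivalences, and Proposition \ref{bvsi} (applied both to $A$ and to the MV--algebra $A/F$, exactly as you do) turns the BLP condition ${\cal B}(A)/F={\cal B}(A/F)$ verbatim into the ILP condition ${\cal I}(A)/F={\cal I}(A/F)$. Your write-up merely makes explicit the substitution details that the paper compresses into its one-line citation.
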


\begin{proof} By Propositions \ref{happy} and \ref{bvsi}.\end{proof}

The equivalences not involving CBLP in the previous corollary were proven in \cite{dcggcm} by using Proposition \ref{bvsi}. Now let us investigate them for BL--algebras. Of course, by Proposition \ref{happy}, a BL--algebra has BLP iff it has CBLP, and, furthermore, any filter of it has BLP iff its associated congruence has CBLP.

\begin{proposition}{\rm \cite{bal}, \cite{gal}, \cite{haj}, \cite{ior}, \cite{kow}, \cite{pic}, \cite{tur}} If $A$ is a BL--algebra, then ${\cal B}(A)={\cal I}(A)\cap {\rm Reg}(A)$.\label{bvsir}\end{proposition}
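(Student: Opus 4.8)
The plan is to prove the two inclusions ${\cal B}(A)\subseteq {\cal I}(A)\cap {\rm Reg}(A)$ and ${\cal I}(A)\cap {\rm Reg}(A)\subseteq {\cal B}(A)$ separately, noting that only the second will require the full BL--algebra structure (divisibility and prelinearity), while the first holds in any residuated lattice.

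For the inclusion ${\cal B}(A)\subseteq {\cal I}(A)\cap {\rm Reg}(A)$, let $e\in {\cal B}(A)$ and let $e^*$ be its complement in the bounded lattice underlying $A$, so $e\wedge e^*=0$ and $e\vee e^*=1$. First I would observe that $e\odot e^*\leq e\wedge e^*=0$, whence $e^*\leq e\rightarrow 0=\neg e$, and therefore $e\vee \neg e\geq e\vee e^*=1$, i.e. $e\vee \neg e=1$. From this single identity both required properties drop out, using the distributivity of $\odot$ over $\vee$ (a standard fact in residuated lattices) together with $e\odot \neg e=e\odot (e\rightarrow 0)\leq 0$ from Lemma~\ref{aritmlr},~(\ref{aritmlr2}): indeed $e=e\odot 1=e\odot (e\vee \neg e)=(e\odot e)\vee (e\odot \neg e)=e^2$, so $e\in {\cal I}(A)$; and likewise $\neg\neg e=\neg\neg e\odot (e\vee \neg e)=(\neg\neg e\odot e)\vee 0\leq e$, which, with the general inequality $e\leq \neg\neg e$, gives $\neg\neg e=e$, so $e\in {\rm Reg}(A)$.

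For the reverse inclusion, fix $a\in {\cal I}(A)\cap {\rm Reg}(A)$; the goal is to show that $\neg a$ is a lattice complement of $a$, i.e. $a\wedge \neg a=0$ and $a\vee \neg a=1$. The key auxiliary identity is $a\rightarrow \neg a=a\rightarrow (a\rightarrow 0)=a^2\rightarrow 0=a\rightarrow 0=\neg a$, obtained by currying and the idempotency $a^2=a$. For the meet I would invoke the divisibility identity of BL--algebras: $a\wedge \neg a=a\odot (a\rightarrow \neg a)=a\odot \neg a=a\odot (a\rightarrow 0)\leq 0$, so $a\wedge \neg a=0$. For the join I would first establish $\neg a\rightarrow a=a$: the inequality $a\leq \neg a\rightarrow a$ is automatic, and conversely $\neg a\odot (\neg a\rightarrow a)$ lies below both $a$ (Lemma~\ref{aritmlr},~(\ref{aritmlr2})) and $\neg a$, hence below $a\wedge \neg a=0$, so by residuation $\neg a\rightarrow a\leq \neg\neg a=a$, where regularity is used. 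Now prelinearity of the BL--algebra $A$, applied to $a$ and $\neg a$, yields $(a\rightarrow \neg a)\vee (\neg a\rightarrow a)=1$, and substituting $a\rightarrow \neg a=\neg a$ and $\neg a\rightarrow a=a$ gives $a\vee \neg a=1$. Thus $a\in {\cal B}(A)$.

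I expect the reverse inclusion to be the main obstacle, and within it the computation of the join. The delicate point is the precise interplay of the two hypotheses: idempotency is what collapses $a\rightarrow \neg a$ to $\neg a$ (feeding both the meet computation and the use of prelinearity), whereas regularity is exactly what forces $\neg a\rightarrow a=a$ and thereby turns prelinearity into the desired identity $a\vee \neg a=1$. Recognizing that divisibility governs the meet while prelinearity governs the join, and that each hypothesis is indispensable for one of the two equalities, is the conceptual heart of the argument; the remaining manipulations are routine applications of the residuation adjunction.
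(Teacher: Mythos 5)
Your proof is correct; note, however, that the paper offers no proof of this proposition to compare against --- it is stated as a known result, imported from the cited literature (\cite{bal}, \cite{gal}, \cite{haj}, \cite{ior}, \cite{kow}, \cite{pic}, \cite{tur}). Judged on its own, your argument is a complete and sound derivation. The first inclusion ${\cal B}(A)\subseteq {\cal I}(A)\cap {\rm Reg}(A)$ uses only facts valid in every residuated lattice as defined in the paper: integrality (so $x\odot y\leq x\wedge y$), the distributivity of $\odot $ over $\vee $ (which the paper itself invokes in Remark \ref{rcuex}), Lemma \ref{aritmlr}, (\ref{aritmlr2}), and the standard inequality $e\leq \neg \, \neg \, e$; your observation that this half needs no BL structure is accurate. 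The second inclusion is also correctly organized: the currying identity $a\rightarrow (a\rightarrow 0)=(a\odot a)\rightarrow 0$ together with idempotency gives $a\rightarrow \neg \, a=\neg \, a$; divisibility then yields $a\wedge \neg \, a=a\odot \neg \, a=0$; and it is essential (as you do) that this meet computation come first, since the step $\neg \, a\odot (\neg \, a\rightarrow a)\leq a\wedge \neg \, a=0$, followed by residuation and regularity to get $\neg \, a\rightarrow a\leq \neg \, \neg \, a=a$, depends on it; prelinearity then converts $(a\rightarrow \neg \, a)\vee (\neg \, a\rightarrow a)=1$ into $a\vee \neg \, a=1$. Your closing diagnosis of which hypothesis drives which equality (idempotency and divisibility for the meet, regularity and prelinearity for the join) matches the standard proofs in the cited sources, and is also consistent with Proposition \ref{bvsi}: in MV--algebras every element is regular, so the present statement collapses to ${\cal B}(A)={\cal I}(A)$ there.
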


\begin{corollary}
If $A$ is a BL--algebra, then:\begin{enumerate}
\item\label{lpsbl1} for any filter $F$ of $A$: if $F$ has ILP, then $F$ has BLP;
\item\label{lpsbl2} if $A$ has ILP, then $A$ has BLP.\end{enumerate}\label{lpsbl}\end{corollary}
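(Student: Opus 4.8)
The plan is to prove (i) directly and then obtain (ii) as an immediate consequence. Throughout I would use that if $A$ is a BL--algebra then so is every quotient $A/F$, and that, by the remark following the definition of BLP, $F$ has BLP iff ${\cal B}(A/F)\subseteq {\cal B}(A)/F$ (the reverse inclusion ${\cal B}(A)/F\subseteq {\cal B}(A/F)$ always holds). Thus the entire task reduces to lifting an arbitrary Boolean element of $A/F$ back to a Boolean element of $A$.

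First I would unwind the two descriptions of the Boolean centers. Applying Proposition \ref{bvsir} to the BL--algebra $A/F$ gives ${\cal B}(A/F)={\cal I}(A/F)\cap {\rm Reg}(A/F)$, and applying it to $A$ gives ${\cal B}(A)={\cal I}(A)\cap {\rm Reg}(A)$. The ILP hypothesis reads ${\cal I}(A/F)={\cal I}(A)/F$, while Proposition \ref{regtriv} supplies ${\rm Reg}(A/F)={\rm Reg}(A)/F$. Hence the goal becomes the purely algebraic inclusion $({\cal I}(A)/F)\cap ({\rm Reg}(A)/F)\subseteq ({\cal I}(A)\cap {\rm Reg}(A))/F$: I must show that an element of $A/F$ which is simultaneously the class of some idempotent and the class of some regular element is in fact the class of a single element that is both idempotent and regular.

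The key construction is regularization by double negation. Given $z\in {\cal B}(A/F)$, the ILP hypothesis yields $z=a/F$ with $a\in {\cal I}(A)$. I would set $c=\neg\neg a$. Since $p_F$ is a residuated lattice morphism it commutes with $\neg$, so $c/F=\neg\neg(a/F)=\neg\neg z=z$, the last equality holding because $z\in {\rm Reg}(A/F)$. Moreover $c$ is regular in $A$, since $\neg\neg\neg\neg a=\neg\neg a$ in every residuated lattice. It then remains only to verify that $c=\neg\neg a$ is idempotent; granting this, $c\in {\cal I}(A)\cap {\rm Reg}(A)={\cal B}(A)$ by Proposition \ref{bvsir}, whence $z=c/F\in {\cal B}(A)/F$, which proves (i). Part (ii) follows at once: if $A$ has ILP then every filter has ILP, so by (i) every filter has BLP, i.e. $A$ has BLP.

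The main obstacle is precisely this last point, the idempotency of $\neg\neg a$ for idempotent $a$. I expect to obtain it from the BL--algebra identity $\neg\neg(x\odot y)=\neg\neg x\odot \neg\neg y$ (a Glivenko--type property reflecting prelinearity), which gives $(\neg\neg a)\odot (\neg\neg a)=\neg\neg(a\odot a)=\neg\neg a$ as soon as $a\odot a=a$. In a general residuated lattice only the inequality $\neg\neg x\odot \neg\neg y\leq \neg\neg(x\odot y)$ is available (double negation being a nucleus, which immediately yields $(\neg\neg a)^2\leq \neg\neg a$), so the substantive content is the reverse inequality $\neg\neg(x\odot y)\leq \neg\neg x\odot \neg\neg y$; establishing it requires the divisibility and prelinearity axioms, and this is exactly the step where the hypothesis that $A$ is a BL--algebra, rather than an arbitrary residuated lattice, is genuinely used.
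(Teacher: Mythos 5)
Your proof is correct, and it is in fact more careful than the paper's own. The paper proves (i) by the one-line chain ${\cal B}(A/F)={\cal I}(A/F)\cap {\rm Reg}(A/F)={\cal I}(A)/F\cap {\rm Reg}(A)/F={\cal B}(A)/F$, using exactly your ingredients (Proposition \ref{bvsir} applied to $A$ and to the BL--algebra $A/F$, Proposition \ref{regtriv}, and the ILP hypothesis), and then derives (ii) from (i) just as you do. However, the paper's final equality silently identifies ${\cal I}(A)/F\cap {\rm Reg}(A)/F$ with $({\cal I}(A)\cap {\rm Reg}(A))/F$; only the inclusion $\supseteq $ is automatic, and the inclusion $\subseteq $ --- which is precisely what the BLP conclusion requires --- is the nontrivial point you isolate (an element that is a class of some idempotent and a class of some regular element must be a class of one element that is both). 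Your double--negation construction closes exactly this gap: writing $z=a/F$ with $a\in {\cal I}(A)$ and setting $c=\neg \, \neg \, a$, you get $c/F=\neg \, \neg \, z=z$ because $z$ is regular in $A/F$, $c$ is regular because $\neg \, \neg \, \neg \, \neg \, a=\neg \, \neg \, a$ in any residuated lattice, and $c$ is idempotent by the BL--algebra identity $\neg \, \neg \, (x\odot y)=\neg \, \neg \, x\odot \neg \, \neg \, y$. That identity is indeed a theorem of BL--algebras (a Glivenko--type property; it can be checked on BL--chains and transferred by subdirect representation, or quoted from \cite{haj}), and your remark that only $\neg \, \neg \, x\odot \neg \, \neg \, y\leq \neg \, \neg \, (x\odot y)$ survives in arbitrary residuated lattices correctly locates where the BL hypothesis is used beyond Proposition \ref{bvsir}. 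In short: same skeleton as the paper, but your version supplies a lemma that the paper's terser argument implicitly presupposes.
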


\begin{proof} (\ref{lpsbl1}) If $F$ has ILP, then ${\cal I}(A/F)={\cal I}(A)/F$, so, by Propositions \ref{bvsir} and \ref{regtriv}, ${\cal B}(A/F)={\cal I}(A/F)\cap {\rm Reg}(A/F)={\cal I}(A)/F\cap {\rm Reg}(A)/F={\cal B}(A)/F$, thus $F$ has BLP.

\noindent (\ref{lpsbl2}) By (\ref{lpsbl1}).\end{proof}

Throughout the rest of this section, unless mentioned otherwise, $(L,\vee ,\wedge ,0,1)$ shall be an arbitrary bounded distributive lattice. We shall denote the set of the filters of $L$ by ${\rm Filt}(L)$, and the set of the ideals of $L$ by ${\rm Id}(L)$. To each filter $F$ of $L$, one can associate a congruence $\equiv _F$ of $L$, defined by: for any $x,y\in L$, $x\equiv _Fy$ iff $x\wedge a=y\wedge a$ for some $a\in F$; the mapping $F\mapsto \equiv _F$ is an embedding of the bounded distributive lattice ${\rm Filt}(L)$ into ${\rm Con}(L)$. For every $F\in {\rm Filt}(L)$, any $x\in L$ and any $X\subseteq L$, we shall denote by $x/F=x/\equiv _F$ and $X/F=X/\equiv _F$. Dually, to each ideal $I$ of $L$, one can associate a congruence $\approx _I$ of $L$, defined by: for any $x,y\in L$, $x\approx _Iy$ iff $x\vee a=y\vee a$ for some $a\in I$; the mapping $I\mapsto \approx _I$ is a bounded lattice embedding of ${\rm Id}(L)$ into ${\rm Con}(L)$.

\begin{definition}{\rm \cite{blpiasi},\cite{blpdacs},\cite{dcggcm}} We say that a congruence $\sim $ of $L$ has the {\em Boolean Lifting Property (BLP)} iff ${\cal B}(L/\sim )={\cal B}(L)/\sim $. We say that $L$ has the {\em Boolean Lifting Property (BLP)} iff all of its congruences have the BLP.

We say that a filter $F$ of $L$ has the {\em Boolean Lifting Property (BLP)} iff $\equiv _F$ has the BLP, that is iff ${\cal B}(L/F)={\cal B}(L)/F$. We say that $L$ has the {\em Boolean Lifting Property for filters ({\rm Filt}--BLP)} iff all of its filters have the BLP.

Similarly, we say that an ideal $I$ of $L$ has the {\em Boolean Lifting Property (BLP)} iff $\approx _I$ has the BLP, and we say that $L$ has the {\em Boolean Lifting Property for ideals ({\rm Id}--BLP)} iff all of its ideals have the BLP.\end{definition}

Clearly, in any bounded distributive lattice $L$, the BLP implies the Filt--BLP and Id--BLP, and the BLP is self--dual, while the Filt--BLP and Id--BLP are duals of each other.

See in \cite{eu3}, \cite{eu1}, \cite{eu}, \cite{eu2}, \cite{eu4}, \cite{eu5}, \cite{dcggcm} the definition of {\em the reticulation functor ${\cal L}$} from the category of residuated lattices to the category of bounded distributive lattices, which takes every residuated lattice $A$ to the unique (up to a bounded lattice isomorphism) bounded distributive lattice ${\cal L}(A)$ whose prime spectrum is homeomorphic to that of $A$, where the prime spectra are the sets of the prime filters of ${\cal L}(A)$, respectively $A$, endowed with the Stone topologies. ${\cal L}(A)$ is called the {\em reticulation of $A$}. The bounded distributive lattice ${\cal L}(A)$ is isomorphic to the dual of ${\rm PFilt}(A)$ (\cite{eu1}, \cite{eu}). 

\begin{proposition}{\rm \cite[Proposition $5.19$]{dcggcm}} $A$ has BLP iff ${\cal L}(A)$ has Filt--BLP.\label{propQ}\end{proposition}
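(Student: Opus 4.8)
The plan is to recognise that both ``$A$ has BLP'' and ``${\cal L}(A)$ has Filt--BLP'' are, once the definitions are unwound, the single assertion that a certain filter lattice is \emph{B--normal}, and that the reticulation identifies the two filter lattices in question. Concretely, I would reduce the statement to the conjunction of three facts: (i) $A$ has BLP iff the lattice ${\rm Filt}(A)$ is B--normal; (ii) ${\cal L}(A)$ has Filt--BLP iff the lattice ${\rm Filt}({\cal L}(A))$ is B--normal; (iii) the bounded lattices ${\rm Filt}(A)$ and ${\rm Filt}({\cal L}(A))$ are isomorphic. Since B--normality is a purely order--theoretic property and hence preserved by bounded lattice isomorphisms, these three facts together yield the desired equivalence.

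Fact (i) is essentially free given the preceding sections: by Proposition \ref{happy}, (\ref{happy2}), $A$ has BLP iff $A$ has CBLP; by the equivalence (\ref{blpbnorm1})$\Leftrightarrow$(\ref{blpbnorm2}) of Proposition \ref{blpbnorm}, $A$ has CBLP iff ${\rm Con}(A)$ is B--normal; and since $h_A:{\rm Filt}(A)\rightarrow {\rm Con}(A)$ is a bounded lattice isomorphism, ${\rm Con}(A)$ is B--normal iff ${\rm Filt}(A)$ is B--normal. I would then establish fact (ii) by transporting the machinery of Lemmas \ref{boolfilt}, \ref{atentie} and \ref{lamunca} to the bounded distributive lattice ${\cal L}(A)$: the map $x\mapsto [x)$ restricts to a Boolean anti--isomorphism ${\cal B}({\cal L}(A))\rightarrow {\cal B}({\rm Filt}({\cal L}(A)))$, and the commutative square $\delta _F\circ i_{{\cal L}(A)}=i_{{\cal L}(A)/F}\circ p_F$ shows that a filter $F$ of ${\cal L}(A)$ has BLP iff ${\cal B}(\delta _F)$ is surjective, equivalently iff ${\cal B}(v_F):{\cal B}({\rm Filt}({\cal L}(A)))\rightarrow {\cal B}([F))$ is surjective. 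Ranging over all filters $F$ and invoking once more the lattice--theoretic equivalence (\ref{blpbnorm1})$\Leftrightarrow$(\ref{blpbnorm2})---whose proof uses only that the ambient lattice is a bounded distributive algebraic lattice with compact top, a property shared by ${\rm Filt}({\cal L}(A))$---then gives (ii). These distributive--lattice analogues are available in \cite{blpiasi}, \cite{dcggcm}.

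Fact (iii) is the standard statement that the reticulation preserves the lattice of filters. Since ${\cal L}(A)$ is, up to isomorphism, the dual of ${\rm PFilt}(A)$, and since ${\rm Filt}(A)$ is the ideal completion of its join--semilattice of compact elements ${\rm PFilt}(A)$ (the compact elements of ${\rm Filt}(A)\cong {\rm Con}(A)$ being exactly the finitely generated, hence principal, filters), one has ${\rm Filt}(A)\cong {\rm Id}({\rm PFilt}(A))\cong {\rm Filt}({\rm PFilt}(A)^{\rm op})\cong {\rm Filt}({\cal L}(A))$, the isomorphism being $F\mapsto \{[x)\ |\ x\in F\}$ followed by dualisation; this is recorded in \cite{eu1}, \cite{eu}, \cite{dcggcm}. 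Combining (i), (iii) and (ii) in that order then transports B--normality of ${\rm Filt}(A)$ to B--normality of ${\rm Filt}({\cal L}(A))$ and back, completing the argument.

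The main obstacle is (ii), and the point is precisely the contrast emphasised in the introduction: for a bounded distributive lattice the relevant B--normality is that of the \emph{filter} lattice ${\rm Filt}({\cal L}(A))$, \emph{not} of ${\rm Con}({\cal L}(A))$, which by Corollary \ref{d01cblp} is always B--normal and therefore carries no information. One must consequently not try to apply Proposition \ref{happy} to ${\cal L}(A)$ (it is not a residuated lattice), but instead verify the filter--level analogues of Lemmas \ref{atentie} and \ref{lamunca} for ${\cal L}(A)$ and confirm that the equivalence (\ref{blpbnorm1})$\Leftrightarrow$(\ref{blpbnorm2}) of Proposition \ref{blpbnorm} is genuinely lattice--theoretic, so that it applies verbatim to ${\rm Filt}({\cal L}(A))$; step (iii), while supplying the decisive structural link, is standard reticulation theory.
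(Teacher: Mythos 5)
Your argument is correct, but note first that the paper itself contains no proof of this statement: it is imported verbatim from \cite[Proposition $5.19$]{dcggcm}, so there is no internal proof to compare yours against. What you have done is reconstruct the cited result from the paper's own machinery, and the reconstruction checks out. Fact (i) follows from Proposition \ref{happy}, (\ref{happy2}), from the equivalence (\ref{blpbnorm1})$\Leftrightarrow$(\ref{blpbnorm2}) of Proposition \ref{blpbnorm}, and from the isomorphism $h_A:{\rm Filt}(A)\rightarrow {\rm Con}(A)$, with no circularity, since neither of those results depends on Proposition \ref{propQ}. The two points where real care is needed are exactly the ones you flag, and both survive scrutiny: the equivalence (\ref{blpbnorm1})$\Leftrightarrow$(\ref{blpbnorm2}), read through Remark \ref{usiv} as ``every ${\cal B}(v_{\textstyle \theta })$ is surjective iff the ambient lattice is B--normal'', uses only boundedness and distributivity of the ambient lattice (not algebraicity, not the algebra ${\cal A}$ itself), so it applies to ${\rm Filt}({\cal L}(A))$; and the bounded--distributive--lattice analogues of Lemmas \ref{boolfilt}, \ref{atentie} and \ref{lamunca} do hold, with $\wedge$ in place of $\odot$ and $\equiv _F$ in place of $\sim _F$ (for instance $([a)\vee F)/F=[a/F)$ and the isomorphism between ${\rm Filt}(L/F)$ and $\{G\in {\rm Filt}(L)\ |\ F\subseteq G\}$ are straightforward computations), these being precisely the content of \cite{blpiasi}. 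Your fact (iii), ${\rm Filt}(A)\cong {\rm Id}({\rm PFilt}(A))\cong {\rm Filt}({\cal L}(A))$, is standard, given that compact filters of a residuated lattice are principal. It is also worth comparing your route with the paper's later Proposition \ref{blpretic}: there, the link between (\ref{blpretic1}) and (\ref{blpretic6}) is obtained only by citing Proposition \ref{propQ}, while the B--normality of ${\rm PFilt}(A)$ and B--conormality of ${\cal L}(A)$ are handled separately via Corollary \ref{corboolfilt} and duality; you instead transport B--normality directly along ${\rm Filt}(A)\cong {\rm Filt}({\cal L}(A))$, which buys self--containedness --- with your argument, Proposition \ref{blpretic} no longer needs the external citation at all.
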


\begin{remark} In bounded distributive lattices, CBLP always holds, as proven in Corollary \ref{d01cblp}. But the next remark contains an example of a bounded distributive lattice without Filt--BLP, thus without BLP. See, in what follows, whole classes of bounded distributive lattices without BLP.\end{remark}

\begin{remark} Trivially, the functor ${\cal L}$ preserves the CBLP, by Corollary \ref{d01cblp}. But ${\cal L}$ does not reflect the CBLP. Indeed, let us consider the following example of residuated lattice from \cite{ior}: $A=\{0,a,b,c,d,1\}$, with the following Hasse diagram, with $\odot =\wedge $ and $\rightarrow $ defined by the following table:\vspace*{-5pt}

\begin{center}\begin{tabular}{cc}
\begin{picture}(50,70)(0,0)
\put(25,10){\line(1,1){10}}
\put(25,10){\line(-1,1){10}}
\put(25,30){\line(1,-1){10}}
\put(25,30){\line(-1,-1){10}}
\put(25,30){\line(0,1){15}}
\put(25,30){\circle*{3}}
\put(25,10){\circle*{3}}
\put(25,45){\circle*{3}}
\put(15,20){\circle*{3}}
\put(35,20){\circle*{3}}
\put(24,1){$0$}
\put(23,48){$1$}
\put(8,17){$a$}
\put(37,17){$b$}
\put(28,29){$c$}
\end{picture} & \begin{picture}(120,70)(0,0)
\put(30,30){\begin{tabular}{c|ccccc}
$\rightarrow $ & $0$ & $a$ & $b$ & $c$ & $1$ \\ \hline
$0$ & $1$ & $1$ & $1$ & $1$ & $1$ \\
$a$ & $b$ & $1$ & $b$ & $1$ & $1$ \\
$b$ & $a$ & $a$ & $1$ & $1$ & $1$ \\
$c$ & $0$ & $a$ & $b$ & $1$ & $1$ \\
$1$ & $0$ & $a$ & $b$ & $c$ & $1$\end{tabular}}\end{picture}\end{tabular}\end{center}

According to \cite[Example $3.5$]{ggcm}, the residuated lattice $A$ does not have BLP, because its filter $[c)$ does not have the BLP, hence, by Proposition \ref{happy}, (\ref{happy2}), $A$ does not have CBLP. But, according to Corollary \ref{d01cblp}, ${\cal L}(A)$ has CBLP, as does every bounded distributive lattice.

Concerning the structure of ${\cal L}(A)$, we may notice that ${\cal L}(A)$ has the same Hasse diagram as $A$, because $\odot =\wedge $ in $A$ ($A$ is a ${\rm G\ddot{o}del}$ algebra) and hence, according to a result in \cite{eu2}, ${\cal L}(A)$ is isomorphic to the underlying bounded lattice of $A$. In \cite[Example $1$]{blpiasi}, we have proven that this bounded distributive lattice does not have Filt--BLP, since its filter $[c)$ does not have BLP. Of course, this and Proposition \ref{propQ} provide another proof for the fact that $A$ does not have the BLP, but this issue is, actually, trivial here, because the underlying bounded lattice of a ${\rm G\ddot{o}del}$ algebra is distributive, since, in any residuated lattice, $\odot $ is distributive with respect to $\vee $, and, obviously, the filters of a ${\rm G\ddot{o}del}$ algebra coincide with the filters of its bounded lattice reduct, and so do the congruences associated to these filters, hence the BLP in a ${\rm G\ddot{o}del}$ algebra coincides with the Filt--BLP in its bounded lattice reduct.\label{rcuex}\end{remark}

Here is an extended version of one of the results recalled above:

\begin{proposition}{\rm \cite{dcggcm}} The following are equivalent:\begin{enumerate}
\item\label{rezulttare0} $A$ is a Gelfand residuated lattice;
\item\label{rezulttare1} ${\rm Filt}(A)$ is a normal lattice;
\item\label{rezulttare2} ${\rm PFilt}(A)$ is a normal lattice;
\item\label{rezulttare3} ${\cal L}(A)$ is a conormal lattice;
\item\label{rezulttare4} any prime filter of $A$ is included in a unique maximal filter of $A$;
\item\label{rezulttare5} any prime filter of ${\cal L}(A)$ is included in a unique maximal filter of ${\cal L}(A)$.\end{enumerate}\label{rezulttare}\end{proposition}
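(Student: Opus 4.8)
The plan is to prove the six conditions equivalent by a fan anchored at the normality of the congruence/filter lattice, separating the links that are essentially quoted or definitional from the one genuinely new step. First I would record the two cheap equivalences: (\ref{rezulttare0}) $\Leftrightarrow$ (\ref{rezulttare4}) is immediate from the definition of a Gelfand residuated lattice, and (\ref{rezulttare0}) $\Leftrightarrow$ (\ref{rezulttare1}) is precisely Proposition \ref{lrgelfand}. This ties the Gelfand property to normality of ${\rm Filt}(A)$ and gives a firm anchor for the rest of the chain.

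The central step is (\ref{rezulttare1}) $\Leftrightarrow$ (\ref{rezulttare2}), that ${\rm Filt}(A)$ is normal iff its bounded sublattice ${\rm PFilt}(A)$ of principal (= compact) filters is normal. Here I would exploit that $A$ satisfies (H), so the top filter $A$ is a compact element of the algebraic lattice ${\rm Filt}(A)$, together with the identities $[x)\vee [y)=[x\odot y)$ and $[x)\cap [y)=[x\vee y)$ which make ${\rm PFilt}(A)$ exactly the set of compact elements. For the forward direction, given $[x),[y)\in{\rm PFilt}(A)$ with $[x)\vee [y)=A$, normality of ${\rm Filt}(A)$ yields filters $E,F$ with $E\cap F=\{1\}$ and $[x)\vee E=[y)\vee F=A$; compactness of $A$ lets me replace $E,F$ by principal subfilters $E_0,F_0$ still realizing the joins, and then $E_0\cap F_0\subseteq E\cap F=\{1\}$, so ${\rm PFilt}(A)$ is normal. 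Conversely, given arbitrary filters $\Phi,\Psi$ with $\Phi\vee\Psi=A$, compactness of $A$ produces principal $[x)\subseteq\Phi$, $[y)\subseteq\Psi$ with $[x)\vee [y)=A$; applying normality of ${\rm PFilt}(A)$ and enlarging the witnesses back along $[x)\subseteq\Phi$, $[y)\subseteq\Psi$ closes the argument. This is the same compactness mechanism already used in the proof of Proposition \ref{blpbnorm} for the equivalence (\ref{blpbnorm2}) $\Leftrightarrow$ (\ref{blpbnorm3}).

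I would then dispatch the remaining two links formally. For (\ref{rezulttare2}) $\Leftrightarrow$ (\ref{rezulttare3}), since ${\cal L}(A)$ is isomorphic to the dual of ${\rm PFilt}(A)$ and conormality is by definition normality of the dual, ${\cal L}(A)$ is conormal iff its dual $L^{\partial}\cong{\rm PFilt}(A)$ is normal. For (\ref{rezulttare4}) $\Leftrightarrow$ (\ref{rezulttare5}) I would invoke the defining property of the reticulation: the prime spectra of $A$ and ${\cal L}(A)$ are homeomorphic, and, being a homeomorphism of spectral spaces, it is an order isomorphism for inclusion, carrying prime filters to prime filters and maximal filters to maximal filters while preserving the relation ``is contained in''; hence the condition ``each prime filter lies below a unique maximal filter'' transfers between $A$ and ${\cal L}(A)$. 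Combining, (\ref{rezulttare1}) $\Leftrightarrow$ (\ref{rezulttare0}) $\Leftrightarrow$ (\ref{rezulttare4}) $\Leftrightarrow$ (\ref{rezulttare5}) and (\ref{rezulttare1}) $\Leftrightarrow$ (\ref{rezulttare2}) $\Leftrightarrow$ (\ref{rezulttare3}) give all six equivalences.

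The main obstacle I expect is the equivalence (\ref{rezulttare1}) $\Leftrightarrow$ (\ref{rezulttare2}): the delicate point is ensuring that the elements witnessing normality of ${\rm PFilt}(A)$ are genuinely \emph{principal} filters rather than arbitrary filters, which is exactly where compactness of the top element $A$ and the multiplicative description $[x)\vee [y)=[x\odot y)$ do the real work. The duality step and the spectral-homeomorphism step are purely formal once the basic properties of the reticulation recalled before this proposition are in hand.
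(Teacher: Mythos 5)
The paper does not actually prove this proposition: it is quoted from \cite{dcggcm}, so there is no internal proof to compare yours against; your argument can only be judged on its own merits and against the closely related result the paper does prove, Proposition \ref{blpretic}. On that basis, your proof is correct, and every link you use is available in the paper: (\ref{rezulttare0})$\Leftrightarrow$(\ref{rezulttare4}) is the definition of a Gelfand residuated lattice, (\ref{rezulttare0})$\Leftrightarrow$(\ref{rezulttare1}) is Proposition \ref{lrgelfand}, the compactness of $A$ as an element of ${\rm Filt}(A)$ is established in the text preceding Lemma \ref{lamunca}, ${\rm PFilt}(A)$ is recalled to be a bounded sublattice of ${\rm Filt}(A)$ with $[x)\vee [y)=[x\odot y)$ and $[x)\cap [y)=[x\vee y)$, and ${\cal L}(A)$ is isomorphic to the dual of ${\rm PFilt}(A)$. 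Your central equivalence (\ref{rezulttare1})$\Leftrightarrow$(\ref{rezulttare2}) is the genuinely new content and it is sound: in the forward direction, $E_0=[e_1\odot \ldots \odot e_n)\subseteq E$ for suitable $e_1,\ldots ,e_n\in E$ is principal, still realizes $[x)\vee E_0=A$, and $E_0\cap F_0\subseteq E\cap F=\{1\}$; in the converse direction, enlarging $[x)\subseteq \Phi $ and $[y)\subseteq \Psi $ only increases the joins. Your converse direction is exactly the mechanism the paper uses in the implication (\ref{blpretic4})$\Rightarrow $(\ref{blpretic3}) of Proposition \ref{blpretic}, except that there compactness is invoked in its concrete form: $\Phi \vee \Psi =A$ forces $x\odot y=0$ for some $x\in \Phi $, $y\in \Psi $, whence $[x)\vee [y)=[x\odot y)=[0)=A$; that shortcut would also streamline your argument. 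Your forward direction, by contrast, has no analogue in Proposition \ref{blpretic}, because there Corollary \ref{corboolfilt} makes the B--normality witnesses automatically principal, whereas normality witnesses are arbitrary filters and genuinely need your compactness--shrinking step.

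The one step you should tighten is (\ref{rezulttare4})$\Leftrightarrow$(\ref{rezulttare5}). The assertion that a homeomorphism of the prime spectra is an order isomorphism for inclusion is correct but needs two explicit remarks: first, inclusion among prime filters is precisely the specialization order of the Stone topology (in both spectra, with the same orientation, since both topologies are defined by the same convention), hence it is preserved by any homeomorphism; second, the maximal points of this order are exactly the maximal filters, because every proper filter extends to a maximal one (by (H) in $A$, respectively by Zorn's lemma in ${\cal L}(A)$) and maximal filters are prime. With these two remarks the transfer of ``each prime filter lies below a unique maximal filter'' is complete. Alternatively, you could bypass the spectra entirely by quoting the standard lattice--theoretic fact that a bounded distributive lattice is conormal iff each of its prime filters is contained in a unique maximal filter --- a fact the paper itself applies to arbitrary bounded distributive lattices in Remark \ref{tarecool} --- which gives (\ref{rezulttare3})$\Leftrightarrow$(\ref{rezulttare5}) directly and reduces the whole proposition to your chain (\ref{rezulttare1})$\Leftrightarrow$(\ref{rezulttare2})$\Leftrightarrow$(\ref{rezulttare3}) together with the two quoted equivalences.
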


\begin{remark} In \cite{blpiasi}, by noticing that, if $L$ is a bounded distributive lattice which is not local and in which $\{1\}$ is a prime filter, then $L$ does not satisfy condition (\ref{rezulttare5}) from Proposition \ref{rezulttare}, and thus $L$ is not conormal, we have pointed out that, for instance, an ordinal sum between a bounded distributive lattice which is not local (for example, a direct product of at least two non--trivial chains) and a non--trivial chain is not a conormal bounded distributive lattice. Such a lattice is ${\cal L}(A)$ from Remark \ref{rcuex}, which is the ordinal sum between ${\cal L}_2^2$ and ${\cal L}_2$, where ${\cal L}_2$ is the two--element chain.\label{tarecool}\end{remark}

\begin{remark} Concerning the class of the B--normal lattices, notice that it includes all congruence lattices of bounded distributive lattices, according to Corollary \ref{d01cblp} and Proposition \ref{blpbnorm}, and it also includes all congruence lattices of algebras satisfying (H) from any discriminator equational class, according to Corollary \ref{cordiscrim} and Proposition \ref{blpbnorm} (see examples of classes of such algebras in Remark \ref{remdiscrim}).\label{totcool}\end{remark}

\begin{corollary}\begin{enumerate}
\item\label{sohappy1} The image of the class of Gelfand residuated lattices through the reticulation functor is included in the class of conormal bounded distributive lattices, thus it is not the whole class of the bounded distributive lattices. 
\item\label{sohappy0} The image of the class of the residuated lattices with BLP through the reticulation functor is included in the class of conormal bounded distributive lattices, thus it is not the whole class of the bounded distributive lattices. 
\item\label{sohappy4} The image of the class of the residuated lattices with CBLP through the reticulation functor is included in the class of conormal bounded distributive lattices, thus it is not the whole class of the bounded distributive lattices. 
\item\label{sohappy2} The image of the class of BL--algebras through the reticulation functor is included in the class of conormal bounded distributive lattices, thus it is not the whole class of the bounded distributive lattices.
\item\label{sohappy3} The image of the class of MV--algebras through the reticulation functor is included in the class of conormal bounded distributive lattices, thus it is not the whole class of the bounded distributive lattices.\end{enumerate}\label{sohappy}\end{corollary}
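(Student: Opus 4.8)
The plan is to reduce all five parts to the single equivalence between statements (\ref{rezulttare0}) and (\ref{rezulttare3}) of Proposition \ref{rezulttare}, which asserts that a residuated lattice $A$ is Gelfand if and only if its reticulation ${\cal L}(A)$ is a conormal bounded distributive lattice. This immediately yields the inclusion asserted in (\ref{sohappy1}): whenever $A$ is Gelfand, ${\cal L}(A)$ is conormal, so the image under ${\cal L}$ of the class of Gelfand residuated lattices lies inside the class of conormal bounded distributive lattices.

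For the remaining inclusion statements I would exhibit each source class as a subclass of the Gelfand residuated lattices and then invoke (\ref{sohappy1}). Concretely: for (\ref{sohappy0}), every residuated lattice with BLP is Gelfand by Corollary \ref{cordinp2}; for (\ref{sohappy4}), a residuated lattice has CBLP if and only if it has BLP by Proposition \ref{happy}, (\ref{happy2}), so this class coincides with the class considered in (\ref{sohappy0}); for (\ref{sohappy2}), every BL--algebra is Gelfand by Proposition \ref{blgelfand}; and for (\ref{sohappy3}), every MV--algebra is a BL--algebra, hence Gelfand, so (\ref{sohappy3}) follows from (\ref{sohappy2}). In each case the image under ${\cal L}$ of the smaller class is contained in the image of the class of Gelfand residuated lattices, and therefore in the class of conormal bounded distributive lattices.

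To establish that none of these images is the whole class of bounded distributive lattices, it suffices to produce a single bounded distributive lattice which fails to be conormal. Such a lattice is already available: by Remark \ref{tarecool}, the ordinal sum ${\cal L}_2^2\dotplus {\cal L}_2$ --- equivalently, the reticulation ${\cal L}(A)$ of the residuated lattice $A$ from Remark \ref{rcuex} --- is not conormal, since it violates condition (\ref{rezulttare5}) of Proposition \ref{rezulttare}. As every one of the five images is contained in the conormal lattices, this non--conormal lattice belongs to none of them, so each image is a proper subclass of the bounded distributive lattices.

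There is essentially no serious obstacle here, as the substantive work is carried by Proposition \ref{rezulttare} (especially the equivalence of Gelfand with conormality of the reticulation) together with the reduction results. The only point requiring care is bookkeeping: matching each source class to the correct reduction statement (Corollary \ref{cordinp2}, Proposition \ref{happy}, Proposition \ref{blgelfand}, and the MV--into--BL inclusion) and confirming that the witness of non--conormality from Remark \ref{tarecool}, being genuinely a bounded distributive lattice outside the conormal class, certifies simultaneously that all five images are proper.
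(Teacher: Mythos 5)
Your proposal is correct and follows essentially the same route as the paper: part (\ref{sohappy1}) via Proposition \ref{rezulttare} with the non--conormal witness from Remark \ref{tarecool}, then (\ref{sohappy0}) by Corollary \ref{cordinp2}, (\ref{sohappy4}) by Proposition \ref{happy}, (\ref{sohappy2}) by Proposition \ref{blgelfand}, and (\ref{sohappy3}) from the inclusion of MV--algebras in BL--algebras. No gaps; this matches the paper's argument step for step.
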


\begin{proof} (\ref{sohappy1}) By Proposition \ref{rezulttare} and Remark \ref{tarecool}, which actually provides quite a productive method for obtaining bounded distributive lattices which are outside of the image through the reticulation functor of the class of Gelfand residuated lattices, and thus of any of the classes mentioned in (\ref{sohappy0}), (\ref{sohappy4}), (\ref{sohappy2}), (\ref{sohappy3}) (see just below). 

\noindent (\ref{sohappy0}) By (\ref{sohappy1}) and Corollary \ref{cordinp2}.

\noindent (\ref{sohappy4}) By (\ref{sohappy0}) and Proposition \ref{happy}, which shows that the class of the residuated lattices with BLP coincides to the class of the residuated lattices with CBLP.

\noindent (\ref{sohappy2}) By (\ref{sohappy1}) and Proposition \ref{blgelfand}.

\noindent (\ref{sohappy3}) By (\ref{sohappy2}) and the fact that the class of MV--algebras is included in the class of BL--algebras.\end{proof}

Now let us see if the statements in Proposition \ref{rezulttare} which refer to normality or conormality remain equivalent if we replace these properties by B--normality and B--conormality, respectively.

\begin{corollary}
${\cal B}({\rm PFilt}(A))={\cal B}({\rm Filt}(A))$.\label{corboolfilt}\end{corollary}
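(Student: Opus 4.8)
The plan is to use the structural fact recorded in the preliminaries that ${\rm PFilt}(A)$ is a \emph{bounded} sublattice of ${\rm Filt}(A)$: the two lattices share the operations $\vee $ and $\cap $, as well as the least element $\{1\}=[1)$ and the greatest element $A=[0)$. I would prove the asserted equality by a double inclusion, of which one direction is purely formal while the other rests on Lemma \ref{boolfilt}. For the formal inclusion ${\cal B}({\rm PFilt}(A))\subseteq {\cal B}({\rm Filt}(A))$, the point is simply that a complemented element of a bounded sublattice is automatically complemented in the ambient lattice: if $P\in {\cal B}({\rm PFilt}(A))$, there is some $Q\in {\rm PFilt}(A)$ with $P\vee Q=A$ and $P\cap Q=\{1\}$, and since $\vee $, $\cap $ and both bounds are computed identically in ${\rm Filt}(A)$, the filter $Q$ witnesses that $P$ is complemented in ${\rm Filt}(A)$, so $P\in {\cal B}({\rm Filt}(A))$. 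This step needs nothing beyond the sublattice structure.

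For the reverse inclusion ${\cal B}({\rm Filt}(A))\subseteq {\cal B}({\rm PFilt}(A))$, I would invoke Lemma \ref{boolfilt}, which gives ${\cal B}({\rm Filt}(A))=\{[e)\ |\ e\in {\cal B}(A)\}$. Thus every $[e)\in {\cal B}({\rm Filt}(A))$ is principal and, because ${\cal B}(A)$ is a Boolean algebra, the complement $\neg \, e$ again lies in ${\cal B}(A)$, so the complement $[\neg \, e)$ of $[e)$ in ${\rm Filt}(A)$ is likewise principal, i.e. belongs to ${\rm PFilt}(A)$. Hence both $[e)$ and its complement lie in ${\rm PFilt}(A)$, and since the meet, join and bounds there agree with those of ${\rm Filt}(A)$, the element $[e)$ is complemented inside ${\rm PFilt}(A)$; that is, $[e)\in {\cal B}({\rm PFilt}(A))$.

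The only genuine obstacle is this second inclusion. When one restricts a bounded lattice to a sublattice, the complement of a complemented element may fail to lie in the sublattice, so it is not \emph{a priori} clear that Boolean elements of ${\rm Filt}(A)$ remain Boolean in ${\rm PFilt}(A)$. The content of Lemma \ref{boolfilt} --- namely that \emph{every} Boolean filter, its complement included, is principal --- is precisely what removes this obstacle. Combining the two inclusions yields ${\cal B}({\rm PFilt}(A))={\cal B}({\rm Filt}(A))$.
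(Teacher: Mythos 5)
Your proof is correct and takes essentially the same route as the paper: the inclusion ${\cal B}({\rm PFilt}(A))\subseteq {\cal B}({\rm Filt}(A))$ comes from ${\rm PFilt}(A)$ being a bounded sublattice of ${\rm Filt}(A)$, and the reverse inclusion comes from Lemma \ref{boolfilt}. The only difference is cosmetic: where the paper argues abstractly that ${\cal B}({\rm Filt}(A))$ is a Boolean algebra all of whose elements are principal, hence closed under complementation inside ${\rm PFilt}(A)$, you exhibit the complement of $[e)$ explicitly as $[\neg \, e)$ --- a claim that does hold, but deserves the one-line verification $[e)\cap [\neg \, e)=[e\vee \neg \, e)=[1)=\{1\}$ and $[e)\vee [\neg \, e)=[e\odot \neg \, e)=[0)=A$, the latter because $e\odot \neg \, e\leq e\wedge \neg \, e=0$.
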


\begin{proof} Since ${\rm PFilt}(A)$ is a bounded sublattice of the bounded distributive lattice ${\rm Filt}(A)$, it follows that\linebreak ${\cal B}({\rm PFilt}(A))\subseteq {\cal B}({\rm Filt}(A))$. But, according to Lemma \ref{boolfilt}, ${\cal B}({\rm Filt}(A))=\{[e)\ |\ e\in {\cal B}(A)\}\subseteq {\rm PFilt}(A)$, hence ${\cal B}({\rm Filt}(A))=\{[e)\ |\ e\in {\cal B}(A)\}\subseteq {\cal B}({\rm PFilt}(A))$ since ${\cal B}({\rm Filt}(A))$ is a Boolean algebra. Therefore ${\cal B}({\rm PFilt}(A))={\cal B}({\rm Filt}(A))$.\end{proof}

\begin{proposition} The following are equivalent:\begin{enumerate}
\item\label{blpretic0} $A$ has CBLP;
\item\label{blpretic1} $A$ has BLP;
\item\label{blpretic2} ${\rm Con}(A)$ is a B--normal lattice;
\item\label{blpretic3} ${\rm Filt}(A)$ is a B--normal lattice;
\item\label{blpretic4} ${\rm PFilt}(A)$ is a B--normal lattice;
\item\label{blpretic5} ${\cal L}(A)$ is a B--conormal lattice;
\item\label{blpretic6} ${\cal L}(A)$ has Filt--BLP;
\item\label{blpretic7} ${\rm PFilt}(A)$ has Id--BLP.\end{enumerate}\label{blpretic}\end{proposition}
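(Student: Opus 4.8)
The plan is to reduce every statement to the B--normality of ${\rm Con}(A)$, for which Propositions \ref{happy} and \ref{blpbnorm} already do most of the work, and then to cross over to the filter lattice and dualize through the reticulation. First I would collect the equivalences that follow immediately from results proved above. The equivalence (\ref{blpretic0})$\Leftrightarrow $(\ref{blpretic1}) is exactly Proposition \ref{happy}, (\ref{happy2}), and (\ref{blpretic0})$\Leftrightarrow $(\ref{blpretic2}) is Proposition \ref{blpbnorm} applied to ${\cal A}=A$. Since $h_A:{\rm Filt}(A)\rightarrow {\rm Con}(A)$ is a bounded lattice isomorphism and B--normality is a purely lattice--theoretic property, hence invariant under bounded lattice isomorphism, we get (\ref{blpretic2})$\Leftrightarrow $(\ref{blpretic3}). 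Finally, Proposition \ref{propQ} yields (\ref{blpretic1})$\Leftrightarrow $(\ref{blpretic6}), recalling that (\ref{blpretic6}) asserts the Filt--BLP of ${\cal L}(A)$. At this stage $\{$(\ref{blpretic0}),(\ref{blpretic1}),(\ref{blpretic2}),(\ref{blpretic3}),(\ref{blpretic6})$\}$ are mutually equivalent.

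The central new step is (\ref{blpretic3})$\Leftrightarrow $(\ref{blpretic4}), passing from the full filter lattice to its sublattice of principal filters. The forward direction is immediate: by Lemma \ref{boolfilt} every element of ${\cal B}({\rm Filt}(A))$ is a principal filter, so the B--normal witnesses produced inside ${\rm Filt}(A)$ already lie in ${\rm PFilt}(A)$, and by Corollary \ref{corboolfilt} they are Boolean there too; as ${\rm PFilt}(A)$ is a bounded sublattice, the relevant joins and meets are computed identically. For the converse I would invoke compactness: the top $A=[0)$ is a compact element of ${\rm Filt}(A)$ (this is hypothesis (H) for residuated lattices, established just before this subsection), and the compact elements of ${\rm Filt}(A)$ are exactly the principal filters, since $[x)\vee [y)=[x\odot y)$. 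Hence, given $F\vee G=A$ in ${\rm Filt}(A)$, I can extract principal filters $[x)\subseteq F$ and $[y)\subseteq G$ with $[x)\vee [y)=A$, apply the B--normality of ${\rm PFilt}(A)$ to the pair $[x),[y)$, and push the resulting Boolean complements $[e),[f)$ back up to $F,G$; this is precisely the compactness device used in the implication (\ref{blpbnorm3})$\Rightarrow $(\ref{blpbnorm2}) of Proposition \ref{blpbnorm}. I expect this step to be the main obstacle, though it is routine once Corollary \ref{corboolfilt} and (H) are available.

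It then remains to bring in the reticulation, which is purely formal. Since ${\cal L}(A)$ is isomorphic to the dual of ${\rm PFilt}(A)$, and a lattice is B--conormal exactly when its dual is B--normal, we obtain (\ref{blpretic4})$\Leftrightarrow $(\ref{blpretic5}). Likewise, Filt--BLP and Id--BLP are dual notions, so ${\cal L}(A)$ has Filt--BLP if and only if its dual ${\rm PFilt}(A)$ has Id--BLP, giving (\ref{blpretic6})$\Leftrightarrow $(\ref{blpretic7}). Assembling the three blocks $\{$(\ref{blpretic0}),(\ref{blpretic1}),(\ref{blpretic2}),(\ref{blpretic3}),(\ref{blpretic6})$\}$, $\{$(\ref{blpretic3}),(\ref{blpretic4})$\}$ and $\{$(\ref{blpretic4}),(\ref{blpretic5})$\}$, (\ref{blpretic6})$\Leftrightarrow $(\ref{blpretic7}), all eight statements become equivalent. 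The genuinely delicate point is (\ref{blpretic3})$\Leftrightarrow $(\ref{blpretic4}); every other link is either cited verbatim or a one--line duality argument.
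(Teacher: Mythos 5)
Your proposal is correct and follows essentially the same route as the paper's proof: the same citations for (\ref{blpretic0})$\Leftrightarrow$(\ref{blpretic1})$\Leftrightarrow$(\ref{blpretic2}) and (\ref{blpretic1})$\Leftrightarrow$(\ref{blpretic6}), the same use of Corollary \ref{corboolfilt} plus the bounded-sublattice fact for (\ref{blpretic3})$\Rightarrow$(\ref{blpretic4}), the same extraction of principal filters $[x)\subseteq F$, $[y)\subseteq G$ with $[x)\vee[y)=A$ (the paper gets them directly from $x\odot y=0$ rather than via compactness, but it is the same device) for (\ref{blpretic4})$\Rightarrow$(\ref{blpretic3}), and the same duality argument for (\ref{blpretic4})$\Leftrightarrow$(\ref{blpretic5}) and (\ref{blpretic6})$\Leftrightarrow$(\ref{blpretic7}). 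If anything, your version is slightly more complete, since you explicitly supply the bridge (\ref{blpretic2})$\Leftrightarrow$(\ref{blpretic3}) via the bounded lattice isomorphism $h_A$, which the paper's proof uses only implicitly to connect its two blocks of equivalences.
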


\begin{proof} (\ref{blpretic0})$\Leftrightarrow $(\ref{blpretic1}): By Proposition \ref{happy}, (\ref{happy2}).

\noindent (\ref{blpretic0})$\Leftrightarrow $(\ref{blpretic2}): By Proposition \ref{blpbnorm}.

\noindent (\ref{blpretic3})$\Rightarrow $(\ref{blpretic4}): By Corollary \ref{corboolfilt} and the fact that ${\rm PFilt}(A)$ is a bounded sublattice of ${\rm Filt}(A)$.

\noindent (\ref{blpretic4})$\Rightarrow $(\ref{blpretic3}): Let $F,G\in {\rm Filt}(A)$ such that $F\vee G=A$. Then $x\odot y=0$ for some $x\in F$ and $y\in G$. So $[x)\vee [y)=[x\odot y)=[0)=A$, hence, according to Corollary \ref{corboolfilt}, there exist $H,K\in {\cal B}({\rm PFilt}(A))={\cal B}({\rm Filt}(A))$ such that $H\cap K=\{1\}$ and $[x)\vee H=[y)\vee K=A$. But $x\in F$ and $y\in G$, thus $[x)\subseteq F$ and $[y)\subseteq G$, hence $A=[x)\vee H\subseteq F\vee H$ and $A=[y)\vee K\subseteq G\vee K$, thus $F\vee H=G\vee K=A$. Therefore ${\rm Filt}(A)$ is B--normal.

\noindent (\ref{blpretic1})$\Leftrightarrow $(\ref{blpretic6}): By Proposition \ref{propQ}.

\noindent (\ref{blpretic4})$\Leftrightarrow $(\ref{blpretic5}) and (\ref{blpretic6})$\Leftrightarrow $(\ref{blpretic7}): By the fact that ${\cal L}(A)$ is isomorphic to the dual of ${\rm PFilt}(A)$.\end{proof}

\begin{corollary} Any bounded distributive lattice which is not B--conormal does not belong to the image of the class of the residuated lattices with BLP (equivalently, with CBLP) through the reticulation functor. Consequently, any bounded distributive lattice which is not conormal does not belong to the image of the class of the residuated lattices with BLP (equivalently, with CBLP) through the reticulation functor.\end{corollary}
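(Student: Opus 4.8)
The plan is to derive both statements directly from the equivalence (\ref{blpretic1})$\Leftrightarrow$(\ref{blpretic5}) in Proposition \ref{blpretic}, which asserts that a residuated lattice $A$ has BLP if and only if its reticulation ${\cal L}(A)$ is a B--conormal lattice. The point is that this single implication already confines the image of the reticulation functor to a proper subclass of the bounded distributive lattices, so the corollary becomes a contrapositive reading of it.

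First I would observe that the image of the class of residuated lattices with BLP through the reticulation functor consists, by definition, of the bounded distributive lattices that are isomorphic to ${\cal L}(A)$ for some residuated lattice $A$ having BLP. For any such $A$, Proposition \ref{blpretic}, (\ref{blpretic5}), guarantees that ${\cal L}(A)$ is B--conormal; since B--conormality is a purely lattice--theoretic property, hence invariant under bounded lattice isomorphism, every member of this image is B--conormal. Contrapositively, a bounded distributive lattice $L$ that fails to be B--conormal cannot be isomorphic to any ${\cal L}(A)$ with $A$ of BLP, so $L$ lies outside the image. By Proposition \ref{happy}, (\ref{happy2}), the classes of residuated lattices with BLP and with CBLP coincide, so the parenthetical "equivalently, with CBLP" adds nothing and the same conclusion holds for the image of the CBLP class; this yields the first statement.

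For the second statement I would invoke the implication, recorded in Section \ref{preliminaries}, that every B--conormal lattice is conormal. Its contrapositive says that a lattice which is not conormal is not B--conormal; combining this with the first statement immediately shows that any bounded distributive lattice which is not conormal lies outside the image as well.

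There is essentially no computational obstacle here, since the substantive content is already packaged in Proposition \ref{blpretic}. The only point requiring a little care is the passage from "${\cal L}(A)$ is B--conormal for every $A$ with BLP" to "every lattice in the image is B--conormal", which rests on the fact that the reticulation is determined only up to bounded lattice isomorphism together with the invariance of B--conormality (and of conormality) under such isomorphisms; once this is noted, both assertions follow in one line each.
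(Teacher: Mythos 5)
Your proposal is correct and is exactly the argument the paper intends: the corollary is stated without proof immediately after Proposition \ref{blpretic}, whose equivalence between BLP (equivalently CBLP, via Proposition \ref{happy}) and B--conormality of ${\cal L}(A)$ is read contrapositively, with the preliminary fact that B--conormal implies conormal handling the second statement. Your added remark about invariance of (B--)conormality under bounded lattice isomorphism is a sound, if routine, point of care.
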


\begin{remark} The previous corollary shows that, for instance, the bounded distributive lattices constructed as in Remark \ref{tarecool} do not belong to the image of the class of residuated lattices with BLP (equivalently, with CBLP) through the reticulation functor.\end{remark}

\begin{corollary}
The following are equivalent:

\begin{itemize}
\item the lattice ${\rm PFilt}(A)$ is normal and it is not B--normal; 
\item the lattice ${\rm PFilt}(A)$ is normal and it does not have Id--BLP; 
\item the lattice ${\cal L}(A)$ is conormal and it is not B--conormal;
\item the lattice ${\cal L}(A)$ is conormal and it does not have Filt--BLP;
\item $A$ is Gelfand and it does not have BLP;
\item $A$ is Gelfand and it does not have CBLP.\end{itemize}\end{corollary}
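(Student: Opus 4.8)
The plan is to recognize that each of the six listed conditions is the conjunction of a ``positive'' statement with a ``negative'' statement, and that these two families are each governed by an already established equivalence, so the corollary reduces to conjoining two equivalences componentwise.

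First I would isolate the positive halves. The positive statements occurring in the list are ``${\rm PFilt}(A)$ is normal'', ``${\cal L}(A)$ is conormal'', and ``$A$ is Gelfand''. By Proposition \ref{rezulttare}, conditions (\ref{rezulttare2}), (\ref{rezulttare3}) and (\ref{rezulttare0}), these three are mutually equivalent; hence in every bullet the positive half may be replaced by any of the other two.

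Next I would isolate the negative halves. The negated statements occurring are ``${\rm PFilt}(A)$ is not B--normal'', ``${\rm PFilt}(A)$ does not have Id--BLP'', ``${\cal L}(A)$ is not B--conormal'', ``${\cal L}(A)$ does not have Filt--BLP'', ``$A$ does not have BLP'', and ``$A$ does not have CBLP''. By Proposition \ref{blpretic}, conditions (\ref{blpretic4}), (\ref{blpretic7}), (\ref{blpretic5}), (\ref{blpretic6}), (\ref{blpretic1}) and (\ref{blpretic0}) are mutually equivalent in their affirmative form; since negating a chain of equivalences preserves it, the six negative halves are mutually equivalent as well.

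Finally I would conjoin the two observations: each bullet has the shape (positive statement) $\wedge$ (negative statement), all positive halves agree and all negative halves agree, so every bullet is equivalent to the single conjunction ``$A$ is Gelfand and $A$ does not have CBLP'', which closes the chain. There is essentially no obstacle here beyond bookkeeping; the only point demanding care is respecting the pairing inside each bullet (normality of ${\rm PFilt}(A)$ with its failure of B--normality, conormality of ${\cal L}(A)$ with its failure of B--conormality, and so on) and noting that Propositions \ref{rezulttare} and \ref{blpretic} are stated affirmatively, so that the negative halves are obtained by negating Proposition \ref{blpretic} rather than by citing it directly.
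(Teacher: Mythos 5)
Your proposal is correct and is exactly the paper's argument: the paper's proof simply cites Propositions \ref{rezulttare} and \ref{blpretic}, which is precisely your decomposition into the positive halves (handled by Proposition \ref{rezulttare}) and the negated halves (handled by negating the chain of equivalences in Proposition \ref{blpretic}). Your write-up just makes the bookkeeping explicit.
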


\begin{proof} By Propositions \ref{rezulttare} and \ref{blpretic}.\end{proof}

\begin{example} Let $A$ be the residuated lattice in Remark \ref{rcuex}, which does not have BLP, and whose underlying bounded lattice is isomorphic to ${\cal L}(A)$. The prime filters of ${\cal L}(A)$ are $[a)$ and $[b)$, which coincide to its maximal filters, hence $A$ is Gelfand by Proposition \ref{rezulttare}. So $A$ is a Gelfand residuated lattice without BLP (equivalently, without CBLP). 

Furthermore, since $A$ has $\odot =\wedge $, $A$ is a ${\rm G\ddot{o}del}$ algebra, so, by \cite[Corollary $4.5$]{dcggcm}, it follows that $A$ has ILP, hence, by Corollary \ref{lpsbl}, (\ref{lpsbl2}), we get that $A$ is not a BL--algebra. So this is an example of a Gelfand residuated lattice which is not a BL--algebra, that is a counter--example for the converse of Proposition \ref{blgelfand}.\end{example}

\section{CBLP in Semilocal Algebras}
\label{semilocal}

In this section, we study the CBLP in semilocal arithmetical algebras.

Throughout this section, we shall assume that all the algebras from ${\cal C}$ are arithmetical, and that the algebra ${\cal A}$ is non--trivial. Consequently, ${\rm Max}({\cal A})$ is non--empty. 

We say that ${\cal A}$ is {\em semilocal} iff ${\rm Max}({\cal A})$ is finite.

The results in this section generalize the results on semilocal residuated lattices from \cite[Section $6$]{ggcm}.

\begin{proposition}
The following are equivalent:

\begin{enumerate}
\item\label{propozitie5.2(0)} ${\cal A}$ is semilocal and satisfies $(\star )$;
\item\label{propozitie5.2(1)} ${\cal A}$ is semilocal and has CBLP;
\item\label{propozitie5.2(2)} ${\cal A}$ is semilocal and ${\rm Rad}({\cal A})$ has CBLP;
\item\label{propozitie5.2(3)} there exist $n\in \N ^*$ and $\alpha _1,\ldots ,\alpha _n\in {\cal B}({\rm Con}({\cal A}))$ such that $\displaystyle \bigcap _{i=1}^n\alpha _i=\Delta _A$, $\alpha _i\vee \alpha _j=\nabla _A$ for all $i,j\in \overline{1,n}$ with $i\neq j$ and ${\cal A}/\alpha _i$ is local for all $i\in \overline{1,n}$;
\item\label{propozitie5.2(4)} ${\cal A}$ is isomorphic to a finite direct product of local algebras.\end{enumerate}\label{propozitie5.2}\end{proposition}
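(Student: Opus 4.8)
My plan is to establish the five conditions by the cycle $(\ref{propozitie5.2(0)})\Rightarrow(\ref{propozitie5.2(1)})\Rightarrow(\ref{propozitie5.2(2)})\Rightarrow(\ref{propozitie5.2(3)})\Rightarrow(\ref{propozitie5.2(4)})\Rightarrow(\ref{propozitie5.2(0)})$, concentrating the real work in the implication $(\ref{propozitie5.2(2)})\Rightarrow(\ref{propozitie5.2(3)})$. The opening two implications are immediate from earlier results: $(\ref{propozitie5.2(0)})\Rightarrow(\ref{propozitie5.2(1)})$ is Proposition \ref{propozitie4.14} (so $(\star)$ implies CBLP), and $(\ref{propozitie5.2(1)})\Rightarrow(\ref{propozitie5.2(2)})$ combines Remark \ref{remarca4.10} (CBLP implies normality) with Proposition \ref{propozitie4.12} (the radical of a normal algebra has CBLP); the semilocality hypothesis is merely carried along in each case.

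For $(\ref{propozitie5.2(2)})\Rightarrow(\ref{propozitie5.2(3)})$ I would first use semilocality to analyse ${\cal A}/{\rm Rad}({\cal A})$. Write ${\rm Max}({\cal A})=\{\mu_1,\ldots,\mu_n\}$ with the $\mu_i$ pairwise distinct, where $n\geq 1$ because ${\cal A}$ is non-trivial; then ${\rm Rad}({\cal A})=\bigcap_{i=1}^n\mu_i$, and since distinct maximal congruences satisfy $\mu_i\subsetneq\mu_i\vee\mu_j$ for $i\neq j$, maximality forces $\mu_i\vee\mu_j=\nabla_{\cal A}$. As every algebra of ${\cal C}$ is arithmetical, CRT holds in ${\cal A}$ (Proposition \ref{propozitie5.6}); pairwise comaximality makes the compatibility hypothesis of CRT automatic, so the canonical morphism ${\cal A}\to\prod_{i=1}^n{\cal A}/\mu_i$ is onto with kernel ${\rm Rad}({\cal A})$, giving ${\cal A}/{\rm Rad}({\cal A})\cong\prod_{i=1}^n{\cal A}/\mu_i$. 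Each factor ${\cal A}/\mu_i$ is simple, hence ${\rm Con}({\cal A}/{\rm Rad}({\cal A}))$ is a finite Boolean algebra, and in particular each projection kernel $\mu_i/{\rm Rad}({\cal A})$ lies in ${\cal B}({\rm Con}({\cal A}/{\rm Rad}({\cal A})))$.

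The lifting is where the CBLP of ${\rm Rad}({\cal A})$ enters, and this is the step I expect to be the main obstacle. Since ${\rm Rad}({\cal A})\subseteq{\rm Rad}({\cal A})$, Corollary \ref{candsrad} and Remark \ref{usiv} tell us that ${\cal B}(u_{{\rm Rad}({\cal A})})$ is injective, and the assumption that ${\rm Rad}({\cal A})$ has CBLP makes it surjective, hence a Boolean isomorphism. Pulling each $\mu_i/{\rm Rad}({\cal A})$ back along this isomorphism yields $\alpha_i\in{\cal B}({\rm Con}({\cal A}))$ with $\alpha_i\vee{\rm Rad}({\cal A})=\mu_i$; as the isomorphism preserves finite meets and joins and the bounds, the identities $\bigcap_{i=1}^n(\mu_i/{\rm Rad}({\cal A}))=\Delta_{{\cal A}/{\rm Rad}({\cal A})}$ and $(\mu_i/{\rm Rad}({\cal A}))\vee(\mu_j/{\rm Rad}({\cal A}))=\nabla_{{\cal A}/{\rm Rad}({\cal A})}$ for $i\neq j$ transfer to $\bigcap_{i=1}^n\alpha_i=\Delta_{\cal A}$ and $\alpha_i\vee\alpha_j=\nabla_{\cal A}$. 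It then remains to check that each ${\cal A}/\alpha_i$ is local: using the description ${\rm Max}({\cal A}/\alpha_i)=\{\mu/\alpha_i : \mu\in{\rm Max}({\cal A}),\ \alpha_i\subseteq\mu\}$ from Section \ref{preliminaries}, the inclusion $\alpha_i\subseteq\mu_i$ puts $\mu_i$ among these, while for $k\neq i$ the relation $\alpha_i\vee\alpha_k=\nabla_{\cal A}$ together with $\alpha_k\subseteq\mu_k$ rules out $\alpha_i\subseteq\mu_k$; thus $\mu_i$ is the unique maximal congruence over $\alpha_i$ and ${\cal A}/\alpha_i$ is local. The delicate point throughout is exactly this promotion of the abstract surjectivity of ${\cal B}(u_{{\rm Rad}({\cal A})})$ into a concrete pairwise comaximal family of Boolean factor congruences with local quotients.

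The last two implications are comparatively routine. For $(\ref{propozitie5.2(3)})\Rightarrow(\ref{propozitie5.2(4)})$ the family $\alpha_1,\ldots,\alpha_n$ is precisely the input of Lemma \ref{dinbj}, which delivers ${\cal A}\cong\prod_{i=1}^n{\cal A}/\alpha_i$, a finite direct product of local algebras. For $(\ref{propozitie5.2(4)})\Rightarrow(\ref{propozitie5.2(0)})$, writing ${\cal A}\cong\prod_{i=1}^n{\cal B}_i$ with each ${\cal B}_i$ local, Proposition \ref{specprod}, (\ref{specprod1}), gives $|{\rm Max}({\cal A})|=\sum_{i=1}^n|{\rm Max}({\cal B}_i)|=n$, so ${\cal A}$ is semilocal; and property $(\star)$ follows by reducing, through the preservation of $(\star)$ under finite direct products established earlier, to the case of a single local algebra ${\cal B}_i$, where Lemma \ref{camcalalr} yields ${\cal B}({\rm Con}({\cal B}_i))=\{\Delta,\nabla\}$ and ${\rm Rad}({\cal B}_i)$ is the unique maximal congruence, so that the required decomposition of a congruence is dictated solely by whether it is proper or equals $\nabla$. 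This closes the cycle and proves the equivalence of all five conditions.
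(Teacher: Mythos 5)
Most of your cycle is correct and runs parallel to the paper's proof: $(\ref{propozitie5.2(0)})\Rightarrow(\ref{propozitie5.2(1)})$ is the same citation of Proposition \ref{propozitie4.14}; your $(\ref{propozitie5.2(1)})\Rightarrow(\ref{propozitie5.2(2)})$ via Remark \ref{remarca4.10} and Proposition \ref{propozitie4.12} is valid though roundabout (the paper just notes it is trivial, since CBLP of ${\cal A}$ means \emph{every} congruence, in particular ${\rm Rad}({\cal A})$, has CBLP); and $(\ref{propozitie5.2(3)})\Leftrightarrow(\ref{propozitie5.2(4)})$ is Lemma \ref{dinbj} in both treatments. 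Your main implication $(\ref{propozitie5.2(2)})\Rightarrow(\ref{propozitie5.2(3)})$ is correct and genuinely different in execution: the paper invokes \cite[Lemma $2$]{afgg} for ${\cal A}/{\rm Rad}({\cal A})\cong\prod_{i=1}^n{\cal A}/\mu_i$, lifts the resulting Boolean congruences using CBLP of ${\rm Rad}({\cal A})$, and then finishes with Lemma \ref{camcalalr}, Lemma \ref{lema4.11} and a cardinality count through Proposition \ref{specprod}; you instead rederive the decomposition from CRT (Proposition \ref{propozitie5.6}), observe that ${\cal B}(u_{{\rm Rad}({\cal A})})$ is a Boolean \emph{isomorphism} (Corollary \ref{candsrad} plus the hypothesis) and transfer the meet/join identities through it, then identify ${\rm Max}({\cal A}/\alpha_i)=\{\mu_i/\alpha_i\}$ directly. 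That argument is sound and, if anything, cleaner than the paper's.

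The genuine gap is in your closing implication $(\ref{propozitie5.2(4)})\Rightarrow(\ref{propozitie5.2(0)})$, in the claim that a local algebra satisfies $(\star)$. In a local algebra, Lemma \ref{camcalalr} does give ${\cal B}({\rm Con}({\cal A}))=\{\Delta_{\cal A},\nabla_{\cal A}\}$ and ${\rm Rad}({\cal A})$ is the unique maximal congruence; but then, for a \emph{proper} $\theta$, the decomposition $\theta=\alpha\vee\beta$ required by $(\star)$ forces $\beta=\Delta_{\cal A}$ and hence $\theta=\alpha\in{\cal K}({\cal A})$. So $(\star)$ in a local algebra is not ``dictated solely by whether $\theta$ is proper or equals $\nabla_{\cal A}$'': it demands that every proper congruence be \emph{compact}, and this can fail. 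For example, the standard G\"odel algebra on $[0,1]$ (a residuated lattice, hence arithmetical and satisfying (H)) is local, its unique maximal filter being $(0,1]$, yet the congruence associated to $(0,1]$ is proper and not finitely generated; so this algebra is local (hence has CBLP by Corollary \ref{corolar4.9}) but does not satisfy $(\star)$ as defined. Your proof silently drops the requirement $\alpha\in{\cal K}({\cal A})$. To be fair, the paper's own proof of this implication is no better -- it cites Corollary \ref{finprodloc}, which yields CBLP, not $(\star)$ -- and the example above shows that, with the compactness requirement in the definition of $(\star)$, statement $(\ref{propozitie5.2(0)})$ is strictly stronger than the others and the equivalence itself breaks. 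Note that the proof of Proposition \ref{propozitie4.14} never uses $\alpha\in{\cal K}({\cal A})$; if $\alpha$ is allowed to be an arbitrary congruence contained in ${\rm Rad}({\cal A})$, then your one-line argument ($\theta$ proper gives $\theta=\theta\vee\Delta_{\cal A}$, and $\nabla_{\cal A}=\Delta_{\cal A}\vee\nabla_{\cal A}$) becomes correct and the whole cycle closes.
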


\begin{proof} (\ref{propozitie5.2(0)})$\Rightarrow $(\ref{propozitie5.2(1)}): By Proposition \ref{propozitie4.14}.

\noindent (\ref{propozitie5.2(1)})$\Rightarrow $(\ref{propozitie5.2(2)}): Trivial.

\noindent (\ref{propozitie5.2(2)})$\Rightarrow $(\ref{propozitie5.2(3)}): Let $n\in \N ^*$ be the cardinality of ${\rm Max}({\cal A})$ and ${\rm Max}({\cal A})=\{\phi _1,\ldots ,\phi _n\}$. Then, according to \cite[Lemma 2]{afgg}, ${\cal A}/{\rm Rad}({\cal A})$ is isomorphic to $\displaystyle \prod _{i=1}^n{\cal A}/\phi _i$, hence, by Lemma \ref{dinbj}, it follows that there exist $\theta _1,\ldots ,\theta _n\in {\rm Con}({\cal A})$ such that ${\rm Rad}({\cal A})\subseteq \theta _i$ for all $i\in \overline{1,n}$ and the following hold:

\begin{flushleft}\begin{tabular}{cl}
$(a)$ & ${\rm Rad}({\cal A})\subseteq \theta _i$ for all $i\in \overline{1,n}$;\\ 

$(b)$ & $\theta _i/{\rm Rad}({\cal A})\in {\cal B}({\rm Con}({\cal A}/{\rm Rad}({\cal A})))$ for all $i\in \overline{1,n}$;\\ 
$(c)$ & $\displaystyle \bigcap _{i=1}^n\theta _i/{\rm Rad}({\cal A})=\Delta _{\textstyle {\cal A}/{\rm Rad}({\cal A})}$;\\ 
$(d)$ & $\theta _i/{\rm Rad}({\cal A})\vee \theta _j/{\rm Rad}({\cal A})=\nabla _{\textstyle {\cal A}/{\rm Rad}({\cal A})}$ for all $i,j\in \overline{1,n}$ such that $i\neq j$;\\ 
$(e)$ & for all $i\in \overline{1,n}$, ${\cal A}/\phi _i$ is isomorphic to $({\cal A}/{\rm Rad}({\cal A}))/_{\textstyle (\theta _i/{\rm Rad}({\cal A}))}$, which in turn is isomorphic to ${\cal A}/\theta _i$ by\\
& the Second Isomorphism Theorem, hence ${\cal A}/\phi _i$ is isomorphic to ${\cal A}/\theta _i$.\end{tabular}\end{flushleft}

Since ${\rm Rad}({\cal A})\subseteq \theta _i$ for all $i\in \overline{1,n}$, it follows that $\displaystyle {\rm Rad}({\cal A})\subseteq \bigcap _{i=1}^n\theta _i$. From $(c)$ we obtain: $\displaystyle (\bigcap _{i=1}^n\theta _i)/{\rm Rad}({\cal A})=\Delta _{\textstyle {\cal A}/{\rm Rad}({\cal A})}$, thus, for all $a,b\in A$ such that $\displaystyle (a,b)\in \bigcap _{i=1}^n\theta _i$, it follows that $(a/{\rm Rad}({\cal A}),b/{\rm Rad}({\cal A}))\in \Delta _{\textstyle {\cal A}/{\rm Rad}({\cal A})}$, that is $a/{\rm Rad}({\cal A})=b/{\rm Rad}({\cal A})$, which means that $(a,b)\in {\rm Rad}({\cal A})$; hence $\displaystyle \bigcap _{i=1}^n\theta _i\subseteq {\rm Rad}({\cal A})$. Therefore:

\begin{flushleft}\begin{tabular}{cl}
$(f)$ & $\displaystyle \bigcap _{i=1}^n\theta _i={\rm Rad}({\cal A})$.\end{tabular}\end{flushleft}

From $(d)$ we get that: for all $i,j\in \overline{1,n}$ such that $i\neq j$, $(\theta _i\vee \theta _j)/{\rm Rad}({\cal A})=\nabla _{\textstyle {\cal A}/{\rm Rad}({\cal A})}=\nabla _{\cal A}/{\rm Rad}({\cal A})$, so $s_{\textstyle {\rm Rad}({\cal A})}(\theta _i\vee \theta _j)=s_{\textstyle {\rm Rad}({\cal A})}(\nabla _{\cal A})$, hence, by the injectivity of $s_{\textstyle {\rm Rad}({\cal A})}$:

\begin{flushleft}\begin{tabular}{cl}
$(g)$ & $\theta _i\vee \theta _j=\nabla _{\cal A}$ for all $i,j\in \overline{1,n}$ such that $i\neq j$.\end{tabular}\end{flushleft}

From $(b)$ and the fact that ${\rm Rad}({\cal A})$ has CBLP, it follows that there exist $\alpha _1,\ldots ,\alpha _n\in {\cal B}({\rm Con}({\cal A}))$ such that, for all $i\in \overline{1,n}$, $\theta _i/{\rm Rad}({\cal A})=(\alpha _i\vee {\rm Rad}({\cal A}))/{\rm Rad}({\cal A})$, that is $s_{\textstyle {\rm Rad}({\cal A})}(\theta _i)=s_{\textstyle {\rm Rad}({\cal A})}(\alpha _i\vee {\rm Rad}({\cal A}))$, thus $\theta _i=\alpha _i\vee {\rm Rad}({\cal A})$ by the injectivity of $s_{\textstyle {\rm Rad}({\cal A})}$. We obtain that $\displaystyle (\bigcap _{i=1}^n\alpha _i)\vee {\rm Rad}({\cal A})=\bigcap _{i=1}^n(\alpha _i\vee {\rm Rad}({\cal A}))=\bigcap _{i=1}^n\theta _i={\rm Rad}({\cal A})$ according to $(f)$, thus $\displaystyle \bigcap _{i=1}^n\alpha _i\subseteq {\rm Rad}({\cal A})$. But $\displaystyle \bigcap _{i=1}^n\alpha _i\in {\cal B}({\rm Con}({\cal A}))$, hence $\displaystyle \bigcap _{i=1}^n\alpha _i=\Delta_{\cal A}$ by Lemma \ref{camcalalr}. From $(g)$ we obtain: for all $i,j\in \overline{1,n}$ such that $i\neq j$, $\alpha _i\vee \alpha _j\vee {\rm Rad}({\cal A})=\theta _i\vee \theta _j={\rm Rad}({\cal A})$, hence $\alpha _i\vee \alpha _j=\nabla _{\cal A}$ by Lemma \ref{lema4.11}.

For every $i\in \overline{1,n}$, since $\phi _i\in {\rm Max}({\cal A})$, it follows that $\phi _i\neq \nabla _{\cal A}$, thus ${\cal A}/\phi _i$ is non--trivial, hence, by $(e)$, ${\cal A}/\theta _i$ is non--trivial, thus $\theta _i\neq \nabla _{\cal A}$; but $\theta _i=\alpha _i\vee {\rm Rad}({\cal A})$, so $\alpha _i\leq \theta _i<\nabla _{\cal A}$, thus $\alpha _i\neq \nabla _{\cal A}$, so ${\cal A}/\alpha _i$ is non--trivial, thus $\Delta _{\cal A}$ is a proper congruence of ${\cal A}$, hence ${\rm Max}({\cal A}/\alpha _i)$ is non--empty by Lemma \ref{folclor}, (\ref{folclor1}). According to Proposition \ref{specprod}, (\ref{specprod1}), $\displaystyle \sum _{i=1}^n|{\rm Max}({\cal A}/\alpha _i)|=|{\rm Max}(\prod _{i=1}^n{\cal A}/\alpha _i)|=|{\rm Max}({\cal A}/{\rm Rad}({\cal A}))|=n$. It follows that, for all $i\in \overline{1,n}$, $|{\rm Max}({\cal A}/\alpha _i)|=1$, that is ${\cal A}/\alpha _i$ is local.

\noindent (\ref{propozitie5.2(3)})$\Leftrightarrow $(\ref{propozitie5.2(4)}): By Lemma \ref{dinbj}.

\noindent (\ref{propozitie5.2(4)})$\Rightarrow $(\ref{propozitie5.2(1)}): By Corollaries \ref{corolar4.8} and \ref{corolar4.9}.

\noindent (\ref{propozitie5.2(4)})$\Rightarrow $(\ref{propozitie5.2(0)}): By Corollary \ref{finprodloc}.\end{proof}

\begin{corollary}
If ${\cal A}$ is semilocal, then: ${\cal A}$ satisfies $(\star )$ iff ${\cal A}$ has CBLP iff ${\rm Rad}({\cal A})$ has CBLP.\label{corolar5.2,5}\end{corollary}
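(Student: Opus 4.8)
The plan is to read this corollary directly off Proposition \ref{propozitie5.2}, since the three conditions asserted here are precisely statements (\ref{propozitie5.2(0)}), (\ref{propozitie5.2(1)}) and (\ref{propozitie5.2(2)}) of that proposition, specialized to the standing hypothesis that ${\cal A}$ is semilocal. First I would observe that, because we are now assuming ${\cal A}$ to be semilocal, the clause ``${\cal A}$ is semilocal'' occurring as a conjunct in each of those three statements is automatically fulfilled. Consequently, under this hypothesis, statement (\ref{propozitie5.2(0)}) reduces to ``${\cal A}$ satisfies $(\star )$'', statement (\ref{propozitie5.2(1)}) reduces to ``${\cal A}$ has CBLP'', and statement (\ref{propozitie5.2(2)}) reduces to ``${\rm Rad}({\cal A})$ has CBLP''.

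With this identification in place, the three equivalences follow at once: Proposition \ref{propozitie5.2} establishes that (\ref{propozitie5.2(0)}), (\ref{propozitie5.2(1)}) and (\ref{propozitie5.2(2)}) are mutually equivalent, hence so are the three reduced conditions appearing in the present statement. I do not expect any genuine obstacle here, as the entire mathematical content—in particular the passage through the structure theorem describing these algebras as finite direct products of local algebras (items (\ref{propozitie5.2(3)}) and (\ref{propozitie5.2(4)}) of that proposition), together with Proposition \ref{propozitie4.14} and Proposition \ref{propozitie4.12}—has already been absorbed into Proposition \ref{propozitie5.2}. The only point deserving explicit mention is the bookkeeping remark that the semilocality assumption is exactly what permits stripping the common ``semilocal'' conjunct from the first three items of that proposition, leaving behind the clean three-way equivalence among $(\star )$, CBLP, and the CBLP of ${\rm Rad}({\cal A})$.
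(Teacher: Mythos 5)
Your proposal is correct and matches the paper's intent exactly: the paper states this corollary without proof precisely because it is the specialization of the equivalences (\ref{propozitie5.2(0)})$\Leftrightarrow$(\ref{propozitie5.2(1)})$\Leftrightarrow$(\ref{propozitie5.2(2)}) of Proposition \ref{propozitie5.2} to the hypothesis that ${\cal A}$ is semilocal, which makes the common ``semilocal'' conjunct vacuous. Nothing further is needed.
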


\begin{corollary}
If ${\cal A}$ is finite, then: ${\cal A}$ satisfies $(\star )$ iff ${\cal A}$ has CBLP iff ${\rm Rad}({\cal A})$ has CBLP.\label{corolar5.3}\end{corollary}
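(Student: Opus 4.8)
The plan is to reduce this corollary immediately to Corollary~\ref{corolar5.2,5} by showing that every finite algebra is semilocal. Throughout this section ${\cal A}$ is already assumed to be a non--trivial arithmetical algebra from ${\cal C}$, so the three--fold equivalence for semilocal algebras is available; the only thing that needs verifying is that the semilocality hypothesis is met whenever ${\cal A}$ is finite.

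First I would observe that, if the support set $A$ of ${\cal A}$ is finite, then $A^2$ is finite, and since each congruence of ${\cal A}$ is by definition an equivalence relation on $A$, that is a subset of $A^2$, the set ${\rm Con}({\cal A})$ is finite. In particular ${\rm Max}({\cal A})\subseteq {\rm Con}({\cal A})$ is a finite set, which is precisely the defining condition for ${\cal A}$ to be semilocal. (The standing assumption that ${\cal A}$ is non--trivial guarantees, via Lemma~\ref{folclor}, (\ref{folclor1}), that ${\rm Max}({\cal A})$ is non--empty, so the hypothesis is not vacuous, though this plays no essential role.)

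Having established that a finite ${\cal A}$ is semilocal, I would then simply apply Corollary~\ref{corolar5.2,5} to conclude that ${\cal A}$ satisfies $(\star)$ iff ${\cal A}$ has CBLP iff ${\rm Rad}({\cal A})$ has CBLP. I expect no genuine obstacle: the statement is an immediate specialization of the semilocal case, and the single step that requires any argument is the passage from finiteness of $A$ to finiteness of ${\rm Max}({\cal A})$, which is entirely routine.
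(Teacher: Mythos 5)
Your proposal is correct and matches the paper's (implicit) argument: the paper states this corollary without proof precisely because, as you observe, a finite algebra has finitely many congruences (they are subsets of $A^2$), hence finitely many maximal congruences, so it is semilocal and Corollary~\ref{corolar5.2,5} applies directly.
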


\begin{openproblem}
Find an arithmetical algebra fulfilling (H) with CBLP that does not satisfy $(\star )$.\end{openproblem}

\begin{remark}
Concerning the open problem above, note that, according to Corollary \ref{corolar5.2,5}, an arithmetical algebra fulfilling (H) with CBLP that does not satisfy $(\star )$ is not semilocal. Finding an algebra which is not semilocal is easy: for instance, according to Proposition \ref{specprodarb}, (\ref{specprodarb1}), a direct product of an infinite family of non--trivial algebras is not semilocal. In some particular cases, such as that of ${\rm G\ddot{o}del}$ algebras, CBLP is preserved by arbitrary direct products, according to Proposition \ref{happy} and \cite[Proposition $5.2$]{ggcm}; unfortunately, in this particular case, so is $(\star )$. The open problem above may prove difficult.\end{remark}

\begin{corollary}
If ${\rm Rad}({\cal A})$ has CBLP, then: ${\cal A}$ is semilocal iff it is isomorphic to a finite direct product of local algebras.\label{corolar5.4}\end{corollary}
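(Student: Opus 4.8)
The plan is to obtain this corollary directly from the equivalences in Proposition \ref{propozitie5.2}, since under the standing hypothesis that ${\rm Rad}({\cal A})$ has CBLP the statement to be proved is just the specialization of the equivalence (\ref{propozitie5.2(2)})$\Leftrightarrow $(\ref{propozitie5.2(4)}). Indeed, once ${\rm Rad}({\cal A})$ having CBLP is fixed, condition (\ref{propozitie5.2(2)}) of Proposition \ref{propozitie5.2} collapses to the single assertion that ${\cal A}$ is semilocal, so the two statements in the corollary become precisely the two ends of that equivalence.

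For the forward implication I would assume that ${\cal A}$ is semilocal. Together with the hypothesis that ${\rm Rad}({\cal A})$ has CBLP, this is exactly condition (\ref{propozitie5.2(2)}) of Proposition \ref{propozitie5.2}. By the implications established there, namely (\ref{propozitie5.2(2)})$\Rightarrow $(\ref{propozitie5.2(3)})$\Leftrightarrow $(\ref{propozitie5.2(4)}), it follows that ${\cal A}$ is isomorphic to a finite direct product of local algebras.

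For the converse I would assume that ${\cal A}$ is isomorphic to a finite direct product of local algebras $\displaystyle \prod _{i=1}^n{\cal A}_i$. Each local factor ${\cal A}_i$ has, by definition, exactly one maximal congruence, so $|{\rm Max}({\cal A}_i)|=1$. By Proposition \ref{specprod}, (\ref{specprod1}), we have $\displaystyle |{\rm Max}({\cal A})|=\sum _{i=1}^n|{\rm Max}({\cal A}_i)|=n$, which is finite, hence ${\cal A}$ is semilocal. (This direction is also subsumed by the implication (\ref{propozitie5.2(4)})$\Rightarrow $(\ref{propozitie5.2(1)}) of Proposition \ref{propozitie5.2}, whose conclusion already includes the semilocality of ${\cal A}$; note that it does not even require the hypothesis on ${\rm Rad}({\cal A})$.)

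I do not expect any genuine obstacle here: the whole substance of the result is contained in Proposition \ref{propozitie5.2}, and the corollary is only a convenient reformulation of it, isolating the role of the assumption that ${\rm Rad}({\cal A})$ has CBLP. The only point to state carefully is the bookkeeping of maximal congruences in the converse, for which Proposition \ref{specprod}, (\ref{specprod1}), provides exactly the additivity of $|{\rm Max}(\,\cdot\,)|$ over finite direct products that is needed.
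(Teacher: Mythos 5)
Your proposal is correct and matches the paper's (implicit) argument: the corollary is stated without proof precisely because it is the specialization of the equivalence (\ref{propozitie5.2(2)})$\Leftrightarrow$(\ref{propozitie5.2(4)}) in Proposition \ref{propozitie5.2}, exactly as you read it off. Your alternative verification of the converse via Proposition \ref{specprod}, (\ref{specprod1}), is sound but, as you yourself note, already subsumed by the implication (\ref{propozitie5.2(4)})$\Rightarrow$(\ref{propozitie5.2(1)}).
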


\begin{corollary}
If ${\cal A}$ is normal, then: ${\cal A}$ is semilocal iff it is isomorphic to a finite direct product of local algebras.\label{corolar5.5}\end{corollary}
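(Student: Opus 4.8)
The plan is to derive this corollary directly by composing two results that have already been established, so that essentially no new computation is required. The decisive observation is that normality of ${\cal A}$ is exactly the hypothesis needed to guarantee that the radical lifts Boolean congruences: by Proposition~\ref{propozitie4.12}, if ${\cal A}$ is normal, then ${\rm Rad}({\cal A})$ has CBLP. Thus the first step is simply to invoke Proposition~\ref{propozitie4.12} in order to replace the normality assumption by the statement that ${\rm Rad}({\cal A})$ has CBLP.

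Once ${\rm Rad}({\cal A})$ has CBLP, the desired biconditional is precisely the content of Corollary~\ref{corolar5.4}, which asserts that, under this very hypothesis, ${\cal A}$ is semilocal if and only if it is isomorphic to a finite direct product of local algebras. So the second and final step is to apply Corollary~\ref{corolar5.4} with the conclusion of the first step as its hypothesis, which closes the argument.

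For orientation on where the genuine mathematical content lies, it is worth noting that the converse implication (a finite direct product of local algebras is semilocal) is unconditional: it follows from the cardinality formula $|{\rm Max}({\cal A})|=\sum_{i=1}^n|{\rm Max}({\cal A}_i)|$ of Proposition~\ref{specprod}, (\ref{specprod1}), since each local factor contributes exactly one maximal congruence. The substantive direction is that a semilocal normal algebra must decompose, and this is where CBLP of the radical is used, through the equivalence (\ref{propozitie5.2(2)})$\Leftrightarrow$(\ref{propozitie5.2(4)}) of Proposition~\ref{propozitie5.2}: starting from the Boolean factorization of ${\cal A}/{\rm Rad}({\cal A})$ into the quotients ${\cal A}/\phi_i$ over the finitely many maximal congruences, one lifts the idempotent components $\theta_i/{\rm Rad}({\cal A})$ back to Boolean congruences $\alpha_i$ of ${\cal A}$, and CBLP of ${\rm Rad}({\cal A})$ is exactly what makes this lifting possible. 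Since that one delicate point — producing the $\alpha_i$ — has already been absorbed into Corollary~\ref{corolar5.4}, there is no remaining obstacle, and the proof of Corollary~\ref{corolar5.5} reduces to chaining Proposition~\ref{propozitie4.12} into Corollary~\ref{corolar5.4}.
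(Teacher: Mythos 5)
Your proof is correct and follows exactly the paper's own route: the paper proves this corollary by citing Proposition~\ref{propozitie4.12} (normality implies ${\rm Rad}({\cal A})$ has CBLP) together with Corollary~\ref{corolar5.4}, precisely the two-step composition you describe. Your additional remarks about where the substantive content lies (the lifting argument inside Proposition~\ref{propozitie5.2}) are accurate but not needed, since that work is indeed already absorbed into Corollary~\ref{corolar5.4}.
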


\begin{proof} By Corollary \ref{corolar5.4} and Proposition \ref{propozitie4.12}.\end{proof}

\begin{definition}{\rm \cite{afgg}} The algebra ${\cal A}$ is said to be {\em maximal} iff, given any index set $I$, any family $(a_i)_{i\in I}\subseteq A$ and any family $(\theta _i)_{i\in i}\subseteq {\rm Con}({\cal A})$ with the property that, for any finite subset $J$ of $I$, there exists an $x_J\in A$ such that $(x_J,a_i)\in \theta _i$ for all $i\in J$, it follows that there exists an $x\in A$ such that $(x_i,a_i)\in \theta _i$ for all $i\in I$.\end{definition}

\begin{lemma}{\rm \cite{afgg}} Any maximal algebra is semilocal.\label{lema5.8}\end{lemma}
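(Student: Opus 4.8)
The plan is to argue by contradiction: I assume that ${\cal A}$ is maximal but ${\rm Max}({\cal A})$ is infinite, and derive a violation of the maximality condition. The whole argument rests on reducing the problem to a single concrete obstruction, namely that an infinite direct product of non--trivial algebras can never be maximal. Two easy preliminary facts set this up. First, maximality is inherited by quotients: given $\psi \in {\rm Con}({\cal A})$ and a finitely solvable system in ${\cal A}/\psi$, one lifts the data along the canonical surjection $p_{\psi}$, using the correspondence $s_{\psi}$ between ${\rm Con}({\cal A}/\psi)$ and the principal filter $[\psi )$, so that each congruence of the quotient is $\theta _i/\psi$ with $\psi \subseteq \theta _i$; this yields a finitely solvable system in ${\cal A}$, which one solves by maximality of ${\cal A}$ and then pushes the solution back down. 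Second, two distinct maximal congruences $\mu ,\nu $ are comaximal: since neither contains the other, $\mu \subsetneq \mu \vee \nu $, and the maximality of $\mu $ forces $\mu \vee \nu =\nabla _{\cal A}$.

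Next I would exploit comaximality together with the Chinese Remainder Theorem. Fix a countably infinite family $(\mu _n)_{n\in \N ^*}$ of pairwise distinct maximal congruences, possible because ${\rm Max}({\cal A})$ is infinite. For any choice of representatives $(a_n)_n$ and any finite index set $J$, the comaximality $\mu _m\vee \mu _n=\nabla _{\cal A}$ makes the hypothesis of CRT vacuously true on $J$, so, since ${\cal A}$ is arithmetical, Proposition \ref{propozitie5.6} yields an $x_J$ with $(x_J,a_n)\in \mu _n$ for all $n\in J$. Thus the system $\{(x,a_n)\in \mu _n\}_{n\in \N ^*}$ is finitely solvable, and maximality of ${\cal A}$ produces a global solution; this says exactly that the canonical homomorphism ${\cal A}\rightarrow \prod _{n\in \N ^*}{\cal A}/\mu _n$ is surjective. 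Its kernel is $\bigcap _n\mu _n$, so $\prod _{n\in \N ^*}{\cal A}/\mu _n$ is a quotient of ${\cal A}$, hence maximal by the first preliminary fact. Each factor ${\cal A}/\mu _n$ is non--trivial because $\mu _n\neq \nabla _{\cal A}$, so I have realized a countably infinite product of non--trivial algebras as a maximal algebra.

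The main obstacle, and the heart of the proof, is to contradict this by showing that no infinite product ${\cal B}=\prod _{k\in K}{\cal B}_k$ of non--trivial algebras is maximal. I would construct an explicit finitely solvable but globally unsolvable system. Pick $b_k\neq c_k$ in each ${\cal B}_k$ and set $b=(b_k)_k$, $c=(c_k)_k$. For each $k$ use the kernel $\theta _k$ of the projection onto the $k$--th coordinate, with target $b$, so that the condition reads ``$x_k=b_k$''; and add one further condition given by the congruence $\Phi =\{(x,y)\in {\cal B}^2\ |\ x_k=y_k\ \mbox{for all but finitely many}\ k\}$ with target $c$, which reads ``$x_k=c_k$ for all but finitely many $k$''. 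A global solution would have to equal $b$ on every coordinate while agreeing with $c$ cofinitely, which is impossible since $b_k\neq c_k$ for all $k$ and $K$ is infinite; yet every finite subsystem is solved by the element equal to $b$ on the finitely many constrained coordinates and equal to $c$ elsewhere. This violates the maximality of ${\cal B}$, contradicting the previous paragraph, and the contradiction forces ${\rm Max}({\cal A})$ to be finite, i.e. ${\cal A}$ is semilocal. The only points needing care are verifying that $\Phi $ is genuinely a congruence (it is, because the operations of a product act componentwise, so a componentwise finite discrepancy is preserved) and that the lifting of finite compatibility in the quotient step respects the inclusions $\psi \subseteq \theta _i$.
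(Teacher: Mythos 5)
Your proof is correct. Note first that the paper does not actually prove this lemma: it is quoted from \cite{afgg} (Filipoiu--Georgescu, \emph{Maximal Arithmetical Algebras}), so there is no in-paper argument to compare yours against; what follows is an assessment of your proposal on its own terms. All the steps check out under the standing hypotheses of Section \ref{semilocal} (all algebras arithmetical, hence CRT holds by Proposition \ref{propozitie5.6}): the inheritance of maximality by quotients via $s_{\textstyle \psi }$ is sound, and the point you flag -- that $(x_J/\psi ,a_i/\psi )\in \theta _i/\psi $ pulls back to $(x_J,a_i)\in \theta _i$ precisely because $\psi \subseteq \theta _i$ allows the representatives to be corrected by transitivity -- is exactly the detail that needs saying. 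Pairwise comaximality of distinct maximal congruences makes the CRT hypothesis vacuous, so finite solvability of the system $\{(x,a_n)\in \mu _n\}$ is automatic, maximality then gives surjectivity of ${\cal A}\rightarrow \prod _n{\cal A}/\mu _n$, and the kernel computation identifies the product as a quotient of ${\cal A}$, hence maximal. The heart of your argument -- that an infinite direct product of non--trivial algebras is never maximal -- is also correct: your $\Phi $ (agreement on cofinitely many coordinates) is indeed a congruence because every operation symbol has finite arity and acts componentwise, so discrepancy sets only grow by finite unions; the system consisting of all projection kernels targeted at $b$ together with $\Phi $ targeted at $c$ is finitely solvable by the hybrid element (equal to $b$ on the finitely many constrained coordinates, to $c$ elsewhere) but globally unsolvable since $b_k\neq c_k$ everywhere. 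This two-stage reduction (quotient onto an infinite product, then defeat the product) is a clean and fully self-contained route to the conclusion that ${\rm Max}({\cal A})$ must be finite.
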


\begin{lemma}{\rm \cite{afgg}} Let $n\in \N ^*$, ${\cal A}_1,\ldots ,{\cal A}_n$ be algebras and $\displaystyle {\cal A}=\prod _{i=1}^n{\cal A}_i$. Then: ${\cal A}$ is maximal iff ${\cal A}_i$ is maximal for every $i\in \overline{1,n}$.\label{lema5.9}\end{lemma}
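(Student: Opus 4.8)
The plan is to prove both implications directly for an arbitrary $n$, with no induction needed, leaning entirely on the componentwise description of congruences of a finite direct product furnished by Lemma \ref{totdinbj}: every $\theta \in {\rm Con}({\cal A})$ equals $pr_1(\theta) \times \cdots \times pr_n(\theta)$, and conversely any product of congruences of the factors is a congruence of ${\cal A}$. The essential observation is that the defining condition of maximality, $(x, a_i) \in \theta_i$, decouples across the $n$ coordinates as soon as $\theta_i$ is written as a product $pr_1(\theta_i) \times \cdots \times pr_n(\theta_i)$; hence the whole system can be solved one factor at a time.

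For the ``if'' direction I would assume each ${\cal A}_k$ maximal and take $(a_i)_{i \in I} \subseteq A$, written $a_i = (a_{i,1}, \ldots, a_{i,n})$ with $a_{i,k} \in A_k$, together with $(\theta_i)_{i \in I} \subseteq {\rm Con}({\cal A})$ satisfying the finite--compatibility hypothesis with witnesses $x_J = (x_{J,1}, \ldots, x_{J,n})$. Fixing $k \in \overline{1,n}$, I would consider in ${\cal A}_k$ the family $(a_{i,k})_{i \in I} \subseteq A_k$ and $(pr_k(\theta_i))_{i \in I} \subseteq {\rm Con}({\cal A}_k)$. For each finite $J \subseteq I$ the coordinate $x_{J,k}$ is a witness, since $(x_J, a_i) \in \theta_i = pr_1(\theta_i) \times \cdots \times pr_n(\theta_i)$ forces $(x_{J,k}, a_{i,k}) \in pr_k(\theta_i)$ for every $i \in J$. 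Maximality of ${\cal A}_k$ then produces $x_k \in A_k$ with $(x_k, a_{i,k}) \in pr_k(\theta_i)$ for all $i \in I$, and assembling $x = (x_1, \ldots, x_n)$ gives, componentwise, $(x, a_i) \in pr_1(\theta_i) \times \cdots \times pr_n(\theta_i) = \theta_i$ for all $i \in I$, so ${\cal A}$ is maximal.

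For the ``only if'' direction I would assume ${\cal A}$ maximal, fix $k \in \overline{1,n}$, and take a system $(a_i)_{i \in I} \subseteq A_k$ and $(\theta_i)_{i \in I} \subseteq {\rm Con}({\cal A}_k)$ in ${\cal A}_k$ with witnesses $x_J \in A_k$. Using the standing non--emptiness convention, fix arbitrary $c_j \in A_j$ for each $j \neq k$, and lift the data to ${\cal A}$: let $\tilde{a}_i \in A$ be the tuple with $k$-th coordinate $a_i$ and $j$-th coordinate $c_j$ for $j \neq k$, and let $\tilde{\theta}_i \in {\rm Con}({\cal A})$ place $\theta_i$ in coordinate $k$ and $\nabla_{{\cal A}_j}$ in every coordinate $j \neq k$. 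For a finite $J \subseteq I$ the lift $\tilde{x}_J$ of $x_J$ (again with $j$-th coordinate $c_j$) satisfies $(\tilde{x}_J, \tilde{a}_i) \in \tilde{\theta}_i$ for all $i \in J$: in coordinate $k$ this reads $(x_J, a_i) \in \theta_i$, while in each coordinate $j \neq k$ the pair is $(c_j, c_j) \in \nabla_{{\cal A}_j}$, which holds vacuously. Hence the finite--compatibility hypothesis holds in ${\cal A}$, and its maximality yields $\tilde{x} \in A$ with $(\tilde{x}, \tilde{a}_i) \in \tilde{\theta}_i$ for all $i \in I$; its $k$-th coordinate $x = pr_k(\tilde{x})$ then satisfies $(x, a_i) \in \theta_i$ for all $i \in I$, showing ${\cal A}_k$ maximal.

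Neither direction presents a genuine obstacle: the entire argument is bookkeeping organized around Lemma \ref{totdinbj}. The one place demanding care is the lifting step in the ``only if'' direction, where one must insert $\nabla_{{\cal A}_j}$ (and not $\Delta_{{\cal A}_j}$) in the coordinates $j \neq k$ so that the auxiliary conditions there are automatically satisfied irrespective of the arbitrary choices $c_j$, and must invoke the non--emptiness of the factors to make those choices at all.
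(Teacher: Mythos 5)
Your proof is correct. The paper itself gives no argument for this lemma --- it is quoted from \cite{afgg} --- so there is nothing to compare against line by line; your argument is the natural one, resting on the componentwise description of congruences of a finite product from Lemma \ref{totdinbj} (whose hypothesis of congruence--distributivity is available here, since in Section \ref{semilocal} all algebras from ${\cal C}$ are arithmetical and ${\cal C}$ is an equational class, hence closed under finite products), and both directions check out: projecting the system coordinatewise for the ``if'' half, and lifting it with $\nabla _{{\cal A}_j}$ in the spare coordinates for the ``only if'' half. One small remark: your closing claim that one \emph{must} use $\nabla _{{\cal A}_j}$ rather than $\Delta _{{\cal A}_j}$ is overstated --- since all the lifted tuples $\tilde{a}_i$ and witnesses $\tilde{x}_J$ share the same fixed coordinates $c_j$, the choice $\Delta _{{\cal A}_j}$ would also yield a finitely solvable system whose global solution projects correctly --- but this affects only a side comment, not the validity of the proof as written.
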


\begin{proposition} The following are equivalent:

\begin{enumerate}
\item\label{propozitie5.10(0)} ${\cal A}$ is maximal and satisfies $(\star )$;
\item\label{propozitie5.10(1)} ${\cal A}$ is maximal and has CBLP;
\item\label{propozitie5.10(2)} ${\cal A}$ is maximal and ${\rm Rad}({\cal A})$ has CBLP;
\item\label{propozitie5.10(3)} ${\cal A}$ is isomorphic to a finite direct product of maximal local algebras.\end{enumerate}\label{propozitie5.10}\end{proposition}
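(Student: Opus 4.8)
The plan is to leverage the already-established theory for semilocal algebras, since by Lemma \ref{lema5.8} every maximal algebra is semilocal. This observation immediately collapses the first three conditions onto one another. Indeed, each of (\ref{propozitie5.10(0)}), (\ref{propozitie5.10(1)}), (\ref{propozitie5.10(2)}) asserts maximality together with, respectively, the property $(\star )$, CBLP, and CBLP of ${\rm Rad}({\cal A})$. Since maximality forces semilocality, Corollary \ref{corolar5.2,5} (equivalently, the equivalence of (\ref{propozitie5.2(0)}), (\ref{propozitie5.2(1)}) and (\ref{propozitie5.2(2)}) in Proposition \ref{propozitie5.2}) tells us that, for a semilocal algebra, the three properties $(\star )$, CBLP, and ``${\rm Rad}({\cal A})$ has CBLP'' are mutually equivalent. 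As maximality is carried along unchanged in all three statements, this yields (\ref{propozitie5.10(0)})$\Leftrightarrow $(\ref{propozitie5.10(1)})$\Leftrightarrow $(\ref{propozitie5.10(2)}) with no further work.

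It then remains to connect this common condition to the structural statement (\ref{propozitie5.10(3)}). For (\ref{propozitie5.10(1)})$\Rightarrow $(\ref{propozitie5.10(3)}), I would start from ${\cal A}$ maximal with CBLP; maximality gives semilocality via Lemma \ref{lema5.8}, so Proposition \ref{propozitie5.2}, (\ref{propozitie5.2(1)})$\Rightarrow $(\ref{propozitie5.2(4)}), produces an isomorphism $\displaystyle {\cal A}\cong \prod _{i=1}^n{\cal A}_i$ with each ${\cal A}_i$ local. The key step is to upgrade each factor from merely local to maximal: since maximality is preserved and reflected by finite direct products (Lemma \ref{lema5.9}) and is clearly an isomorphism invariant, the maximality of ${\cal A}$ forces each ${\cal A}_i$ to be maximal, so ${\cal A}$ is a finite direct product of maximal local algebras.

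For the converse (\ref{propozitie5.10(3)})$\Rightarrow $(\ref{propozitie5.10(1)}), I would take $\displaystyle {\cal A}\cong \prod _{i=1}^n{\cal A}_i$ with each ${\cal A}_i$ maximal and local. Lemma \ref{lema5.9} then gives that ${\cal A}$ is maximal, while Proposition \ref{propozitie5.2}, (\ref{propozitie5.2(4)})$\Rightarrow $(\ref{propozitie5.2(1)}), gives that a finite direct product of local algebras has CBLP. Hence ${\cal A}$ is maximal and has CBLP, i.e. (\ref{propozitie5.10(1)}) holds, closing the cycle of implications.

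The only genuinely new content relative to the semilocal case is the interplay with maximality, and the single point requiring care is the ``upgrade'' step in (\ref{propozitie5.10(1)})$\Rightarrow $(\ref{propozitie5.10(3)}): one must ensure that the local factors supplied by Proposition \ref{propozitie5.2} are precisely the factors to which Lemma \ref{lema5.9} is applied. This is immediate, because both results are read off the one and the same isomorphism $\displaystyle {\cal A}\cong \prod _{i=1}^n{\cal A}_i$, so Lemma \ref{lema5.9} applies to exactly that decomposition. I expect no other obstacles, since everything else is a direct transcription of the semilocal results of Proposition \ref{propozitie5.2} and Corollary \ref{corolar5.2,5}.
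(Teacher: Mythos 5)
Your proof is correct and takes essentially the same route as the paper's: both reduce to the semilocal case via Lemma \ref{lema5.8}, use Proposition \ref{propozitie5.2} for the semilocal equivalences and for the decomposition into local factors, and use Lemma \ref{lema5.9} to transfer maximality between ${\cal A}$ and the factors of that one decomposition. The only difference is in how condition (\ref{propozitie5.10(0)}) is tied in, and there your argument via Corollary \ref{corolar5.2,5} is actually tighter than the paper's, which justifies (\ref{propozitie5.10(0)})$\Leftrightarrow $(\ref{propozitie5.10(1)}) by citing Proposition \ref{propozitie4.14} (which gives only the forward implication) and closes the cycle with (\ref{propozitie5.10(3)})$\Rightarrow $(\ref{propozitie5.10(0)}) via Corollary \ref{finprodloc} (which yields CBLP rather than $(\star )$, so it implicitly needs exactly the semilocal equivalence you invoke explicitly).
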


\begin{proof} (\ref{propozitie5.10(0)})$\Leftrightarrow $(\ref{propozitie5.10(1)}): By Proposition \ref{propozitie4.14}.

\noindent (\ref{propozitie5.10(1)})$\Leftrightarrow $(\ref{propozitie5.10(2)}): By Lemma \ref{lema5.8} and Proposition \ref{propozitie5.2}.

\noindent (\ref{propozitie5.10(1)})$\Rightarrow $(\ref{propozitie5.10(3)}): By Lemma \ref{lema5.8}, Proposition \ref{propozitie5.2} and Lemma \ref{lema5.9}.

\noindent (\ref{propozitie5.10(3)})$\Rightarrow $(\ref{propozitie5.10(1)}): Assume that ${\cal A}$ is isomorphic to a finite direct product of maximal local algebras. Then ${\cal A}$ is maximal by Lemma \ref{lema5.9}, and ${\cal A}$ has CBLP by Corollaries \ref{corolar4.8} and \ref{corolar4.9}.

\noindent (\ref{propozitie5.10(3)})$\Rightarrow $(\ref{propozitie5.10(0)}): By Lemma \ref{lema5.9} and Corollary \ref{finprodloc}.\end{proof}

\begin{corollary} If ${\rm Rad}({\cal A})$ has CBLP, then: ${\cal A}$ is maximal iff it is isomorphic to a finite direct product of maximal local algebras.\label{corolar5.11}\end{corollary}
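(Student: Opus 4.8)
The plan is to read this off immediately from Proposition \ref{propozitie5.10}, exploiting the standing hypothesis that ${\rm Rad}({\cal A})$ has CBLP. The key observation is that, under this hypothesis, the bare assertion ``${\cal A}$ is maximal'' is logically the same as the conjunction ``${\cal A}$ is maximal and ${\rm Rad}({\cal A})$ has CBLP'', which is exactly statement (\ref{propozitie5.10(2)}) of that proposition. Thus the work has effectively been done already, and all that remains is to transport the equivalence (\ref{propozitie5.10(2)})$\Leftrightarrow$(\ref{propozitie5.10(3)}) into the present phrasing.

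First I would treat the forward implication. Assuming ${\cal A}$ is maximal, the standing hypothesis supplies statement (\ref{propozitie5.10(2)}), and the equivalence (\ref{propozitie5.10(2)})$\Leftrightarrow$(\ref{propozitie5.10(3)}) of Proposition \ref{propozitie5.10} then yields that ${\cal A}$ is isomorphic to a finite direct product of maximal local algebras. For the converse, starting from statement (\ref{propozitie5.10(3)}), the same equivalence returns (\ref{propozitie5.10(2)}), and in particular ${\cal A}$ is maximal. This closes both directions.

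Since every step is a direct invocation of an already-established equivalence, there is no genuine obstacle; the only point requiring care is that the hypothesis ``${\rm Rad}({\cal A})$ has CBLP'' is precisely what licenses the free passage between the predicate ``maximal'' and statement (\ref{propozitie5.10(2)}), and so it must be kept in force throughout. If one wished to make the converse self-contained rather than routing it through Proposition \ref{propozitie5.10}, an equally short argument is available via Lemma \ref{lema5.9}: a finite direct product of maximal local algebras is a finite direct product of maximal algebras, hence maximal.
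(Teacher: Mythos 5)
Your proof is correct and is exactly the paper's intended argument: the paper states this corollary without a separate proof precisely because it is an immediate consequence of the equivalence (\ref{propozitie5.10(2)})$\Leftrightarrow$(\ref{propozitie5.10(3)}) in Proposition \ref{propozitie5.10}, with the hypothesis that ${\rm Rad}({\cal A})$ has CBLP absorbing the extra conjunct in (\ref{propozitie5.10(2)}), just as you observe. Your alternative closing of the converse via Lemma \ref{lema5.9} is also valid but unnecessary.
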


\begin{corollary} If ${\cal A}$ is normal, then: ${\cal A}$ is maximal iff it is isomorphic to a finite direct product of maximal local algebras.\label{corolar5.12}\end{corollary}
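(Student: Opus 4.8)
The plan is to derive this corollary directly from the two results that immediately precede it, without reproving any structural content. The hypothesis is that ${\cal A}$ is a normal algebra, i.e. that the lattice ${\rm Con}({\cal A})$ is normal. The conclusion I want is the biconditional ``${\cal A}$ is maximal iff ${\cal A}$ is isomorphic to a finite direct product of maximal local algebras'', which is exactly the statement of Corollary \ref{corolar5.11} under the hypothesis that ${\rm Rad}({\cal A})$ has CBLP. So the entire task reduces to upgrading the normality hypothesis to the CBLP hypothesis on ${\rm Rad}({\cal A})$.

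First I would invoke Proposition \ref{propozitie4.12}, which asserts precisely that whenever ${\cal A}$ is a normal algebra, ${\rm Rad}({\cal A})$ has CBLP. This is the single substantive step, and even it is not original at this point: its proof (already given) uses the normality of ${\rm Con}({\cal A})$ to split a relative complement $\phi \in {\cal B}([{\rm Rad}({\cal A})))$ into genuine Boolean congruences $\alpha ,\beta \in {\cal B}({\rm Con}({\cal A}))$ and then appeals to Lemma \ref{lema4.11} to promote $\alpha \vee \beta $ up to $\nabla _{\cal A}$. Having secured that ${\rm Rad}({\cal A})$ has CBLP, I would then apply Corollary \ref{corolar5.11} verbatim to obtain the desired equivalence. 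All the genuine structural work --- the characterization of maximal algebras with CBLP as finite products of maximal local algebras --- is carried out there, resting in turn on Proposition \ref{propozitie5.10} together with the maximality--transfer Lemmas \ref{lema5.8} and \ref{lema5.9}.

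Consequently there is no real obstacle: the argument is a two--citation composition, entirely parallel to the proof of Corollary \ref{corolar5.5}, which derives the semilocal analogue from Corollary \ref{corolar5.4} and the very same Proposition \ref{propozitie4.12}. The only point requiring care is the exact threading of hypotheses --- namely, that ``normal algebra'' is indeed the hypothesis of Proposition \ref{propozitie4.12} and that ``${\rm Rad}({\cal A})$ has CBLP'' is indeed the hypothesis of Corollary \ref{corolar5.11} --- which a quick inspection confirms. Thus the proof reduces to the single line: by Corollary \ref{corolar5.11} and Proposition \ref{propozitie4.12}.
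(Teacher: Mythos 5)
Your proof is correct and is exactly the paper's own argument: Proposition \ref{propozitie4.12} upgrades normality of ${\cal A}$ to ``${\rm Rad}({\cal A})$ has CBLP'', and then Corollary \ref{corolar5.11} gives the stated equivalence. Nothing further is needed.
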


\begin{proof} By Proposition \ref{propozitie4.12} and Corollary \ref{corolar5.11}.\end{proof}

\end{document}